\newtheorem{theorem}{Theorem}[section]
\newtheorem{lemma}[theorem]{Lemma}
\newtheorem{proposition}[theorem]{Proposition}
\newtheorem{corollary}[theorem]{Corollary}
\newtheorem*{conjecture*}{Conjecture}
\theoremstyle{definition}
\newtheorem{remark}[theorem]{Remark}
\newtheorem{definition}[theorem]{Definition}
\newcommand{\diffII}[3]{\ifthenelse{\equal{#2}{#3}}
{\frac{d^2 #1}{d #2^2}}
{\frac{d^2 #1}{d #2 d #3}}
}
\newcommand{\diffIIat}[4]{\left.\diffII{#1}{#2}{#3}\right|_{#4}}
\newcommand{\diffat}[3]{\left.\frac{d #1}{d #2}\right|_{#3}}
\newcommand{\pdiff}[2]{\frac{\partial #1}{\partial #2}}
\newcommand{\pdiffat}[3]{\left.\frac{\partial #1}{\partial #2}\right|_{#3}}
\newcommand{\rank}{\mathop{\mathrm{rank}}}
\newcommand{\vol}{\mathrm{vol}}
\newcommand{\inr}[2]{\langle #1, #2 \rangle}
\newcommand{\norm}[1]{\left\Vert#1\right\Vert}
\newcommand{\snorm}[1]{\Vert#1\Vert}
\newcommand{\abs}[1]{\left\vert#1\right\vert}
\newcommand{\tabs}[1]{\widetilde{\abs{#1}}}
\newcommand{\tbra}[1]{\widetilde{[#1]}}
\newcommand{\set}[1]{\left\{#1\right\}}
\newcommand{\brac}[1]{\left(#1\right)}
\newcommand{\rbrac}[1]{\left[#1\right]}
\newcommand{\scalar}[1]{\left \langle #1 \right \rangle}
\newcommand{\eps}{\epsilon}
\newcommand{\R}{\mathbb{R}}
\newcommand{\B}{\mathcal{B}}
\newcommand{\E}{\mathbb{E}}
\newcommand{\Real}{\mathbb{R}}
\newcommand{\n}{\mathbf{n}}
\newcommand{\tang}{\mathbf{t}}
\newcommand{\calH}{\mathcal{H}}
\newcommand{\cyclic}{\mathcal{C}}
\newcommand{\per}{P}
\newcommand{\Id}{\mathrm{Id}}
\newcommand{\II}{\mathrm{I\!I}}
\newcommand{\simplex}{\Delta}
\renewcommand{\div}{\mathrm{div}}
\newcommand{\navg}{\overline{\mathbf{n}}}
\newcommand{\pot}{W}
\newcommand{\Y}{\mathbf{Y}}
\newcommand{\T}{\mathbf{T}}
\newcommand{\Tr}{\mathcal{T}}
\newcommand{\M}{\mathbf{M}}
\renewcommand{\H}{\mathcal{H}}
\renewcommand{\S}{\mathcal{S}}
\newcommand{\C}{\mathcal{C}}
\newcommand{\Eps}{\mathcal{E}}
\newcommand{\F}{\mathcal{F}}
\newcommand{\modelV}{\Psi}
\newcommand{\Sphere}{\mathbb{S}}
\newcommand{\G}{\mathbb{G}}
\DeclareMathOperator{\interior}{int}
\DeclareMathOperator{\tr}{tr}
\DeclareMathOperator{\supp}{supp}
\DeclareMathOperator{\relint}{relint}
\DeclareMathOperator{\spn}{span}
\DeclareMathOperator{\Ker}{Ker}
\DeclareMathOperator{\Img}{Im}
\numberwithin{equation}{section}
\begin{document}

\title{The Gaussian Double-Bubble and Multi-Bubble Conjectures}

\author{Emanuel Milman\textsuperscript{1} and Joe Neeman\textsuperscript{2}}

\date{}

\maketitle

\abstract{We establish the Gaussian Multi-Bubble Conjecture: 
the least Gaussian-weighted perimeter way to decompose $\R^n$ into $q$ cells of prescribed (positive) Gaussian measure when $2 \leq q \leq n+1$, is to use a ``simplicial cluster", obtained from the Voronoi cells of $q$ equidistant points. Moreover, we prove that simplicial clusters are the unique isoperimetric minimizers (up to null-sets).  
In particular, the case $q=3$ confirms the Gaussian Double-Bubble Conjecture: the unique least Gaussian-weighted perimeter way to decompose $\R^n$ ($n \geq 2$) into three cells of prescribed (positive) Gaussian measure is to use a tripod-cluster, whose interfaces consist of three half-hyperplanes meeting along an $(n-2)$-dimensional plane at $120^{\circ}$ angles (forming a tripod or ``Y" shape in the plane). The case $q=2$ recovers the classical Gaussian isoperimetric inequality. 

To establish the Multi-Bubble conjecture, we show that in the above range of $q$, stable regular clusters must have flat interfaces, therefore consisting of convex polyhedral cells (with at most $q-1$ facets). In the Double-Bubble case $q=3$, it is possible to avoid establishing flatness of the interfaces by invoking a certain dichotomy on the structure of stable clusters, yielding a simplified argument. 
}

\footnotetext[1]{Department of Mathematics, Technion - Israel
Institute of Technology, Haifa 32000, Israel. Email: emilman@tx.technion.ac.il.}

\footnotetext[2]{Department of Mathematics, University of Texas at Austin. Email: joeneeman@gmail.com. \\
The research leading to these results is part of a project that has received funding from the European Research Council (ERC) under the European Union's Horizon 2020 research and innovation programme (grant agreement No 637851). This material is also based upon work supported by the National Science Foundation under Grant No. 1440140, while the authors were in residence at the Mathematical Sciences Research Institute in Berkeley, California, during the Fall semester of 2017.
}

\begingroup    \renewcommand{\thefootnote}{}    \footnotetext{2010 Mathematics Subject Classification: 49Q20, 53A10.}
    \endgroup

\section{Introduction}

Let $\gamma = \gamma^n$ denote the standard Gaussian probability measure on Euclidean space $(\R^n,\abs{\cdot})$: \[
 \gamma^n := \frac{1}{(2\pi)^{\frac{n}{2}}} e^{-\frac{|x|^2}{2}} \, dx =: e^{-\pot(x)}\, dx.
\]
More generally, if $\H^{k}$ denotes the $k$-dimensional Hausdorff measure, let $\gamma^k$ denote its Gaussian-weighted counterpart: 
\[
\gamma^k := e^{-\pot(x)} \H^{k} .
\]
The Gaussian-weighted (Euclidean) perimeter of a Borel set $U \subset \R^n$ is defined as:
\[ \per_\gamma(U) := \sup \left\{\int_U (\div X - \inr{\nabla \pot}{X}) \, d\gamma: X \in C_c^\infty(\R^n; T \R^n), |X| \le 1 \right\}.
\] For nice sets (e.g. open sets with piecewise smooth boundary), $\per_\gamma(U)$ is known to agree with $\gamma^{n-1}(\partial U)$ (see e.g.~\cite{MaggiBook}). The weighted perimeter $\per_\gamma(U)$ has the advantage of being lower semi-continuous with respect to $L^1(\gamma)$ convergence, and thus fits well with the direct method of calculus-of-variations.

The classical Gaussian isoperimetric inequality, established independently by Sudakov--Tsirelson \cite{SudakovTsirelson} and Borell \cite{Borell-GaussianIsoperimetry} in 1975, asserts that among all Borel sets $U$ in $\R^n$ having prescribed Gaussian measure $\gamma(U) = v \in [0,1]$, halfspaces minimize Gaussian-weighted perimeter $\per_\gamma(U)$ (see also \cite{EhrhardPhiConcavity, Latala-EhrhardForOneConvexSet,BakryLedoux, BobkovGaussianIsopInqViaCube, BartheMaureyIsoperimetricInqs,BobkovLocalizedProofOfGaussianIso,Borell-EhrhardForBorelSets,MorganManifoldsWithDensity}).
Later on, it was shown by Carlen--Kerce \cite{CarlenKerceEqualityInGaussianIsop} (see also \cite{EhrhardGaussianIsopEqualityCases,MorganManifoldsWithDensity, McGonagleRoss:15}), that up to null-sets, halfspaces are in fact the \emph{unique} minimizers for the Gaussian isoperimetric inequality. 
These results are the Gaussian analogue of the classical \emph{unweighted} isoperimetric inequality in Euclidean space $(\R^n,\abs{\cdot})$, which states that Euclidean balls uniquely minimize (up to null-sets) unweighted perimeter among all sets of prescribed Lebesgue measure \cite{BuragoZalgallerBook,MaggiBook}.

\medskip

In this work, we extend these classical results for the Gaussian measure to the case of clusters. A \emph{$q$-cluster} $\Omega = (\Omega_1, \ldots, \Omega_q)$ is a $q$-tuple of Borel subsets $\Omega_i \subset \R^n$ called cells, such that $\set{\Omega_i}$ are pairwise disjoint, $\per_\gamma(\Omega_i) <
\infty$ for each $i$, and  $\gamma(\R^n \setminus \bigcup_{i=1}^q \Omega_i) = 0$. Note that the cells are not required to be connected. The total Gaussian
perimeter of a cluster $\Omega$ is defined as:
\[
  \per_\gamma(\Omega) := \frac 12 \sum_{i=1}^q \per_\gamma(\Omega_i) .
\]
The Gaussian measure of a cluster is defined as:
\[
  \gamma(\Omega) := (\gamma(\Omega_1), \ldots, \gamma(\Omega_q)) \in \simplex^{(q-1)} ,
\]
where $\simplex^{(q-1)} := \{v \in \R^q: v_i \ge 0 ~,~ \sum_{i=1}^q v_i = 1\}$ denotes the $(q-1)$-dimensional probability simplex. The isoperimetric problem for $q$-clusters consists of identifying those clusters $\Omega$ of prescribed Gaussian measure $\gamma(\Omega) = v \in \simplex^{(q-1)}$ which minimize the total Gaussian perimeter $\per_\gamma(\Omega)$. 

\smallskip
Note that easy properties of the perimeter ensure that for a $2$-cluster $\Omega$, $\per_\gamma(\Omega) = \per_\gamma(\Omega_1) = \per_\gamma(\Omega_2)$, and so the case $q=2$ corresponds to the classical isoperimetric setup when testing the perimeter of a single set of prescribed measure. In analogy to the classical 
unweighted isoperimetric inequality in $\R^n$,
we will refer to the case $q=2$ as the ``single-bubble" case (with the bubble being $\Omega_1$ and having complement $\Omega_2$). Accordingly, the case $q=3$ is called the ``double-bubble" problem, and the case of general $q$ is referred to as the ``multi-bubble" problem.  See below for further motivation behind this terminology and related results.

\smallskip
A natural conjecture is then the following:
\begin{conjecture*}[Gaussian Multi-Bubble Conjecture]
For all $2 \leq q \leq n+1$, the least Gaussian-weighted perimeter way to decompose $\R^n$ into $q$ cells of prescribed Gaussian measure $v \in \interior \simplex^{(q-1)}$ is by using ``simplicial clusters", obtained as the Voronoi cells of $q$ equidistant points in $\R^n$. \end{conjecture*}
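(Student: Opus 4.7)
The plan is to follow the direct method of the calculus of variations, coupled with a careful analysis of the first and second variations of Gaussian perimeter around a minimizer. For existence, I would exploit the $L^1(\gamma)$-lower semicontinuity of $\per_\gamma$ together with Gaussian tightness to extract, from a minimizing sequence of $q$-clusters of prescribed measure $v \in \interior \simplex^{(q-1)}$, a minimizing cluster $\Omega^* = (\Omega_1^*, \ldots, \Omega_q^*)$. I would then invoke the regularity theory for perimeter-minimizing clusters (originating with Almgren and Taylor and extended to density settings) to conclude that, away from a singular set of Hausdorff dimension at most $n-2$, the interfaces $\Sigma_{ij} = \partial \Omega_i^* \cap \partial \Omega_j^*$ are smooth hypersurfaces meeting in threes along $(n-2)$-dimensional strata at $120^\circ$ angles. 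The first variation then yields Lagrange multipliers $\lambda_1,\ldots,\lambda_q$ with $\sum_i \lambda_i = 0$ such that each $\Sigma_{ij}$ satisfies the Gaussian constant-curvature equation $H_{\Sigma_{ij}} - \inr{x}{\n_{ij}} = \lambda_i - \lambda_j$, where $\n_{ij}$ points from $\Omega_i^*$ into $\Omega_j^*$.

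The heart of the argument is to combine stability (non-negativity of the second variation under volume-preserving deformations) with a clever family of test functions to force each interface to be flat. I would first derive a second-variation quadratic form $Q(f)$ on admissible normal fields $(f_{ij})$, schematically of shape $\sum_{i<j}\int_{\Sigma_{ij}} (\abs{\nabla^\Sigma f_{ij}}^2 - \abs{\II_{ij}}^2 f_{ij}^2 - f_{ij}^2)\, d\gamma^{n-1}$ plus triple-junction boundary terms, the $-f_{ij}^2$ contribution arising from the potential $\pot$. Then, plugging in deformations $f_{ij} = \inr{Y}{\n_{ij}}$ for various ambient vector fields $Y$ produces spectral inequalities; classical choices such as translations and dilations recover mean-curvature and stationarity data but do not by themselves yield flatness. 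The main novelty, as the abstract signals, is the construction for each cell $i$ of an (approximate) inward vector field $Y_i$ whose normal component is (almost) constant on each neighbouring interface $\Sigma_{ij}$. By taking a well-chosen $\R$-linear combination of the resulting test functions that respects the volume constraints, stability should force $\abs{\II_{ij}} \equiv 0$ on every interface, so all cells are convex polyhedra.

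I expect this stability-plus-vector-fields step to be the principal obstacle: such $Y_i$ cannot in general exist exactly, so one must work with an approximation and absorb the resulting error terms into $Q$. This is precisely where the integrability of the interface curvature for stationary regular clusters, together with detailed control of the combinatorial incidence structure of the higher-codimension singular strata, becomes indispensable; these supporting results are built up in the paper for exactly this purpose. Once the cells are known to be flat polyhedra and $q \leq n+1$, a direct classification shows that the only configuration of $q$ hyperplanes meeting the equal-angle Plateau condition together with the stationary first-variation equation is, up to a Gaussian-measure-preserving rotation, the Voronoi diagram of $q$ equidistant points, i.e.\ a simplicial cluster. Uniqueness up to null sets then follows by tracing the cases of equality in the stability argument back to this configuration.
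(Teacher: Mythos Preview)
Your outline tracks the paper closely through the existence, regularity, first-variation, second-variation, and inward-field steps, and the form of $Q$ you wrote is essentially the paper's formula~\eqref{eq:intro-formula}. The genuine gap is your final step: you assert that once the cells are flat polyhedra, ``a direct classification shows that the only configuration of $q$ hyperplanes meeting the equal-angle Plateau condition together with the stationary first-variation equation is \ldots\ a simplicial cluster.'' This is false for merely stable clusters, and the paper makes a point of this. Theorem~\ref{thm:pull-back} shows that a stable regular flat $q$-cluster is the pull-back via some linear map $B^T:\R^n\to E^{(q-1)}$ of a shifted canonical model cluster, with the only constraint $|B(e_j-e_i)|=1$ for those pairs $(i,j)$ with $A_{ij}>0$. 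If some interfaces are empty, $B$ need not be conformal and the cluster need not be simplicial. The paper explicitly remarks (after Theorem~\ref{thm:minimizers-char}) that it does \emph{not} know how to rule out a priori that a minimizing cluster is effectively lower-dimensional; if it could, your shortcut would work.

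What the paper actually does after establishing flatness is quite different from a direct classification. It uses the full $(q-1)$-parameter family of inward fields on the now-flat minimizer to derive the matrix PDI $\nabla^2 I^{(q-1)}(v)\le -L_{A(v)}^{-1}$ (Proposition~\ref{prop:hessian-bound-for-I}), compares this with the model identity $\nabla^2 I_m^{(q-1)}=-L_{A^m}^{-1}$, and applies a maximum-principle argument to the fully nonlinear elliptic equation $-\tr[(\nabla^2 I)^{-1}]=2I$ to conclude $I=I_m$. Only \emph{after} this does the equality analysis force $A_{ij}=A_{ij}^m>0$ for all $i\neq j$, which is precisely the extra information needed to upgrade ``flat polyhedral'' to ``simplicial'' (Corollary~\ref{cor:complete-graph}). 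Your proposal omits this entire PDI\,/\,maximum-principle layer, which is the mechanism by which minimality (as opposed to mere stability) is actually exploited.
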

\noindent 
Recall that the Voronoi cells of $\set{x_1,\ldots,x_q} \subset \R^n$ are defined as:
\[
\Omega_i = \interior \set{ x \in \R^n : \min_{j=1,\ldots,q} \abs{x-x_j} = \abs{x-x_i} } \;\;\; i=1,\ldots, q ~ ,
\]
where $\interior$ denotes the interior operation (to obtain pairwise disjoint cells).

\begin{figure}
\begin{center}
\begin{tikzpicture}
    \coordinate(O) at (0, 0);   
    \coordinate(A) at ({-1.5 * sqrt(3)}, 1.5);
    \coordinate(B) at ({1.5 * sqrt(3)}, 1.5);
    \coordinate(C) at (0, -2.6);   
    
    \draw[thick] (O) -- (A); 
    \draw[thick] (O) -- (B); 
    \draw[thick] (O) -- (C); 
\end{tikzpicture}
\tdplotsetmaincoords{10}{20} 
\begin{tikzpicture}[tdplot_main_coords]
    \coordinate (O) at (0, 0, 0);  
    \coordinate (X1) at (3, 0, 0);    
    \coordinate (X2) at (-1, {2*sqrt(2)}, 0);
    \coordinate (X3) at (-1, {-sqrt(2)}, {sqrt(6)});
    \coordinate (X4) at (-1, {-sqrt(2)}, {-sqrt(6)});

        \draw[thick] (O) -- (X1);    
    \draw[thick] (O) -- (X2);    
    \draw[thin,dashed] (O) -- (X3);    
    \draw[thick] (O) -- (X4);    

        \foreach \n/\m in {1/2, 1/4, 2/3, 2/4, 3/4} {
            \foreach \t in {0,5,...,85} {
                \draw[thick] (${sin(\t)}*(X\n)+{cos(\t)}*(X\m)$) -- (${sin(\t+5)}*(X\n)+{cos(\t+5)}*(X\m)$);
            }
    }
    \foreach \t in {0,5,...,85} {  
        \draw[thin,dashed] (${sin(\t)}*(X1)+{cos(\t)}*(X3)$) -- (${sin(\t+5)}*(X1)+{cos(\t+5)}*(X3)$);
    }

\end{tikzpicture}
\captionsetup{width=0.7\linewidth} 
\caption{Left, a simplicial $3$-cluster in $\R^2$; right, a simplicial $4$-cluster in $\R^3$ (intersected with a Euclidean ball).}              
\end{center}
\end{figure}

\smallskip

Indeed, when $q=2$, the cells of a simplicial cluster are precisely halfspaces, and the single-bubble conjecture holds by the classical Gaussian isoperimetric inequality.  
The simplest case beyond $q=2$ is the double-bubble case ($q=3$ and $v \in \interior \simplex^{(2)}$), when the interfaces of a simplicial cluster's cells are three half-hyperplanes meeting along an $(n-2)$-dimensional plane at $120^\circ$ angles (forming a tripod or ``Y" shape in the plane).
Simplicial clusters are the naturally conjectured minimizers in the range $3 \leq q \leq n+1$ as their interfaces have constant Gaussian mean-curvature (being flat), and meet at $120^{\circ}$ angles in threes, both of which are necessary conditions for any extremizer of Gaussian perimeter under a Gaussian measure constraint -- see Section \ref{sec:minimizers}.
Equal measure simplicial clusters also naturally appear in other related problems (see e.g. \cite{HeilmanThesis,HeilmanJagannathNaor-PropellerInR3,IsakssonMossel-GaussianNoiseStability} and the references therein).

\smallskip
Our main results in this work are the following:

\begin{theorem}[Gaussian Multi-Bubble Theorem] \label{thm:main1} \label{thm:main-I-I_m}
The Gaussian Multi-Bubble Conjecture holds true for all $2 \leq q \leq n+1$. 
\end{theorem}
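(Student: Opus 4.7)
The plan is to follow the classical three-step structure for cluster isoperimetric problems---existence, regularity and stationarity, classification via stability---but with substantial new input at the stability step, which is where the restriction $q \leq n+1$ must enter. First, I would invoke the direct method of the calculus of variations: since $\per_\gamma$ is lower semicontinuous under $L^1(\gamma)$-convergence and the Gaussian measure constraint is closed on $\simplex^{(q-1)}$, a minimizing $q$-cluster $\Omega$ exists. Regularity theory for perimeter-minimizing clusters in the Gaussian-weighted setting (an analogue of Almgren's theorem for clusters, together with the higher-codimension regularity flagged in the abstract) then promotes $\Omega$ to a \emph{regular} cluster: the interface $\Sigma_{ij}$ between cells $\Omega_i$ and $\Omega_j$ is a smooth hypersurface away from a closed singular set of dimension at most $n-2$, three interfaces meet at equal $120^\circ$ angles along the top-dimensional part of the singular set, and higher-codimensional strata mimic the combinatorics of lower-dimensional simplicial clusters.

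Next, the first variation would be used to pin down the stationarity conditions: each interface $\Sigma_{ij}$ has constant Gaussian mean curvature equal to a difference $\lambda_i - \lambda_j$ of cell ``pressures'', and the $120^\circ$ balancing condition holds at each codimension-two junction. Stability then says that the second variation $Q(X)$ of $\per_\gamma$ along any compactly supported, $\gamma$-volume-preserving variation $X$ is nonnegative. I would next derive an explicit formula for $Q$ on regular clusters, carefully tracking the boundary contributions coming from the codimension-two junctions and the Gaussian weight, and prove enough integrability of the second fundamental form and Gaussian mean curvature to legitimately plug vector fields of only modest regularity into $Q$.

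The crux---and the main new ingredient highlighted in the abstract---is to deduce that every cell $\Omega_i$ is a convex polyhedron. The strategy I would use is: for each fixed cell $\Omega_i$, construct a family of approximate inward vector fields whose normal component on each interface bounding $\Omega_i$ is close to a prescribed constant, while leaving the interfaces between other cells essentially undisturbed, so that after a volume-preserving correction they become admissible in the stability inequality $Q \geq 0$. Feeding a well-chosen linear combination of such fields into $Q$ and invoking the curvature integrability established above, one should be able to force both the Gaussian mean curvature and the full norm of the second fundamental form to vanish on every interface, so that every interface is flat. Combined with the $120^\circ$ junction condition, flatness forces each cell to be a (possibly unbounded) intersection of halfspaces, hence a convex polyhedron.

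Once convex polyhedrality is in place, the classification reduces to an essentially combinatorial-geometric step: a partition of $\R^n$ into $q$ convex polyhedral cells whose $(n-1)$-dimensional interfaces meet in threes at $120^\circ$ with constant-difference pressures, and with $q \leq n+1$, must be the Voronoi diagram of $q$ affinely independent equidistant points---precisely a simplicial cluster. Uniqueness up to null sets would then follow by tracing through the equality cases in the stability inequalities. The principal obstacle, by a wide margin, will be constructing the approximate inward vector fields with the right normal profile on each interface and sufficiently tame behaviour at the codimension-two junctions, so that the boundary terms they generate in $Q$ do not destroy the sign information from the bulk terms; it is here that the dimensional constraint $q \leq n+1$ is expected to surface, since at most $n+1$ affinely independent pressures can be realised in $\R^n$.
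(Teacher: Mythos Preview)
Your outline tracks the paper closely through existence, regularity, stationarity, the second-variation formula, the inward-field construction, and the flatness result (convex polyhedral cells). The divergence---and a real gap---is in the final step.

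You propose to finish by a direct ``combinatorial-geometric'' classification: a stable convex-polyhedral $q$-cluster with $120^\circ$ junctions and $q\le n+1$ must be a simplicial cluster. This is precisely the step the paper \emph{cannot} carry out directly. Theorem~\ref{thm:flat} only shows that a stable regular cluster is a pull-back via some linear map $B^T:\R^n\to E^{(q-1)}$ of a canonical model cluster (Theorem~\ref{thm:pull-back}); it does \emph{not} force $\sqrt{2}B$ to be an isometry, and the paper explicitly says (Section~\ref{sec:conclude}) that it does not know how to a priori exclude a minimizing cluster being effectively lower-dimensional. Equivalently (Theorem~\ref{thm:minimizers-char}), one would need to know that $A_{ij}>0$ for all $i\ne j$, and this is only obtained \emph{after} proving $I=I_m$ (Lemma~\ref{lem:equality}), not before.

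What the paper does instead is bypass the classification entirely: once flatness is in hand, the second-variation formula collapses to $Q(X)=\sum_{i<j}\int_{\Sigma_{ij}}(|\nabla^\tang X^\n|^2-(X^\n)^2)\,d\gamma^{n-1}$, and plugging in the full $(q-1)$-parameter family of inward fields yields the matrix PDI $\nabla^2 I^{(q-1)}(v)\le -L_{A(v)}^{-1}$ (Proposition~\ref{prop:hessian-bound-for-I}). Comparing this against the model identity $\nabla^2 I_m^{(q-1)}=-L_{A^m}^{-1}$ (Proposition~\ref{prop:I_m-equation}), taking traces gives $-\tr[(\nabla^2 I_m)^{-1}]\le 2I$, with equality for $I_m$; a maximum-principle argument on $I-I_m$ (with boundary values handled by induction on $q$) then forces $I=I_m$. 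Only afterwards is the minimizer identified as simplicial. Your proposal is missing this entire PDI/maximum-principle mechanism, which is the actual engine of the proof. (A secondary point: the restriction $q\le n+1$ enters in the flatness proof---specifically in the full-dimensional case via surjectivity of $w\mapsto\delta_w V$---and requires a separate ``skewing'' argument for the dimension-deficient case, not in the inward-field construction itself.)
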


\begin{theorem}[Uniqueness of Minimizing Clusters] \label{thm:main-uniqueness}
Up to null-sets, simplicial $q$-clusters are the \emph{unique} minimizers of Gaussian perimeter among all $q$-clusters of prescribed Gaussian measure $v \in \interior \Delta^{(q-1)}$,
for all $2 \leq q \leq n+1$.
\end{theorem}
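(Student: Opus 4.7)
The plan is to extract uniqueness from the proof of Theorem~\ref{thm:main1} by arguing that \emph{any} minimizer must already belong to the family of simplicial clusters (up to null-sets); no further uniqueness within the simplicial family is claimed.

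First, I would check that a minimizer is amenable to the paper's main structural machinery. Existence of a Gaussian-perimeter minimizer with $\gamma(\Omega)=v \in \interior \simplex^{(q-1)}$ follows by the direct method: weighted perimeter is $L^1(\gamma)$-lower semicontinuous, and the Gaussian measure provides the tightness needed to extract a convergent subsequence. Standard cluster regularity theory (Almgren--Taylor, adapted to the Gaussian setting) reduces the minimizer, away from a closed singular set of sufficiently low Hausdorff dimension, to a regular stationary cluster. Since minimizers are automatically stable under Gaussian-measure preserving variations, the main structural theorem of this paper --- every stable regular $q$-cluster with $2\le q \le n+1$ has convex polyhedral cells --- applies and places us in a rigid polyhedral setting.

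Next, the first-order stationarity conditions identify $\Omega$ as simplicial. Each two-cell interface $\Sigma_{ij}$ has constant Gaussian mean curvature; combined with convex polyhedrality this forces $\Sigma_{ij}\subseteq H_{ij}:=\set{x:\inr{x}{\n_{ij}}=c_{ij}}$ for a unit normal $\n_{ij}$ (pointing from $\Omega_i$ into $\Omega_j$) and a constant $c_{ij}$. The $120^\circ$ balancing condition at each $(n-2)$-dimensional triple junction reads $\n_{ij}+\n_{jk}+\n_{ki}=0$, together with $\n_{ij}=-\n_{ji}$. These relations are equivalent to the representation $\n_{ij}=(x_j-x_i)/|x_j-x_i|$ for some configuration of points $x_1,\ldots,x_q$, and the unit-length requirement forces $|x_i-x_j|$ to be constant over the pairs $(i,j)$ sharing an interface; the combinatorial connectivity of a convex polyhedral cluster with $q\le n+1$ cells then promotes this to full equidistance. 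The constants $c_{ij}=\inr{x_i+x_j}{\n_{ij}}/2$ are pinned down similarly, identifying each $H_{ij}$ as the perpendicular bisector of $x_i$ and $x_j$. Hence $\Omega$ is the Voronoi partition of $\set{x_1,\ldots,x_q}$, i.e.\ a simplicial cluster.

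The most delicate step is the passage from local first-order conditions to a global simplicial identification: propagating the $120^\circ$ and constant-curvature conditions across the singular set left by regularity theory, and verifying the combinatorial adjacency structure of the cells. All other ingredients are either classical (existence, Almgren--Taylor regularity, stationarity) or reduce to a direct application of the paper's structural theorem that stable regular clusters have convex polyhedral cells.
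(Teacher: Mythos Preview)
There is a genuine gap. After reducing to a convex polyhedral cluster via the paper's structural theorem, you try to force the simplicial conclusion using only \emph{first-order} information: the $120^\circ$ condition $\n_{ij}+\n_{jk}+\n_{ki}=0$ at triple junctions and constant Gaussian mean curvature. But this is not enough. The $120^\circ$ identity only holds on $\Sigma_{ijk}$, i.e.\ only for those triples $(i,j,k)$ for which a triple junction actually exists; a priori a stable polyhedral $q$-cluster need not have all $\Sigma_{ij}$ (let alone all $\Sigma_{ijk}$) non-empty, so the ``cocycle'' relation may be available on only part of the adjacency graph. Even granting the cocycle relation on every triple, the passage to $\n_{ij}=v_j-v_i$ is a cohomological statement that requires the relevant $1$-homology to vanish --- this is the content of the paper's Theorem~\ref{thm:homology}, and it is not immediate. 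Finally, your equidistance step is circular as written: if $\n_{ij}=(x_j-x_i)/|x_j-x_i|$ then unit length is automatic and gives no constraint on $|x_j-x_i|$; what one actually needs is $\n_{ij}=B(e_j-e_i)$ for a \emph{linear} $B$ with $B^TB=\tfrac12\Id$, and the second condition requires knowing $|\n_{ij}|=1$ for \emph{all} pairs $i\ne j$, which in turn needs $A_{ij}>0$ for all $i\ne j$.

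The paper's proof is essentially different: it mines the proof of Theorem~\ref{thm:main1} for \emph{equality} information (Lemma~\ref{lem:equality}), obtaining in particular $A_{ij}=A_{ij}^m>0$ for all $i\ne j$ and the second-variation inequality $(\delta_X V)^T L_A^{-1}(\delta_X V)\ge -Q(X)$. Testing the latter on constant fields $w$ and comparing with the matrix Cauchy--Schwarz inequality (Lemma~\ref{lem:cs}) forces equality, whose equality case gives $\n_{ij}=B(e_j-e_i)$ directly; the condition $|\n_{ij}|=1$ for all $i\ne j$ then yields $B^TB=\tfrac12\Id$. In short, the identification of the minimizer as simplicial is a \emph{second-order} consequence of the PDI for the profile, not a first-order consequence of stationarity plus polyhedrality. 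Your outline never accesses this second-order information, and first-order conditions alone do not rule out non-simplicial stable polyhedral clusters (cf.\ the discussion surrounding Theorem~\ref{thm:pull-back} and Corollary~\ref{cor:complete-graph}).
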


\subsection{Previously Known and Related Results}

The Gaussian Multi-Bubble Conjecture is known to experts. Presumably, its origins may be traced to an analogous problem of J.~Sullivan from 1995 in the \emph{unweighted} Euclidean setting \cite[Problem 2]{OpenProblemsInSoapBubbles96}, where the conjectured uniquely minimizing
$q$-cluster (up to null-sets) for $q \leq n+2$ is a standard $(q-1)$-bubble -- spherical caps bounding connected cells $\set{\Omega_i}_{i=1}^{q}$ which are obtained by taking the Voronoi cells of $q$ equidistant points in $\mathbb{S}^{n} \subset \R^{n+1}$, and applying all stereographic projections to $\R^n$. That the standard $(q-1)$-bubble in unweighted Euclidean space exists and is unique (up to isometries) for all volumes was proved by Montesinos-Amilibia \cite{MontesinosStandardBubbleE!}. 
 Similarly, on the (unweighted) canonical $n$-sphere $\mathbb{S}^n$, where spherical caps are known to uniquely minimize perimeter (up to null-sets) \cite{BuragoZalgallerBook}, the Spherical Multi-Bubble Conjecture asserts that the uniquely perimeter minimizing $q$-cluster (up to null-sets) for $q \leq n+2$ is a standard spherical $(q-1)$-bubble -- a stereographic projection of a standard $(q-1)$-bubble in $\R^n$. See also Schechtman \cite{Schechtman-ApproxGaussianMultiBubble} for a formulation of the equal volumes case of the Gaussian Multi-Bubble Conjecture.

\smallskip

In the double-bubble case $q=3$, various prior results have been established in a variety of settings. To put our results into appropriate context, let us first go over some related double-bubble results in the three settings mentioned above: the unweighted Euclidean setting $\R^n$, the canonical sphere $\Sphere^n$ (normalized for convenience to have total mass one), 
and our Gaussian-weighted Euclidean setting $\G^n$. Further results may be found in F.~Morgan's excellent book \cite[Chapters 13,14,18,19]{MorganBook5Ed}.

\begin{itemize}
\item 
Long believed to be true, but appearing explicitly as a conjecture in an undergraduate thesis by J.~Foisy in 1991 \cite{Foisy-UGThesis}, 
the Euclidean double-bubble problem in $\R^n$ was considered in the 1990's by various authors \cite{SMALL93, HHS95}, culminating in the work of Hutchings--Morgan--Ritor\'e--Ros \cite{DoubleBubbleInR3}, who proved that up to null-sets, the standard double-bubble is uniquely perimeter minimizing in $\R^3$; this was later extended to $\R^n$ in \cite{SMALL03,Reichardt-DoubleBubbleInRn}. \item  
The double-bubble conjecture was resolved on $\Sphere^2$ by Masters \cite{Masters-DoubleBubbleInS2}, but on $\Sphere^n$ for $n\geq 3$ only partial results are known \cite{CottonFreeman-DoubleBubbleInSandH, CorneliHoffmanEtAl-DoubleBubbleIn3D,CorneliCorwinEtAl-DoubleBubbleInSandG}.  
In particular, we mention a result by Corneli-et-al \cite{CorneliCorwinEtAl-DoubleBubbleInSandG}, which confirms the double-bubble conjecture on $\Sphere^n$ for all $n \geq 3$ when the prescribed measure $v \in \simplex^{(2)}$ satisfies $\max_{i} \abs{v_i - 1/3} \leq 0.04$. Their proof employs a result of Cotton--Freeman \cite{CottonFreeman-DoubleBubbleInSandH} stating that if the minimizing cluster's cells are known to be connected, then it must be the standard double-bubble. 
\item In the Gaussian setting $\G^n$ which we consider in this work, the original proofs of the single-bubble case made use of 
the classical fact that the projection onto a fixed $n$-dimensional subspace of the uniform measure on a rescaled sphere $\sqrt{N} \Sphere^N$, converges to the Gaussian measure $\gamma^n$ as $N \rightarrow \infty$. Building upon this idea, it was shown by Corneli-et-al \cite{CorneliCorwinEtAl-DoubleBubbleInSandG} that verification of the double-bubble conjecture on $\Sphere^N$ for a sequence of $N$'s tending to $\infty$ and a fixed $v \in \interior \simplex^{(2)}$ will verify the double-bubble conjecture on $\G^n$ for the same $v$ and for all $n \geq 2$. As a consequence, they confirmed the double-bubble conjecture on $\G^n$ for all $n \geq 2$ when the prescribed measure $v \in \simplex^{(2)}$ satisfies $\max_{i} \abs{v_i - 1/3} \leq 0.04$. Note that this approximation argument precludes any attempts to establish uniqueness in the double-bubble conjecture, and to the best of our knowledge, no uniqueness in the double-bubble conjecture on $\G^n$ was known for any $v \in \interior \simplex^{(2)}$ prior to our Theorem \ref{thm:main-uniqueness}.
\end{itemize} 

\smallskip

In the triple-bubble case $q=4$, Wichiramala proved in \cite{Wichiramala-TripleBubbleInR2} that the standard triple-bubble is uniquely perimeter minimizing in the unweighted Euclidean plane $\R^2$. 
\emph{To the best of our knowledge, no other (provably) multi-bubble isoperimetric minimizers have been determined when $q > 3$ prior to this work}, not in any of the above settings, nor in any other. 

\medskip

It is therefore not surprising that we do not employ in this work the traditional ingredients used in the above mentioned results, such a B.~White's symmetrization argument (see \cite{Foisy-UGThesis,Hutchings-StructureOfDoubleBubbles}), or Hutchings' theory \cite{Hutchings-StructureOfDoubleBubbles} of bounds on the number of connected components comprising each cell of a minimizing cluster. Contrary to previous approaches, we do not identify the minimizing clusters by systematically ruling out competitors, and in fact we do not characterize them at all in our proof of Theorem \ref{thm:main1} (the characterization is obtained only later in Theorem \ref{thm:main-uniqueness}). We do not know how to use other tools from the Gaussian single-bubble setting such as tensorization \cite{BobkovGaussianIsopInqViaCube}, semi-group methods \cite{BakryLedoux}, martingale methods \cite{BartheMaureyIsoperimetricInqs} or parabolic techniques \cite{Borell-EhrhardForBorelSets}, even after a possible reduction to the case $n=q-1$ via methods such as localization (cf. \cite{BobkovLocalizedProofOfGaussianIso}). We do not even know how to \emph{directly} obtain a sharp lower bound on the perimeter of a cluster having prescribed measure $v \in \simplex^{(q-1)}$. Our approach, which appears new in this context, is based on obtaining a matrix-valued partial-differential inequality (MPDI) for the associated isoperimetric profile, concluding the desired lower bound in one fell swoop simultaneously for \emph{all} $v \in \simplex^{(q-1)}$ by an application of the maximum-principle.

\subsection{Matrix-valued Partial Differential Inequality}

Let $I^{(q-1)} : \simplex^{(q-1)} \to \R_+$ denote the Gaussian isoperimetric profile for $q$-clusters, defined as:
\[
  I^{(q-1)}(v) := \inf\{\per_\gamma(\Omega): \text{$\Omega$ is a $q$-cluster with $\gamma(\Omega) = v$}\}.
\]
Our goal will be to show that $I^{(q-1)} = I^{(q-1)}_m$ on $\interior \simplex^{(q-1)}$, where $I^{(q-1)}_m : \interior \simplex^{(q-1)} \to \R_+$ denote the Gaussian multi-bubble \emph{model} profile:
\[
  I^{(q-1)}_m(v) := \per_\gamma(\Omega^m) \text{ where $\Omega^m$ is a simplicial $q$-cluster with $\gamma(\Omega^m) = v$} 
\]
(see Section \ref{sec:model} for why this is well-defined). 

To establish that $I^{(q-1)} = I^{(q-1)}_m$, we draw motivation from the single-bubble case. By identifying $\simplex^{(1)}$ with $[0,1]$ using the map $\simplex^{(1)} \ni (v_1,v_2) \mapsto v_1 \in [0,1]$, we will think of the single-bubble profile $I^{(1)}$ as defined on $[0,1]$. The single-bubble Gaussian isoperimetric inequality asserts that $I^{(1)} = I^{(1)}_m$, where $I^{(1)}_m : [0,1] \rightarrow \Real_+$ denotes the single-bubble \emph{model} profile: 
\[
I^{(1)}_m(v) := P_\gamma(U) \text{ where $U$ is a halfplane with $\gamma(U) = v$} . 
\]
The product structure of the Gaussian measure implies that $I^{(1)}_m$ may be calculated in dimension one, readily yielding $I^{(1)}_m(v) = \varphi \circ \Phi^{-1}(v)$,
where $\varphi(x) = (2\pi)^{-1/2} e^{-x^2/2}$ is the one-dimensional Gaussian density and $\Phi(x) = \int_{-\infty}^x \varphi(y)\, dy$. It is well-known and immediate to check that $I^{(1)}_m$ satisfies the following ordinary differential equation:
\begin{equation} \label{eq:intro-ODE}
(I^{(1)}_m)^{\prime\prime} = -\frac{1}{I^{(1)}_m} \text{ on $[0,1]$.} 
\end{equation}

Our starting observation is that $I^{(q-1)}_m$ satisfies a similar \emph{matrix-valued partial} differential equation on $\simplex^{(q-1)}$. Let $E = E^{(q-1)}$ denote the tangent space to $\simplex^{(q-1)}$, which we identify with $\{x \in \R^q: \sum_{i=1}^q x_i = 0\}$. Given $A = \{A_{ij}\}_{1 \leq i < j \leq q}$ where $A_{ij} = \gamma^{n-1}(\partial^* \Omega_i \cap \partial^* \Omega_j) \geq 0$ denote the weighted areas of the interfaces of a general $q$-cluster $\Omega$ (and $\partial^* U$ denotes the reduced boundary of a Borel set $U$ having finite perimeter -- see Section \ref{sec:prelim}), we consider the following $q \times q$ positive semi-definite matrix:
\begin{equation} \label{eq:intro-LA}
L_{A} := \sum_{1 \leq i < j \leq q} A_{ij} (e_i - e_j) (e_i-e_j)^T .
\end{equation}
In fact, as a quadratic form on $E$, it is easy to show that $L_{A}$ is strictly positive-definite whenever $\gamma(\Omega) \in \interior \simplex^{(q-1)}$ (see Lemma \ref{lem:L-nondegenerate}). 
Given $v \in \interior \simplex^{(q-1)}$, let $A^m_{ij}(v) = \gamma^{n-1}(\partial^* \Omega^m_i \cap \partial^* \Omega^m_j)   > 0$ denote the weighted areas of the interfaces of a  model simplicial $q$-cluster $\Omega^m$ satisfying $\gamma(\Omega^m) = v$. A calculation then verifies that:
\begin{equation} \label{eq:intro-MDE}
\nabla^2 I^{(q-1)}_m(v) = -L_{A^m(v)}^{-1} \text{ on $\interior \simplex^{(q-1)}$,}
\end{equation}
where differentiation and inversion are both carried out on $E$. This constitutes the right matrix-valued extension of (\ref{eq:intro-ODE}) to the multi-bubble setting. Taking inverses and contracting, we obtain the PDE analogue of (\ref{eq:intro-ODE}): 
\begin{equation} \label{eq:intro-PDE-m}
- \tr[ (\nabla^2 I^{(q-1)}_m(v))^{-1} ] = \tr(L_{A^m(v)}) = 2 \sum_{i<j} A^m_{ij}(v) = 2 I^{(q-1)}_m(v)  \text{ on $\interior \simplex^{(q-1)}$.}
\end{equation}
Note that the factor of $2$ which is present in (\ref{eq:intro-PDE-m}) but not in (\ref{eq:intro-ODE}) is explained by the fact that the map identifying $\simplex^{(1)}$ with $[0, 1]$ contracts distances by a factor of $\sqrt 2$.

To establish that $I^{(q-1)} = I^{(q-1)}_m$ on $\simplex^{(q-1)}$, our idea is as follows. First, it is not too hard to show that $I^{(q-1)}_m$ may be extended continuously to the entire $\simplex^{(q-1)}$ by setting $I^{(q-1)}_m(v) := I^{(k)}_m(v_J)$ for $v \in J$, where $J$ is a $k$-dimensional face of $\partial \simplex^{(q-1)}$, and $v_J$ is the natural restriction of $v$ to the coordinates defined by $J$. Our goal is to show that $I^{(q-1)} = I^{(q-1)}_m$ on $\interior \simplex^{(q-1)}$; we clearly have $I^{(q-1)} \leq I^{(q-1)}_m$ on $\simplex^{(q-1)}$, and we may assume that equality occurs on the boundary by induction on $q$.

Assume for the sake of this sketch that $I^{(q-1)}$ is twice continuously differentiable on $\interior \simplex^{(q-1)}$. Given an isoperimetric minimizing cluster $\Omega$ with $\gamma(\Omega) = v \in \interior \simplex^{(q-1)}$, let $A_{ij}(v) := \gamma^{n-1}(\partial^*\Omega_i \cap \partial^* \Omega_j)$ denote the weighted areas of the cluster's interfaces. Assume again for simplicity that $A_{ij}(v)$ are well-defined, that is, depend only on $v$. We will then show that the following matrix-valued partial differential inequality (MPDI) holds:
\begin{equation} \label{eq:intro-MDI}
\nabla^2 I^{(q-1)}(v) \leq -L_{A(v)}^{-1} \text{ on $\interior \simplex^{(q-1)}$,}
\end{equation}
as quadratic forms on $E^{(q-1)}$. Consequently:
\begin{equation} \label{eq:intro-PDE}
- \tr[ (\nabla^2 I^{(q-1)}(v))^{-1} ] \leq \tr(L_{A(v)}) = 2 \sum_{i<j} A_{ij}(v) = 2 I^{(q-1)}(v)  \text{ on $\interior \simplex^{(q-1)}$.}
\end{equation}
On the other hand, by (\ref{eq:intro-PDE-m}), we have equality above when $I^{(q-1)}$ and $A$ are replaced by $I^{(q-1)}_m$ and $A^m$, respectively. Since $I^{(q-1)} \leq I^{(q-1)}_m$ on $\simplex^{(q-1)}$ with equality on the boundary, an application of the maximum principle for the (fully non-linear) second-order elliptic PDE (\ref{eq:intro-PDE}) will yield the desired $I^{(q-1)} = I^{(q-1)}_m$.

\subsection{Variations along Vector-Fields -- challenges which arise}

The bulk of this work is thus aimed at establishing a rigorous (approximate) version of (\ref{eq:intro-MDI}). To this end, we consider an isoperimetric minimizing cluster $\Omega$, and perturb it using a flow $F_t$ along an (admissible) vector-field $X$. Since:
\begin{equation} \label{eq:intro-variation}
 I^{(q-1)}(\gamma(F_t(\Omega))) \le \per_\gamma(F_t(\Omega)) 
\end{equation}
with equality at $t=0$, we deduce (at least, conceptually) that the first variations must coincide and that the second variations must obey the inequality; this idea is well-known in the single-bubble setting, see e.g. \cite{BavardPansu,SternbergZumbrun,MorganJohnson,Kuwert, RosIsoperimetryInCrystals, BayleIsoperimetricODE, BayleRosales, KleinerProofOfCartanHadamardIn3D, BayleThesis}. 

\smallskip
Unfortunately, there is no straightforward adaptation of this approach to the double-bubble, and more generally, multi-bubble settings, for several reasons: 

\begin{itemize}
\item The desired differential inequality (\ref{eq:intro-MDI}) is a MPDI, not an ODI, and so necessarily a $(q-1)$-dimensional \emph{family} of vector-fields $X$ must be constructed and used, not just a single one as in the single-bubble setting. Moreover, in order to get the sharp inequality (\ref{eq:intro-MDI}), we need to use vector-fields which preserve the family of model simplicial clusters to second order (as the inequality becomes an equality in this case). The most natural family of vector-fields with this property is the family of constant fields (generating translations), but unfortunately this is not sufficient. To explain why, let us say that a cluster is \emph{effectively $k$-dimensional} if it is of the form $\tilde \Omega \times F^{\perp}$, where $\tilde \Omega$ is a $q$-cluster in a $k$-dimensional subspace $F$. It is clear that translations of such a cluster will only contribute $k$ degrees of information, which is not enough for deriving (\ref{eq:intro-MDI}) if the effective-dimension $k$ is strictly smaller than $q-1$ -- a possibility we cannot rule out beforehand. Consequently, we are required to use more complicated vector-fields $X$ and derive formulas for the second variation along them.  

\item Contrary to the single-bubble setting, a cluster's cells will meet not only in pairs along an $(n-1)$-dimensional boundary $\Sigma^1$ (consisting of the interfaces $\Sigma_{ij} = \partial^*\Omega_i \cap \partial^* \Omega_j$),
but also in threes along an $(n-2)$-dimensional boundary $\Sigma^2$ in the double (and higher) bubble setting, and in fours along an $(n-3)$-dimensional boundary $\Sigma^3$ in the triple (and higher) bubble setting -- let us informally call these meeting points the triple and quadruple points, respectively. Thanks to the isoperimetric stationarity of our cluster, the contributions of triple-points to the first variation of perimeter cancel out, and it is easy to check that the latter will only depend on $\Sigma^1$. However, for the second variation, the triple-points $\Sigma^2$ will in general have a contribution, involving moreover the three interface curvatures at the triple-point; we will return to this point below. 

\item Unfortunately, it turns out that the \emph{regularity} of the boundary around the \emph{quadruple-points} $\Sigma^3$ also plays an indirect yet crucial role in the \emph{justification of the formulas} for the second variation one is able to \emph{rigorously} derive. To illustrate this point, consider the following heuristic derivation of the formula for the second variation of perimeter of magnitude $\varphi : \Sigma^1 \rightarrow \R$ in the normal direction (for simplicity, in the unweighted setting and ignoring the contribution of $\Sigma^2$): it is well-known that the first normal variation of perimeter is $\int_{\Sigma^1} H \varphi \; d\H^{n-1}$, where $H$ denotes the interface's mean-curvature; taking another normal variation, and using that the normal variation of $H$ is the Jacobi operator $-\Delta_{\Sigma^1} - \norm{\II}^2$ (where $\Delta_{\Sigma^1}$ and $\II$ are the interface Laplacian and second fundamental form, respectively), one might be tempted to immediately conclude that the second normal variation of perimeter  is $\int_{\Sigma^1} (- \varphi \Delta_{\Sigma^1} \varphi - \norm{\II}^2 \varphi^2) \; d\H^{n-1}$. However, to rigorously justify differentiating inside the integral, at the very least one needs to \emph{a-priori} know that the latter expression, which involves the curvature, is integrable. 
This subtle difficulty is not apparent in the single nor double-bubble settings, where there are no quadruple-points; Geometric Measure Theory (GMT) and elliptic regularity theory guarantee that $\Sigma^1$ is smooth all the way up to and including $\Sigma^2$, and so it follows that the curvature is guaranteed to be locally bounded, and one can easily justify differentiating inside the integral. However, the present (just recently established) state-of-the-art GMT results \cite{CES-RegularityOfMinimalSurfacesNearCones} only guarantee that the boundary near quadruple-points is $C^{1,\alpha}$ diffeomorphic  to a neighborhood of a quadruple-point on a model simplicial cluster, and so the curvature may in principle be blowing up near these points. Note that it is not possible to circumvent around this issue by using cutoff functions to truncate a neighborhood of $\Sigma^3$, since in general $\H^{n-3}(\Sigma^3) > 0$, and thus a second variation of $\Sigma^1$ will feel this truncation and its contribution cannot be made arbitrary small. Consequently, we are forced to \emph{a-priori} justify the $L^2$ integrability of the curvature on $\Sigma^1$ (and in fact also its $L^1$ integrability on $\Sigma^2$), as well as the various integration by parts we perform on $\Sigma^1$.

\item In the single-bubble setting, the vector-field $X$ used  in (\ref{eq:intro-variation}) is typically the unit-normal field to the interface $\Sigma_{12}$ (perhaps after attenuation by some cutoff functions to keep away from the singular parts of the topological interface, which are known to have sufficiently small Hausdorff dimension to be negligible). However, in the multi-bubble setting, the interfaces will meet in threes in $120^\circ$ angles at the triple-points, and so it is not possible to construct a vector-field $X$ which is normal to all interfaces and in addition continuous, to obtain a well-defined flow $F_t$ 
(using cutoff functions to keep away from the triple-points fails in this instance as they are $(n-2)$-dimensional and thus contribute to the second variation). 
Hence, contrary to the single-bubble setting, the non-trivial tangential component of $X$ and the three interface curvatures at the triple-points necessarily come into play and must be somehow accounted for. As we a-priori have no control over the curvatures (and their sign can be arbitrary), in order to obtain a useful differential inequality, we are forced to \emph{first} establish that the curvatures vanish, i.e. that \emph{the interfaces are in fact flat}. 

\item As a testament of these complications, we remark that our approach \emph{cannot} directly handle a more general probability measure $\mu = e^{-W} dx$ with $\nabla^2 W \geq \Id$ as in the single-bubble setting (see \cite{BakryLedoux,CaffarelliContraction,BobkovLocalizedProofOfGaussianIso}). However, after having established Theorem \ref{thm:main1} for the Gaussian measure, we can \emph{a-posteriori} treat such more general measures using Caffarelli's Contraction theorem \cite{CaffarelliContraction}, see Remark \ref{rem:Q-sign} and Theorem \ref{thm:CaffarelliCor}.
\end{itemize}

\subsection{Ingredients of Proof}

We now detail some of the ingredients which we employ to establish (\ref{eq:intro-MDI}):

\begin{itemize}
\item Classical results from Geometric Measure Theory due to Almgren \cite{AlmgrenMemoirs} (see also Maggi's excellent book \cite{MaggiBook}) ensure the existence of isoperimetric minimizing clusters and $C^\infty$ regularity of their interfaces $\Sigma^1 = \bigcup_{i<j} \Sigma_{ij}$. We will in addition require regularity and structural information on
the codimension-$1$ and $2$ boundary (as a manifold) of $\Sigma^1$, namely the triple-point set $\Sigma^2$ and quadruple-point set $\Sigma^3$, respectively, as well as information on the negligibility of the residual topological boundary. Such results have been obtained by various authors:  J.~Taylor when $n=2,3$ \cite{Taylor-SoapBubbleRegularityInR3} (see also F.~Morgan when $n=2$ \cite{MorganSoapBubblesInR2}), and B.~White \cite{White-SoapBubbleRegularityInRn} and very recently M.~Colombo, N.~Edelen and L.~Spolaor \cite{CES-RegularityOfMinimalSurfacesNearCones} when $n \geq 4$. Together with elliptic regularity results of Kinderlehrer, Nirenberg and Spruck \cite{KNS}, these results imply that $\Sigma^2$ consists of  $(n-2)$-dimensional $C^\infty$ manifolds where three of the cells locally meet (like the cells of a simplicial $3$-cluster), $\Sigma^3$ consists of $(n-3)$-dimensional $C^{1,\alpha}$ manifolds where four of the cells locally meet (like the cells of a simplicial $4$-cluster), and the residual topological boundary $\Sigma^4$ satisfies $\H^{n-3}(\Sigma^4)=0$. Being slightly inaccurate in this introductory section, we call such clusters ``regular". 
\item For any stationary regular cluster with respect to a measure with smooth positive density on $\R^n$, we establish new integrability properties of the curvature of $\Sigma^1$, which may a-priori be blowing up near $\Sigma^3$. We show in Appendix \ref{sec:Schauder} that the curvature is in $L^2(\Sigma^1 \cap K) \cap L^1(\Sigma^2 \cap K)$ for any compact $K$ disjoint from $\Sigma^4$. This is achieved using Schauder estimates for elliptic systems of PDEs, following the reflection technique of Kinderlehrer--Nirenberg--Spruck \cite{KNS}. As explained in the previous subsection, this information is completely crucial for us, both for rigorously establishing the formula for second variation of perimeter in the multi-bubble setting, as well as for rigorously constructing our approximate inward fields, described below. 
\item To justify the various integration by parts we need to employ on each interface $\Sigma_{ij}$, we establish a version of Stokes' theorem on the manifold-with-boundary $M_{ij} := \Sigma_{ij} \cup \partial \Sigma_{ij}$ for vector-fields which are non-compactly supported and which may be blowing up near $\overline{M_{ij}} \setminus M_{ij}$ (here and below, $\partial \Sigma_{ij}$ denotes the codimension-$1$ manifold boundary of $\Sigma_{ij}$). 
The fact that the latter set has locally-finite $\H^{n-3}$-measure enables us to handle vector-fields in  $L^2(\Sigma_{ij},\gamma^{n-1}) \cap L^1(\partial \Sigma_{ij},\gamma^{n-2})$  whose divergence is in $L^1(\Sigma_{ij},\gamma^{n-1})$, and the above-mentioned integrability properties of the curvature ensure that the relevant vector-fields indeed belong to this class. We thus finally obtain the following formula for the second variation of weighted area under a volume constraint:
\begin{equation} \label{eq:intro-formula}
        \sum_{i < j} \Big[\int_{\Sigma_{ij}} \brac{ |\nabla^\tang X^{\n_{ij}}|^2 - (X^{\n_{ij}})^2 \|\II^{ij} \|_2^2 - (X^{\n_{ij}})^2  } d\gamma^{n-1} - \int_{\partial \Sigma_{ij}} X^{\n_{ij}} X^{\n_{\partial ij}} \II^{ij}_{\partial,\partial} \, d\gamma^{n-2}\Big]
 \end{equation}
  (see Theorem \ref{thm:formula} for the precise assumptions on the cluster $\Omega$, the vector-field $X$, and the notation used above). As already explained in the previous subsection, while this formula may be considered as standard in the single and double bubble settings, this is not the case in the multi-bubble case when $q > 3$, and we require the entire contents of Section \ref{sec:second-var} and Appendices \ref{sec:calculation} and \ref{sec:Schauder} to rigorously justify it here for the first time. 
\item Our $(q-1)$-parameter family of vector-fields is constructed as follows. For each of the $q$ cells $\Omega_i$, we would like to construct an ``inward field": a vector-field $X_i$ on $\R^n$ so that $X_i$'s normal component $X_i^{\n}$ is constant $1$ on $\Sigma^1 \cap \partial \Omega_i$  with respect to the inward normal to $\Omega_i$, and constant $0$ on $\Sigma^1 \setminus \partial \Omega_i$. The family is obtained by taking linear combinations of the $X_i$'s, and one degree of freedom is obviously lost because $\sum_{i=1}^q X^{\n}_i \equiv 0$ on $\Sigma^1$. The idea behind this particular choice of vector-fields is that if $X = \sum_{i=1}^q a_i X_i$, $F_t$ is the flow along $X$, and $\n_{ij}$ denotes the unit-normal to $\Sigma_{ij}$ pointing from $\Omega_i$ to $\Omega_j$, then: 
\[
\delta_X V(\Omega) := \left .  \frac{d}{dt} \right |_{t=0} \gamma(F_t(\Omega)) = \brac{\sum_{j \neq i} \int_{\Sigma_{ij}} X^{\n_{ij}} d\gamma^{n-1}}_i = (\sum_{j \neq i} A_{ij} (a_j - a_i))_i = - L_A a ,
\]
which already shows that the family is always $(q-1)$-dimensional (as $L_A$ is positive definite and hence full-rank on $E^{(q-1)}$). In addition, when the \emph{interfaces have vanishing curvature} (such as for a model simplicial cluster), the expression in (\ref{eq:intro-formula}) for the second variation of weighted area under a volume constraint reduces to:
\begin{equation} \label{eq:intro-formula-simplified}
\sum_{i < j} \int_{\Sigma_{ij}} \brac{|\nabla^\tang X^{\n_{ij}}|^2 - (X^{\n_{ij}})^2} d\gamma^{n-1} = - \sum_{i < j} A_{ij} (a_j - a_i)^2 = - a^T L_A a = - (\delta_X V)^T L_A^{-1} \delta_X V ,
\end{equation}
revealing the relation to our desired MPDI (\ref{eq:intro-MDI}).

In practice, we can only \emph{approximately} ensure that our inward fields have constant normal components, due to the fact that the normal to $\Sigma_{ij}$ is only guaranteed to be $C^{0,\alpha}$ smooth near the quadruple-set $\Sigma^3$, whereas to obtain a uniquely defined flow $F_t$, we need to construct, at the very least, a \emph{Lipschitz} vector-field. To ensure that the $|\nabla^\tang X^{\n_{ij}}|^2$ term in (\ref{eq:intro-formula-simplified}) can be made arbitrarily small for our approximate inward fields, our curvature-integrability is crucially used once again -- see Proposition \ref{prop:inward-fields}. 
\item All of the above work culminates in the proof of Theorem \ref{thm:flat}, which is the key ingredient in the proof of Theorem \ref{thm:main-I-I_m} and of independent interest.
In particular, it verifies:
\begin{theorem}[Stable Regular Clusters] \label{thm:intro-stable}
A Gaussian-stable regular $q$-cluster in $\R^n$ ($2 \leq q \leq n+1$) has flat interfaces. Moreover, its cells are (up to null-sets) convex polyhedra with at most $q-1$ facets, and the effective dimension is at most $q-1$. 
\end{theorem}
Note that in the single-bubble case $q=2$, this implies that the cells must be complementary halfspaces. If instead of regularity one makes the stronger assumption that the entire topological boundary of the single-bubble is smooth, the case $q=2$ of Theorem \ref{thm:intro-stable} was proved by McGonagle and Ross in \cite{McGonagleRoss:15} (see also \cite{Italians-SharpGaussianIsopStability,Rosales-StableSetsForGaussianMeasures}). For $q \geq 3$ (including the double-bubble case $q=3$), Theorem \ref{thm:intro-stable} is new. 

\smallskip

That stability forces the effective dimension to be at most $q-1$ is elementary (see Lemma \ref{lem:MKernelAndDef}). 
The main challenge in establishing Theorem \ref{thm:intro-stable} is to show that the interfaces must be flat. To this end, we assume in the contrapositive that the curvature is non-zero at some point, and produce a vector-field $X$ which violates stability, i.e. decreases area in second-order while maintaining the volume constraint. As a first step, we take a linear combination $X = \sum_{i=1}^q a_i X_i$ of our inward fields, and show that there is some linear combination so that the contribution of the curvature terms in formula (\ref{eq:intro-formula}) for the second variation is strictly negative; the other terms work in our favor. The problem is to ensure that the volume constraint is preserved to first order: $\delta_X V = 0$. When the cluster is full-dimensional (or just of maximal effective dimension $q-1$), then the map $\R^n \ni w \mapsto \delta_w V \in E^{(q-1)}$ is surjective (where we think of $w$ as a constant vector-field on $\R^n$), and we can make sure that $\delta_X V = 0$ by modifying $X$ by an appropriate $w \in \R^n$. However, as already explained, we do not a-priori know that a minimizing cluster will be effectively $(q-1)$-dimensional,
 and moreover, a general stable cluster may very well be effectively $k$-dimensional with $k < q-1$.  To handle the case that the cluster $\Omega$ is dimension-deficient, we write $\Omega$ as $\tilde \Omega \times \R$, and perform the above construction of $\tilde X$ for $\tilde \Omega$ in $\R^{n-1}$; we then define $X$ to be the product of $\tilde X$ and a linear function on the one-dimensional fiber, yielding a flow $F_t$ which \emph{skews} the cluster out of its product state.
By oddness of the linear function, we are ensured that $\delta_X V = 0$, and it turns out that, by the Poincar\'e inequality for the one-dimensional Gaussian measure (which is saturated by linear functions), we have just enough room to still get a negative overall second variation. This yields a contradiction to stability, and shows that a stable regular cluster must be flat. 

Stability similarly implies connectedness of the cells; convexity follows since the flat interfaces meet on their codimension-1 boundary at $120^{\circ}$ angles while their lower-dimensional boundary satisfies $\H^{n-2}(\Sigma^3 \cup \Sigma^4) = 0$. 

See also Theorem \ref{thm:pull-back} for a refinement of Theorem \ref{thm:intro-stable}, where a further description of the convex polyhedral cells is obtained, yielding an interesting relation between a general stable regular cluster and the canonical model simplicial cluster.
\item 
Once we know that a stable regular cluster (and in particular, a minimizing cluster) is flat, the formula (\ref{eq:intro-formula}) simplifies to (\ref{eq:intro-formula-simplified}), and we can use the entire $(q-1)$-dimensional family of inward fields to deduce a rigorous (approximate) version of the $(q-1)$-dimensional MPDI (\ref{eq:intro-MDI}) -- see Theorem \ref{thm:hessian-bound-for-I}.
\end{itemize}

\medskip

To establish the uniqueness of minimizers, we observe that all of our inequalities in the derivation of (\ref{eq:intro-MDI}) must have been equalities, and this already provides enough information for characterizing simplicial-clusters. An alternative approach for establishing uniqueness is presented in Section \ref{sec:conclude}: with any stable regular $q$-cluster on $\R^n$ ($2 \leq q \leq n+1$) we associate a two-dimensional simplicial complex and show that its first (simplicial) homology must be trivial; from this we deduce that such a cluster must be the pull-back of a shifted canonical model cluster on $E^{(q-1)}$ by a linear-map having certain properties (see Theorem \ref{thm:pull-back}); for a minimizing cluster, we show that this map must be adjoint to an isometry, and hence the cluster must itself be a simplicial cluster. 

\subsection{Simplified argument in the Double-Bubble case}

In the double-bubble case $q=3$, there is a drastic simplification of the argument outlined above, which is based on the following dichotomy: either there is  enough information from testing (\ref{eq:intro-variation}) on constant (translation) fields to deduce our desired MPDI (\ref{eq:intro-MDI}), or else the cluster is necessarily effectively one-dimensional, in which case the interfaces are automatically flat, and we can deduce (\ref{eq:intro-MDI}) by constructing the inward fields by hand in dimension one; the simple argument, based on a certain Cauchy--Schwarz inequality for quadratic forms, is described in Section \ref{sec:double-bubble}. In particular, there is no need to establish the key Theorem \ref{thm:intro-stable} on the flatness of a minimizing cluster's interfaces, nor to use any of its ingredients, such as the higher-codimensional regularity theory for triple and quadruple points, curvature-integrability, the general formula (\ref{eq:intro-formula}), or the construction of our approximate inward fields. Consequently, the reader only interested in the resolution of the Gaussian Double-Bubble Conjecture may safely skip over Sections \ref{sec:higher} through \ref{sec:stable} as well as Appendices \ref{sec:calculation} and \ref{sec:Schauder}. 
Unfortunately, this simplified argument is particular to the case $q=3$, since when $q \geq 4$, the alternative in the above dichotomy is that the cluster's effective dimension is between $1$ and $q-2 \geq 2$.

\medskip

The contents of this work were posted on the arXiv in two installments: the above simplified argument for resolving the Gaussian Double-Bubble Conjecture (case $q=3$ of Theorems \ref{thm:main-I-I_m} and \ref{thm:main-uniqueness}) was posted in January 2018 \cite{EMilmanNeeman-GaussianDoubleBubbleConj}, and the full resolution of the Gaussian Multi-Bubble Conjecture was announced in \cite{EMilmanNeeman-GaussianDoubleBubbleConj} and posted in May 2018 \cite{EMilmanNeeman-GaussianMultiBubbleConj}. 

\medskip

The rest of this work is organized as follows. In Section \ref{sec:model}, we construct the model simplicial-clusters and associated model isoperimetric profile, and establish their properties.  In Section \ref{sec:prelim} we recall relevant definitions and provide some preliminaries for the ensuing calculations. In Sections \ref{sec:minimizers} and \ref{sec:higher} we collect and prove the results regarding isoperimetric minimizing clusters we require: Section \ref{sec:minimizers} pertains to properties one can deduce just from the regularity theory of the interfaces $\Sigma^1$, while Section \ref{sec:higher} is dedicated to consequences of the higher-codimensional regularity theory of $\Sigma^2$ and $\Sigma^3$; to streamline the presentation, proofs of various statements are deferred to the appendices, and in particular, the proof of the curvature integrability is deferred to Appendix \ref{sec:Schauder}. In Section \ref{sec:second-var} we derive a generalized version of Stokes' theorem; we then apply it to the general formula for the second variation of weighted area under a volume constraint, whose calculation is deferred to Appendix \ref{sec:calculation}, and obtain the simplified formula (\ref{eq:intro-formula}) for regular stationary clusters. In Section \ref{sec:inward-fields} we construct our inward fields and record some of their useful properties. In Section \ref{sec:stable} we prove that stable regular $q$-clusters in $\R^n$ have convex polyhedral cells when $2 \leq q \leq n+1$. In Section \ref{sec:multi-bubble} we establish an (approximate) rigorous version of (\ref{eq:intro-MDI}) and prove Theorem \ref{thm:main1}. In Section \ref{sec:double-bubble} we present the simplified argument for the double-bubble case. In Section \ref{sec:uniqueness} we prove Theorem \ref{thm:main-uniqueness}. In Section \ref{sec:conclude} we provide some concluding remarks and highlight some questions which remain open.

\medskip

\noindent 
\textbf{Acknowledgments.}
We thank Ramon van Handel for his comments on a preliminary version of this work, Francesco Maggi for his continued interest and support and for pointing us towards a simpler version of the maximum principle than our original one, Frank Morgan for many helpful references,  and Brian White for informing us of the reference to the recent \cite{CES-RegularityOfMinimalSurfacesNearCones}. We also acknowledge the hospitality of MSRI where part of this work was conducted.

\section{Model Simplicial Clusters} \label{sec:model}

In this section, we construct the model $q$-clusters which are conjectured to be optimal on $\R^n$, for all $q \geq 2$ and $n \geq q-1$. It will be enough to construct them on $\R^{q-1}$, since by taking Cartesian product with $\Real^{n-q+1}$ and employing the product structure of the Gaussian measure, these clusters extend to $\Real^n$ for all $n \geq q-1$. Actually, it will be convenient to construct them on $E^{(q-1)}$, rather than on $\R^{q-1}$, 
where recall that $E = E^{(q-1)}$ denotes the tangent space to $\simplex = \simplex^{(q-1)}$, which we identify with $\{x \in \R^q: \sum_{i=1}^q x_i = 0\}$.
Strictly speaking, we will construct them on the dual space $E^*$, but we will freely identify between $E$ and $E^*$ via the standard Euclidean structure. 
Consequently, in this section, let $\gamma = \gamma_{q-1}$ denote the standard $(q-1)$-dimensional Gaussian measure on $E$, and if $\varphi = e^{-W}$ denotes its (smooth, Gaussian) density on $E$,  set $\gamma^{q-2} = \gamma_{q-1}^{q-2} := \varphi\H^{q-2}$. 
\medskip

We will frequently use the fact that if $Z \in E^{(q-1)}$ is distributed according to $\gamma_{q-1}$, then:
\begin{equation} \label{eq:GaussianProj} 
\text{$\sqrt{\frac{q}{q-1}} Z_i$ and $\frac{Z_i - Z_j}{\sqrt{2}}$ ($i \neq j$) are distributed according to $\gamma_1$.} 
\end{equation}
(for the former statement, note that the orthogonal projection of $e_i$ onto $E^{(q-1)}$ is $e_i - \frac{1}{q} \sum_{j=1}^q e_j$ which has norm $\sqrt{\frac{q-1}{q}}$). We use $\Phi$ to denote the one-dimensional Gaussian cumulative distribution function $\Phi(t) = \gamma_1(-\infty,t]$.  

\medskip

Define $\Omega^m_i = \interior \{x \in E: \max_j x_j = x_i \}$ (``$m$'' stands for ``model'').
For any $x \in E$, $x + \Omega^m = (x + \Omega^m_1, \ldots, x + \Omega^m_q)$ is a $q$-cluster, which we call a canonical model \emph{simplicial} cluster, as it corresponds to 
the Voronoi cells of the $q$-equidistant points $\{x + e_i -  \frac{1}{q}\sum_{j} e_j\}_{i=1,\ldots,q} \subset E$. 
Let $\Sigma^m_{ij} := \partial \Omega^m_i \cap \partial \Omega^m_j$ denote the codimension-$1$ interfaces of $\Omega^m$. Observe that $x + \Sigma^m_{ij}$, the interfaces of $x + \Omega^m$, are flat, and meet at $120^\circ$ angles along codimension-$2$ flat surfaces. We denote:  \[
A^m_{ij}(x) := \gamma^{q-2}(x + \Sigma^m_{ij}) .
\]

\begin{lemma} \label{lem:diffeo}
 The map:
 \[
 E \ni x \mapsto \modelV(x) := \gamma(x + \Omega^m) \in \interior \simplex 
 \]
 is a diffeomorphism between $E$ and $\interior \simplex$. Its differential is given by:
 \begin{equation}\label{eq:DPsi}
      D \modelV(x) = - \frac{1}{\sqrt 2} L_{A^m(x)} ,
  \end{equation}
where, recall, $L_A$ was defined in (\ref{eq:intro-LA}). 
\end{lemma}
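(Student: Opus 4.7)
The plan is to: (i) derive the differential formula (\ref{eq:DPsi}) by a first-variation calculation; (ii) deduce that $\modelV$ is a smooth local diffeomorphism from the positive-definiteness of $L_{A^m(x)}$ on $E$; (iii) upgrade this to global injectivity by viewing $\modelV$ as essentially a gradient map of a strictly convex function; and (iv) establish surjectivity onto $\interior \simplex$ via a properness argument at infinity. I expect (iv) to be the main obstacle; the other steps are fairly mechanical consequences of strict convexity and the divergence theorem.

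For (i), I would fix $w \in E$ and apply the standard first-variation identity for translating the polyhedron $x + \Omega^m_i$ by $tw$:
\[
\left.\frac{d}{dt}\right|_{t=0} \modelV_i(x + tw) \;=\; \sum_{j \neq i} \int_{x + \Sigma^m_{ij}} \inr{w}{\n_{ij}} \, d\gamma^{q-2} \;=\; \sum_{j \neq i} A^m_{ij}(x)\, \inr{w}{\n_{ij}},
\]
where $\n_{ij}$ is the outer unit normal to $\Omega^m_i$ on $\Sigma^m_{ij}$. Since $\Sigma^m_{ij}$ lies in the hyperplane $\{y_i = y_j\}$ with $\Omega^m_j$ on the side $\{y_j > y_i\}$, we have $\n_{ij} = (e_j - e_i)/\sqrt{2}$. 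Combining this with the elementary identity $(L_A w)_i = \sum_{j \neq i} A_{ij}(w_i - w_j)$ then yields (\ref{eq:DPsi}). Smoothness of $\modelV$ is immediate from the smoothness of the Gaussian density and the polyhedral structure of the cells.

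For (ii) and (iii), each interface $x + \Sigma^m_{ij}$ carries positive $\gamma^{q-2}$-measure, so all $A^m_{ij}(x) > 0$; consequently $L_{A^m(x)}$ is the (weighted) Laplacian of the complete graph on $q$ vertices, whose kernel on $\R^q$ is $\R\mathbf{1}$ and whose restriction to $E = \mathbf{1}^\perp$ is strictly positive-definite (cf.~Lemma \ref{lem:L-nondegenerate}). Thus $D\modelV(x)$ is an isomorphism of $E$ at every $x$, and the inverse function theorem makes $\modelV$ a local diffeomorphism. For global injectivity, I would introduce the auxiliary function
\[
h(x) := \int_E \max_{j=1,\ldots,q}(y_j - x_j) \, d\gamma(y),
\]
which is $C^\infty$ (differentiating under the integral against the smooth Gaussian density) and convex as an average of maxima of affine functions of $x$. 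A direct calculation gives $\nabla_E h(x) = \tfrac{1}{q}\mathbf{1} - \modelV(x)$, and therefore $\nabla_E^2 h(x) = \tfrac{1}{\sqrt{2}} L_{A^m(x)}$ by (\ref{eq:DPsi}), which is strictly positive-definite on $E$. Strict convexity of $h$ then forces its gradient map — and hence $\modelV$ — to be globally injective.

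The hard part will be (iv). Trivially $\modelV(E) \subset \interior \simplex$, and by (ii) this image is open in $\interior \simplex$; it therefore suffices to show $\modelV$ is proper as a map into $\interior \simplex$. Suppose $x_n \in E$ with $|x_n| \to \infty$, and pass to a subsequence along which $x_n/|x_n| \to u \in E$ with $|u|=1$; since $u \in E \setminus \{0\}$, its coordinates cannot all coincide, so there exist indices $i^*, j^*$ with $u_{i^*} > u_{j^*}$, forcing $(x_n)_{i^*} - (x_n)_{j^*} \to +\infty$. Then
\[
\modelV_{i^*}(x_n) = \mathbb{P}\bigl(Y_{i^*} - Y_k \geq (x_n)_{i^*} - (x_n)_k \text{ for all } k \bigr) \leq \mathbb{P}\bigl(Y_{i^*} - Y_{j^*} \geq (x_n)_{i^*} - (x_n)_{j^*}\bigr),
\]
and by (\ref{eq:GaussianProj}) the right-hand side tends to $0$, so $\modelV(x_n)$ escapes every compact subset of $\interior \simplex$. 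Properness, together with the openness and injectivity already established, forces $\modelV(E)$ to be both open and closed in the connected set $\interior \simplex$; since it is nonempty, $\modelV(E) = \interior \simplex$, completing the proof.
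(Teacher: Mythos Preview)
Your proof is correct and complete. The differential computation and local diffeomorphism argument match the paper exactly, but you diverge from the paper on both injectivity and surjectivity.

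For injectivity, the paper gives a one-line geometric argument: if $y \neq x$, pick $i$ with $y \in x + \overline{\Omega^m_i}$; since $\overline{\Omega^m_i}$ is a convex cone, $y + \Omega^m_i \subsetneq x + \Omega^m_i$ strictly, so $\gamma(y + \Omega^m_i) < \gamma(x + \Omega^m_i)$. Your route via the potential $h(x) = \int \max_j(y_j - x_j)\,d\gamma(y)$ is longer but has independent value: it identifies $\modelV$ (up to sign and a constant) as the gradient of a strictly convex function, which is a structural fact one might want later. For surjectivity, the paper fixes nested simplices $K_R \subset E$, shows $\modelV(\partial K_R)$ avoids a shrunken simplex $\simplex_R$, and then invokes Invariance of Domain plus connectedness to conclude $\simplex_R \subset \modelV(K_R)$; your properness argument (some coordinate of $\modelV(x_n)$ tends to $0$ along any sequence $|x_n|\to\infty$, hence the image is closed as well as open in $\interior\simplex$) reaches the same conclusion with slightly less machinery and is the more standard template. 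Both proofs are of comparable length; the paper's injectivity step is quicker, while your surjectivity step is cleaner.
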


\begin{proof}

    Clearly $\modelV(x)$ is $C^\infty$,
    since the Gaussian density is $C^\infty$ and all of its derivatives vanish rapidly
    at infinity.
    
    To see that $\modelV$ is injective, simply note that if $y \neq x$, then there exists $i \in \{1,\ldots,q\}$ such that $y \in x + \overline{\Omega^m_i}$, and as $\overline{\Omega^m_i}$ is a convex cone, it follows that $y + \Omega^m_i \subsetneq x + \Omega^m_i$ and hence $\gamma(y + \Omega^m_i) < \gamma(x + \Omega^m_i)$. 
  
                    We now compute:
    \[
      \nabla_v \Psi_i(x) = \int_{x + \Omega^m_i} \nabla_v e^{-\pot(y)} \, dy
      = \int_{x + \partial \Omega^m_i} \inr{v}{\n} e^{-\pot(y)}\, d\calH^{q-2},
    \]
    where $\n$ denotes the outward unit-normal. Note that
    $\partial \Omega^m_i = \bigcup_{j \ne i} \Sigma^m_{ij}$ (the union is disjoint up to $\calH^{q-2}$-null sets),
    and that the outward
    unit-normal on $\Sigma^m_{ij}$ is the constant vector-field $(e_j - e_i)/\sqrt{2}$.
    Therefore,
    \[
      \nabla_v \Psi_i(x) = \frac{1}{\sqrt 2} \sum_{j \ne i} \inr{v}{e_j - e_i} A^m_{ij}(x) = -\frac{1}{\sqrt 2} v^T L_{A^m}(x) e_i ,
    \]
    thereby confirming (\ref{eq:DPsi}). Since each of the $A^m_{ij}(x)$ is strictly positive, $L_{A^m(x)}$ is non-singular (in fact, strictly positive-definite) as a quadratic form on $E$, and hence $D\modelV(x)$ is non-singular as well. It follows that $\modelV$ is a diffeomorphism onto its image. 
            
    Finally, to show that $\Psi$ is surjective, fix $R>0$ and consider the open simplex $K_R =
    \{x \in E: \max_i x_i < R\}$. For any $x \in \partial K_R$, there is some $i$ with $x_i = R$, and hence
                    \begin{align*}
      x + \overline{\Omega^m_i}
      &= \{z \in E : z_i - x_i \geq \max_{j} (z_j - x_j)\} \\
      &= \{z \in E : z_i - R \geq \max_{j} (z_j - x_j)\} \\
      &\subset \{z \in E : z_i - R \ge 0\},
    \end{align*}
    since $z - x \in E$ implies that $\max_j (z_j - x_j) \ge 0$. It follows by (\ref{eq:GaussianProj}) that for all $x \in \partial K_R$, 
     $\gamma(x + \Omega^m_i) \le 1 - \Phi(\sqrt{q/(q-1)} R) < 1 - \Phi(R)$. In other words, if we consider the shrunken
    simplex $\simplex_R = \{v \in \simplex: \min_i v_i \ge 1 -\Phi(R)\}$, then $\Psi(\partial K_R) \cap \simplex_R = \emptyset$. 
        On the other hand, $\Psi : K_R \rightarrow \Psi(K_R)$ is continuous and injective, and is therefore a homeomorphism by the Invariance of Domain theorem \cite{Munkres-TopologyBook2ndEd}. 
            Since $\partial \Psi(K_R) = \Psi(\partial K_R)$ is disjoint from $\simplex_R$, it follows that $\simplex_R \cap \Psi(K_R)$ and $\simplex_R \setminus \Psi(K_R)$ are two complementing relatively-open subsets of the connected set $\simplex_R$. 
    Since $\Psi(0) = (\frac 1 q,\ldots, \frac 1 q) \in \simplex_R$ for $R > 0$ large enough, it follows that necessarily $\simplex_R  \subset \Psi(K_R)$. Taking $R \to \infty$, we see that $\Psi: E \to \interior \simplex$ is surjective. 
    
\end{proof}

We can now give the following definition, which is well-posed according to the bijection established in Lemma \ref{lem:diffeo}. Adopting the jargon from the Euclidean setting, note that a cluster with $q$ cells corresponds to $q-1$ bubbles (with the last cell corresponding to the ``exterior" bubble, which is not counted). 

\begin{definition}
The model isoperimetric $(q-1)$-bubble profile $I_m : \interior \simplex \to \R$ is defined as:
\[
I_m(v) := \per_\gamma(x + \Omega^m) = \sum_{i < j} A^m_{ij}(x)  \;\;\; \text{for $x$ such that} \;\;\; \Psi(x) = \gamma(x + \Omega^m) = v . 
\]
When we need to explicitly refer to the number of bubbles involved, we will write $I^{(q-1)}_m$ and $\simplex^{(q-1)}$ instead of $I_m$ and $\simplex$, respectively. For completeness, we define $I_m^{(0)}(1) = 0$. \end{definition}
 
Note that thanks to the product structure of the Gaussian measure and its invariance under orthogonal transformations, the above definition coincides with the one given in the Introduction involving arbitrary simplicial clusters in $\R^n$ (instead of canonical ones on $E$).

Thanks to the next lemma, we may (and do) extend $I_m$ by continuity to the entire $\simplex$. 
Given $J \subset \{ 1,\ldots,q \}$ with $0 < |J| < q$, let $\simplex_J$ denote the face of $\simplex$ consisting of all $v \in \simplex$ such that $\{i: v_i > 0\} = J$. Given $v \in \simplex_J$, let $v_J$ denote its natural projection to $\interior \simplex^{(|J|-1)}$ obtained by only keeping the coordinates in $J$. 

\begin{lemma}\label{lem:tripod-profile-continuous}
  $I^{(q-1)}_m$ is $C^\infty$ on $\interior \simplex^{(q-1)}$, and continuous up to $\partial
  \simplex^{(q-1)}$. Moreover, the continuous extension of $I^{(q-1)}_m$ satisfies 
  \[
    I^{(q-1)}_m(v) = I^{(|J|-1)}_m(v_J)
  \]
  whenever $v \in \simplex^{(q-1)}_J$, for all $J \subset \set{1,\ldots,q}$ with $0 < |J| < q$.
\end{lemma}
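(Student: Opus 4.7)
The $C^\infty$ claim on $\interior\simplex^{(q-1)}$ is almost immediate. Writing $I^{(q-1)}_m = \sum_{i<j} A_{ij}^m \circ \Psi^{-1}$, Lemma \ref{lem:diffeo} provides $\Psi^{-1} \in C^\infty(\interior\simplex^{(q-1)})$, while differentiating under the integral in $A_{ij}^m(x) = \int_{\Sigma_{ij}^m} e^{-\pot(z+x)}\,d\calH^{q-2}(z)$ (justified by the rapid Gaussian decay of the integrand and all its $x$-derivatives on the fixed flat piece $\Sigma_{ij}^m$) gives $A_{ij}^m \in C^\infty(E^{(q-1)})$.

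For continuity up to $\partial\simplex^{(q-1)}$ and the face identification I would induct on $q$, with the base $q=2$ being the classical Gaussian isoperimetric profile. Fix $v_0 \in \simplex^{(q-1)}_J$ with $|J| = k$, $1 \le k < q$, and $v_n \to v_0$. If $v_n$ lies on some boundary face $\simplex^{(q-1)}_{J'}$ with $J \subseteq J' \subsetneq \{1,\ldots,q\}$, the definition of the extension reduces the claim to $I^{(|J'|-1)}_m(v_{n,J'}) \to I^{(|J|-1)}_m(v_{0,J})$, which is supplied by the inductive hypothesis at dimension $|J'|-1 < q-1$. Splitting an arbitrary sequence according to which face its terms inhabit, I am reduced to the essential case $v_n \in \interior\simplex^{(q-1)}$.

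Set $x_n := \Psi^{-1}(v_n) \in E^{(q-1)}$; this sequence must leave every compact subset of $E^{(q-1)}$, else a subsequential limit $x$ would give $\Psi(x) = v_0 \notin \interior\simplex^{(q-1)}$. After passing to a subsequence I may assume each pairwise difference $d_{ij}(x_n) := x_{n,i} - x_{n,j}$ converges to some $d_{ij}^\ast \in [-\infty,+\infty]$. The relation $i \sim j$ defined by $d_{ij}^\ast \in \mathbb R$ is an equivalence on $\{1,\ldots,q\}$ by the cocycle identity, and $[i] < [j]$ iff $d_{ij}^\ast = -\infty$ defines a total order on the classes. I then identify $J$ with the unique minimal class: if $[j] < [i]$ then $d_{ij}(x_n) \to +\infty$, and the inclusion $\Omega_i^m \subset \{z: z_i \ge z_j\}$ together with \eqref{eq:GaussianProj} gives $v_{n,i} \le 1 - \Phi(d_{ij}(x_n)/\sqrt{2}) \to 0$, placing $i \notin J$; conversely, if $i$ lies in the minimal class, every $d_{im}(x_n)$ is bounded above by a uniform constant $C$, whence $\Omega_i^m + x_n \supseteq \{y : y_i - y_m \ge C \text{ for all } m\}$ and $v_{n,i}$ is bounded below by a positive constant, placing $i \in J$.

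To evaluate $\lim A_{ij}^m(x_n)$: if either $i$ or $j$ lies outside $J$, choosing $m^\ast$ in a strictly lower class (nonempty by non-minimality) so that $d_{im^\ast}(x_n) \to +\infty$, the interface condition forces $y_i - y_{m^\ast} \ge d_{im^\ast}(x_n)$, hence $|y|^2 \ge d_{im^\ast}(x_n)^2 / 2$, on $\Sigma_{ij}^m + x_n$, so $A_{ij}^m(x_n) \to 0$ by dominated convergence. For $i, j \in J$, the constraints in the description of $\Sigma_{ij}^m + x_n$ indexed by $\ell \notin J$ become vacuous in the limit (since $d_{i\ell}(x_n) \to -\infty$); the orthogonal decomposition of $\gamma^{(q-1)}$ with respect to the subspace of $E^{(q-1)}$ spanned by the surviving indices (cf.\ \eqref{eq:GaussianProj}) integrates out the complementary directions and delivers convergence of $A_{ij}^m(x_n)$ to the analogous model interface area of the canonical $k$-cluster in $E^{(k-1)}$ at the parameter $\tilde y^\ast \in E^{(k-1)}$ determined by $\{d_{ij}^\ast\}_{i,j\in J}$. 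Summing over $i<j$ and noting that $\Psi^{(k-1)}(\tilde y^\ast) = v_{0,J}$ (by comparing the two asymptotic evaluations of $v_{n,i}$ for $i \in J$) delivers $I^{(q-1)}_m(v_n) \to I^{(k-1)}_m(v_{0,J})$; injectivity of $\Psi^{(k-1)}$ from Lemma \ref{lem:diffeo} renders the limit subsequence-independent. The step I expect to be the main obstacle is the minimal-class identification above, specifically the uniform positive lower bound on $v_{n,i}$ for $i$ in the minimal class, with the remainder being dominated-convergence bookkeeping combined with the Gaussian product structure \eqref{eq:GaussianProj}.
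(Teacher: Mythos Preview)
Your approach is correct and is essentially the paper's argument reorganized: both pin down which coordinate-differences of $x_n = \Psi^{-1}(v_n)$ remain bounded (your minimal equivalence class is the paper's set $\{1,\ldots,k\}$ after relabeling, and the limit of the paper's projection $\Pi_k x_n$ is your $\tilde y^*$), then show that interfaces touching an escaping index have vanishing area while the surviving interfaces converge via the Gaussian product structure to the lower-dimensional model at that parameter. The one step that needs tightening is your claim that the pointwise bound $|y|^2 \ge d_{im^*}(x_n)^2/2$ on $\Sigma_{ij}^m + x_n$ yields $A_{ij}^m(x_n)\to 0$ ``by dominated convergence'': a pointwise density bound does not by itself control an integral over a set of infinite $\calH^{q-2}$-measure, and there is no evident dominating function on a fixed parametrizing domain. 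The paper handles this instead by including $x_n + \Sigma_{ij}^m$ either in the hyperplane $\{z_i - z_j = d_{ij}(x_n)\}$ (when $|d_{ij}(x_n)|\to\infty$) or in the intersection of that hyperplane with a halfspace $\{z_i - z_{m^*} \ge d_{im^*}(x_n)\}$ (otherwise), and reading off directly from (\ref{eq:GaussianProj}) that the $\gamma^{q-2}$-measure of the enclosing set tends to zero; this replaces your dominated-convergence bookkeeping with a one-line tail estimate.
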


The main idea in the proof of Lemma~\ref{lem:tripod-profile-continuous} is to
take a non-degenerate $q$-simplicial cluster with a few small cells, and compare
it to the lower-dimensional simplicial cluster obtained by deleting the small cells
and ``extending'' the large cells in the natural way (see Figure~\ref{fig:comparison} for an illustration).
By comparing the measure
and perimeter of these two clusters, we can see how $I_m$ behaves
near $\partial \simplex$.

\begin{figure}
    \begin{center}
    \begin{tikzpicture}
        \draw (0, 0) -- (0, 2);
        \draw (0, 0) -- (0.866, -0.5);
        \draw (0, 0) -- (-0.866, -0.5);
        \node at (-0.5, 1.0) {$\Omega_1$};
        \node at (0.5, 1.0) {$\Omega_2$};
        \node at (0.0, -0.3) {$\Omega_3$};
    \end{tikzpicture}
    \hspace{5em}
    \begin{tikzpicture}
        \draw (0, -0.5) -- (0, 2);
        \draw[dotted] (0, 0) -- (0.866, -0.5);
        \draw[dotted] (0, 0) -- (-0.866, -0.5);
        \node at (-0.5, 1.0) {$\Omega_1$};
        \node at (0.5, 1.0) {$\Omega_2$};
    \end{tikzpicture}
    \end{center}

    \captionsetup{width=0.7\linewidth}
    \caption{
        Given the configuration on the left, we delete the small chamber and extend the large chambers,
        as seen on the right.
    }
    \label{fig:comparison}
\end{figure}

\begin{proof}[Proof of Lemma~\ref{lem:tripod-profile-continuous}]
    Since $I_m(v) = \per_\gamma(\modelV^{-1}(v) + \Omega^m)$ and both $\modelV^{-1}$ and the map $x \mapsto \per_\gamma(x +
  \Omega^m)$ are $C^\infty$ on their respective domains, it follows that $I_m$ is $C^\infty$ on $\interior \simplex$.

  For the rest of this proof, we write $\Omega = \Omega^{q}$ for the model $q$-cluster $\Omega^m$ on $E = E^{(q-1)}$, and recall the definition of $\Psi = \Psi^{(q)}$ from Lemma \ref{lem:diffeo}. 
  Given $0 < k < q$, consider the embedding $E^{(k-1)} \subset E^{(q-1)}$ by padding with zeros the last $q-k$ coordinates. Let $\Pi_{k}: E^{(q-1)} \to E^{(k-1)}$ denote the orthogonal projection onto $E^{(k-1)}$, defined by $(\Pi_{k} x)_i = x_i - \frac 1k \sum_{j=1}^k x_j$ for $i=1,\ldots,k$. 
    
  We now define the $q$-cluster $\Omega^{q \leftarrow k}$ on $E^{(q-1)}$ by
  taking $\Omega^{q \leftarrow k}_i$ empty for $i > k$, and
  \[
      \Omega^{q \leftarrow k}_i = \interior \{z \in E^{(q-1)} :  \max_{1 \le j \le k} z_j  = z_i \}
  \]
  otherwise. In other words, $\Omega^{q \leftarrow k} = \Pi_{k}^{-1}(\Omega^{k})$. 
  Observe that $\Omega_i \subset \Omega^{q \leftarrow k}_i$ if $i \le k$, and so by the product structure of the Gaussian measure,
         \begin{equation}\label{eq:volume-ineq}
      \gamma(x + \Omega_i) \le \Psi^{(k)}_i(\Pi_{k} x)  \;\;\; \forall i \leq k . 
  \end{equation}
  
  Since the functions $I^{(q-1)}$ are symmetric in their arguments for every $q$, it suffices to prove that
  if $\{ v_m \}_m \subset \interior \simplex^{(q-1)}$ is a convergent sequence with
  $(v_{m,1}, \dots, v_{m,k}) \to v \in \interior \simplex^{(k-1)}$ then $I^{(q-1)}(v_m) \to I^{(k-1)}(v)$.
  So, fix such a sequence, and define:
  \[
  x_m := \Psi^{-1}(v_m) \in E^{(q-1)} ~,~ y := (\Psi^{(k)})^{-1}(v) \in E^{(k-1)} ~,~  y_m := \Pi_{k} x_m \in E^{(k-1)} .
  \]
  First, we observe that $y_m \to y$. Indeed,~\eqref{eq:volume-ineq} implies that for $i \le k$,
  \[
    v_i = \liminf_{m \to \infty} \Psi_i(x_m) \le \liminf_{m \to \infty} \Psi^{(k)}_i(y_m).
  \]
  But the inequality cannot be strict, since $\sum_{i=1}^k v_i = 1$ and
  \[
  \sum_{i=1}^k \liminf_{m \to \infty} \Psi^{(k)}_i(y_m) \leq \liminf_{m \to \infty} \sum_{i=1}^k \Psi^{(k)}_i(y_m) = 1 .
  \]
     It follows that
  \begin{equation} \label{eq:Psiyv}
   \Psi^{(k)}(y_m) \to v ,
   \end{equation} 
   and since $v = \Psi^{(k)}(y)$ and $\Psi^{(k)}: E^{(k-1)} \to \simplex^{(k-1)}$
  is a diffeomorphism, we must have $y_m \to y$.

  Next, we claim that for every $i \le k$ and $j > k$, $x_{m,j} - x_{m,i} \to \infty$.
  Assume in the contrapositive this is not the case. 
  Then we can choose $C > 0$, $i \le k$, and $j > k$ such that (after passing to a subsequence)
  $x_{m,j} = \min_{\ell > k} x_{m,\ell}$
  and $x_{m,j} - x_{m,i} \le C$ for all $m$. Since $y_m = \Pi_k x_m$ converges, it follows that
  the first $k$ coordinates of $x_m$ differ by at most a constant independent of $m$.
  Hence, $x_{m,j} \le \min_{\ell \le k} x_{m,\ell} + C'$ (for a different constant $C'$).
  Since $x_{m,j} \le x_{m,\ell}$
  for $\ell > k$, we have $x_{m,j} \le \min_{\ell=1, \dots, q} x_{m,\ell} + C'$. But then
  \begin{align*}
    x_m + \Omega_j
    &= \{z \in E : z_j - x_{m,j} > \min_{\ell \ne j} (z_\ell - x_{m,\ell})\} \\
    &\supset \{z \in E : z_j - x_{m,j} > \min_{\ell \ne j} (z_\ell + C' - x_{m,j})\} \\
    &= \{z \in E : z_j > \min_{\ell \ne j} z_\ell - C'\}.
  \end{align*}
  The Gaussian measure of the latter set is strictly positive,  in contradiction to the assumption 
  that $v_m = \Psi(x_m) = \gamma(x_m + \Omega)$ satisfies $(v_{m,1}, \dots, v_{m,k}) \to v \in \interior \simplex^{(k-1)}$, which in particular implies that $v_{m,j} \to 0$
  for $j > k$.

  Having understood something about $x_m = \Psi^{-1}(v_m)$, we turn to the interfaces
  $x_m + \Sigma_{ij}$.   Recall that the interfaces $\Sigma_{ij}$ of
  $\Omega$ are given by $\Sigma_{ij} = \{z \in E^{(q-1)} : z_i = z_j = \max_\ell z_\ell\}$,
  while the interfaces $\Sigma_{ij}^{q \leftarrow k}$ of $\Omega^{q \leftarrow k}$ are given by
  \[
    \Sigma_{ij}^{q \leftarrow k} = 
    \begin{cases}
      \{z \in E^{(q-1)} : z_i = z_j = \max_{\ell \le k} z_\ell\} & \text{if $i, j \le k$} \\
      \emptyset & \text{otherwise}.
    \end{cases}
  \]
  In particular, $\Sigma_{ij}^{q \leftarrow k} \supset \Sigma_{ij}$ as long as $i, j \le k$.
  On the other hand,
  \[
    \Sigma_{ij}^{q \leftarrow k}
    \subset \Sigma_{ij} \cup \bigcup_{\ell > k} \{z \in E^{(q-1)} : z_i = z_j \text{ and } z_\ell > z_i\},
  \]
  meaning that
  \[
    x + \Sigma_{ij}^{q \leftarrow k}
    \subset (x + \Sigma_{ij}) \cup \bigcup_{\ell > k} \{z \in E^{(q-1)} : z_i - z_j = x_i - x_j \text{ and } z_\ell - z_i > x_\ell - x_i\}.
  \]
  Recalling that $i \le k$ and $\ell > k$ implies that $x_{m,\ell} - x_{m,i} \to \infty$, we have
  \[
      \gamma^{q-2}(\{z \in E^{(q-1)} : z_i - z_j = x_{m,i} - x_{m,j} \text{ and } z_\ell - z_i > x_{m,\ell} - x_{m,i}\}) \to 0
  \]
  for every $\ell > k$, and so it follows that
  \begin{equation} \label{eq:arrow-same}
    \left|\gamma^{q-2}(x_m + \Sigma_{ij}) - \gamma^{q-2}(x_m + \Sigma_{ij}^{q \leftarrow k})\right| \to 0
  \end{equation}
  as $m \to \infty$ for every $i, j \le k$. By the product structure of the Gaussian measure, (\ref{eq:Psiyv}), and continuity of $I^{(k-1)}$ on $\interior \simplex^{(k-1)}$, we know that:
  \[
    \sum_{1 \le i < j \le k} \gamma^{q-2}(x_m + \Sigma_{ij}^{q \leftarrow k}) = \sum_{1 \le i < j \le k} \gamma^{k-2}_{k-1}(y_m + \Sigma_{ij}^{k})
    = I^{(k-1)}(\Psi^{(k)}(y_m))
    \to I^{(k-1)}(v),
  \]
  and hence we deduce by (\ref{eq:arrow-same}) that:
  \[
      \sum_{1 \leq i < j \le k} \gamma^{q-2}(x_m + \Sigma_{ij}) \to I^{(k-1)}(v).
  \]
  
  To conclude that $I^{(q-1)}(v_m) \to I^{(k-1)}(v)$, we will show that the remaining terms of $I^{(q-1)}(v_m) = \per_\gamma(x_m + \Omega) = \sum_{1 \leq i < j \leq q} \gamma^{q-2}(x_m + \Sigma_{ij})$ are negligible: 
  \[
    \gamma^{q-2}(x_m + \Sigma_{ij}) \to 0
  \]
  whenever $i > k$ (and by symmetry, we will reach the same conclusion whenever either $i > k$ or $j > k$).
  For $j \le k$, this follows from the inclusion
  \begin{align*}
    x_m + \Sigma_{ij}
    &= \{z \in E : z_i - x_{m,i} = z_j - x_{m,j} = \max_\ell (z_\ell - x_{m,\ell})\} \\
    &\subset \{z \in E : z_i - z_j = x_{m,i} - x_{m,j}\},
  \end{align*}
  and the fact that $x_{m,i} - x_{m,j} \to \infty$, which implies by (\ref{eq:GaussianProj}) that:
  \[
  \gamma^{q-2}( x_m + \Sigma_{ij} ) \leq \gamma_1^0 \set{ \frac{x_{m,i} - x_{m,j}}{\sqrt{2}}  } \rightarrow 0 . 
  \]
  For $j > k$, it follows similarly   by fixing a single $\ell \le k$ and using the inclusion
  \begin{align*}
    x_m + \Sigma_{ij}
    &\subset \{z \in E : z_i - x_{m,i} = z_j - x_{m,j} \ge z_\ell - x_{m,\ell} \} \\
    & = \{z \in E : z_i - z_j = x_{m,i} - x_{m,j} \text{ and } z_i - z_\ell \ge x_{m,i} - x_{m,\ell} \},
  \end{align*}
  noting again that $x_{m,i} - x_{m,\ell} \to \infty$.
\end{proof}

We have constructed the canonical model simplicial $(k-1)$-bubble clusters $\Omega$ in $E^{(k-1)}$ with $\gamma_{k-1}(\Omega) = v$ for all $v$ in the \emph{interior} $\interior \simplex^{(k-1)}$ and for all $k \geq 2$. For $k=1$, the trivial $0$-bubble ($1$-cell) cluster on $E^{(0)} = \{ 0 \}$ is $\Omega_1 = \{ 0 \}$. Clearly, by taking Cartesian products with $E^{(q-k+1)}$ and employing the product structure of the Gaussian measure, we obtain the canonical model $(q-1)$-bubble clusters $\Omega$ in $E^{(q-1)}$ with $\gamma_{q-1}(\Omega) = v$ for all $v \in \simplex^{(q-1)}$. Similarly, these clusters extend to $\R^n$ for all $n \geq q-1$. Consequently, $I(v) \le I^{(q-1)}_m(v)$ for all $v \in \simplex^{(q-1)}$ and $n \geq q-1$, and our main goal in this work is to establish the converse inequality. 

\medskip
To this end, we observe that the model profile $I_m = I_m^{(q-1)}$ satisfies a remarkable
differential equation. 
\begin{proposition}\label{prop:I_m-equation}
    At any point in $\interior \Delta$,
    \[
        \nabla I_m = \frac{1}{\sqrt{2}} \Psi^{-1} \qquad \text{and} \qquad \nabla^2 I_m = - (L_{A^m \circ \Psi^{-1}})^{-1}
    \]
    as tensors on $E$, where, recall, $L_A$ was defined in (\ref{eq:intro-LA}). 
\end{proposition}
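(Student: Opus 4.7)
The plan is to reduce both identities to the single claim
\[
\nabla_x P(x) = -\tfrac{1}{2}\, L_{A^m(x)}\, x \qquad (x \in E),
\]
where $P(x) := \per_\gamma(x+\Omega^m) = \sum_{i<j} A^m_{ij}(x)$. Indeed, since $L_A$ is symmetric, Lemma~\ref{lem:diffeo} yields $D\Psi^{-1}(v) = -\sqrt 2\, L_{A^m(x)}^{-1}$ at $x := \Psi^{-1}(v)$, and applying the chain rule to $I_m = P \circ \Psi^{-1}$ together with the above claim gives
\[
\nabla I_m(v) = -\sqrt 2\, L_{A^m(x)}^{-1}\, \nabla_x P(x) = \tfrac{1}{\sqrt 2}\, x = \tfrac{1}{\sqrt 2}\, \Psi^{-1}(v).
\]
Differentiating this identity once more yields $\nabla^2 I_m(v) = \tfrac{1}{\sqrt 2}\, D\Psi^{-1}(v) = -L_{A^m \circ \Psi^{-1}(v)}^{-1}$, which is the Hessian formula.

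To prove the claim, differentiate $P(x) = \sum_{i<j}\int_{\Sigma^m_{ij}} e^{-W(x+y)}\, d\calH^{q-2}(y)$ under the integral sign (using $\nabla W(z) = z$) and change variables to obtain
\[
\nabla_x P(x) = -\sum_{i<j}\int_{x+\Sigma^m_{ij}} z\, d\gamma^{q-2}(z).
\]
On each translated interface, decompose $z = z^\tang + \langle z, \n_{ij}\rangle \n_{ij}$ with $\n_{ij} = (e_j - e_i)/\sqrt 2$. The key observation is that the normal coordinate is constant on each interface: since $\Sigma^m_{ij} \subset \{y : y_i = y_j\}$, one has $\langle z, \n_{ij}\rangle = (x_j - x_i)/\sqrt 2$ for all $z \in x+\Sigma^m_{ij}$. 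Hence the normal contribution of the $(i,j)$-th integral equals $\tfrac{1}{2}(x_j - x_i)(e_j - e_i)\, A^m_{ij}(x)$, and summing over $i<j$ produces exactly $\tfrac{1}{2}\, L_{A^m(x)}\, x$, matching the right-hand side of the claim (with sign).

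It remains to show that the tangential contributions cancel: $\sum_{i<j}\int_{x+\Sigma^m_{ij}} z^\tang\, d\gamma^{q-2} = 0$. Since $W$ is quadratic and each interface is flat, $z^\tang = \nabla^\tang W(z)$, and hence $z^\tang e^{-W} = -\nabla^\tang(e^{-W})$. The classical vector-valued Stokes theorem on the $(q-2)$-dimensional flat set $\Sigma^m_{ij}$ — justified here since Gaussian decay controls infinity and the singular quadruple-and-higher junctions are $\calH^{q-3}$-negligible — converts this into
\[
\int_{x+\Sigma^m_{ij}} z^\tang\, d\gamma^{q-2} = -\int_{x+\partial\Sigma^m_{ij}} \n_{\partial,ij}\, d\gamma^{q-3},
\]
where $\n_{\partial,ij}$ is the outer unit conormal to $\Sigma^m_{ij}$. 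The $(q-3)$-dimensional part of $\partial\Sigma^m_{ij}$ is the union of triple junctions $\Sigma^m_{ijk}$ for $k \notin \{i,j\}$, and each such junction lies on the boundary of exactly the three interfaces $\Sigma^m_{ij}, \Sigma^m_{ik}, \Sigma^m_{jk}$. In the common $2$-plane orthogonal to $\Sigma^m_{ijk}$, the three outward conormals point in directions $120^\circ$ apart (by the simplicial geometry of the model cluster), and therefore sum to zero. Hence every triple junction contributes zero under $\sum_{i<j}$, proving the cancellation and completing the proof. The main geometric content is this $120^\circ$ cancellation at triple junctions, which is an algebraic manifestation of stationarity of the simplicial model; the only mildly delicate analytic point is justifying Stokes' theorem in this non-compact singular setting, which is nevertheless essentially classical since the interfaces are explicitly flat and the weight is Gaussian.
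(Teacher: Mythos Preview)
Your proof is correct and follows essentially the same route as the paper's: you differentiate the perimeter, split into normal and tangential contributions, use that the normal coordinate is constant on each flat interface to produce the $L_{A^m(x)}x$ term, and eliminate the tangential part via Stokes' theorem together with the $120^\circ$ cancellation at triple junctions. The only cosmetic difference is that the paper decomposes the test direction $v$ into normal and tangential parts (computing $\nabla_v A^m_{ij}$ directionally), whereas you decompose the vector-valued integrand $z$; pairing your computation with $v$ recovers the paper's line by line.
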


\begin{proof}
    Recalling that $A^m_{ij}(x) = \int_{x + \Sigma^m_{ij}} e^{-\pot}\, d\calH^{q-2}$,
    we have
    \[
        \nabla_v A^m_{ij}(x) = \int_{x + \Sigma^m_{ij}} \nabla_v e^{-\pot} \, d\calH^{q-2}.
    \]
    Decomposing $v$ into its normal and tangential components and applying
    Stokes' theorem to the tangential part (which is justified since the Gaussian density decays faster than any polynomial)
    \[
        \nabla_v A^m_{ij}(x) = -\int_{x + \Sigma^m_{ij}} \inr{v}{\n_{ij}} \inr{\n_{ij}}{y} e^{-\pot(y)} \, d\calH^{q-2}
        + \int_{x + \partial \Sigma^m_{ij}} \inr{v}{\n_{\partial ij}} e^{-\pot(y)} \, d\calH^{q-3},
    \]
    where $\n_{ij} = \frac{1}{\sqrt{2}} (e_j - e_i)$ 
        is the unit-normal to $\Sigma^m_{ij}$, $\partial \Sigma^m_{ij}$ denotes the $(q-3)$-dimensional boundary of $\Sigma^m_{ij}$ in the manifold sense,
    and $\n_{\partial ij}$ is the boundary outer unit-normal. Now, $\inr{\n_{ij}}{y} =
    \inr{\n_{ij}}{x}$ for all $y \in x + \Sigma^m_{ij}$. Note also that the $(q-2)$-dimensional interfaces meet at their $(q-3)$-dimensional boundaries
    in threes, and that the boundary outer normals at these points sum to zero. Hence, summing over all $i < j$, the total contribution of the last integral above vanishes, and we are left with:
    \begin{align*}
        \nabla_v \sum_{i < j} A^m_{ij}(x)
        & = - \sum_{i < j} \inr{v}{\n_{ij}} \inr{\n_{ij}}{x} A^m_{ij}(x) \\
        & = - \frac{1}{2} \sum_{i < j} \inr{v}{e_j - e_i} \inr{e_j - e_i}{x} A^m_{ij}(x) =  - \frac{1}{2}  v^T L_{A^m} x.
    \end{align*}
    Finally, recall that $I_m = \sum_{i < j} A^m_{ij} \circ \Psi^{-1}$.
    By~\eqref{eq:DPsi}, $D (\Psi^{-1}) = (D \Psi)^{-1} \circ \Psi^{-1} = - \sqrt 2 (L_{A^m \circ \Psi^{-1}})^{-1}$, and so the chain rule implies that
    \[
        \nabla I_m = \frac{1}{\sqrt 2} \Psi^{-1},
    \]
    thus establishing the first claim. The second claim follows from differentiating the first claim. \end{proof}

\section{Definitions and Notation}\label{sec:prelim}

We will be working in Euclidean space $(\R^n,\abs{\cdot})$ endowed with a
measure $\mu = \mu^n$ having $C^\infty$-smooth and strictly-positive density
$e^{-\pot}$ with respect to Lebesgue measure; with the exception of Theorem \ref{thm:trans-formula} and Lemma \ref{lem:MKernelAndDef},
we will not restrict ourselves to the Gaussian measure $\gamma$ until reaching Section \ref{sec:stable}. 
Recall that the cells $\set{\Omega_i}_{i=1,\ldots,q}$
of a cluster $\Omega$ are assumed to be pairwise disjoint Borel subsets of $\R^n$, and
satisfy $\mu(\R^n \setminus \cup_{i=1}^q \Omega_i) = 0$. In addition, they are
assumed to have finite $\mu$-weighted perimeter $P_\mu(\Omega_i) < \infty$, to
be defined below. Given distinct $i,j,k \in \{1,\ldots,q\}$, we define the set of cyclically ordered pairs in $\{i,j,k\}$:
\[
\cyclic(i,j,k) := \{ (i,j) , (j,k) , (k,i) \}  .
\]

\subsection{Weighted divergence and Mean-curvature}

We write $\div X$ to denote divergence of a smooth vector-field $X$, and $\div_\mu X$ to denote its weighted divergence:
\begin{equation} \label{eq:weighted-div}
\div_{\mu} X := \div (X e^{-W}) e^{+W} = \div X - \nabla_X \pot . 
\end{equation}
For a smooth hypersurface $\Sigma \subset \Real^n$ co-oriented by a unit-normal field
$\n$, let $H_\Sigma: \Sigma \to \R$ denote its mean-curvature, defined as the trace of its second fundamental form $\II_{\Sigma}$. 
 The weighted mean-curvature $H_{\Sigma,\mu}$ is defined as:
\[
H_{\Sigma,\mu} := H_{\Sigma} - \nabla_\n \pot .
\]
We write $\div_\Sigma X$ for the surface divergence of a vector-field $X$ defined on $\Sigma$, i.e. $\sum_{i=1}^{n-1} \scalar{\tang_i,\nabla_{\tang_i} X}$ where $\{\tang_i\}$ is a local orthonormal frame on $\Sigma$; this coincides with $\div X - \inr{\n}{\nabla_\n X}$ for any smooth extension of $X$ to a neighborhood of $\Sigma$. 
The weighted surface divergence $\div_{\Sigma,\mu}$ is defined as:
\[
\div_{\Sigma,\mu} X = \div_{\Sigma} X - \nabla_X \pot,
\]
so that $\div_{\Sigma,\mu} X = \div_{\Sigma} (X e^{-\pot}) e^{+\pot}$ if $X$ is tangential to $\Sigma$.  
Note that $\div_{\Sigma} \n = H_{\Sigma}$ and $\div_{\Sigma,\mu} \n = H_{\Sigma,\mu}$.  We will also abbreviate $\inr{X}{\n}$ by $X^\n$, and we will write $X^\tang$ for the tangential part of $X$, i.e. $X - X^{\n} \n$.  

We will frequently use that:
\[
\div_{\Sigma,\mu} X = \div_{\Sigma,\mu} (X^\n \n) + \div_{\Sigma,\mu} X^{\tang} = H_{\Sigma,\mu} X^{\n} + \div_{\Sigma,\mu} X^{\tang} . 
\]
Note that the above definitions ensure the following weighted version of Stokes' theorem (which we interchangeably refer to in this work as the Gauss--Green or divergence theorem): if $\Sigma$ is a smooth $(n-1)$-dimensional manifold with $C^1$ boundary, denoted $\partial \Sigma$, (completeness of $\Sigma \cup \partial \Sigma$ is not required), and $X$ is a smooth vector-field on $\Sigma$, continuous up to $\partial \Sigma$, with compact support in $\Sigma \cup \partial \Sigma$, then:
\begin{equation} \label{eq:Stokes}
\int_\Sigma \div_{\Sigma,\mu} X d\mu^{n-1} = \int_{\Sigma} H_{\Sigma,\mu} X^{\n} d\mu^{n-1} + \int_{\partial \Sigma} X^{\n_{\partial}} d\mu^{n-2} ,
\end{equation}
where $\n_{\partial}$ denotes the exterior unit-normal to $\partial \Sigma$, and:
\[
\mu^{k} := e^{-W} \H^{k} . 
\]

\subsection{Reduced boundary and cluster interfaces}

Given a Borel set $U \subset \Real^n$ with locally-finite perimeter, its reduced boundary $\partial^* U$ is defined (see e.g. \cite[Chapter 15]{MaggiBook}) as the subset of $\partial U$ for which there is a uniquely defined outer unit normal vector to $U$ in a measure theoretic sense. While the precise definition will not play a crucial role in this work, we provide it for completeness. The set $U$ is said to have locally-finite (unweighted) perimeter, if for any compact subset $K \subset \Real^n$ we have:
\[
\sup \left\{\int_{U} \div X \; dx : X \in C_c^\infty(\R^n; T \R^n) \; ,\; \text{supp}(X) \subset K \; , \; |X| \le 1 \right\} < \infty . 
\]
With any Borel set with locally-finite perimeter one may associate a vector-valued Radon measure $\mu_U$ on $\Real^n$, called the Gauss--Green measure, so that:
\[
\int_U \div X \; dx = \int_{\Real^n} \inr{X}{d\mu_U} \;\;\; \forall X \in C_c^\infty(\R^n; T \R^n) .
\]
The reduced boundary $\partial^* U$ of a set $U$ with locally-finite perimeter is defined as the collection of $x \in \text{supp } \mu_U$ so that the vector limit:
\[
\n_U := \lim_{\eps \to 0+} \frac{\mu_U(B(x,\eps))}{\abs{\mu_U}(B(x,\eps))}  
\]
exists and has length $1$ (here $\abs{\mu_U}$ denotes the total-variation of $\mu_U$ and $B(x,\eps)$ is the open Euclidean ball of radius $\eps$ centered at $x$). When the context is clear, we will abbreviate $\n_U$ by $\n$. Note that modifying $U$ on a null-set does not alter $\mu_U$ nor $\partial^* U$. It is known that $\partial^* U$ is a Borel subset of $\partial U$ and that $\abs{\mu_U}(\R^n \setminus \partial^* U) = 0$. 
If $U$ is an open set with $C^1$ smooth boundary, it is known (e.g. \cite[Remark 15.1]{MaggiBook}) that $\partial^* U = \partial U$.

Recall that the $\mu$-weighted perimeter of $U$ was defined in the Introduction as:
\[
\per_\mu(U) := \sup \left\{\int_U \div_\mu X \, d\mu: X \in C_c^\infty(\R^n; T \R^n), |X| \le 1 \right\}.
\]
Clearly, if $U$ has finite weighted-perimeter $\per_\mu(U) < \infty$, it has locally-finite (unweighted) perimeter. It is known \cite[Theorem 15.9]{MaggiBook} that in that case:
\[
\per_\mu(U) = \mu^{n-1}(\partial^* U)  .
\]
In addition, by the Gauss--Green--De Giorgi theorem, the following integration by parts formula holds for any $C_c^1$ vector-field $X$ on $\R^n$, and Borel subset $U \subset \Real^n$ with locally finite perimeter (see \cite[Theorem 15.9]{MaggiBook} and recall (\ref{eq:weighted-div})):
\begin{equation}\label{eq:integration-by-parts}
  \int_U \div_\mu X \, d\mu^n = \int_{\partial^* U} X^\n \, d\mu^{n-1} .
\end{equation}

Given a cluster $\Omega = (\Omega_1, \dots, \Omega_q)$, we define the interface
between cells $i$ and $j$ (for $i \ne j$) as:
\[
    \Sigma_{ij} = \Sigma_{ij}(\Omega) := \partial^* \Omega_i \cap \partial^* \Omega_j,
\]
and we define:
\[
    A_{ij} = A_{ij}(\Omega) := \mu^{n-1}(\Sigma_{ij}).
\]
It is standard to show (see \cite[Exercise 29.7, (29.8)]{MaggiBook}) that for any $S \subset \set{1,\ldots,q}$:
\begin{equation} \label{eq:nothing-lost-many}
\H^{n-1}\brac{\partial^*(\cup_{i \in S} \Omega_i) \setminus \cup_{i \in S , j \notin S} \Sigma_{ij}} = 0 .
\end{equation}
In particular:
\begin{equation} \label{eq:nothing-lost}
\H^{n-1} \brac{ \partial^* \Omega_i  \setminus \cup_{j \neq i} \Sigma_{ij} } = 0 \;\;\; \forall i=1,\ldots,q ,
\end{equation}
and hence:
\[
\per_\mu(\Omega_i) = \sum_{j \neq i} A_{ij}(\Omega) , 
\]
and:
\[
\per_\mu(\Omega) = \frac{1}{2} \sum_{i=1}^q \per_\mu(\Omega_i) = \sum_{i < j} A_{ij}(\Omega)  .
\]
In addition, if follows that:
\begin{equation} \label{eq:top-nothing-lost}
\forall i \;\;\; \overline{\partial^* \Omega_i} = \overline{\cup_{j \neq i} \Sigma_{ij}} .
\end{equation}
Indeed, since $\H^{n-1}|_{\partial^* \Omega_i} = \H^{n-1}|_{\cup_{j \neq i} \Sigma_{ij}}$ they must have the same support. But the support of $\H^{n-1}|_{\cup_{j \neq i} \Sigma_{ij}}$ is clearly contained in the right-hand-side of (\ref{eq:top-nothing-lost}), whereas 
$\H^{n-1}|_{\partial^* \Omega_i} = \abs{\mu_{\Omega_i}}$
by \cite[Theorem 15.9]{MaggiBook}, and hence its support is (e.g. \cite[Remark 15.3]{MaggiBook}) the left-hand-side of (\ref{eq:top-nothing-lost}); the converse inclusion is trivial. 

\subsection{Volume and perimeter regular sets} \label{subsec:variations}

\begin{definition}[Admissible Vector-Fields] \label{def:admissible}
    A vector-field $X$ on $\R^n$ is called \emph{admissible} if it is
    $C^\infty$-smooth and satisfies
    \begin{equation} \label{eq:field-bdd}
        \forall i \ge 0 \quad \max_{x \in \R^n}\|\nabla^i X(x)\| \le C_i < \infty.
    \end{equation}
\end{definition}
Any smooth, compactly supported vector-field (denoted $C_c^\infty$) is clearly admissible, and of course so is a constant vector-field; 
these two types and their linear combinations will be the main kinds of vector-fields which we will use in this work.

Let $F_t$ denote the associated flow along an admissible vector-field $X$,
defined as the family of maps $\set{F_t : \R^n \to \R^n}$ solving
the following ODE:
\begin{equation} \label{eq:flow-gen-by-X}
\frac{d}{dt} F_t(x) = X \circ F_t(x) ~,~ F_0(x) = x .
\end{equation}
It is well-known that a unique smooth solution in $t \in \Real$ exists for all
$x \in \R^n$, and that the resulting maps $F_t : \R^n \rightarrow \R^n$ are
$C^\infty$ diffeomorphisms, so that the partial derivatives in $t$ and $x$ of
any fixed order are uniformly bounded in $(x,t) \in \Real^n \times [-T,T]$, for
any fixed $T > 0$. 

Note that if $\Omega=(\Omega_1,\ldots,\Omega_q)$ is a cluster then so is its image
$F_t(\Omega) = (F_t(\Omega_1),\ldots,F_t(\Omega_q))$: obviously, its cells remain
Borel and pairwise disjoint; the fact that they have finite $\mu$-weighted perimeter
and satisfy $\mu(\Real^n \setminus \cup_i F_t(\Omega_i)) = \mu(F_t(\Real^n
\setminus \cup_i \Omega_i)) = 0$ follows from the fact that $F_t$ is a Lipschitz map.

We define the $r$-th variations of weighted volume and perimeter of $\Omega$ as:
\begin{align*}
  \delta_X^r V(\Omega) &:= \left. \frac{d^r}{(dt)^r}\right|_{t=0} \mu(F_t(\Omega)) ,\\
  \delta_X^r A(\Omega) &:= \left. \frac{d^r}{(dt)^r}\right|_{t=0} P_\mu(F_t(\Omega)) ,
\end{align*}
whenever the right-hand sides exist. When $\Omega$ is clear from the context, we will simply write $\delta_X^r V$ and $\delta_X^r A$; when $r = 1$, we will write $\delta_X V$ and $\delta_X A$. 

\smallskip

It will be of crucial importance for us in this work to calculate the first and especially
second variations of weighted volume and perimeter for \textbf{non-}compactly
supported vector-fields, for which even the existence of
$\delta_X^r V(\Omega)$ and especially $\delta_X^r A(\Omega)$ is not immediately
clear.  Indeed, even for the case of the standard Gaussian measure, the
derivatives of its density are asymptotically larger at infinity than the
Gaussian density itself.  We consequently introduce the following:

\begin{definition}[Volume / Perimeter Regular Set]
A Borel set $U$ is said to be \emph{volume regular} with respect to the measure $\mu = e^{-W(x)} dx$ if:
\[
\forall i,j \geq 0 \;\;\; \exists \delta > 0 \;\; \int_{U} \sup_{z \in B(x,\delta)} \norm{\nabla^i \pot(z)}^j e^{-\pot(z)} dx < \infty . 
\]
It is said to be \emph{perimeter regular} with respect to the measure $\mu$ if:
\[
\forall i,j \geq 0 \;\;\; \exists \delta > 0 \;\; \int_{\partial^* U} \sup_{z \in B(x,\delta)} \norm{\nabla^i \pot(z)}^j e^{-\pot(z)} d\H^{n-1}(x) < \infty .
\]
If $\delta > 0$ above may be chosen uniformly for all $i,j \geq 0$, $U$ is called \emph{uniformly} volume / perimeter regular. 
\end{definition}
\noindent
Here and throughout this work $\norm{\cdot} = \norm{\cdot}_2$ denotes the Hilbert-Schmidt norm of a tensor, defined as the square-root of the sum of squares of its coordinates in any local orthonormal frame. 
Note that volume (perimeter) regular sets clearly have finite weighted volume (perimeter). 

\medskip
Write $JF_t = \text{det}(dF_t)$ for the Jacobian of $F_t$, and observe that by the change-of-variables formula for smooth injective functions:
\begin{equation} \label{eq:Jac-vol}
\mu(F_t(U)) = \int_{U} J F_t  e^{-\pot \circ F_t} \, dx,
\end{equation}
for any Borel set $U$. Similarly, if $U$ is in addition of locally finite-perimeter, let $\Phi_t = F_t|_{\partial^* U}$ and write $J \Phi_t = \text{det}((d_{\n_U^{\perp}} F_t)^T d_{\n_U^{\perp}} F_t)^{1/2}$ for the Jacobian of $\Phi_t$ on $\partial^* U$. Since $\partial^* U$ is locally $\H^{n-1}$-rectifiable, \cite[Proposition 17.1 and Theorem 11.6]{MaggiBook} imply:
\begin{equation} \label{eq:Jac-area}
\mu^{n-1}(\partial^* F_t(U)) = \mu^{n-1}(F_t(\partial^* U)) = \int_{\partial^* U} J \Phi_t e^{-\pot \circ F_t} \, d\H^{n-1} . 
\end{equation}

\begin{lemma} \label{lem:regular}
\hfill
\begin{enumerate}
\item
If $U$ is volume regular with respect to $\mu$ and $X$ is admissible, then for any $r \geq 1$, $t \mapsto \mu(F_t(U))$ is $C^r$ in an open neighborhood of $t=0$, and:
\[
\delta_X^r V(U) = \int_{U} \left . \frac{d^r}{(dt)^r} \right |_{t=0}  (J F_t e^{-\pot \circ F_t}) dx .
\]
Furthermore, if $U$ is uniformly volume regular, or alternatively, if $X$ is $C_c^\infty$, then there exists an open neighborhood of $t=0$ where $t \mapsto \mu(F_t(U))$ is $C^\infty$ and the above formula holds. 
\item
If $U$ is perimeter regular with respect to $\mu$ and $X$ is admissible, then for any $r \geq 1$, $t \mapsto P_\mu(F_t(U))$ is $C^r$ in an open neighborhood of $t=0$, and:
\[
\delta_X^r A(U) = \int_{\partial^* U} \left . \frac{d^r}{(dt)^r} \right |_{t=0} (J \Phi_t e^{-\pot \circ F_t}) d\H^{n-1} .  \]
Furthermore, if $U$ is uniformly perimeter regular, or alternatively, if $X$ is $C_c^\infty$, then there exists an open neighborhood of $t=0$ where $t \mapsto P_\mu(F_t(U))$ is $C^\infty$ and the above formula holds. 
\end{enumerate}
\end{lemma}

\begin{proof}[Proof of Lemma \ref{lem:regular}]
Our task is to justify taking derivative inside the integral representations (\ref{eq:Jac-vol}) and (\ref{eq:Jac-area}). Taking difference quotients, applying Taylor's theorem with Lagrange remainder and induction on $r$, it is enough to establish by Dominant Convergence Theorem that for some $\eps > 0$:
\begin{align} 
\label{eq:dominant1} & \int_U \sup_{t \in [-\eps,\eps]} \frac{d^r}{(dt)^r} (J F_t(x) e^{-\pot(F_t(x))}) dx < \infty ~,\\
\label{eq:dominant2} & \int_{\partial^* U} \sup_{t \in [-\eps,\eps]} \frac{d^r}{(dt)^r} (J \Phi_t(x) e^{-\pot(F_t(x))}) d\H^{n-1}(x) < \infty .
\end{align}
By the Leibniz product rule, for $\F = F,\Phi$:
\[
\frac{d^r}{(dt)^r} (J \F_t(x) e^{-\pot(F_t(x))}) = \sum_{p+q=r} {r \choose p} \frac{d^p}{(dt)^p} J \F_t(x) \frac{d^{q}}{(dt)^{q}}  e^{-\pot(F_t(x))} .
\]
For each $x$, $t \mapsto J \F_t(x)$ is a smooth function of the differential $d F_t(x)$ (note that for $\F = \Phi$, the normal $n_{U}(x)$ remains fixed for all $t$).
This differential satisfies:
\[
\frac{d}{dt} dF_t(x) = \nabla X(F_t(x))  dF_t(x) , 
\]
and since $X$ satisfies (\ref{eq:field-bdd}), it follows that $\sup_{t \in [-\eps,\eps]} \frac{d^p}{(dt)^p} J \F_t(x)$ is uniformly bounded in $x \in \Real^n$ for all fixed $\eps > 0$ and $p$. Moreover, the latter expression has compact support $K$ if $X$ is assumed $C_c^\infty$. 

It remains to handle the $\frac{d^{q}}{(dt)^{q}}  e^{-\pot(F_t(x))}$ term. Repeated differentiation and application of the chain rule results in a polynomial expression in $\nabla^a \pot$ and $\nabla^b X$ times $e^{-\pot}$ evaluated at $F_t(x)$, where the degree of the polynomial, $a$ and $b$ are bounded by a function of $q$. Since $X$ satisfies (\ref{eq:field-bdd}), we may bound the magnitudes of $\nabla^b X$ by a constant depending on $q$. Now let $\delta > 0$ be arbitrary if $X$ is $C_c^\infty$, or be the corresponding parameter if $U$ is assumed volume or perimeter regular with respect to $\mu$ for appropriately bounded above $i,j$. 
Since $\abs{F_t(x) - x} \leq \abs{t} \max_{x \in \R^n} \abs{X(x)}$, we may find $\eps > 0$ so that $\abs{F_t(x) - x} \leq \delta$ uniformly in $x \in \R^n$. It follows that for an appropriate constant $D_r >0$:
\[
\sup_{t \in [-\eps,\eps]} \frac{d^r}{(dt)^r} (J \F_t(x) e^{-\pot(F_t(x))}) \leq D_r \sup_{z \in B(x,\delta)} \norm{\nabla^i \pot(z)}^j e^{-\pot(z)} ,
\]
and so the required (\ref{eq:dominant1}) and (\ref{eq:dominant2}) are established by definition of a regular set if $U$ is regular, or simply by integration inside the compact set $K$ if $X$ is $C_c^\infty$. Note that when the set is assumed to be uniformly regular or when $X \in C^\infty_c$, $\delta > 0$ and hence $\eps  > 0$ above may be chosen uniformly in $r \geq 1$, and $C^\infty$ smoothness is established for $t \in (-\eps,\eps)$. 
\end{proof}

\subsection{The quadratic form $L_A$ of interface measures}

Recall the definition of the $q \times q$ symmetric matrix $L_A$, associated to $A = \{ A_{ij} \}_{1 \leq i < j \leq q}$:
\begin{equation} \label{eq:L_A}
        L_A := \sum_{1 \leq i<j \leq q} A_{ij} (e_i - e_j) (e_i - e_j)^T .
                                  \end{equation}
We will mostly consider $L_A$ as a quadratic form on $E^{(q-1)}$. 

\begin{lemma}\label{lem:L-nondegenerate}
    Let $\Omega$ be a $q$-cluster with $\mu(\Omega) \in \interior \Delta^{(q-1)}$
    and let $A_{ij} = \mu^{n-1}(\Sigma_{ij})$. 
    Consider the undirected graph $G$ with vertices $\{1, \dots, q\}$ and an edge
    between $i$ and $j$ if $A_{ij} > 0$.
    Then $G$ is connected and $L_A$ is positive-definite as an operator on $E^{(q-1)}$ (in particular, it has full-rank $q-1$). 
    \end{lemma}
\begin{proof}
        The graph $G$ is connected because if
    $S \subset \{1, \dots, q\}$ were a non-trivial connected-component then
    $U = \bigcup_{i \in S} \Omega_i$ would satisfy $\mu(U) \in (0, 1)$, and hence $\gamma(U) \in (0,1)$. 
    On the other hand, we would have $\per_\mu(U) = \sum_{i \in S , j \notin S} A_{ij} = 0$ by (\ref{eq:nothing-lost-many}), and hence $\per_\gamma(U) = 0$, contradicting the (single-bubble) Gaussian isoperimetric inequality.

    Clearly $L_A$ is positive semi-definite as $v^T L_A v = \sum_{i \ne j} A_{ij} (v_i - v_j)^2$ with $A_{ij} \geq 0$. 
    Moreover, the fact that $G$ is connected implies that $v^T L_A v = 0$ for $v \in \R^q$ if and only
    if the coordinates of $v$ are all identical, which means that $v=0$ if $v \in E^{(q-1)}$. It follows that $L_A$
    is strictly positive-definite on $E^{(q-1)}$.
\end{proof}

The fact that $L_A$ is invertible on $E^{(q-1)}$ will play a crucial role in this work.

\section{Isoperimetric Minimizing Clusters -- Interface Regularity} \label{sec:minimizers}

A cluster $\Omega$ is called an \emph{isoperimetric minimizer with respect to
$\mu$} (or simply $\mu$-\emph{minimizing}) if $\per_\mu(\Omega') \ge \per_\mu(\Omega)$
for every other cluster $\Omega'$ satisfying $\mu(\Omega') =
\mu(\Omega)$.  We will at times invoke the following additional assumption: \begin{equation} \label{eq:mu-regular}
\begin{array}{l}
\text{$\mu$ is a probability measure for which all cells of any isoperimetric}\\
\text{minimizing cluster are volume and perimeter regular.}
\end{array}
\end{equation}
It is shown in Corollary \ref{cor:Gaussian-regular} in the Appendix that the Gaussian
measure $\gamma$ satisfies~\eqref{eq:mu-regular}. More precisely, it is shown that any Borel set is uniformly volume regular, and that the cells of any minimizing cluster are uniformly perimeter regular (all with respect to $\gamma$).

\subsection{Existence and interface-regularity}

The following theorem is due to Almgren \cite{AlmgrenMemoirs} (see also~\cite[Chapter 13]{MorganBook5Ed} and~\cite[Chapters 29-30]{MaggiBook}). 

\begin{theorem}[Almgren] \label{thm:Almgren}
    Let $\mu = e^{-W} dx$ with $W \in C^\infty(\R^n)$.  
    \begin{enumerate}[(i)]
\item \label{it:Almgren-i}
   If $\mu$ is a probability measure, then for any prescribed $v \in \simplex^{(q-1)}$, an isoperimetric $\mu$-minimizing $q$-cluster $\Omega$ satisfying $\mu(\Omega) = v$ exists. 
   \end{enumerate}
    For every isoperimetric $\mu$-minimizing cluster $\Omega$: 
    \begin{enumerate}[(i)] \setcounter{enumi}{1}
\item \label{it:Almgren-ii}
  $\Omega$ may and will be modified on a $\mu$-null set (thereby not altering $\set{\partial^* \Omega_i}$) so that all of its cells are open, and so that for every $i$, 
$\overline{\partial^* \Omega_i} = \partial \Omega_i$  and $\mu^{n-1}(\partial \Omega_i \setminus \partial^* \Omega_i) = 0$. 
\item \label{it:Almgren-iii} 
    For all $i \neq j$ the interfaces $\Sigma_{ij} = \Sigma_{ij}(\Omega)$ are
    $C^\infty$-smooth $(n-1)$-dimensional manifolds, relatively open in
    $\Sigma := \bigcup_k \partial \Omega_k$, and for every $x \in
    \Sigma_{ij}$ there exists $\epsilon > 0$ such that $B(x,\epsilon) \cap
    \Omega_k = \emptyset$ for all $k \neq i,j$. 
\item \label{it:Almgren-density}
    For any compact set $K$ in $\R^n$, there exist constants $\Lambda_K,r_K > 0$ so that:
    \begin{equation} \label{eq:density}
    \mu^{n-1}(\Sigma \cap B(x,r)) \leq \Lambda_K r^{n-1} \;\;\; \forall x \in \Sigma \cap K \;\;\; \forall r \in (0,r_K) . 
    \end{equation}
\item \label{it:Almgren-M} For any open bounded set $U$, $\Sigma \cap U$ is an $(\M,\eps(r) = \Lambda_U r, \delta_U)$-minimizing set in $U$. 
\end{enumerate}
\end{theorem}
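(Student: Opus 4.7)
The plan is to follow Almgren's classical program, adapted to the weighted setting as in \cite[Section~4]{EMilmanNeeman-GaussianDoubleBubbleConj}, treating the five claims in order.

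For (i), I apply the direct method of the calculus of variations. Take a minimizing sequence $\{\Omega^{(k)}\}$ of $q$-clusters with $\mu(\Omega^{(k)}) = v$ and $\per_\mu(\Omega^{(k)})$ tending to the infimum. The uniform perimeter bound combined with tightness of the probability measure $\mu$ yields, via the BV compactness theorem applied to each indicator $\mathbf{1}_{\Omega^{(k)}_i}$, an $L^1(\mu)$-subsequential limit cluster $\Omega$ whose cells are pairwise disjoint and fill $\R^n$ up to a $\mu$-null set. Lower semi-continuity of $\per_\mu$ under $L^1(\mu)$-convergence gives $\per_\mu(\Omega) \le \liminf_k \per_\mu(\Omega^{(k)})$, while $\mu(\Omega) = v$ passes to the limit. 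Claim (ii) is the standard cleanup for sets of locally finite perimeter: each $\Omega_i$ admits a Borel representative (equivalent modulo a Lebesgue-null set) whose topological boundary equals $\overline{\partial^* \Omega_i}$ and is the boundary of an open set (cf.~\cite[Proposition~12.19]{MaggiBook}), and $\mu^{n-1}(\partial \Omega_i \setminus \partial^* \Omega_i) = 0$ follows from the identification of the perimeter measure with $\H^{n-1}|_{\partial^* \Omega_i}$.

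The substance is claim (v), from which (iii) and (iv) follow. To establish $(\M, \eps(r) = \Lambda_U r, \delta_U)$-minimality on a bounded open $U$, I would compare $\Omega$ with any surface competitor localized in a ball $B(x,r)$ for $x \in \Sigma \cap U$ and $r < \delta_U$. Such a competitor introduces a volume imbalance $|\delta v| = O(r^n)$; to restore $\mu(\Omega) = v$, I would build a fixed ``reservoir'' $U'$ at positive distance from $U$ on which there exist $q-1$ admissible vector fields whose first-variation map $X \mapsto \delta_X V$ surjects onto $E^{(q-1)}$. Such fields exist because, for $v \in \interior \simplex^{(q-1)}$, each interface $\Sigma_{ij}$ carries non-trivial $\mu^{n-1}$-mass, and at a density point of $\Sigma_{ij}$ a $C^\infty_c$ bump field along the unit normal produces a direction $e_i - e_j \in E^{(q-1)}$; the case $v \in \partial \simplex^{(q-1)}$ reduces by induction to a smaller cluster. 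The restoration costs only $O(|\delta v|) = O(r^n)$ extra perimeter, which is absorbed into the $\Lambda_U r \cdot r^{n-1}$ allowance. Given (v), the density upper bound (iv) is the monotonicity-type estimate for $(\M,\eps,\delta)$-minimal sets; the $C^{1,\alpha}$ part of (iii) (at points where two cells meet) is Almgren--Taylor--White regularity, upgraded to $C^\infty$ via elliptic bootstrap applied to the quasilinear weighted mean-curvature equation $H_{\Sigma_{ij},\mu} \equiv \lambda_{ij}$ (constants by stationarity of $\Omega$), using smoothness of $\pot$; the local clearance $B(x,\epsilon) \cap \Omega_k = \emptyset$ for $k \neq i,j$ follows from the local structure at a regular point.

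The main obstacle I foresee is uniform control of the volume-restoring vector fields over a compact set $K$. One needs, for each $x \in K$, a reservoir $U'(x)$ disjoint from $B(x, r_K)$ together with a $(q-1)$-parameter family of admissible fields whose $\delta V$-images span $E^{(q-1)}$ with norms bounded by a single constant $\Lambda_K$. A finite cover of $K$, combined with the density-point construction of bump fields at chosen interfaces, converts the pointwise existence into the required uniform bound. Everything else in the theorem reduces to standard regularity for almost-minimal sets in the weighted setting.
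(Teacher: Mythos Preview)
Your outline follows the standard program and aligns with the paper's treatment, which largely defers to \cite[Theorem~4.1]{EMilmanNeeman-GaussianDoubleBubbleConj}, \cite[Theorem~12.19, Lemma~30.5]{MaggiBook}, and \cite[Theorem~3.8]{CES-RegularityOfMinimalSurfacesNearCones} rather than reproducing the arguments. There is, however, a genuine gap in your justification of the volume-fixing fields for part~\ref{it:Almgren-M}: you assert that for $v \in \interior \simplex^{(q-1)}$, ``each interface $\Sigma_{ij}$ carries non-trivial $\mu^{n-1}$-mass.'' This is \emph{not} known a priori for a minimizing cluster --- establishing $A_{ij} > 0$ for all $i \ne j$ is in fact an a posteriori consequence of the multi-bubble theorem itself (Lemma~\ref{lem:equality}), and there is no reason it should hold at this stage of the argument.

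The correct input, and all that is needed, is connectedness of the adjacency graph on $\{1,\dots,q\}$ with edges $\{i,j\}$ whenever $A_{ij} > 0$: a non-trivial connected component $S$ would give $U = \bigcup_{i \in S} \Omega_i$ with $\mu(U) \in (0,1)$ and $\per_\mu(U) = 0$, which is impossible since a set of locally finite perimeter with zero perimeter is Lebesgue-null or co-null and $e^{-W} > 0$ everywhere. Connectivity ensures that $\{e_i - e_j : A_{ij} > 0\}$ spans $E^{(q-1)}$, so bump fields at density points of the \emph{non-empty} interfaces already suffice to build the reservoir and make $X \mapsto \delta_X V$ surject onto $E^{(q-1)}$. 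With this correction your argument for~\ref{it:Almgren-M} goes through; this is precisely how the volume-fixing variations are constructed in \cite[Chapter~29]{MaggiBook}.
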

We refer to \cite{AlmgrenMemoirs,MorganBook5Ed,CES-RegularityOfMinimalSurfacesNearCones} for Almgren's definition of an $(\M,\eps,\delta)$-minimizing set, which we will not directly require for the purposes of this work. Whenever referring to the cells of a minimizing cluster or their topological boundary in this work, we will always choose a representative such as in Theorem \ref{thm:Almgren} \ref{it:Almgren-ii}. 
\begin{proof}[Proof of Theorem \ref{thm:Almgren}]
\hfill
\begin{enumerate}[(i)]
\item
  It is well-known (e.g.~\cite[Proposition~12.15]{MaggiBook}) that the (weighted)
  perimeter is lower semi-continuous with respect to (weighted) $L^1$
  convergence: if $U^r \to U$ in $L^1(\mu)$ then $\liminf_r \per_\mu(U^r) \geq
  P_\mu(U)$ for all Borel sets $U^r$ of finite (weighted) perimeter. Clearly, the
  same applies to clusters, where $L^1(\mu)$ convergence is understood for each of
  the individual cells.
  
   It is also well-known that, since $\mu$ has finite mass, the set $\set{ U \in \B(\R^n) : P_\mu(U) \leq C}$ (where
  $\B(\R^n)$ denotes the collection of Borel subsets of $\R^n$) is compact in $L^1(\mu)$ -- for bounded sets, this follows from ~\cite[Theorem~12.26]{MaggiBook}, and the general case follows by truncation with a large ball and a standard diagonalization argument (see e.g. \cite[Theorem 2.1]{RitoreRosalesMinimizersInEulideanCones}).
  
  Set $I(v) := \inf \set{ P_\mu(\Omega) : \text{$\Omega$ is a $q$-cluster with }\mu(\Omega) = v}$. As the latter set is clearly non-empty, obviously $I(v) < \infty$. Given $v \in \Delta^{(q-1)}$, let $\Omega^r$ be a sequence of $q$-clusters with $\mu(\Omega^r) = v$ and $P_\mu(\Omega^r) \rightarrow I(v)$. As $P_\mu(\Omega^r_i) \leq P_\mu(\Omega^r) \leq I(v)+1$ for large enough $r$, by passing to a subsequence, it follows that each of the cells $\Omega^r_i$ converges in $L^1(\mu)$ to $\Omega_i$. By Dominated Convergence (as the total mass is finite), we must have $\mu(\Omega_i) = v_i$, and the limiting $\Omega$ is easily seen to be a cluster (possibly after a measure-zero modification to ensure disjointness of the cells). It follows by lower semi-continuity that:
  \[
    I(v) \le P_\mu(\Omega) \le \liminf_{r \to \infty} P_\mu(\Omega^r) = I(v),
  \]
  and consequently $P_\mu(\Omega) = I(v)$. Hence $\Omega$ is a minimizing cluster with $\mu(\Omega) = v$. 
  
  Note that the proof is much simpler than the one in the unweighted setting, where the total mass is infinite, but on the other hand perimeter and volume are translation-invariant. 
  
  \item That $\Omega$ may be chosen so that 
$\overline{\partial^* \Omega_i} = \partial \Omega_i$ and
  $\mu^{n-1}(\partial \Omega_i \setminus \partial^* \Omega_i) = 0$ for all $i$ follows from \cite[Theorems 12.19 and 30.1]{MaggiBook}; the proof of the latter carries over to the weighted setting (see below). In particular, the topological boundary of each cell has zero $\mu^n$-measure, and so by replacing each cell by its interior, we do not change its measure nor its reduced boundary (and hence its $\mu$-weighted perimeter), so $\Omega$ remains an isoperimetric minimizer. 
  In addition, $\partial \interior \Omega_i \supseteq  \overline{\partial^* \interior \Omega_i} =  \overline{\partial^* \Omega_i} = \partial \Omega_i  \supseteq \partial \interior \Omega_i$
 and so the topological boundary remains unaltered and the above still holds.
        \item
 The assertions follow from \cite[Theorem 30.1, Lemma 30.2 and Corollary 3.3]{MaggiBook}, whose proofs carry over to the weighted setting. 
  Indeed, all of the arguments used in those proofs are local in nature, and so as long as our density $e^{-W}$ is $C^{\infty}$-smooth and positive, and hence locally bounded above and below (away from zero), the proofs of  the relative openness of $\Sigma_{ij}$ in $\partial \Omega_i \cap \partial \Omega_j$, and of the disjointness of $B(x,\eps)$ from $\Omega_k$ carry over by adjusting constants (which are local). To justify the relative openness of $\Sigma_{ij}$ in $\cup_k \partial \Omega_k$, note that $\partial \Omega_i \subset \bigcup_{k \neq i} \partial \Omega_k$, 
and hence $\partial \Omega_i \setminus \partial \Omega_j \subset \cup_{k \neq i,j} \partial \Omega_k$. It follows that given $x \in
    \Sigma_{ij}$ and $\eps > 0$ such that $B(x,\eps) \cap \Omega_k = \emptyset$ for all $k \neq i,j$, we have:
    \[
    B(x,\eps/2) \cap \cup_k \partial \Omega_k = B(x,\eps/2) \cap  (\partial \Omega_i \cup \partial \Omega_j) = B(x,\eps/2) \cap  \partial \Omega_i \cap \partial \Omega_j , 
    \]
    and so the relative openness in $\cup_k \partial \Omega_k$ is equivalent to that in $\partial \Omega_i \cap \partial \Omega_j$, already established above. 
 
 By \cite[Corollary 3.3 and Remark 3.4]{MaggiBook}, we also know that for any $x \in \Sigma_{ij}$ there exists $r_x > 0$ so that $\Omega_i$ and $\Omega_j$ are measure-constrained weighted-perimeter minimizers in $B(x,r_x)$. Consequently, by regularity theory for volume-constrained perimeter minimizers (see Morgan \cite[Section 3.10]{MorganRegularityOfMinimizers} for an adaptation to the weighted Riemannian setting), since our density is $C^{\infty}$-smooth, it follows that $\Sigma_{ij} \cap B(x,r_x) = \partial \Omega_i \cap \partial \Omega_j \cap B(x,r_x)$ is a $C^{\infty}$-smooth $(n-1)$-dimensional submanifold.

\item For a proof in the unweighted setting see \cite[Lemma 30.5]{MaggiBook} or \cite[Example 21.3 and Theorem 21.11]{MaggiBook}; the proof easily transfers to the weighted setting on any compact set $K$, where the density is bounded between two positive constants (depending on $K$), only resulting in modified constants. 

\item It is well known in the unweighted setting that $\Sigma$ is $(\M, \epsilon(r) = \Lambda r, \delta)$-minimizing for any minimizing cluster $\Omega$ (see \cite[Theorem 3.8]{CES-RegularityOfMinimalSurfacesNearCones} and the references therein), and by inserting the effect of the smooth positive density into the excess function, it follows that the same holds in the weighted setting inside any bounded open set $U$.
   
\end{enumerate}
\end{proof}

\begin{definition}[interface--regular cluster] \label{def:interface-regular}
A cluster $\Omega$ satisfying parts \ref{it:Almgren-ii} and \ref{it:Almgren-iii} of Theorem \ref{thm:Almgren}
is called interface--regular. \end{definition}

The definition of interface-regular cluster should not be confused with the stronger definition of regular cluster, introduced in the next section (cf. Definition \ref{def:regular}).  
Given an interface--regular cluster, let $\n_{ij}$ be the (smooth) unit normal field along $\Sigma_{ij}$ that
points from $\Omega_i$ to $\Omega_j$. We use $\n_{ij}$ to co-orient $\Sigma_{ij}$, and since $\n_{ij} = -\n_{ji}$, note that $\Sigma_{ij}$ and $\Sigma_{ji}$ have opposite orientations.
When $i$ and $j$ are clear from the context, we will simply write $\n$. 
We will typically abbreviate $H_{\Sigma_{ij}}$ and $H_{\Sigma_{ij},\mu}$ by $H_{ij}$ and $H_{ij,\mu}$, respectively.

\subsection{Stationarity and Stability}

Using Theorem \ref{thm:Almgren} and Lemma \ref{lem:regular}, a standard first variation argument (see Appendix \ref{app:stst}) gives
necessary first-order conditions for the minimality of a cluster.

\begin{lemma}[First-order conditions] \label{lem:first-order-conditions}
  For any $\mu$-minimizing cluster $\Omega$:
  \begin{enumerate}[(i)]
    \item On each $\Sigma_{ij}$, $H_{ij,\mu}$ is constant. \label{it:first-order-constant}
    \item There exists $\lambda \in E^*$ such that $H_{ij,\mu} = \lambda_i - \lambda_j$ for all $i \neq j$; moreover, $\lambda \in E^*$ is unique whenever $\mu(\Omega) \in \interior \simplex$. \label{it:first-order-cyclic}
                                \item \label{it:weak-angles}
     For every $C_c^\infty$ vector-field $X$:
      \[
        \sum_{i<j} \int_{\Sigma_{ij}} \div_{\Sigma,\mu} X^\tang\, d\mu^{n-1} = 0.
      \]
      Moreover, assuming the cells of $\Omega$ are volume and perimeter regular with respect to $\mu$, the above holds for any admissible vector-field $X$.
          \end{enumerate}
\end{lemma}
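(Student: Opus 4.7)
My plan is the standard Lagrange-multiplier approach to constrained isoperimetry. Using Lemma~\ref{lem:regular} (unconditionally for $C^\infty_c$ vector-fields, and under assumption~(\ref{eq:mu-regular}) for admissible ones), together with the weighted Gauss--Green identity~(\ref{eq:integration-by-parts}) applied to each $\Omega_i$ and the corresponding surface-divergence formula on the smooth $(n-1)$-manifold $\Sigma_{ij}$ (Theorem~\ref{thm:Almgren}~\ref{it:Almgren-iii}), the first variations take the form
\[
\delta_X V(\Omega_i) = \sum_{j \neq i} \int_{\Sigma_{ij}} X^{\n_{ij}} \, d\mu^{n-1}, \qquad \delta_X A(\Omega) = \sum_{i<j} \int_{\Sigma_{ij}} \div_{\Sigma,\mu} X \, d\mu^{n-1}.
\]
I will use the pointwise identity $\div_{\Sigma,\mu} X = H_{ij,\mu} X^{\n_{ij}} + \div_{\Sigma,\mu} X^{\tang}$ throughout.

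The first step is to produce Lagrange multipliers. A routine construction yields $q-1$ compactly supported vector-fields $Y_1,\ldots,Y_{q-1}$, each localised via Theorem~\ref{thm:Almgren}~\ref{it:Almgren-iii} near a distinct interface where it moves a small normal bump, such that the resulting matrix $(\delta_{Y_\alpha} V(\Omega_i))_{\alpha,i}$ has rank $q-1$; combined with the automatic identity $\sum_i \delta_X V(\Omega_i)=0$ (since $F_t(\R^n)=\R^n$), this produces a unique $\lambda \in E^*$ so that $\delta_X A = \sum_i \lambda_i \delta_X V(\Omega_i)$ for every admissible (or $C^\infty_c$) $X$. Rewriting the right-hand side as a sum over unordered interfaces gives $\sum_i \lambda_i \delta_X V(\Omega_i) = \sum_{i<j} (\lambda_i - \lambda_j) \int_{\Sigma_{ij}} X^{\n_{ij}} d\mu^{n-1}$, so substituting the decomposition of $\delta_X A$ and rearranging yields
\[
\sum_{i<j} \int_{\Sigma_{ij}} \bigl( (H_{ij,\mu} - (\lambda_i - \lambda_j)) X^{\n_{ij}} + \div_{\Sigma,\mu} X^{\tang} \bigr) d\mu^{n-1} = 0.
\]
Testing against $X = f \n_{ij}$ (smoothly extended and cut off to a tubular neighbourhood) with $f \in C^\infty_c$ supported in the interior of a single $\Sigma_{ij}$, the tangential integral vanishes (pure normal field) and only the $(i,j)$ normal term survives, giving $H_{ij,\mu} = \lambda_i - \lambda_j$ by the fundamental lemma of the calculus of variations. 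This simultaneously proves~\ref{it:first-order-constant} and~\ref{it:first-order-cyclic} (note that the mean-curvature is forced to a single constant on all of $\Sigma_{ij}$, not merely on each connected component, because arbitrary $f$ are admitted).

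Finally,~\ref{it:weak-angles} is immediate: substituting $H_{ij,\mu} = \lambda_i - \lambda_j$ into the displayed Lagrange identity makes the normal-component contribution vanish identically for every admissible (or $C^\infty_c$) $X$, leaving precisely $\sum_{i<j} \int_{\Sigma_{ij}} \div_{\Sigma,\mu} X^{\tang} d\mu^{n-1} = 0$. The main delicate point I anticipate is the extension from $C^\infty_c$ to admissible fields: one must ensure that the first variations are given by the displayed surface integrals over $\cup_{i<j}\Sigma_{ij}$ with no contribution ``from infinity'' or from the residual boundary $\partial \Omega_i \setminus \partial^* \Omega_i$, and this is exactly what Lemma~\ref{lem:regular} (via assumption~(\ref{eq:mu-regular})) and (\ref{eq:nothing-lost}) are designed to guarantee; once these validity statements are in place, every step above is linear in $X$ and goes through verbatim.
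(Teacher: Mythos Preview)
Your proposal is correct and follows precisely the standard first-variation / Lagrange-multiplier argument that the paper invokes by citing \cite[Theorem~4.9]{EMilmanNeeman-GaussianDoubleBubbleConj} rather than reproducing it. The only point you label ``routine'' that deserves a word of justification is the rank-$(q-1)$ claim for the volume-variation matrix: this relies on the adjacency graph $\{(i,j): A_{ij}>0\}$ being connected (so that a spanning tree supplies $q-1$ edges along which localized normal bumps produce variations spanning $E^{(q-1)}$), which in turn uses the single-bubble isoperimetric inequality exactly as in Lemma~\ref{lem:L-nondegenerate}; once noted, the volume-fixing (implicit function theorem) step and the rest of your outline go through verbatim.
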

 
\begin{definition}[Stationary Cluster]
An interface--regular cluster $\Omega$ satisfying the three conclusions of Lemma \ref{lem:first-order-conditions} is called stationary (with respect to $\mu$ and with Lagrange multiplier $\lambda \in E^*$), or $\mu$-stationary. 
\end{definition}

Indeed, the following lemma provides an insightful interpretation of $\lambda \in E^*$ as a Lagrange multiplier for the isoperimetric constrained minimization problem (see Appendix \ref{app:stst}):
\begin{lemma}[Lagrange Multiplier] \label{lem:Lagrange}      Let $\Omega$ be stationary cluster with respect to $\mu$ and with Lagrange multiplier $\lambda \in E^*$.
    Then for every $C_c^\infty$ vector-field $X$: 
    \begin{align} 
    \label{eq:first-variation-volume}
     \delta_X V(\Omega)_i &= \sum_{j \neq i} \int_{\Sigma_{ij}} X^{\n_{ij}}\, d\mu^{n-1} \;\;\; \forall i ,\\
     \nonumber
      \delta_X A(\Omega) &= \sum_{i<j} H_{ij,\mu} \int_{\Sigma_{ij}} X^{\n_{ij}}\, d\mu^{n-1} .
    \end{align} 
    In particular, 
    \[
      \delta_X A = \inr{\lambda}{\delta_X V} .
    \]
  Moreover, assuming the cells of $\Omega$ are volume and perimeter regular with respect to $\mu$, the above holds for any admissible vector-field $X$. 
\end{lemma}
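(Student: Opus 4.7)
The plan is to prove (i) first via the weighted divergence theorem, then derive (ii) by combining the standard first-variation-of-area computation with the stationarity condition \ref{it:weak-angles}, and finally obtain (iii) by symmetrizing under the antisymmetry $\n_{ij}=-\n_{ji}$ and invoking the cyclic structure $H_{ij,\mu}=\lambda_i-\lambda_j$ from \ref{it:first-order-cyclic}.

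For (i), I would apply Lemma \ref{lem:regular}(1) to the identity (\ref{eq:Jac-vol}) to commute $\frac{d}{dt}\big|_{t=0}$ past the integral. The standard pointwise identities $\frac{d}{dt}\big|_{t=0}JF_t=\div X$ and $\frac{d}{dt}\big|_{t=0}(\pot\circ F_t)=\nabla_X\pot$ combined with (\ref{eq:weighted-div}) give
\[
\delta_X V(\Omega)_i = \int_{\Omega_i}(\div X - \nabla_X\pot)\,d\mu = \int_{\Omega_i}\div_\mu X\,d\mu.
\]
Then the weighted Gauss--Green identity (\ref{eq:integration-by-parts}) converts this to $\int_{\partial^*\Omega_i} X^{\n}\,d\mu^{n-1}$, and (\ref{eq:nothing-lost}) together with the fact that the outer unit-normal to $\Omega_i$ along $\Sigma_{ij}$ is precisely $\n_{ij}$ yields (\ref{eq:first-variation-volume}).

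For the perimeter formula, I would split $\per_\mu(F_t(\Omega))=\sum_{i<j}\mu^{n-1}(F_t(\Sigma_{ij}))$ using Theorem \ref{thm:Almgren}\ref{it:Almgren-iii} and (\ref{eq:nothing-lost}), then apply Lemma \ref{lem:regular}(2) to each interface together with the standard identity $\frac{d}{dt}\big|_{t=0}J\Phi_t = \div_{\Sigma_{ij}} X$, arriving at
\[
\delta_X A(\Omega) = \sum_{i<j}\int_{\Sigma_{ij}}\div_{\Sigma,\mu} X\,d\mu^{n-1}.
\]
Decomposing $X = X^{\n_{ij}}\n_{ij}+X^\tang$ and using $\div_{\Sigma,\mu} X = H_{ij,\mu}X^{\n_{ij}} + \div_{\Sigma,\mu}X^\tang$, the total tangential contribution vanishes by the stationarity condition \ref{it:weak-angles}, leaving the claimed expression for $\delta_X A$. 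Substituting $H_{ij,\mu}=\lambda_i-\lambda_j$ from \ref{it:first-order-cyclic}, and rewriting the ordered-pair sum as a sum over $i$ by exploiting $\int_{\Sigma_{ij}}X^{\n_{ij}}d\mu^{n-1}=-\int_{\Sigma_{ji}}X^{\n_{ji}}d\mu^{n-1}$, collects the terms into $\sum_i\lambda_i\bigl(\sum_{j\ne i}\int_{\Sigma_{ij}}X^{\n_{ij}}d\mu^{n-1}\bigr)=\sum_i \lambda_i \delta_X V(\Omega)_i = \inr{\lambda}{\delta_X V}$ by (i).

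The only real subtlety is the extension from $C_c^\infty$ to admissible vector-fields under hypothesis (\ref{eq:mu-regular}); the hard part is not the algebra, which is essentially formal, but justifying differentiation under the integral and the Gauss--Green step when $X$ does not vanish at infinity. This is precisely what Lemma \ref{lem:regular} was designed to handle: uniform volume and perimeter regularity supply the dominating integrands (one should exhibit explicit $L^1$-bounds on $\partial_t(JF_t e^{-\pot\circ F_t})$ and $\partial_t(J\Phi_t e^{-\pot\circ F_t})$ in a neighborhood of $t=0$ using admissibility of $X$), and \eqref{eq:integration-by-parts} then extends from compactly-supported $X$ by a cutoff-and-dominated-convergence approximation. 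The stationarity identity \ref{it:weak-angles} is already stated for admissible $X$, so no extra work is needed there.
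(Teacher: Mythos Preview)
Your proposal is correct and follows the natural route: first variation of volume via the weighted divergence theorem and the interface decomposition~(\ref{eq:nothing-lost}), first variation of area via $\div_{\Sigma,\mu} X = H_{ij,\mu}X^{\n_{ij}}+\div_{\Sigma,\mu}X^\tang$ with the tangential piece killed by stationarity~\ref{it:weak-angles}, and the pairing $\delta_X A=\inr{\lambda}{\delta_X V}$ from the antisymmetry and~\ref{it:first-order-cyclic}. The paper does not supply a proof here but refers to~\cite[Theorem~4.9]{EMilmanNeeman-GaussianDoubleBubbleConj}, and your argument is precisely the standard one carried out there; the only small imprecision is that the extension to admissible $X$ is stated under the hypothesis that the cells are volume and perimeter regular (not~(\ref{eq:mu-regular}) directly), which is exactly what Lemma~\ref{lem:regular} and the truncation for~(\ref{eq:integration-by-parts}) require---you have this right in substance.
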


\medskip
Similarly, a standard second variation argument (see Appendix \ref{app:stst}) gives
necessary second-order conditions for the local minimality of a cluster.

\begin{definition}[Index Form $Q$] The Index Form $Q$ associated to a stationary cluster with respect to $\mu$ and with Lagrange multiplier $\lambda \in E^*$ is defined as the following quadratic form:
\[
Q(X) := \delta^2_X A - \scalar{\lambda,\delta^2_X V} ,
\]
defined on vector-fields $X$ for which the right-hand-side is well defined. 
\end{definition}

\begin{lemma}[Stability]\label{lem:unstable}
For any $\mu$-minimizing cluster $\Omega$ and $C_c^\infty$ vector-field $X$:
    \begin{equation}  \label{eq:Q}
      \delta_X V = 0 \;\;\; \Rightarrow \;\;\; Q(X) \ge 0 .
     \end{equation}
 Moreover, assuming the cells of $\Omega$ are volume and perimeter regular with respect to $\mu$, the above holds for any admissible vector-field $X$.
\end{lemma}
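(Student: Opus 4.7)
The plan is a Lagrange-multiplier argument: given $X$ with $\delta_X V = 0$, I correct the flow along $X$ by an auxiliary multi-parameter flow so as to produce a genuinely volume-preserving deformation of $\Omega$, and then use minimality to conclude $Q(X) \geq 0$. To this end, I first pick $C_c^\infty$ vector-fields $Y_1, \ldots, Y_{q-1}$ whose first volume variations $\delta_{Y_k} V$ are linearly independent in $E^{(q-1)}$. Each $Y_k$ can be taken as a small bump supported near a regular point of some interface $\Sigma_{i_k j_k}$ with prescribed sign of its normal component, so by (\ref{eq:first-variation-volume}) the vector $\delta_{Y_k} V$ is a non-zero multiple of $e_{j_k} - e_{i_k}$. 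Since the cells of $\Omega$ partition $\R^n$ up to null-sets, the graph on $\{1,\ldots,q\}$ with edges $\{(i,j) : A_{ij} > 0\}$ is connected, and the pairs $(i_k,j_k)$ can be chosen so that the corresponding $q-1$ differences $e_{j_k}-e_{i_k}$ span $E^{(q-1)}$.

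Next, I set $\Phi_s := F_1^{s_1 Y_1 + \cdots + s_{q-1} Y_{q-1}}$, $G_{t,s} := F_t^X \circ \Phi_s$, and define $f(t,s) := \mu(G_{t,s}(\Omega)) \in \simplex^{(q-1)}$ and $h(t,s) := \per_\mu(G_{t,s}(\Omega))$. Thanks to Lemma~\ref{lem:regular}, both $f$ and $h$ are jointly $C^\infty$ in a neighborhood of $(0,0)$; moreover $\partial_{s_k} f(0,0) = \delta_{Y_k} V$ span $E^{(q-1)}$. The implicit function theorem, applied within the affine hyperplane $\{v \in \R^q : \sum_i v_i = 1\}$, produces a smooth curve $t \mapsto s(t)$ with $s(0) = 0$ and $f(t, s(t)) \equiv \mu(\Omega)$ for $t$ near $0$, so that $G_{t,s(t)}$ is a volume-preserving deformation of $\Omega$. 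The main technical subtlety is the admissible case: when $X$ is only admissible and not $C_c^\infty$, joint smoothness of $f$ and $h$ in $(t,s)$ relies on the uniform volume and perimeter regularity of the cells of $\Omega$, which is exactly what hypothesis (\ref{eq:mu-regular}) supplies via Lemma~\ref{lem:regular} (the auxiliary $Y_k$ being $C_c^\infty$ cause no additional difficulty).

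Finally, differentiating the constraint $f(t, s(t)) = \mu(\Omega)$ once at $t = 0$ gives
\[
\delta_X V + \sum_{k=1}^{q-1} s_k'(0)\, \delta_{Y_k} V = 0 ,
\]
so linear independence of the $\delta_{Y_k} V$ combined with $\delta_X V = 0$ forces $s'(0) = 0$. Differentiating a second time and using $s'(0) = 0$ yields
\[
\delta_X^2 V + \sum_{k=1}^{q-1} s_k''(0) \, \delta_{Y_k} V = 0.
\]
By minimality, $t \mapsto h(t, s(t))$ has a local minimum at $t = 0$; expanding its second derivative (once again using $s'(0) = 0$) gives
\[
0 \leq \delta_X^2 A + \sum_{k=1}^{q-1} s_k''(0) \, \delta_{Y_k} A .
\]
Applying the Lagrange relation $\delta_{Y_k} A = \scalar{\lambda, \delta_{Y_k} V}$ from Lemma~\ref{lem:Lagrange} to each $Y_k$ shows the last sum equals $\scalar{\lambda, \sum_k s_k''(0) \delta_{Y_k} V} = -\scalar{\lambda, \delta_X^2 V}$. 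Therefore $Q(X) = \delta_X^2 A - \scalar{\lambda, \delta_X^2 V} \geq 0$, which is the desired conclusion.
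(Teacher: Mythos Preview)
Your argument is correct and is precisely the standard second-variation Lagrange-multiplier argument; the paper itself does not reproduce a proof but simply cites \cite[Lemma~4.12]{EMilmanNeeman-GaussianDoubleBubbleConj}, where the same reasoning is carried out. Two very minor points worth tightening: the connectivity of the adjacency graph (needed to span $E^{(q-1)}$) uses an isoperimetric lower bound rather than just the partition property, and joint $C^\infty$-smoothness of $f,h$ in $(t,s)$ is not literally the statement of Lemma~\ref{lem:regular} (which treats a single parameter) but follows by the same dominated-convergence argument applied to the composite diffeomorphism $G_{t,s}$.
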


\begin{definition}[Stable Cluster]
A $\mu$-stationary cluster satisfying the conclusion of Lemma \ref{lem:unstable} 
is called stable (with respect to $\mu$), or $\mu$-stable.
\end{definition}

\subsection{Second Variation under Translations} 

Unlike the formulas for first variation in Lemma \ref{lem:Lagrange} above, the following identity for the second
variation under translations appears to be new, and moreover, reveals a crucial feature of the Gaussian measure. 

Let $\delta_w$ denotes the variation under the translation of the cluster generated by the constant vector-field $X \equiv w \in \R^n$ (note that it is not compactly supported). The following formula follows from the calculations in Appendix \ref{app:translations}:

\begin{lemma} \label{lem:cor-Q} 
Let $\Omega$ be a stationary cluster with respect to $\mu = e^{-\pot} dx$ and with Lagrange multiplier $\lambda \in E^*$, and assume its cells are volume and perimeter regular. Then for any $w \in \R^n$:
\[
Q(w) = \delta_w^2 A - \inr{\lambda}{\delta_w^2 V} = -  \sum_{i < j} \int_{\Sigma_{ij}} \scalar{w, \n_{ij}} \nabla^2_{\n_{ij},w} \pot  d\gamma^{n-1} . 
\]
\end{lemma}

\noindent
Specializing this to the Gaussian measure $\gamma$, and recalling that any Borel set is volume regular with respect to $\gamma$, we immediately deduce (as $\nabla^2 W = \text{Id}$):

\begin{theorem}\label{thm:trans-formula}
  If $\Omega$ is a Gaussian-stationary cluster whose cells are perimeter regular, then for any $w \in \R^n$:
    \begin{equation}
        Q(w) = \delta_w^2 A - \inr{\lambda}{\delta_w^2 V} = - \sum_{i < j} \int_{\Sigma_{ij}} \inr{w}{\n_{ij}}^2\, d\gamma^{n-1} \leq 0 . 
            \label{eq:trans-formula}
    \end{equation}
\end{theorem}

\begin{remark} \label{rem:Q-sign} 
We see that a crucial feature of the Gaussian measure, which we will constantly use in this work, is that $Q$ is negative semi-definite under translation-fields. 
Note that for a measure $\gamma = e^{-\pot} dx$  with $\nabla^2 W \geq \Id$, it is in general \emph{false} that:
\[
Q(w) \leq  -  \sum_{i < j} \int_{\Sigma_{ij}} \inr{w}{\n_{ij}}^2 d\gamma^{n-1} ,
\]
or even that $Q$ is negative semi-definite. Consequently, our approach in this work cannot directly handle such measures, in sharp contrast to a variety of methods which are able to  handle such measures in the single-bubble setting (cf. \cite{BakryLedoux,BobkovLocalizedProofOfGaussianIso}). 
\end{remark}

\subsection{Effective Dimension of Gaussian-Stable Clusters} \label{subsec:M}

\begin{definition}[Effective Dimension and Dimension of Deficiency]  \label{def:effective} Given an interface--regular cluster $\Omega$, consider the linear span $N$ of $\set{\n_{ij}(x)}$ for all $x \in \Sigma_{ij}$ and all $i \neq j$. 
The dimension $e$ of $N$ is called the effective dimension of $\Omega$ and $\Omega$ is said to be effectively $e$-dimensional. The dimension of $N^{\perp}$ is called the dimension of deficiency of $\Omega$, and $\Omega$ is said to be dimension-deficient in the directions of $N^{\perp}$. If $N= \R^n$, $\Omega$ is called full-dimensional, otherwise it is called dimension-deficient. 
\end{definition}

\begin{remark}\label{rem:empty-interfaces}
Clearly the effective dimension is always positive, unless all the $\Sigma_{ij}$'s are empty, which by (\ref{eq:top-nothing-lost}) would imply that $\partial^* \Omega_i = \emptyset$ and hence $\gamma^{n-1}(\partial^* \Omega_i) = 0$ for all $i=1,\ldots,q$, and so by the single-bubble Gaussian isoperimetric inequality the cluster must (up to null-sets and rearranging indices) be the trivial cluster $\Omega = (\R^n,\emptyset,\ldots,\emptyset)$. For consistency in the statements below, we say that $\Omega$ is a $q$-cluster on the 0-dimensional subspace $\{0\}$ if (up to rearranging indices) $\Omega = ( \{0\} , \emptyset, \ldots, \emptyset)$. 
\end{remark}

\begin{lemma} \label{lem:dim-deficient}
An interface--regular $q$-cluster $\Omega$ is dimension-deficient in the direction $\theta \in \mathbb{S}^{n-1}$ if and only if there exists an interface--regular $q$-cluster $\tilde \Omega$ in $\theta^{\perp}$, so that $\Omega = \tilde \Omega \times \R$ up to null-sets. 
\end{lemma}
\begin{proof}
By Remark \ref{rem:empty-interfaces} there is nothing to prove if $n=1$, so assume $n \geq 2$. 
The ``if" direction is obvious as $\partial^* (\tilde \Omega_i \times \R) = (\partial^* \Omega_i) \times \R$ and modification of a cluster by null-sets does not change its interfaces. 
For the ``only if" direction, assume without loss of generality that the cluster $\Omega$ is dimension-deficient in the direction $e_n \in \mathbb{S}^{n-1}$. Observe that by the Gauss--Green--De Giorgi theorem (\ref{eq:integration-by-parts}) and (\ref{eq:nothing-lost}):
\[
\int_{\Omega_i} \partial_{x_n}\varphi \, d\H^{n} = \int_{\partial^* \Omega_i} \varphi \scalar{e_n,\n_{\Omega_i}} d\H^{n-1} = \sum_{j \neq i} \int_{\Sigma_{ij}} \varphi \scalar{e_n, \n_{ij}} d\H^{n-1} =  0 ,
\]
for all $\varphi \in C^1_c(\R^n)$. Applying this to $-\varphi_{\eps}(x-\cdot)$ where $\varphi_\eps$ is a compactly-supported approximation of identity, and denoting $u_\eps := 1_{\Omega_i} \ast \varphi_\eps$, it follows that:
\[
\partial_{x_n} u_\eps (x) = \int_{\Omega_i} \partial_{x_n} \varphi_{\eps}(x-y) \, d\H^{n}(y) = 0 \;\;\; \forall x \in \R^n . 
\]
Since $u_\eps \in C^\infty(\R^n)$, it follows that $u_{\eps}(x) = \tilde u_\eps(\tilde x)$ for some $\tilde u_\eps \in C^\infty(\R^{n-1})$, where $\tilde x = (x_1,\ldots,x_{n-1})$ and $x = (\tilde x , x_n)$.  But since $u_\eps \rightarrow 1_{\Omega_i}$ in $L^1_{loc}(\R^n)$ as $\eps \rightarrow 0$, it follows by Fubini's theorem that 
for a.e. $\tilde x \in \R^{n-1}$, for a.e. $x_n \in \R$, $1_{\Omega_i}(\tilde x,x_n) = 1_{\tilde \Omega_i}(\tilde{x})$ for some Borel set $\tilde \Omega_i$ in $\R^{n-1}$. In other words,  $\Omega_i$ coincides with $\tilde \Omega_i \times \R$ up to a null-set. Since $\{\Omega_i\}$ are disjoint, by modifying $\{ \tilde \Omega_i \}$ on null-sets, we can ensure that they are also disjoint. Since $\partial^* (\tilde \Omega_i \times \R) = (\partial^* \Omega_i) \times \R$, it follows that $P_{\tilde \mu}(\tilde \Omega_i) = P_{\mu}(\Omega_i) < \infty$ and $\tilde \mu(\R^{n-1} \setminus \cup_i \tilde \Omega_i) = \mu(\R^{n} \setminus \cup_i \Omega_i) = 0$, where $\tilde \mu$ denotes the marginal of $\mu$ on $\theta^{\perp}$, verifying that $\tilde \Omega$ is indeed a legal cluster (with respect to $\tilde \mu$). Clearly $\tilde \Omega$ is interface--regular if and only if $\Omega$ is. 
\end{proof}

Applying Lemma \ref{lem:dim-deficient} recursively, we obtain:
\begin{corollary}
The effective dimension of an interface--regular $q$-cluster $\Omega$ is the minimal $e$ so that there exists an $e$-dimensional subspace $F$ and a $q$-cluster $\tilde \Omega$ in $F$ with the property that $\Omega = \tilde \Omega \times F^{\perp}$ up to null-sets. 
\end{corollary}

We will make use of the following linear operator associated to a $q$-cluster $\Omega$ (whose cells are volume regular):
\[
 M : \R^n \to E^{(q-1)} ~,~ M w := \delta_w V(\Omega) ,
\]
which accounts for the first variation in (weighted) volume under translation of the cluster in the direction $w \in \R^n$. Recall that by Corollary \ref{cor:Gaussian-regular}, any Borel set is volume regular with respect to the Gaussian measure $\gamma$. 

\medskip
The fact that for the Gaussian measure, $Q$ is negative semi-definite under translation-fields, allows us to completely characterize a Gaussian-stable cluster's effective dimension in terms of the rank of $M$.

\begin{lemma} \label{lem:MKernelAndDef}
For any Gaussian-stable $q$-cluster $\Omega$ whose cells are perimeter regular, $\Ker M$ coincides with $\Omega$'s directions of dimension-deficiency. 
In particular, if $d$ denotes $\Omega$'s dimension of deficiency and $e$ denotes its effective dimension, then:
\begin{enumerate}[(i)]
\item $d = \dim \Ker M$ and $e = \rank M$. 
\item $\Omega$ is full-dimensional iff  $M$ is injective.
\item $\Omega$ is effectively $(q-1)$-dimensional iff $M$ is surjective. 
\item The effective dimension of $\Omega$ is at most $q-1$. 
\item If $q < n+1$ then $e < n$ and $d > 0$, so $\Omega$ must be dimension-deficient. 
\end{enumerate}
\end{lemma}
\begin{proof}
Clearly, if $w$ is a direction of dimension-deficiency then $\delta_w V = 0$ by (\ref{eq:first-variation-volume}). To see the other implication, recall that by Theorem \ref{thm:trans-formula}, the Gaussian measure satisfies:     \[
        Q(w) = -\sum_{i < j} \int_{\Sigma_{ij}} \inr{w}{\n_{ij}}^2 \, d\gamma^{n-1} \leq 0 ,
    \]
for any $w \in \R^n$ and Gaussian-stationary cluster (whose cells are perimeter regular). On the other hand, stability implies that if $\delta_w V = 0$ then $Q(w) \geq 0$, which means that $w$ must be perpendicular to all normals $\n_{ij}$, and is thus a direction of dimension-deficiency. Since $M : \R^n \to E^{(q-1)}$, we have $\dim \Img M = n - \dim \Ker M = n - d = e$, and $e \leq \dim E^{(q-1)} = q-1$.  
\end{proof}

Thanks to the product structure of the Gaussian measure, Lemma \ref{lem:MKernelAndDef} already reduces the proof of Gaussian Multi-Bubble Conjecture from $\R^n$ to $\R^{k}$ where $k$ is a minimizing cluster's maximal effective dimension, namely to $k = q-1 \leq n$. However, to handle the case $q > 3$, we will see in Section \ref{sec:stable} that there is actually some advantage in working with dimension-deficient clusters.

\section{Isoperimetric Minimizing Clusters - Higher Codimension Regularity} \label{sec:higher}

To handle the case $3 < q \leq n+1$ of the Gaussian Multi-Bubble Conjecture, we will need more information on the structure and regularity of the higher codimensional boundary of an isoperimetric minimizing cluster. \textbf{The reader only interested in the simplified proof of the Double-Bubble case $q=3$ may safely skip over the present and subsequent sections, and jump straight to Section \ref{sec:double-bubble}.} 

\medskip

Given a minimizing cluster $\Omega$, denote: 
\[
    \Sigma := \cup_{i} \partial \Omega_i ~,~  \Sigma^1 := \cup_{i < j} \Sigma_{ij} ,
\]
and observe that $\Sigma = \overline{\Sigma^1}$ by (\ref{eq:top-nothing-lost}) and our convention from Theorem \ref{thm:Almgren} \ref{it:Almgren-ii}.
We will also require additional information on the lower-dimensional structure of $\Sigma$. To this end, define two special cones:
\begin{align*}
    \Y &= \{x \in E^{(2)}: \text{ there exist $i \ne j \in \{1,2,3\}$ with $x_i = x_j = \max_{k \in \{1,2,3\}} x_k$}\} , \\
    \T &= \{x \in E^{(3)}: \text{ there exist $i \ne j \in \{1,2,3,4\}$ with $x_i = x_j = \max_{k \in \{1,2,3,4\}} x_k$}\}.
\end{align*}
In other words, $\Y$ is the boundary of a model $3$-cluster in $E^{(2)}$ and $\T$ is the boundary of a model $4$-cluster in $E^{(3)}$. Note that $\Y$ consists of 
$3$ half-lines meeting at the origin in $120^\circ$ angles, and that $\T$ consists of $6$ two-dimensional sectors meeting in threes at $120^{\circ}$ angles along $4$ half-lines, which in turn all meet at the origin in $\cos^{-1}(-1/3) \simeq 109^{\circ}$ angles. 
It turns out that on the codimension-$2$ and codimension-$3$ parts of a minimizing cluster, $\Sigma$ locally looks
like $\Y \times \R^{n-2}$ and $\T \times \R^{n-3}$, respectively.

\begin{theorem}[Taylor, White, Colombo--Edelen--Spolaor] \label{thm:regularity}
    Let $\Omega$ be a minimizing cluster for the measure $\mu = \exp(-W) dx$ in $\R^n$ with $W \in C^\infty(\R^n)$.         Then there exist $\alpha > 0$ and sets $\Sigma^2, \Sigma^3, \Sigma^4 \subset \Sigma$ such that:
    \begin{enumerate}[(i)]
        \item $\Sigma$ is the disjoint union of $\Sigma^1,\Sigma^2,\Sigma^3,\Sigma^4$;             \label{it:regularity-union}
        \item \label{it:regularity-Sigma2}         $\Sigma^2$ is a locally-finite union of embedded $(n-2)$-dimensional $C^{1,\alpha}$ manifolds, and for every $p
            \in \Sigma^2$ there is a $C^{1,\alpha}$ diffeomorphism mapping a neighborhood of
            $p$ in $\R^n$ to a neighborhood of the origin in $E^{(2)} \times \R^{n-2}$, so that $p$ is mapped to the origin and $\Sigma$ is locally mapped to $\Y \times \R^{n-2}$ ; 
        \item \label{it:regularity-Sigma3}
         $\Sigma^3$ is a locally-finite union of embedded $(n-3)$-dimensional $C^{1,\alpha}$ manifolds, and for every $p
            \in \Sigma^3$ there is a $C^{1,\alpha}$ diffeomorphism mapping a neighborhood of
            $p$ in $\R^n$ to a neighborhood of the origin in $E^{(3)} \times \R^{n-3}$, so that $p$ is mapped to the origin and $\Sigma$ is locally mapped to $\T \times \R^{n-3}$ ;
                \item $\Sigma^4$ is closed and $\dim_H(\Sigma^4) \leq n-4$.             \label{it:regularity-dimension}
    \end{enumerate}
\end{theorem}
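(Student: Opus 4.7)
The plan is to combine Almgren's $(\M, \epsilon, \delta)$-minimizing property (from Theorem~\ref{thm:Almgren}\ref{it:Almgren-M}) with the classification of low-codimension area-minimizing cones and an $\eps$-regularity theorem near each such cone. The first step is to define the strata in terms of tangent cone type. By the monotonicity formula (which applies to $(\M, \eps(r) = \Lambda r, \delta)$-minimizing sets on any compact subset of an open bounded set $U$, after absorbing the smooth positive density into modified constants), tangent cones to $\Sigma$ exist at every point $p \in \Sigma$ and are themselves area-minimizing cones in the Almgren sense. I would set $\Sigma^1$ to be the set of $p$ where some tangent cone is a hyperplane, $\Sigma^2$ the set where some tangent cone is $\Y \times \R^{n-2}$, $\Sigma^3$ the set where some tangent cone is $\T \times \R^{n-3}$, and $\Sigma^4$ everything else.

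Next I would invoke the three classical $\eps$-regularity results, one for each stratum. On $\Sigma^1$, Allard's regularity theorem upgrades a flat tangent cone to a $C^{1,\alpha}$ graph in a neighborhood, which combined with the elliptic-regularity argument of Kinderlehrer--Nirenberg--Spruck bootstraps to $C^\infty$ smoothness (this is already part of Theorem~\ref{thm:Almgren}\ref{it:Almgren-iii}). On $\Sigma^2$, Taylor's classification in $\R^3$ and White's extension to $\R^n$ give the required $C^{1,\alpha}$ diffeomorphism to $\Y \times \R^{n-2}$ near each point with a $\Y \times \R^{n-2}$ tangent cone. On $\Sigma^3$, the analogous $\eps$-regularity theorem near the cone $\T$ is the content of Colombo--Edelen--Spolaor, yielding the $C^{1,\alpha}$ diffeomorphism to $\T \times \R^{n-3}$. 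Because each of these $\eps$-regularity theorems is \emph{open}, $\Sigma^1, \Sigma^2, \Sigma^3$ are relatively open in $\Sigma$, and in particular tangent cones at points of these strata are unique and of the asserted type, making the decomposition in \ref{it:regularity-union} disjoint and forcing $\Sigma^4$ to be closed. Local finiteness of the manifold decompositions in \ref{it:regularity-Sigma2} and \ref{it:regularity-Sigma3} is obtained from the Almgren density bound (\ref{eq:density}), which caps the number of sheets meeting any ball.

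The main obstacle, and the only truly nontrivial step beyond quoting the above $\eps$-regularity theorems, is the Hausdorff-dimension bound $\dim_H(\Sigma^4) \leq n-4$. I would establish this by Federer--Almgren dimension reduction: iteratively blowing up at points of $\Sigma^4$ produces tangent cones that are themselves area-minimizing, and by the stratification one obtains a sequence of ``simpler'' cones whose singular set has strictly smaller dimension at each step. The classification of $2$-dimensional minimizing cones in $\R^n$ (due to Taylor, with higher-dimensional extensions in White and Colombo--Edelen--Spolaor) asserts that every such cone is either a plane, a $\Y$-cone, or a $\T$-cone; consequently, if $\dim_H(\Sigma^4) > n-4$, one could perform dimension reduction to produce a $2$-dimensional minimizing cone not in the above list, a contradiction. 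This argument combines with the openness of $\Sigma^1 \cup \Sigma^2 \cup \Sigma^3$ to give all the remaining assertions. The technical heart is entirely the three $\eps$-regularity theorems, especially the Colombo--Edelen--Spolaor theorem near $\T$ in dimensions $n \geq 4$, which is the hardest and most recent ingredient; given these as black boxes, the stratification, disjointness, closedness of $\Sigma^4$, and the dimension bound follow from a comparatively soft combination of monotonicity, blowup analysis, and dimension reduction.
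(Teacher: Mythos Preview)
Your outline captures the standard GMT architecture behind such regularity theorems (monotonicity, tangent cones, $\eps$-regularity near model cones, Federer dimension reduction), and in that sense it is a reasonable sketch of how the Colombo--Edelen--Spolaor machinery is built. The paper, however, takes a different and more economical route: rather than assembling the stratification from individual $\eps$-regularity theorems plus dimension reduction, it invokes \cite[Theorem~3.10]{CES-RegularityOfMinimalSurfacesNearCones} as a single black box, which already delivers the full stratification $\Sigma^1_U \cup \Sigma^2_U \cup \Sigma^3_U \cup \Sigma^4_U$ in any open $U$. The actual work in the paper's proof is then (a) verifying the hypotheses of that theorem in the weighted setting, and (b) gluing the local stratifications over balls $B_R$ and reconciling the abstract $\tilde\Sigma^1$ with the paper's a priori definition $\Sigma^1 = \bigcup_{i<j} \Sigma_{ij}$.

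Your proposal skips both of these steps, and the first one is not purely formal. The CES theorem applies to the support of a multiplicity-one integral $(n-1)$-current with bounded mean curvature and \emph{no boundary} in $U$. Bounded mean curvature follows because $H_{ij,\mu}$ is constant and differs from $H_{ij}$ by $\nabla_{\n_{ij}} W$, bounded on bounded sets. But the no-boundary condition $T(d\beta) = 0$ requires an argument: the paper writes $d\beta$ on $\Sigma^1$ as $\div_{\Sigma^1} Y \cdot \vol_{\Sigma^1}$ for a tangential compactly supported $Y$, and then invokes the stationarity condition Lemma~\ref{lem:first-order-conditions}\ref{it:weak-angles} to conclude the integral vanishes. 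This is the one place where properties of the minimizing cluster beyond $(\M,\eps,\delta)$-minimality actually enter, and your sketch does not account for it. Similarly, you define $\Sigma^1$ as the hyperplane-tangent-cone stratum, but the paper already has $\Sigma^1 = \bigcup \Sigma_{ij}$; showing these coincide (up to adjusting $\Sigma^4$) requires a short argument that the paper gives explicitly and that you should not omit.
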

\begin{remark}
Clearly, when $n=2$, necessarily $\Sigma^2$ is discrete and $\Sigma^3 = \Sigma^4 = \emptyset$, and when $n=3$, necessarily $\Sigma^3$ is discrete and $\Sigma^4 = \emptyset$. 
The $C^{1,\alpha}$ regularity in \ref{it:regularity-Sigma2} will be improved to $C^\infty$ regularity in Corollary \ref{cor:smooth-Sigma2} using an argument of Kinderlehrer--Nirenberg--Spruck \cite{KNS}. The work of Naber and Valtorta \cite{NaberValtorta-MinimizingHarmonicMaps} implies that $\Sigma^4$ is actually $\H^{n-4}$-rectifiable and has locally-finite $\H^{n-4}$ measure, but we will not require this here. 
 \end{remark}
\begin{proof}[Proof of Theorem \ref{thm:regularity}]
Let us first give references in the classical unweighted setting (when $\Omega$ is a minimizing cluster with respect to the Lebesgue measure in $\R^n$). 
The case $n=2$ was shown by F.~Morgan in \cite{MorganSoapBubblesInR2} building upon the work of Almgren \cite{AlmgrenMemoirs}, and also follows from the results of  J.~Taylor~\cite{Taylor-SoapBubbleRegularityInR3}. 
  The case $n=3$  was established by Taylor \cite{Taylor-SoapBubbleRegularityInR3} for general $(\M,\eps,\delta)$ sets in the sense of Almgren. 
 When $n \geq 4$, Theorem \ref{thm:regularity} was announced by B.~White \cite{White-AusyAnnouncementOfClusterRegularity,White-SoapBubbleRegularityInRn} for general $(\M,\eps,\delta)$ sets. 
Theorem \ref{thm:regularity} with part \ref{it:regularity-Sigma3} replaced by $\dim_H(\Sigma^3) \leq n-3$ follows from the work of L.~Simon \cite{Simon-Codimension2Regularity}.
A version of Theorem \ref{thm:regularity} for multiplicity-one integral varifolds in an open set $U \subset \R^n$ having associated cycle structure, no boundary in $U$, bounded mean-curvature and whose support is $(\M,\eps,\delta)$ minimizing, was very recently established by M.~Colombo, N.~Edelen and L.~Spolaor  \cite[Theorem~1.3, Remark~1.4, Theorem~3.10]{CES-RegularityOfMinimalSurfacesNearCones}; in particular, this applies to isoperimetric minimizing clusters in $\R^n$ \cite[Theorem~3.8]{CES-RegularityOfMinimalSurfacesNearCones}, yielding Theorem \ref{thm:regularity} when $\mu$ is the Lebesgue measure.

All the the above regularity results equally apply in the weighted setting. 
Let us explain how to deduce Theorem \ref{thm:regularity} from the results of \cite{CES-RegularityOfMinimalSurfacesNearCones}.
Given an open set $U \subset \R^n$ and a set $\Sigma_U \subset U$ satisfying a certain requirement, to be described next, \cite[Theorem~3.10]{CES-RegularityOfMinimalSurfacesNearCones} asserts a stratification of $\Sigma_U$ into a disjoint union of $\Sigma^1_U,\Sigma^2_U,\Sigma^3_U,\Sigma^4_U$, where $\Sigma^i_U$ for $i=1,2,3$ is a locally-finite union of $C^{1,\alpha}$ $(n-i)$-dimensional manifolds for some $\alpha > 0$ depending only $n$, with $\Sigma_U$ diffeomorphic to $\R^{n-1}$, $Y \times \R^{n-2}$ and $\T \times \R^{n-3}$ near $\Sigma^1_U$, $\Sigma^2_U$ and $\Sigma^3_U$, respectively, and with $\Sigma^4_U$ satisfying \ref{it:regularity-dimension} in $U$. The stratification of $\Sigma_U$ into $\Sigma^i_U$ may be described explicitly using symmetry properties of the possible tangent cones of $\Sigma_U$ at a given $x \in \Sigma_U$, but we will not require this here; it suffices to note that the stratification is determined by the \emph{local} structure of $\Sigma_U$, so that if $\Sigma_U \subset \Sigma_V$ with $U \subset V$ open, then $\Sigma_U^i \subset \Sigma_V^i$ for all $i=1,2,3,4$. 

It will thus be enough to establish the above decomposition for $\Sigma_U := \Sigma \cap U$, where $U = B_R$ is an open ball of arbitrary radius $R > 0$. Indeed, setting $\tilde{\Sigma}^i := \cup_{R > 0} \Sigma^i_{B_R}$, it would follow that $\{\tilde{\Sigma}^i\}_{i=1,2,3,4}$ satisfy all the asserted properties \ref{it:regularity-union} through \ref{it:regularity-dimension} of Theorem \ref{thm:regularity}. Moreover, recalling that $\Sigma^1 = \cup_{ij} \Sigma_{ij}$, clearly $\Sigma^1$ is disjoint from $\tilde{\Sigma}^2$ and $\tilde{\Sigma}^3$ since $\Sigma$ is diffeomorphic to $\R^{n-1}$ near $\Sigma^1$, and therefore $\Sigma^1 \subset \tilde{\Sigma}^1 \cup \tilde{\Sigma}^4$. On the other hand, we claim that $\tilde{\Sigma}^1 \subset \Sigma^1$. To see this, let $p \in \tilde{\Sigma}^1$, and let $N_p$ be a neighborhood of $p$ where $\Sigma$ is locally diffeomorphic to $\{x_n = 0\} \subset \R^n$. 
 Applying our local diffeomorphism, we conclude that there exist $j \neq k$ so that $p \in \partial \Omega_j \cap \partial \Omega_k$, and on $N_p$, $\Omega_j$ is mapped to $\{x_n < 0\}$ and $\Omega_k$ is mapped to $\{ x_n > 0 \}$. Pulling back via our diffeomorphism (and since for open sets with $C^1$ smooth boundary, the reduced boundary coincides with the topological one), it follows that $p \in \partial^* \Omega_j \cap \partial^* \Omega_k = \Sigma_{jk} \subset \Sigma^1$, as asserted. Hence, defining $\Sigma^2 := \tilde{\Sigma}^2$, $\Sigma^3 := \tilde{\Sigma}^3$ and $\Sigma^4 := \Sigma \setminus (\Sigma^1 \cup \Sigma^2 \cup \Sigma^3) \subset \tilde{\Sigma}^4$, all the asserted properties \ref{it:regularity-union} through \ref{it:regularity-dimension} would hold for $\{\Sigma^i\}_{i=1,2,3,4}$, thereby concluding the proof. 

As for the aforementioned requirement on $\Sigma_U$ in \cite[Theorem~3.10]{CES-RegularityOfMinimalSurfacesNearCones} --  it holds, in particular, if $\Sigma_U$ is $(\M, \epsilon, \delta)$-minimizing in $U$ in the sense of Almgren, and in addition it is the support of a multiplicity-one integral $(n-1)$-current with bounded mean curvature and no boundary in $U$. Let us verify this requirement for our $\Sigma_U = \Sigma \cap U$ for an arbitrary open bounded set $U$. 
Theorem \ref{thm:Almgren} \ref{it:Almgren-M} already asserts that $\Sigma_U$ is $(\M, \epsilon, \delta)$-minimizing, so
it remains to check that $\Sigma_U$ is the support of an integral $(n-1)$-current with bounded mean curvature and no boundary in $U$. 

Indeed, consider $\Sigma^1$ as the  $(n-1)$-dimensional smooth manifold $\sqcup_{i < j} \Sigma_{ij}$, which is in addition oriented (recall that the co-orientation is given by $\n_{ij}$), and define the associated rectifiable $(n-1)$-current $T$ on $U$ acting by integration:
\[
T(\alpha) = \int_{\Sigma^1 \cap U} \alpha ,
\]
for any smooth differential $(n-1)$-form $\alpha$ compactly supported in $U$. Since $\Sigma^1$ is a smooth manifold, which is relatively open in its closure $\Sigma$, the support of $T$ in $U$ is easily seen to be $\overline{\Sigma^1} \cap U = \Sigma \cap U = \Sigma_U$.

Next, note that $\Sigma^1 \cap U$ and hence $T$ has bounded mean-curvature, because the weighted mean-curvature $H_{\Sigma_{ij},\mu}$ is constant by Lemma~\ref{lem:first-order-conditions}, and it differs from $H_{\Sigma_{ij}}$ by $\nabla_{\n_{ij}} W$, which is a bounded quantity on any bounded set $U$. In addition, we claim that $T$ has no boundary as a current, i.e. $T(d\beta) = 0$ for any smooth differential $(n-2)$-form $\beta$ compactly supported in $U$. Indeed, on $\Sigma^1$, 
write $\beta = \beta_0 + \eta$, where $\eta$ is identically zero when acting on $\Lambda^{n-2} T \Sigma^1$, the space of $(n-2)$-vectorfields in the tangent bundle $T \Sigma^1$, and $\beta_0$ is an $(n-2)$-form on $\Lambda^{n-2} T \Sigma^1$. 
 It is then easy to check that $d \eta$ remains identically zero when acting on $\Lambda^{n-1} T \Sigma^1$, and that $\beta_0 = i_Y \vol_{\Sigma^1}$ for some smooth vector-field $Y$ tangential to $\Sigma^1$ and compactly supported in $U$ (where $i$ denotes interior product). It follows that as forms on $\Lambda^{n-1} T\Sigma^1$:
\[
d \beta = d (i_Y \vol_{\Sigma^1}) = \div_{\Sigma^1} Y \; \vol_{\Sigma^1} . \]
Hence:
\[
T(d\beta) = \int_{\Sigma^1} \div_{\Sigma^1} Y \; \vol_{\Sigma^1} = \int_{\Sigma^1} \div_{\Sigma^1,\mu}(Y e^{+W}) e^{-W} \vol_{\Sigma^1} = \sum_{i < j} \int_{\Sigma_{ij}} \div_{\Sigma_{ij},\mu}(Y e^{+W}) d\mu^{n-1} .  
\]
But since $Y e^{+W}$ is tangential to $\Sigma^1$ and compactly supported, Lemma \ref{lem:first-order-conditions} \ref{it:weak-angles} asserts that the latter integral is zero, confirming that $T$ has no boundary, and concluding the proof.

\end{proof}

\begin{definition}[Regular Cluster] \label{def:regular} A cluster satisfying parts \ref{it:Almgren-ii}, \ref{it:Almgren-iii} and \ref{it:Almgren-density} of Theorem \ref{thm:Almgren} and the conclusion of Theorem \ref{thm:regularity}, and whose cells are volume and perimeter regular with respect to $\mu$, is called regular (with respect to $\mu$). 
\end{definition}

\begin{remark} \label{rem:relaxed-regularity}
Note that a regular cluster is in particular interface--regular (cf. Definition \ref{def:interface-regular}). 
For the results of this work, we can slightly relax the above definition of regularity: we do not need the local-finiteness in parts \ref{it:regularity-Sigma2} and \ref{it:regularity-Sigma3} of Theorem \ref{thm:regularity}, and instead of $\dim_H(\Sigma^4) \leq n-4$ in part \ref{it:regularity-dimension} we will only require $\H^{n-3}(\Sigma^4) = 0$. 
In addition, it is worthwhile noting that parts \ref{it:Almgren-ii}, \ref{it:Almgren-iii} and \ref{it:Almgren-density} of Theorem \ref{thm:Almgren} are a consequence of $\Sigma$ being a locally $(\M,\eps,\delta)$-minimizing set (part \ref{it:Almgren-M} of Theorem \ref{thm:Almgren}) and stationarity, and that the latter two properties are the only ones which were used in the proof of Theorem \ref{thm:regularity}; as we will only consider regularity in conjunction with stationarity in this work, one could therefore replace ``satisfying parts \ref{it:Almgren-ii}, \ref{it:Almgren-iii} and \ref{it:Almgren-density} of Theorem \ref{thm:Almgren} and the conclusion of Theorem \ref{thm:regularity}" in the above definition by (the a-priori stronger, but simpler to state) ``satisfying part \ref{it:Almgren-M} of Theorem \ref{thm:Almgren}". 
\end{remark}

In summary, Theorem \ref{thm:Almgren}, Lemmas \ref{lem:first-order-conditions} and \ref{lem:unstable} and Theorem \ref{thm:regularity} assert that, assuming that $\mu$ satisfies (\ref{eq:mu-regular}), an isoperimetric $\mu$-minimizing cluster $\Omega$ is necessarily stationary, stable and regular (with respect to $\mu$). We proceed to describe additional properties of $\Sigma$, which hold for any stationary and regular cluster $\Omega$.

\subsection{Angles and $C^\infty$ regularity}

Theorem~\ref{thm:regularity} implies that every point in $\Sigma^2$, which we will call the \emph{triple-point set}, belongs to the closure of exactly three cells, as well as to the closure of exactly three interfaces. Given distinct $i,j,k$, we will write $\Sigma_{ijk}$ for the subset of $\Sigma^2$ which belongs to the closure of $\Omega_i$, $\Omega_j$ and $\Omega_k$, or equivalently, to the closure of $\Sigma_{ij}$, $\Sigma_{jk}$ and $\Sigma_{ki}$. Similarly, we will call $\Sigma^3$ the \emph{quadruple-point set}, and given distinct $i,j,k,l$, denote by $\Sigma_{ijkl}$ the subset of $\Sigma^3$ which belongs to the closure of $\Omega_i$, $\Omega_j$, $\Omega_k$ and $\Omega_l$, or equivalently, to the closure of all six $\Sigma_{ab}$ for distinct $a,b \in \{ i , j , k, l \}$. 

We will extend the normal fields $\n_{ij}$ to $\Sigma_{ijk}$ and $\Sigma_{ijkl}$ by continuity (thanks to $C^1$ regularity). We will also define $\n_{\partial ij}$ on
$\Sigma_{ijk}$ to be the outward-pointing unit boundary-normal to $\Sigma_{ij}$.
When $i$ and $j$ are clear from the context, we will write $\n_\partial$ for $\n_{\partial ij}$.

Let us also denote:
\[
\partial \Sigma_{ij} := \bigcup_{k \neq i,j} \Sigma_{ijk}.
\]
Note that $\Sigma_{ij} \cup \partial \Sigma_{ij}$ is a (possibly incomplete) $C^\infty$ manifold with $C^{1,\alpha}$ boundary (the boundary $\partial \Sigma_{ij}$ will be shown to actually be $C^\infty$ in Corollary \ref{cor:smooth-Sigma2}).

\begin{corollary}\label{cor:boundary-normal-sum}
    For any stationary regular cluster, the following holds at every point in $\Sigma_{ijk}$:
    \[
     \sum_{(\ell,m) \in \cyclic(i, j, k)} \n_{\partial \ell m} = 0 \; \text{ and } \sum_{(\ell,m) \in \cyclic(i, j, k)} \n_{\ell m} = 0  .
     \]
      In other words, $\Sigma_{ij}$, $\Sigma_{jk}$ and $\Sigma_{ki}$ meet at $\Sigma_{ijk}$ in $120^{\circ}$ angles. 
\end{corollary}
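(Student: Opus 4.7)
The plan is to deduce these pointwise identities from the weak angle condition (Lemma~\ref{lem:first-order-conditions}\ref{it:weak-angles}) via a localized application of the weighted Stokes' theorem (\ref{eq:Stokes}) on each interface $\Sigma_{\ell m}$.

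Fix $p \in \Sigma_{ijk}$ and choose an open ball $B = B(p,r)$ whose closure avoids $\Sigma^3 \cup \Sigma^4$ and meets no triple-point component other than $\Sigma_{ijk}$; this is possible by Theorem~\ref{thm:regularity}, since $\Sigma^3$ is a locally-finite union of $(n-3)$-manifolds not containing $p$ and $\Sigma^4$ is closed of Hausdorff dimension at most $n-4$. Inside $B$, for each $(\ell,m) \in \cyclic(i,j,k)$ the interface $\Sigma_{\ell m}$ is a smooth manifold with $C^{1,\alpha}$ boundary $\Sigma_{ijk} \cap B$, and no other interface is hit. For any $X \in C_c^\infty(B; T\R^n)$, I would apply (\ref{eq:Stokes}) to the tangential vector-field $X^\tang$ on each such $\Sigma_{\ell m}$. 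Since $X^\tang$ has no normal component, the mean-curvature term vanishes; and since $\n_{\partial \ell m}$ is itself tangent to $\Sigma_{\ell m}$, we have $\inr{X^\tang}{\n_{\partial \ell m}} = \inr{X}{\n_{\partial \ell m}}$. Summing the three contributions gives
\[
  \sum_{i<j} \int_{\Sigma_{ij}} \div_{\Sigma,\mu} X^\tang \, d\mu^{n-1} = \int_{\Sigma_{ijk} \cap B} \Bigl\langle X, \sum_{(\ell,m) \in \cyclic(i,j,k)} \n_{\partial \ell m} \Bigr\rangle \, d\mu^{n-2}.
\]
By the weak angle identity the left-hand side vanishes. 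Since $X$ is arbitrary and the normal fields are continuous on $\Sigma_{ijk}$, the fundamental lemma of the calculus of variations (using that $\Sigma_{ijk}$ has positive $\mu^{n-2}$-density locally as a $C^1$ manifold) forces $\sum_{(\ell,m) \in \cyclic(i,j,k)} \n_{\partial \ell m} = 0$ at $p$, giving the first identity.

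The angle statement and the second identity are then planar geometry inside the two-dimensional plane $E_p \subset T_p \R^n$ orthogonal to $T_p \Sigma_{ijk}$: the three interfaces trace three unit half-lines from $p$ in $E_p$, and $\n_{\partial \ell m}$ is the unit vector in $E_p$ pointing opposite to the corresponding half-line. Three unit vectors in a plane summing to zero must be mutually separated by $120^\circ$, which yields the stated angle condition. For the sum of the $\n_{\ell m}$: each $\n_{\ell m}$ is a $90^\circ$ rotation of $\n_{\partial \ell m}$ in $E_p$ (the sign dictated by which adjacent sector is $\Omega_\ell$ versus $\Omega_m$), and the cyclic arrangement of $\Omega_i, \Omega_j, \Omega_k$ around $p$ forces these three rotations to share a common orientation. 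Consequently $\sum_{(\ell,m) \in \cyclic(i,j,k)} \n_{\ell m}$ is the common rotation applied to $\sum \n_{\partial \ell m} = 0$, hence also vanishes. The only subtlety here is the sign-consistency of the three rotations, which becomes transparent once the cells are labelled cyclically around $p$; this is really the only nontrivial piece of bookkeeping in the argument, as the analytic content is fully provided by stationarity.
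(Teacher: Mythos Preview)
Your proof is correct and follows essentially the same route as the paper: localize near $p\in\Sigma_{ijk}$, apply the weighted Stokes' theorem (\ref{eq:Stokes}) to $X^\tang$ on each of the three interfaces, invoke the weak-angle condition of Lemma~\ref{lem:first-order-conditions}\ref{it:weak-angles} to kill the sum of boundary integrals, and conclude pointwise by continuity; the second identity then follows by the common $90^\circ$ rotation in the two-plane normal to $T_p\Sigma_{ijk}$, exactly as the paper argues.
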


\begin{proof}
    Let $X$ be any $C_c^\infty$ vector field whose support intersects $\Sigma$ only in $\Sigma_{ijk} \cup \Sigma_{ij} \cup\Sigma_{jk} \cup \Sigma_{ki}$.
    According to Lemma~\ref{lem:first-order-conditions} \ref{it:weak-angles}, we have:
    \[
        \sum_{(\ell, m) \in \cyclic(i, j, k)} \int_{\Sigma_{\ell m}} \div_{\Sigma,\mu} X^\tang \, d\mu^{n-1} = 0 . 
    \]
    On the other hand, since $\Sigma$ is closed, note that $X$ has compact support in $\Sigma_{\ell m} \cup \partial \Sigma_{\ell m}$ for all $(\ell, m) \in \cyclic(i, j, k)$, and so applying Stokes' theorem (\ref{eq:Stokes}), we see that the above quantity is equal to 
    \[
        \sum_{(\ell, m) \in \cyclic(i, j, k)} \int_{\Sigma_{ijk}} \inr{X}{\n_{\partial \ell m}}\, d\mu^{n-2}.
    \]
    It follows that for any such vector field $X$,
    \[
    \int_{\Sigma_{ijk}} \left\langle X , \sum_{(\ell,m) \in \cyclic(i, j, k)} \n_{\partial \ell m}\right\rangle \, d\mu^{n-2} = 0
    \]
    Since $\n_{\partial \ell m}$ is a continuous vector-field on $\Sigma_{ijk}$ and $\Sigma_{ijk} \cup \Sigma_{ij} \cup \Sigma_{jk} \cup \Sigma_{ki}$ is relatively open in $\Sigma$ (by Theorems \ref{thm:Almgren} and ~\ref{thm:regularity}),
    the first assertion follows. For the second assertion, let $W$ denote the span of $\{\n_{\partial ij}, \n_{\partial jk}, \n_{\partial ki}\}$, which we already know is two-dimensional. We will orient $W$ so that $\n_{\partial ij}$, $\n_{\partial jk}$, $\n_{\partial ki}$ are in counterclockwise order. Then $\n_{\ell m} = R \, \n_{\partial \ell m}$ for all $(\ell,m) \in \cyclic(i,j,k)$, where $R$ is a $90^\circ$ clockwise rotation in $W$, and the second assertion follows from the first.
\end{proof}

\begin{corollary}[Kinderlehrer--Nirenberg--Spruck] \label{cor:smooth-Sigma2}
For any stationary regular cluster, Theorem \ref{thm:regularity} \ref{it:regularity-Sigma2} holds with $C^\infty$ regularity (instead of just $C^{1,\alpha}$) for $\Sigma^2$ and its associated local diffeomorphisms. 
\end{corollary}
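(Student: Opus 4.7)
The plan is to apply the reflection/bootstrap technique of Kinderlehrer--Nirenberg--Spruck \cite{KNS} to a natural elliptic boundary-value system satisfied by the three sheets meeting along $\Sigma_{ijk}$. The input is the $C^{1,\alpha}$ information from Theorem \ref{thm:regularity} \ref{it:regularity-Sigma2} together with the angle identity of Corollary \ref{cor:boundary-normal-sum}, and the output is $C^\infty$ regularity up to the triple-junction.

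First I would localize. Fix $p \in \Sigma_{ijk}$. Using the $C^{1,\alpha}$ diffeomorphism from Theorem \ref{thm:regularity} \ref{it:regularity-Sigma2}, I can choose coordinates near $p$ in which $\Sigma_{ijk}$ is (initially) a $C^{1,\alpha}$ graph over an $(n-2)$-plane $P$, and the three sheets $\Sigma_{ij}$, $\Sigma_{jk}$, $\Sigma_{ki}$ are $C^{1,\alpha}$ graphs over the three half-$(n-1)$-planes of the model cone $\Y\times P$. After a $C^{1,\alpha}$ straightening of $\Sigma_{ijk}$ (possible by a change of variables that is as smooth as the current regularity of the junction), each sheet can be written as the graph of a function $u_{\ell m} \colon H \to \R$ over a \emph{fixed} flat half-space $H = \{(x',x_{n-1},0)\colon x_{n-1}\ge 0\}$, with common boundary condition on $\partial H$ corresponding to $\Sigma_{ijk}$. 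In the interior of $H$ each $u_{\ell m}$ is already $C^\infty$ by Theorem \ref{thm:Almgren} \ref{it:Almgren-iii} and Lemma \ref{lem:first-order-conditions} \ref{it:first-order-constant}.

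Next I would write the coupled boundary value problem. Inside $H$, each $u_{\ell m}$ satisfies the quasilinear elliptic PDE
\[
H_{ij}(u_{\ell m}) = c_{\ell m} + \nabla_{\n_{\ell m}(u_{\ell m})} W,
\]
which is smooth in its arguments because $W \in C^\infty$. On $\partial H$, I impose the \emph{Dirichlet matching} conditions $u_{ij} = u_{jk} = u_{ki}$ (the three sheets meet along $\Sigma_{ijk}$) and the \emph{Neumann-type angle} condition from Corollary \ref{cor:boundary-normal-sum}, namely $\sum_{(\ell,m)\in\cyclic(i,j,k)} \n_{\partial \ell m} = 0$, expressed in terms of the tangential and normal derivatives of the three $u_{\ell m}$'s at $\partial H$. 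This yields a nonlinear elliptic system for the triple $(u_{ij},u_{jk},u_{ki})$ with mixed first-order boundary conditions of Dirichlet/Neumann type and smooth coefficients.

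The decisive step is to verify that this system satisfies the Agmon--Douglis--Nirenberg complementing (Lopatinski--Shapiro) condition at $\partial H$; this is precisely the content of the Kinderlehrer--Nirenberg--Spruck framework \cite{KNS}, and it is here that the specific $120^\circ$ symmetry is essential -- the matching plus balanced-angle boundary operator can be checked to be elliptic after the symmetric reflection that unfolds the three sheets (equivalently, the six sheets of the doubled $\Y$) into a smooth model at which Fourier analysis of the boundary symbol is explicit. With the complementing condition in hand, the Schauder estimates for nonlinear elliptic systems yield a bootstrap: starting from $u_{\ell m}\in C^{1,\alpha}$, which is also the regularity of the boundary $\partial H$ in the original coordinates, one gains a derivative at a time up to the boundary, so $u_{\ell m}\in C^{k,\alpha}$ for all $k$. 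In particular $\Sigma_{ijk}$ itself (identified by the common boundary values) is $C^\infty$, so the original $C^{1,\alpha}$ straightening can be iteratively refined to a $C^\infty$ diffeomorphism realizing the local model $\Y \times \R^{n-2}$. The main obstacle is the verification of the complementing condition for this particular triple of equations with the $120^\circ$ boundary operator; everything else is routine Schauder bootstrapping once that algebraic check is done, and that is exactly the check carried out in \cite{KNS}.
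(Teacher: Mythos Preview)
Your proposal is correct and takes essentially the same approach as the paper: verify the $120^\circ$ angle condition via Corollary~\ref{cor:boundary-normal-sum}, observe that the unweighted mean-curvature $H_{\ell m} = H_{\ell m,\mu} + \nabla_{\n_{\ell m}} W$ is a smooth function of zeroth and first-order data, and then invoke the Kinderlehrer--Nirenberg--Spruck bootstrap for the resulting elliptic free-boundary system. The paper's proof is terser---it simply cites \cite[Theorem~5.2]{KNS} after checking these hypotheses---whereas you have sketched more of the internal mechanics (graph parametrization, Dirichlet matching plus balanced-angle Neumann conditions, complementing condition), which is exactly what the paper later reproduces in Appendix~\ref{sec:Schauder} for a different purpose.
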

\begin{proof}
It was shown by Nitsche \cite{Nitsche-ThreeMinimalSurfacesMeetOnSmoothCurve} that three minimal surfaces in $\R^3$ meeting at equal $120^{\circ}$ angles must do so on a smooth curve. Kinderlehrer, Nirenberg and Spruck \cite[Theorem 5.2]{KNS} extended this to the case that three hypersurfaces with analytic mean-curvature in $\R^n$ meet at arbitrary \emph{constant} transversal angles on a $C^{1,\alpha}$ $(n-2)$-dimensional manifold $\Sigma^2$, and showed that $\Sigma^2$ must in fact be analytic. As we shall essentially reproduce in Appendix \ref{sec:Schauder} for a different purpose, the key step in their argument is to use elliptic methods to show that the $C^{k,\alpha}$ regularity of $\Sigma^2$ 
(as well as of the local diffeomorphism as in Theorem \ref{thm:regularity} \ref{it:regularity-Sigma2}) may be improved to $C^{k+1,\alpha}$ regularity if the mean-curvature of the three hypersurfaces is a smooth-enough function of the first and zeroth order derivatives of the surfaces' parametrization. 
By Corollary \ref{cor:boundary-normal-sum}, $\Sigma_{ij}$, $\Sigma_{jk}$ and $\Sigma_{ki}$ meet at equal $120^{\circ}$ angles on the $C^{1,\alpha}$ manifold $\Sigma_{ijk}$. Since the weighted mean-curvature of $\Sigma_{\ell m}$ is constant $H_{\ell m,\mu}$, its unweighted mean-curvature is of the form $H_{\ell m,\mu} + \scalar{\nabla W, \n_{\ell m}}$, which is a $C^\infty$ function of the zeroth and first order terms since $W \in C^\infty(\R^n)$. The assertion therefore follows by applying the Kinderlehrer--Nirenberg--Spruck argument. 
\end{proof}

\subsection{Local integrability of curvature}

We will also crucially require the following local integrability properties of our various curvatures. Let $\II^{ij}$ denotes the second fundamental form on $\Sigma_{ij}$, which may be
extended by continuity to $\partial \Sigma_{ij}$. When $i$ and $j$ are clear from the context, we will write $\II$ for $\II^{ij}$. 
Recall that $\norm{\II^{ij}}$ denotes (say) the Hilbert-Schmidt norm of $\II^{ij}$.

\begin{proposition} \label{prop:Schauder} Let $\Omega$ be a stationary regular cluster. 
For any compact set $K \subset \R^n$ which is disjoint from $\Sigma^4$:
\begin{enumerate}[(1)]
\item \label{it:Schauder-Sigma1} $\displaystyle \sum_{i,j} \int_{\Sigma_{ij} \cap K} \norm{\II^{ij}}^2 d\mu^{n-1} < \infty$. 
\item \label{it:Schauder-Sigma2} $\displaystyle \sum_{i,j,k} \int_{\Sigma_{ijk} \cap K} \norm{\II^{ij}} d\mu^{n-2} < \infty$. 
\end{enumerate}
\end{proposition}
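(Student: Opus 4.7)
\medskip
\noindent\textbf{Proof plan.} The plan is to stratify $K$ according to $\Sigma = \Sigma^1 \sqcup \Sigma^2 \sqcup \Sigma^3 \sqcup \Sigma^4$ and reduce both integrability claims to a local analysis near the quadruple-point set $\Sigma^3$, which is the only locus where $\|\II^{ij}\|$ can fail to be bounded. Since $K$ is compact and disjoint from the closed set $\Sigma^4$, we have $d(K,\Sigma^4) > 0$; on any compact set disjoint from $\Sigma^3 \cup \Sigma^4$, Corollary~\ref{cor:smooth-Sigma2} guarantees that the interfaces are $C^\infty$ up to and including the triple-point set $\Sigma^2$, so $\|\II^{ij}\|$ is continuous and hence bounded there. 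Combined with the density estimate in Theorem~\ref{thm:Almgren}\ref{it:Almgren-density}, which yields $\mu^{n-1}(\Sigma \cap K) < \infty$ and analogously $\mu^{n-2}(\Sigma^2 \cap K) < \infty$, both integrals converge on the complement of any fixed neighborhood of $\Sigma^3 \cap K$. So the entire problem reduces to handling an arbitrarily small neighborhood $U$ of each point $p \in \Sigma^3 \cap K$.

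Near such $p$ I would apply Theorem~\ref{thm:regularity}\ref{it:regularity-Sigma3} to obtain a $C^{1,\alpha}$ diffeomorphism straightening $\Sigma \cap U$ to (a neighborhood of the origin in) $\T \times \R^{n-3}$, and parametrize each of the six interfaces $\Sigma_{ij} \cap U$ as a small $C^{1,\alpha}$ graph $u_{ij}$ over the corresponding two-dimensional sector of $\T$ (times $\R^{n-3}$). The graphs $\{u_{ij}\}$ form a coupled nonlinear elliptic system: the constant weighted mean-curvature equation from Lemma~\ref{lem:first-order-conditions}\ref{it:first-order-constant} in the interior of each sector, glued across each triple-junction face by the $120^\circ$-angle matching condition of Corollary~\ref{cor:boundary-normal-sum}. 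Following the reflection technique of Kinderlehrer--Nirenberg--Spruck used in the proof of Corollary~\ref{cor:smooth-Sigma2}, I would extend the three sectors meeting at each triple-junction face across that face so as to convert the boundary condition into an elliptic system on a single domain, with coefficients that are Hölder continuous up to the codimension-$3$ axis $\Sigma^3 \cap U$. A dyadic shell decomposition at distance $r$ from the axis, combined with scaling and interior Schauder estimates on each shell, then produces bounds of the form $\|\II^{ij}(x)\|  \leq  C\, r(x)^{\alpha - 1}$, where $r(x)$ denotes the distance to $\Sigma^3$ and $\alpha > 0$ is the Hölder exponent from Theorem~\ref{thm:regularity}.

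Polar integration in the two transverse directions to the codimension-$3$ axis then gives
\[
\int_{\Sigma_{ij} \cap U} \|\II^{ij}\|^2 \, d\mu^{n-1} \;\lesssim\; \int_0^{r_0} s \cdot s^{2(\alpha-1)} \, ds \;<\; \infty,
\]
establishing part~\ref{it:Schauder-Sigma1} locally, and similarly polar integration in the single transverse direction to $\Sigma^3$ inside $\Sigma^2$ gives
\[
\int_{\Sigma_{ijk} \cap U} \|\II^{ij}\| \, d\mu^{n-2} \;\lesssim\; \int_0^{r_0} s^{\alpha - 1} \, ds \;<\; \infty,
\]
establishing part~\ref{it:Schauder-Sigma2} locally. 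A finite cover of $\Sigma^3 \cap K$ by such neighborhoods $U$ finishes the proof. The main obstacle is the Schauder step: one must verify that after reflecting across the four triple-junction faces meeting at the codimension-$3$ axis, the extended nonlinear system remains genuinely elliptic with sufficiently regular coefficients, and one must control the constants in the dyadic-scale estimates so that their sum converges. This is the point where the strictly positive Hölder exponent $\alpha > 0$ in Theorem~\ref{thm:regularity}, rather than mere Lipschitz control, is crucial --- a pointwise bound $\|\II\| \lesssim 1/r$ alone would leave the $L^2$ integral borderline divergent in the transverse $2$-disk.
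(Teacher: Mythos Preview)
Your proposal is correct and follows essentially the same route as the paper: reduce to a neighborhood of each $q\in\Sigma^3\cap K$, use the $C^{1,\alpha}$ straightening to $\T\times\R^{n-3}$, represent the three interfaces meeting along a fixed triple line as graphs, invoke the Kinderlehrer--Nirenberg--Spruck reflection and Schauder estimates to obtain the pointwise bound $\|\II^{ij}(x)\|\le C\,d(x,\Sigma^3)^{\alpha-1}$, and then integrate in transverse polar coordinates. The paper carries out the Schauder step slightly differently---it applies a hodograph transform to flatten the triple set, linearizes the resulting quasilinear system, and uses the Agmon--Douglis--Nirenberg weighted boundary estimate on a single hemisphere of maximal radius $R\sim d(\cdot,\Sigma^3)$ rather than a dyadic shell decomposition with interior estimates---but the output pointwise bound and the subsequent integration are identical to yours.
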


By compactness and the fact that $\Sigma^4$ is closed, it is enough to verify the above integrability locally, in arbitrarily-small relatively open subsets of $\overline{\Sigma^1}$ and $\overline{\Sigma^2}$, respectively, which are disjoint from $\Sigma^4$. Since $\Sigma_{ij}$ is $C^\infty$ smooth all the way up to $\partial \Sigma_{ij} \subset \Sigma^2$, it remains to verify the integrability around small neighborhoods of quadruple points in $\Sigma^3$. Proposition \ref{prop:Schauder} follows from combining the locally uniform $C^{1,\alpha}$ regularity of $\Sigma$ around quadruple points, together with the elliptic regularity method used by Kinderlehrer--Nirenberg--Spruck \cite{KNS} to self-improve the $C^{1,\alpha}$ regularity around triple points $\Sigma^2$ to $C^\infty$ smoothness. The idea is to use Schauder estimates to show that the curvature (or $C^2$ semi-norm of the graph of $\Sigma_{ij}$) at a distance of $r$ from $\Sigma^3$ blows up at a rate of at most $1/r^{1-\alpha}$ as $r \rightarrow 0$, and hence is locally in $L^1$ on $\Sigma^2$ and locally in $L^2$ on $\Sigma^1$. As the tools we require for establishing Proposition \ref{prop:Schauder} will not be used in any other part of this work, we defer its proof to Appendix \ref{sec:Schauder}.

\medskip
Note that the same compactness argument as above, together with the $C^{1,\alpha}$ regularity of $\Sigma$ around quadruple-points in $\Sigma^3$,  immediately yields the following simple corollary of Theorem \ref{thm:regularity}:
\begin{corollary} \label{cor:locally-finite-Sigma2}
For any compact set $K \subset \R^n$ which is disjoint from $\Sigma^4$, $\mu^{n-2}(\Sigma^2 \cap K) < \infty$. 
\end{corollary}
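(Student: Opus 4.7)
The plan is to reduce the claim to a finite-cover argument that combines compactness with the local model picture from Theorem~\ref{thm:regularity}. Since $W$ is continuous and $K$ is compact, it suffices to show $\H^{n-2}(\Sigma^2 \cap K) < \infty$, after which multiplying by $\sup_K e^{-W}$ gives the weighted bound.

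The first key observation is that $\Sigma^1 \cup \Sigma^2$ is relatively open in $\Sigma$. Indeed, every point of $\Sigma^1$ has a neighborhood in which $\Sigma$ is a smooth $(n-1)$-manifold (Theorem~\ref{thm:Almgren}~\ref{it:Almgren-iii}), and every point of $\Sigma^2$ has a neighborhood in which $\Sigma$ is diffeomorphic to $\Y \times \R^{n-2}$ (Theorem~\ref{thm:regularity}~\ref{it:regularity-Sigma2}), so that nearby singular points of $\Sigma$ still lie in $\Sigma^2$. Consequently $\Sigma^3 \cup \Sigma^4$ is relatively closed in $\Sigma$, hence closed in $\R^n$. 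Since $\Sigma^4$ is closed (Theorem~\ref{thm:regularity}~\ref{it:regularity-dimension}) and $K \cap \Sigma^4 = \emptyset$, we may enlarge $K$ slightly to a compact set $\overline{U}$ still disjoint from $\Sigma^4$; then $\Sigma^3 \cap \overline{U} = (\Sigma^3 \cup \Sigma^4) \cap \overline{U}$ is closed and bounded, hence \emph{compact}. This is the only non-routine step, and is the main (mild) obstacle.

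The second step handles a neighborhood of $\Sigma^3 \cap \overline{U}$. For each $p$ in this compact set, Theorem~\ref{thm:regularity}~\ref{it:regularity-Sigma3} provides a bounded open neighborhood $B_p$ and a $C^{1,\alpha}$ diffeomorphism onto a neighborhood of the origin in $E^{(3)} \times \R^{n-3}$ carrying $\Sigma$ to $\T \times \R^{n-3}$. Under this map, $\Sigma^2 \cap B_p$ corresponds to the $(n-2)$-dimensional singular part of $\T \times \R^{n-3}$, i.e. the union of the four edges of $\T$ crossed with $\R^{n-3}$; this is a finite union of $(n-2)$-planes in $E^{(3)} \times \R^{n-3}$ and so has finite $\H^{n-2}$-measure on any bounded set. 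Because the diffeomorphism is locally bi-Lipschitz, $\H^{n-2}(\Sigma^2 \cap B_p') < \infty$ for any $B_p' \Subset B_p$. Extract a finite subcover $B_{p_1}', \ldots, B_{p_m}'$ of the compact set $\Sigma^3 \cap \overline{U}$; the contribution from $\Sigma^2 \cap \bigcup_j B_{p_j}'$ is finite.

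The third step handles $F := (\Sigma^2 \cap K) \setminus \bigcup_{j} B_{p_j}'$, which is closed in $K$ (hence compact) and, by construction, stays a positive distance from $\Sigma^3 \cup \Sigma^4$. Around each point of $F$, the local $\Y \times \R^{n-2}$ model from Theorem~\ref{thm:regularity}~\ref{it:regularity-Sigma2} shows that $\Sigma^2$ coincides with the spine $\{0\} \times \R^{n-2}$, so it is a single $C^{1,\alpha}$ embedded $(n-2)$-manifold whose $\H^{n-2}$-measure is finite on bounded pieces. A finite subcover of $F$ bounds $\H^{n-2}(F)$. Summing this with the finite contribution from the quadruple-point charts yields $\H^{n-2}(\Sigma^2 \cap K) < \infty$, and hence $\mu^{n-2}(\Sigma^2 \cap K) < \infty$.
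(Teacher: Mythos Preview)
Your proof is correct and follows essentially the same approach as the paper: a compactness argument reducing to local neighborhoods, using the $\Y \times \R^{n-2}$ model away from $\Sigma^3$ and the $\T \times \R^{n-3}$ model near $\Sigma^3$. The paper states this tersely (``the same compactness argument as above, together with the $C^{1,\alpha}$ regularity of $\Sigma$ around quadruple-points in $\Sigma^3$''), whereas you spell out the details, including the useful observation that $\Sigma^3 \cap \overline{U}$ is compact once one has enlarged $K$ to a compact $\overline{U}$ still disjoint from $\Sigma^4$.
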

In fact, by the local finiteness statement in Theorem \ref{thm:regularity} \ref{it:regularity-Sigma2}, the same holds without assuming that $K$ is disjoint from $\Sigma^4$, but we will not require this here.

\section{Second Variation for Tame Vector-Fields} \label{sec:second-var}

Throughout this section, unless otherwise stated, $\Omega$ is assumed to be a $\mu$-stationary regular cluster. 
Recall that $\partial \Sigma_{ij}$ denotes $\bigcup_{k \neq i,j} \Sigma_{ijk}$ and that
$\Sigma_{ij} \cup \partial \Sigma_{ij}$ is a $C^\infty$ manifold with boundary.
Recall also that $\II^{ij}$ denotes the second fundamental form of $\Sigma_{ij}$, which we extend by continuity to $\partial \Sigma_{ij}$.
We will abbreviate $\II^{ij}_{\partial,\partial}$ for $\II^{ij}(\n_{\partial ij}, \n_{\partial ij})$ on $\partial \Sigma_{ij}$. When $i$ and $j$ are clear from the context, we use $\II$ for $\II^{ij}$. 
Finally, recall the definition of the index-form $Q(X) = \delta_X^2 A - \inr{\lambda}{\delta_X^2 V}$.

\begin{definition}[Tame Field]
    The vector-field $X$ is called \emph{tame} for a regular cluster $\Omega$ if it is $C_c^\infty$ and supported in $\R^n \setminus \Sigma^4$.
\end{definition}

In this section, we establish the following key formula for $Q(X)$. We denote by $\nabla^\tang$ the tangential component of the derivative.

\begin{theorem}[Index-form formula for tame fields]\label{thm:formula}
    If $\Omega$ is a stationary regular cluster and $X$ is a tame vector-field, then $Q(X)$ is given by
    \begin{equation}         \sum_{i < j} \Big[\int_{\Sigma_{ij}} \brac{ |\nabla^\tang X^\n|^2 - (X^\n)^2 \|\II\|_2^2 - (X^\n)^2 \nabla^2_{\n,\n} W } d\mu^{n-1}     - \int_{\partial \Sigma_{ij}} X^\n X^{\n_\partial} \II_{\partial,\partial} \, d\mu^{n-2}\Big].
        \label{eq:formula}
    \end{equation} \end{theorem}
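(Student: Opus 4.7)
The plan is to start from a general second variation computation on each interface $\Sigma_{ij}$ (the detailed pointwise calculation I would defer to the appendix on second variations) and then to integrate by parts all the divergence terms using the generalized Stokes' theorem developed earlier in Section~\ref{sec:second-var}, exploiting the stationarity conditions in Lemma~\ref{lem:first-order-conditions} and Corollary~\ref{cor:boundary-normal-sum} to cancel most of the boundary contributions.

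More precisely, first I would write $Q(X) = \delta_X^2 A - \inr{\lambda}{\delta_X^2 V}$ by differentiating $J\Phi_t\,e^{-W\circ F_t}$ twice in $t$ at $t=0$ on each $\Sigma_{ij}$ (and $JF_t\, e^{-W\circ F_t}$ on each $\Omega_i$), using Lemma~\ref{lem:regular} to justify bringing the derivatives inside the integrals. This yields, on each $\Sigma_{ij}$, an integrand of the schematic form
\[
|\nabla^\tang X^\n|^2 - (X^\n)^2 \norm{\II}^2 - (X^\n)^2 \nabla^2_{\n,\n}W + H_{ij,\mu}\,\text{(stuff)} + \div_{\Sigma,\mu} Y_{ij}
\]
where the $H_{ij,\mu}$ term is exactly the one needed to absorb $-\inr{\lambda}{\delta_X^2 V}$ via the relation $H_{ij,\mu}=\lambda_i-\lambda_j$ and Lemma~\ref{lem:Lagrange}, and $Y_{ij}$ is an explicit sum of tangential vector fields on $\Sigma_{ij}$ built from $X$, $X^\n$, $X^\tang$, $\nabla X$ and $\II$ (the standard ones that come out of computing $\diff{}{t}|_{t=0}(J\Phi_t\, e^{-W\circ F_t})$ a second time and rearranging terms). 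The point is that modulo these ``volume'' and ``divergence'' pieces, the remaining bulk integrand is precisely the one in~\eqref{eq:formula}.

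Next I would handle the divergence terms. Here the key issue is that $Y_{ij}$ involves the second fundamental form $\II^{ij}$, which may blow up near $\Sigma^3$. This is exactly the situation addressed by the generalized Stokes' theorem derived in this section: since $X$ is tame (compactly supported and avoiding $\Sigma^4$), Proposition~\ref{prop:Schauder} gives $\II \in L^2_{\text{loc}}(\Sigma^1) \cap L^1_{\text{loc}}(\Sigma^2)$ away from $\Sigma^4$, so $Y_{ij}\in L^2(\Sigma_{ij},\mu^{n-1})\cap L^1(\partial\Sigma_{ij},\mu^{n-2})$ and $\div_{\Sigma,\mu} Y_{ij}\in L^1(\Sigma_{ij},\mu^{n-1})$, which is precisely what is needed to apply the version of Stokes' theorem for non-compactly supported fields on $M_{ij}=\Sigma_{ij}\cup\partial\Sigma_{ij}$ whose closure has locally finite $\calH^{n-3}$ measure (Corollary~\ref{cor:locally-finite-Sigma2}). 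Each $\int_{\Sigma_{ij}}\div_{\Sigma,\mu}Y_{ij}\,d\mu^{n-1}$ then becomes $\int_{\partial\Sigma_{ij}} Y_{ij}^{\n_\partial}\, d\mu^{n-2}$.

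Finally, I would sum the boundary integrals over $i<j$ and use the stationarity conditions at triple points. By Corollary~\ref{cor:boundary-normal-sum}, at each $\Sigma_{ijk}$ the three boundary normals $\{\n_{\partial \ell m}\}_{(\ell,m)\in\cyclic(i,j,k)}$ sum to zero, as do the three interface normals. A direct check term-by-term shows that three of the four pieces making up $Y_{ij}^{\n_\partial}$ (the ones linear in $\n_{\partial ij}$ or $\n_{ij}$ times quantities that are continuous across the triple line) cancel upon summing in cycles around $\Sigma_{ijk}$; what survives is precisely $-X^\n X^{\n_\partial}\II_{\partial,\partial}$ per interface, giving the boundary term in~\eqref{eq:formula}. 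Collecting everything yields the stated formula.

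The main obstacle is the integration-by-parts step: one must be sure that (a) the generalized Stokes' theorem really applies to the explicit $Y_{ij}$ produced by the second variation, and (b) the cyclic cancellations at triple points indeed eliminate exactly three of the four boundary contributions. The integrability (a) reduces to checking that each scalar coefficient in $Y_{ij}$ is controlled pointwise by a constant times $1+\norm{\II}$, which combined with tameness of $X$ and Proposition~\ref{prop:Schauder} suffices; the cancellation (b) is a purely algebraic identity at $\Sigma_{ijk}$ that follows from the $120^\circ$ angle condition together with continuity of the normal fields across the triple line.
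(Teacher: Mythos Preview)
Your plan is correct and is essentially the paper's own argument: start from the raw second-variation formula (Lemma~\ref{lem:formula}), apply the generalized Stokes' theorem (Lemma~\ref{lem:stokes}) to the divergence terms, and use the $120^\circ$ structure at $\Sigma_{ijk}$ to reduce the boundary contribution to the $\II_{\partial,\partial}$ term.

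Two points deserve more care than your sketch indicates. First, the claim ``$\div_{\Sigma,\mu}Y_{ij}\in L^1$'' does not follow from a pointwise bound $|Y_{ij}|\lesssim 1+\|\II\|$: the piece $Z_{ij}=X^\n\nabla_{X^\tang}\n$ has divergence involving derivatives of $\II$, for which Proposition~\ref{prop:Schauder} gives no control. The paper treats $Z_{ij}$ separately and argues indirectly: the full integrand of Lemma~\ref{lem:formula} is integrable by Lemma~\ref{lem:regular}, and every other term is shown individually integrable, so $\div_{\Sigma,\mu}Z_{ij}$ must be as well. Second, the surviving boundary term from $Z_{ij}$ is $X^{\n_{ij}}\II^{ij}(X^{\tang_{ij}},\n_{\partial ij})$, not $X^{\n_{ij}}X^{\n_{\partial ij}}\II^{ij}_{\partial,\partial}$; converting one to the other after cyclic summation is Lemma~\ref{lem:boundary-curvature-cancellation0}, which relies on the differential fact that $\inr{\nabla_y\n_{\ell m}}{\n_{\partial \ell m}}$ is independent of $(\ell,m)\in\cyclic(i,j,k)$ for $y$ tangent to $\Sigma_{ijk}$, not merely on $\sum\n_{\partial}=0$. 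The remaining three boundary pieces (from the bounded field $Y_{ij}=X^\n\nabla_\n X - X\inr{\nabla_\n X}{\n}+X^\tang\div_\mu X$) do cancel by a genuinely algebraic $3$-tensor identity (Lemma~\ref{lem:cancellation}).
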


We will also require the following variant of Theorem \ref{thm:formula}, which will require additional work: 
\begin{theorem}[Index-form polarization formula with respect to constant fields]
%Index-Form Formula for Tame + Constant Fields]
\label{thm:combined-vector-fields}     With the same assumptions as in Theorem \ref{thm:formula}, if $Y = X + w$ where $w$ is a constant
    vector-field, then $Q(Y)$ is given by
    \begin{multline*}
        \sum_{i < j} \Bigg[
            \int_{\Sigma_{ij}} \left ( |\nabla^\tang X^\n|^2 - (X^\n)^2 \|\II\|_2^2 - (Y^\n)^2 \nabla^2_{\n,\n} W                         - (X+Y)^{\n} \nabla^2_{w^{\tang} ,\n} W \right ) d\mu^{n-1} \\ - \int_{\partial \Sigma_{ij}} X^\n X^{\n_\partial} \II_{\partial,\partial}\, d\mu^{n-2} 
        \Bigg] .
    \end{multline*}
\end{theorem}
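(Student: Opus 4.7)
The plan is to deduce Theorem~\ref{thm:combined-vector-fields} from Theorem~\ref{thm:formula} by approximating the non-compactly supported part $w$ with tame cutoffs, taking a limit, and then rearranging the resulting expression via integration by parts. Fix a smooth cutoff $\eta_R$ with $\eta_R \equiv 1$ on $B_R$, $\eta_R \equiv 0$ outside $B_{2R}$, and $|\nabla \eta_R| = O(1/R)$. Since $\Sigma^4$ is closed with $\dim_H(\Sigma^4) \le n-4$ by Theorem~\ref{thm:regularity}\ref{it:regularity-dimension}, I can arrange $\eta_R$ to also vanish in a small neighborhood of $\Sigma^4$. Then $X_R := X + \eta_R w$ is tame, so Theorem~\ref{thm:formula} applies directly to $X_R$.

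The first step is to verify that $Q(X_R) \to Q(Y)$ as $R \to \infty$, by dominated convergence applied to the integral representations in Lemma~\ref{lem:regular}, using volume and perimeter regularity of the cluster (part of the regularity hypothesis) together with the uniform smoothness bounds satisfied by $X_R$. Next, one computes the pointwise limits of the integrands on the right-hand side of Theorem~\ref{thm:formula} applied to $X_R$, using the identity $\nabla^\tang w^\n = -\II^{ij}(w^\tang,\cdot)$ valid on each $\Sigma_{ij}$ (which follows from $\nabla w = 0$ and Weingarten); combined with the curvature integrability of Proposition~\ref{prop:Schauder}, this passes to the limit and yields the intermediate formula
\begin{equation*}
Q(Y) = \sum_{i<j}\Bigl[\int_{\Sigma_{ij}}\!\bigl(|\nabla^\tang Y^\n|^2 - (Y^\n)^2 \|\II\|^2 - (Y^\n)^2 \nabla^2_{\n,\n}W\bigr)\, d\mu^{n-1} - \int_{\partial\Sigma_{ij}}\! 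Y^\n Y^{\n_\partial}\, \II_{\partial,\partial}\, d\mu^{n-2}\Bigr].
\end{equation*}

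The technical heart of the proof is the rearrangement of this intermediate formula into the form stated in Theorem~\ref{thm:combined-vector-fields}. Expanding $Y = X + w$ and separating the pure-$X$ contribution (which reproduces Theorem~\ref{thm:formula}) from the $w$-dependent cross terms, the latter are regrouped via the generalized Stokes' theorem of Section~\ref{sec:second-var} applied to the tangential vector-field $X^\n S^{ij}(w^\tang)$, where $S^{ij}$ is the Weingarten shape operator of $\Sigma_{ij}$. A direct computation using the Codazzi equation together with the stationarity identity $H_{ij,\mu} \equiv \mathrm{const}$ gives
\begin{equation*}
\div_{\Sigma_{ij},\mu}\bigl(S^{ij}(w^\tang)\bigr) = w^\n \|\II^{ij}\|^2 + \nabla^2_{w^\tang,\n}W - 2\,\II^{ij}(w^\tang, \nabla^\tang W),
\end{equation*}
which converts the bulk discrepancy into boundary integrals on $\partial \Sigma_{ij}$. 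Summing these over $(\ell,m)\in \cyclic(i,j,k)$ at each triple point $\Sigma_{ijk}$ and invoking Corollary~\ref{cor:boundary-normal-sum} (the $120^\circ$-meeting condition $\sum_{(\ell,m)\in \cyclic(i,j,k)} \n_{\partial \ell m} = 0$) produces the cancellations needed to isolate exactly the asymmetric coefficient $(X+Y)^\n = 2X^\n + w^\n$ on the $\nabla^2_{w^\tang,\n}W$ term, yielding the stated formula.

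The main obstacle is justifying the integration-by-parts step near the quadruple-point set $\Sigma^3$, where the shape operator $S^{ij}$ may blow up and the field $X^\n S^{ij}(w^\tang)$ is therefore non-smooth. This is precisely what Proposition~\ref{prop:Schauder} buys: it provides $\|\II^{ij}\|^2 \in L^1_{\mathrm{loc}}(\Sigma^1)$ and $\|\II^{ij}\| \in L^1_{\mathrm{loc}}(\Sigma^2)$ away from $\Sigma^4$, which combined with the tame support of $X$ places the relevant vector-fields in the class covered by the generalized Stokes' theorem of Section~\ref{sec:second-var}. Beyond this, the remaining difficulties are essentially algebraic bookkeeping, namely expanding $Y = X + w$ carefully and tracking signs through the cyclic cancellations at triple points.
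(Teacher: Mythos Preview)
Your approach differs from the paper's and contains a genuine gap at the intermediate-formula step. You claim that taking $R\to\infty$ in Theorem~\ref{thm:formula} applied to $X_R = X + \eta_R w$ yields
\[
Q(Y) = \sum_{i<j}\Bigl[\int_{\Sigma_{ij}}\bigl(|\nabla^\tang Y^\n|^2 - (Y^\n)^2\|\II\|^2 - (Y^\n)^2\nabla^2_{\n,\n}W\bigr)\,d\mu^{n-1} - \int_{\partial\Sigma_{ij}} Y^\n Y^{\n_\partial}\II_{\partial,\partial}\,d\mu^{n-2}\Bigr],
\]
but the individual curvature integrals here need not be finite. Proposition~\ref{prop:Schauder} only gives $\|\II\|^2 \in L^1(\Sigma^1)$ and $\|\II\| \in L^1(\Sigma^2)$ on compact sets \emph{disjoint from $\Sigma^4$}; there is no bound on $\int_{\Sigma_{ij}}(w^\n)^2\|\II\|^2\,d\mu^{n-1}$ or $\int_{\partial\Sigma_{ij}} w^\n w^{\n_\partial}\II_{\partial,\partial}\,d\mu^{n-2}$, since the constant field $w$ has no support restriction. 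Your dominated-convergence step therefore fails, and the intermediate formula may not make sense as written. The same obstruction blocks the subsequent rearrangement: the pure-$w$ curvature terms you would need to cancel via Stokes lie outside the tame support of $X$, so Lemma~\ref{lem:stokes} does not apply to them either.

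The paper circumvents this by never writing down a curvature integral involving $w$ alone. It proves a polarization identity (Lemma~\ref{lem:polarization}) directly from the raw variation formulas in Appendix~\ref{sec:calculation}: $Q(X+w) = Q(X) + Q(w) + \text{(cross terms)}$, where every cross term carries an $X$-dependent factor and is hence tamely supported. Lemma~\ref{lem:stokes} then applies to the cross terms, with boundary contributions cancelling via the tensor identity of Lemma~\ref{lem:cancellation}, yielding the clean identity~\eqref{eq:QX+w}. The term $Q(w)$ is computed separately (equation~\eqref{eq:Qw}) by a direct first/second-variation argument that involves no curvature at all. Combining these gives the stated formula without invoking curvature integrability outside $\supp X$. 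If you want to salvage your route, you would have to keep the cutoff $\eta_R$ in place through the entire rearrangement and pass to the limit only after the problematic curvature terms have cancelled pointwise---effectively reproducing the paper's separation of $Q(w)$ from the cross terms.

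(As a minor aside, your divergence identity for $S(w^\tang)$ is off: using Codazzi and stationarity one obtains $\div_{\Sigma,\mu}(S(w^\tang)) = \nabla^2_{w^\tang,\n}W - w^\n\|\II\|^2$, with no residual $\II(w^\tang,\nabla^\tang W)$ term---the two contributions of that form cancel.)
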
 
\noindent Note that when $X \equiv 0$, the above expression for $Q(w)$ indeed coincides with the one derived in (the much simpler) Lemma \ref{lem:cor-Q}. 

\medskip

Our starting point for establishing Theorem \ref{thm:formula} will be the following formula for $Q(X)$, which may be derived starting from Lemma~\ref{lem:regular}; its proof consists of a long computation, which is deferred to Appendix \ref{sec:calculation}.
\begin{lemma}\label{lem:formula}
    For any stationary cluster $\Omega$ and vector-field $X$, so that either $X$ is $C_c^\infty$, or alternatively, $X$ is admissible and the cells of $\Omega$ are volume and perimeter regular, $Q(X)$ is given by     \begin{multline*}
        \sum_{i < j}
        \int_{\Sigma_{ij}}   \Bigg[ |\nabla^\tang X^\n|^2 - (X^\n)^2 \|\II\|_2^2 - (X^\n)^2 \nabla_{\n,\n}^2 W - \div_{\Sigma,\mu} (X^\n \nabla_{X^\tang} \n)  \\
         + \div_{\Sigma,\mu} (X^\n \nabla_\n X) + \div_{\Sigma,\mu} (X \div_{\Sigma,\mu} X) - H_{ij,\mu} X^\n \div_\mu X \Bigg] \, d\mu^{n-1}.
    \end{multline*}
\end{lemma}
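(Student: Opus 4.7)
The plan is to apply Lemma~\ref{lem:regular} to express $\delta_X^2 V(\Omega)$ and $\delta_X^2 A(\Omega)$ as integrals of second time-derivatives of Jacobians weighted by $e^{-\pot}$, and then combine them via the stationarity relation $H_{ij,\mu} = \lambda_i - \lambda_j$ of Lemma~\ref{lem:first-order-conditions}. The volume side admits a clean closed form: writing $h(t,x) = JF_t(x)\, e^{-\pot(F_t(x))}$, the standard identity $\partial_t (JF_t) = (\div X)(F_t) \cdot JF_t$ together with the chain rule gives $\partial_t h = (\div_\mu X)(F_t) \cdot h$, and differentiating once more and evaluating at $t=0$ yields
\begin{equation*}
\left.\partial_t^2 h\right|_{t=0} = \nabla_X(\div_\mu X) + (\div_\mu X)^2 = \div_\mu(X \div_\mu X),
\end{equation*}
where the last equality uses $\nabla_X f = \div_\mu(X f) - f \div_\mu X$ for scalar $f$. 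Applying the weighted divergence theorem~\eqref{eq:integration-by-parts} on each $\Omega_i$ and invoking $\partial^* \Omega_i = \sqcup_{j \neq i} \Sigma_{ij}$ up to $\H^{n-1}$-null sets (by~\eqref{eq:nothing-lost}) gives $\delta_X^2 V(\Omega)_i = \sum_{j \neq i} \int_{\Sigma_{ij}} X^{\n_{ij}} \div_\mu X \, d\mu^{n-1}$. Antisymmetrizing over $i,j$ and substituting $H_{ij,\mu} = \lambda_i - \lambda_j$ then produces exactly $\inr{\lambda}{\delta_X^2 V(\Omega)} = \sum_{i<j} \int_{\Sigma_{ij}} H_{ij,\mu} X^\n \div_\mu X \, d\mu^{n-1}$, which after subtraction is the last term in the claimed formula.

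The substantive calculation is the area contribution. On each smooth interface $\Sigma_{ij}$ one parametrizes locally and expands both the surface Jacobian $J\Phi_t = \sqrt{\det g_t / \det g_0}$ (where $g_t$ is the pullback metric under $\Phi_t = F_t|_{\Sigma_{ij}}$) and the density $e^{-\pot \circ F_t}$ to second order in $t$. Decomposing $X = X^\n \n + X^\tang$, the first-variation identity $\partial_t \sqrt{\det g_t}|_{t=0} = \div_\Sigma X$, the classical second-variation identity for $\partial_t^2 \sqrt{\det g_t}|_{t=0}$ (involving the Jacobi-type quantity $(X^\n)^2 \|\II\|^2$ and the tangential gradient $|\nabla^\tang X^\n|^2$, plus tangential divergence terms in $X^\tang$ and $\nabla_\n X$), combined with the Taylor expansion $\partial_t^2 e^{-\pot \circ F_t}|_{t=0} = [(\nabla_X \pot)^2 - \nabla^2 \pot(X,X) - \nabla_{\nabla_X X}\pot]\, e^{-\pot}$, yield after organized algebra an integrand of the form
\begin{equation*}
|\nabla^\tang X^\n|^2 - (X^\n)^2 \|\II\|^2 - (X^\n)^2 \nabla^2_{\n,\n} \pot
\end{equation*}
plus three weighted surface-divergence terms that match $-\div_{\Sigma,\mu}(X^\n \nabla_{X^\tang} \n)$, $\div_{\Sigma,\mu}(X^\n \nabla_\n X)$, and $\div_{\Sigma,\mu}(X \div_{\Sigma,\mu} X)$. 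Summing over $i<j$ and subtracting $\inr{\lambda}{\delta_X^2 V}$ reproduces Lemma~\ref{lem:formula} verbatim.

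The main obstacle is bookkeeping rather than ingenuity: the expansion of $J\Phi_t\, e^{-\pot \circ F_t}$ produces a profusion of algebraic terms --- involving $\nabla X$, $\nabla^2 X$, $\II$, $H_\Sigma$, $\nabla \pot$, $\nabla^2 \pot$, and their cross-products --- which must be systematically repackaged into the four tensorial pieces in the statement. Two identities carry most of the burden: (i) $\div X = \div_\Sigma X + \inr{\n}{\nabla_\n X}$, splitting ambient derivatives into surface and normal components; and (ii) the product rule $\div_{\Sigma,\mu}(fY) = f \div_{\Sigma,\mu} Y + \nabla^\tang_Y f$, which absorbs all first-order derivatives of $\pot$ into the $\mu$-weighted surface divergence operator, allowing one to pass freely between unweighted quantities natural for the Jacobian expansion and $\mu$-weighted quantities appearing in the statement. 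The hypotheses of Lemma~\ref{lem:formula} --- either compact support of $X$, or admissibility together with uniform volume/perimeter regularity --- ensure via Lemma~\ref{lem:regular} that all interchanges of differentiation and integration are justified, so no analytic subtleties arise beyond the algebraic identification.
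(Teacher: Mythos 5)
Your approach is the same as the paper's: express $\delta_X^2 V$ and $\delta_X^2 A$ via Lemma~\ref{lem:regular}, compute the volume contribution in closed form, expand the surface Jacobian and the density to second order on each $\Sigma_{ij}$, and combine with $H_{ij,\mu}=\lambda_i-\lambda_j$. Your volume computation is in fact a little cleaner than the paper's: differentiating the multiplicative identity $\partial_t h=(\div_\mu X)(F_t)\,h$ for $h=JF_t\,e^{-\pot\circ F_t}$ gives $\div_\mu(X\div_\mu X)$ directly, whereas the paper expands all terms and then recombines. One small caveat there: for merely admissible $X$ the field $X\div_\mu X$ is not $C_c^1$ (it involves $\nabla \pot$), so the passage from $\int_{\Omega_i}\div_\mu(X\div_\mu X)\,d\mu$ to the boundary integral cannot be cited straight from \eqref{eq:integration-by-parts}; the paper invokes an additional truncation argument for this step.

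The place where your sketch is thinner than it admits is the area computation. The two identities you name --- $\div X=\div_\Sigma X+\inr{\n}{\nabla_\n X}$ and the Leibniz rule for $\div_{\Sigma,\mu}(fY)$ --- do not by themselves produce the term $-\div_{\Sigma,\mu}(X^\n\nabla_{X^\tang}\n)$. Starting from the Sternberg--Zumbrun expression $\div_\Sigma\nabla_X X+(\div_\Sigma X)^2+\sum_i\inr{\n}{\nabla_i X}^2-\sum_{i,j}\inr{\tang_i}{\nabla_j X}\inr{\tang_j}{\nabla_i X}$, the decisive step (the paper's Lemma~\ref{lem:div-nabla-nabla-div}) repackages the cross terms $\sum_{i,j}\II_{ij}\nabla_i(X^\n X^j)$ using the \emph{Codazzi equation} $\sum_i\nabla_i\II_{ij}=\nabla_j H_\Sigma$ (together with flatness of the ambient connection, $\nabla_i\nabla_j X=\nabla_j\nabla_i X$); this is what generates $\div_\Sigma(X^\n\nabla_{X^\tang}\n)-X^\n\nabla_{X^\tang}H_\Sigma$. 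Relatedly, the raw second variation of weighted area contains the extra term $+X^\n\nabla_{X^\tang}H_{\Sigma,\mu}$, which disappears only because stationarity makes $H_{ij,\mu}$ constant on each $\Sigma_{ij}$; your account uses stationarity only for the Lagrange-multiplier identity on the volume side, so this second use should be made explicit. With those two points supplied, the argument closes exactly as in the paper's appendix.
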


In order to simplify the expression in Lemma \ref{lem:formula}, we will want to apply Stokes' theorem to the
various divergence terms. Even if we assume that $X$ has compact support in $\R^n$, this requires some justification; each
$M_{ij} = \Sigma_{ij} \cup \partial \Sigma_{ij}$ is a smooth manifold with boundary, but
the vector-fields $Y_{ij}$ whose divergence is integrated may not have compact support inside $M_{ij}$, as their support may intersect $\Sigma^3 \cup \Sigma^4$. 
Furthemore, $Y_{ij}$ may be blowing up near $\Sigma^3 \cup \Sigma^4$, precluding us from invoking a version of Stokes' theorem for vector-fields with bounded magnitude and divergence  
which only requires that $\H^{n-2}(\overline{M_{ij}} \setminus M_{ij}) = 0$, see e.g.~\cite[XXIII, Section 6]{LangRealAndFunctionalAnalysis}.
However, in our setting, we actually know that $\overline{M_{ij}} \setminus M_{ij}$ has locally-finite $\H^{n-3}$-measure, which enables us to handle vector-fields in  $L^2(\Sigma_{ij},\mu^{n-1}) \cap L^1(\partial \Sigma_{ij},\mu^{n-2})$  whose divergence is in $L^1(\Sigma_{ij},\mu^{n-1})$; it turns out that whenever $X$ is tame, our $Y_{ij}$'s will indeed belong to this class thanks to the integrability properties of our curvatures established in Proposition \ref{prop:Schauder}.
The idea is to use a well-known truncation argument to cut out the low-dimensional sets (e.g. as in \cite{LangRealAndFunctionalAnalysis,SternbergZumbrun}). 

\subsection{Truncation via cutoff functions} \label{subsec:cutoff}

A function on $\R^n$ which is $C^\infty$ smooth and takes values in $[0, 1]$ will be called a cutoff function. 

\begin{lemma}\label{lem:sigma4-cutoff}
    For every $\epsilon > 0$, there is a compactly supported cutoff function $\eta$
    such that $\eta \equiv 0$ on a neighborhood of $\Sigma^{4}$,
    \[
        \int_{\Sigma^1} |\nabla \eta|^2 \, d\mu^{n-1} \le \epsilon,
    \]
    and
    \[
        \mu^{n-1} \{p \in \Sigma^1: \eta(p) < 1\} \le \epsilon.
    \]
    \end{lemma}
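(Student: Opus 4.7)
My plan is to combine a standard compactly-supported cutoff at infinity with logarithmic cutoffs around the singular set $\Sigma^4$, leveraging the Hausdorff-dimension bound $\dim_H(\Sigma^4) \leq n-4$ from Theorem~\ref{thm:regularity}~\ref{it:regularity-dimension} (which implies $\H^{n-3}(\Sigma^4) = 0$), together with the uniform density estimate $\mu^{n-1}(\Sigma \cap B(x,r)) \le \Lambda_K r^{n-1}$ from Theorem~\ref{thm:Almgren}~\ref{it:Almgren-density} and the finiteness $\mu^{n-1}(\Sigma^1) = 2 \per_\mu(\Omega) < \infty$. If $n \leq 3$ then $\Sigma^4 = \emptyset$ and the lemma is trivial (take $\eta$ to be any radial bump equal to $1$ on a sufficiently large ball, with slowly varying gradient), so I assume henceforth $n \geq 4$. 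First I would choose $R$ so large that $\mu^{n-1}(\Sigma^1 \setminus B_R) < \eps/3$ and such that a radial cutoff $\chi \in C_c^\infty(\R^n;[0,1])$ with $\chi \equiv 1$ on $B_R$, $\supp \chi \subset B_{2R}$, and $|\nabla \chi| \leq 2/R$ satisfies $\int_{\Sigma^1} |\nabla \chi|^2 \, d\mu^{n-1} \leq 4 \mu^{n-1}(\Sigma^1) / R^2 < \eps/4$.

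Next, since $K := \Sigma^4 \cap \overline{B_{2R}}$ is compact and $\H^{n-3}(K) = 0$, for parameters $\delta, L, r_0 > 0$ to be tuned, I would find a finite cover $K \subset \bigcup_{k=1}^N B(x_k, s_k)$ with $s_k < r_0$ and $\sum_{k=1}^N s_k^{n-3} < \delta$, and (via a Besicovitch-type refinement) of bounded overlap multiplicity $M = M(n)$. Setting $\rho_k := e^{-L} s_k$, I define $\eta_k(y) := \zeta\bigl(\log(|y-x_k|/\rho_k)/L\bigr)$ for a fixed smooth nondecreasing $\zeta : \R \to [0,1]$ with $\zeta \equiv 0$ on $(-\infty,0]$ and $\zeta \equiv 1$ on $[1,\infty)$; then $\eta_k \equiv 0$ on $B(x_k,\rho_k)$, $\eta_k \equiv 1$ off $B(x_k,s_k)$, and $|\nabla \eta_k(y)| \leq C_\zeta / (L |y - x_k|)$ in the annulus. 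Finally, $\eta := \chi \cdot \prod_{k=1}^N \eta_k$ is smooth, $[0,1]$-valued, compactly supported in $B_{2R}$, and identically zero on the open neighborhood $\bigcup_k B(x_k,\rho_k) \cup (\R^n \setminus \overline{B_{2R}})$ of $\Sigma^4$.

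For the gradient bound, integration by parts with $F(r) := \mu^{n-1}(\Sigma^1 \cap B(x_k,r)) \leq \Lambda r^{n-1}$ yields (crucially using $n \ge 4$)
\[
\int_{\Sigma^1 \cap B(x_k,s_k)} \frac{d\mu^{n-1}(y)}{|y-x_k|^2} = \frac{F(s_k)}{s_k^2} + 2 \int_0^{s_k} \frac{F(r)}{r^3}\, dr \leq C \Lambda\, s_k^{n-3},
\]
so $\int_{\Sigma^1} |\nabla \eta_k|^2 d\mu^{n-1} \leq C \Lambda s_k^{n-3} / L^2$; combined with $\bigl|\nabla \prod_k \eta_k\bigr|^2 \leq M \sum_k |\nabla \eta_k|^2$ (bounded multiplicity) and $|\nabla \eta|^2 \leq 2|\nabla \chi|^2 + 2 \bigl|\nabla \prod_k \eta_k\bigr|^2$, this gives $\int_{\Sigma^1} |\nabla \eta|^2 d\mu^{n-1} \leq \eps/2 + 2CM\Lambda \delta / L^2 < \eps$ provided $L$ is chosen large enough (independently of $\delta$). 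For the measure bound, $\{p \in \Sigma^1 : \eta(p) < 1\} \subset (\Sigma^1 \setminus B_R) \cup \bigcup_k B(x_k, s_k)$, and the density estimate gives $\mu^{n-1}(\Sigma^1 \cap B(x_k,s_k)) \leq \Lambda s_k^{n-1} \leq \Lambda r_0^2 s_k^{n-3}$, so $\mu^{n-1}\{\eta < 1\} \leq \eps/3 + \Lambda r_0^2 \delta < \eps$ once $r_0$ is chosen small. The main obstacle is the simultaneous control of both estimates: a linear cutoff over an annulus of thickness $\sim s_k$ would yield gradient energy $\sim s_k^{n-3}$ per ball, giving no small parameter once summed over a cover of $\Sigma^4$; the logarithmic profile is essential because its extra factor $1/L^2$ decouples the gradient bound from $\delta$, leaving $\delta$ free to be taken small to control the measure of $\{\eta < 1\}$.
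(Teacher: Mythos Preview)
Your argument is correct, but your closing claim that the logarithmic profile is ``essential'' is mistaken, and as a result your construction is more elaborate than necessary. The paper's proof uses plain linear cutoffs: each $\eta_i$ is $0$ on $B(x_i,2\delta_i)$, $1$ outside $B(x_i,3\delta_i)$, with $|\nabla \eta_i| \le 2/\delta_i$; one then sets $\tilde\eta = \min(\zeta,\min_i \eta_i)$ and mollifies. The density estimate gives $\int_{\Sigma^1}|\nabla\eta_i|^2\,d\mu^{n-1} \le 4\Lambda\,\delta_i^{n-3}$, so the total gradient energy is $\lesssim \sum_i \delta_i^{n-3}$, and the measure of $\{\eta<1\}$ is $\lesssim \sum_i \Lambda\,\delta_i^{n-1} \le \Lambda\sum_i \delta_i^{n-3}$. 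Since $\H^{n-3}(\Sigma^4)=0$ (not merely finite), the single quantity $\sum_i \delta_i^{n-3}$ can be driven below $\epsilon/\Lambda$, and this controls \emph{both} estimates simultaneously. No decoupling is required; in your own notation, taking $L=1$ already works once $\delta$ is small.

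Two further simplifications the paper makes: using $\min_i \eta_i$ rather than $\prod_i \eta_i$ gives $|\nabla(\min_i\eta_i)|^2 \le \sum_i |\nabla\eta_i|^2$ a.e.\ without any overlap control, so the Besicovitch refinement is not needed; and the integration-by-parts computation of $\int |y-x_k|^{-2}\,d\mu^{n-1}$ is replaced by the one-line bound $(2/\delta_i)^2\,\mu^{n-1}(\Sigma^1\cap B(x_i,4\delta_i)) \le 4\Lambda\,\delta_i^{n-3}$. What your logarithmic approach would genuinely buy is a proof under the weaker hypothesis $\H^{n-3}(\Sigma^4)<\infty$; this is precisely the situation for $\Sigma^3$ in the companion Lemma~\ref{lem:sigma3-cutoff}, where the paper accordingly only obtains a fixed constant $C_U$ for the gradient energy rather than $\epsilon$.
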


\begin{proof}
    Fix $\epsilon > 0$, and choose $R \geq 4$ sufficiently large so that     $\mu^{n-1}(\Sigma^1 \setminus B_R) \le \epsilon$. Let $\zeta$
    be a $C_c^\infty$ function that is identically one on $B_{R+1}$,
    identically zero outside $B_{2R-1}$, and
    satisfies $|\nabla \zeta| \le 1$.     By the upper density estimate (\ref{eq:density}), there exist $\Lambda > 0$ and $r_0 \in (0,1)$ so that:
    \begin{equation} \label{eq:cutoff-density}
    \mu^{n-1}(\Sigma^1 \cap B(x,4 r)) \leq \Lambda r^{n-1} \;\;\; \forall x \in \Sigma \cap \overline{B_{2R}} \;\; \forall r \in (0,r_0) . 
    \end{equation}

    Since $\H^{n-3}(\Sigma^{4})=0$ and $\Sigma^{4} \cap \overline{B_{2R}}$
    is compact,     we may find finite sequences $x_i \in \Sigma^{4} \cap \overline{B_{2R}}$ and     $\delta_i \in (0, r_0)$, $i=1,\ldots,N$, such that $\{B(x_i,\delta_i)\}_{i =1,\ldots,N}$ cover
    $\Sigma^{4} \cap \overline{B_{2R}}$ and $\sum_i \delta_i^{n-3} \le \epsilon / \Lambda$. 
        For each $i$, choose a cutoff function $\eta_i$ such that $\eta_i \equiv 1$ outside
    of $B(x_i,3 \delta_i)$, $\eta_i \equiv 0$ inside $B(x_i,2 \delta_i)$, and
    $|\nabla \eta_i| \le 2/\delta_i$. 
                        Now define $\tilde \eta(x) = \min\{\zeta, \min_{i=1,\ldots,N} \eta_i(x)\}$;
    $\tilde \eta$ is compactly supported, vanishes on $(\R^n \setminus \overline{B_{2R-1}}) \bigcup \cup_i  B(x_i,2 \delta_i)$, and is identically $1$ on $B_{R+1} \setminus \cup_i B(x_i,3 \delta_i)$. Since it is only piecewise $C^\infty$, let us mollify $\tilde \eta$ with a smooth mollifier supported in $B(0,\delta)$, for $\delta = \min_{i=1,\ldots,N} \delta_i \in (0,1)$ - this will be our desired function $\eta$. The cutoff function $\eta$ is compactly supported and  vanishes on $(\R^n \setminus \overline{B_{2R}}) \bigcup \cup_i  B(x_i, \delta_i)$, an open neighborhood of $\Sigma^4$ and $\infty$. 
        Since $\delta_i < r_0 < 1$, it follows by (\ref{eq:cutoff-density}) that:
    \begin{align*}
        \mu^{n-1}\{p \in \Sigma^1: \eta(p) < 1\}
        &\le \mu^{n-1}(\Sigma^1 \setminus B_R) + \sum_i \mu^{n-1}(\Sigma^1 \cap B(x_i,4 \delta_i)) \\
        &\le \epsilon + \Lambda \sum_i \delta_i^{n-1} \le \epsilon + \Lambda \sum_i \delta_i^{n-3}  \leq 2\epsilon.
    \end{align*}
    Finally, we have for a.e. $x \in \R^n$:
    \[
        |\nabla \tilde \eta(x)|^2 \le \max \{|\nabla \zeta(x)|^2, \max_i |\nabla \eta_i(x)|^2\}
        \le |\nabla \zeta(x)|^2 + \sum_i |\nabla \eta_i(x)|^2 ,
    \]
    and hence after mollification it follows that for all $x \in \R^n$:
    \[
    | \nabla \eta(x) |^2 \leq 1_{B_{2R} \setminus B_{R}}(x) + \sum_{i} (2 / \delta_i)^2 1_{B(x_i,4 \delta_i) \setminus B(x_i,\delta_i)}(x) .
    \]
    Consequently:
        \begin{align*}
    \int_{\Sigma^1} |\nabla \eta|^2\, d\mu^{n-1} & \leq \mu^{n-1}(\Sigma^1 \setminus B_R) + \sum_i (2 / \delta_i)^2 \mu^{n-1}(\Sigma^1 \cap B(x_i,4 \delta_i) ) \\
    & \leq \epsilon + \sum_{i} 4 \Lambda \delta_i^{n-3} \leq 5 \epsilon . 
    \end{align*}
                        Appropriately modifying the value of $\eps$, the assertion follows. 
    \end{proof}

A slight variation on the proof of Lemma~\ref{lem:sigma4-cutoff} implies
that we can cut off $\Sigma^3$ also, while only paying a bounded amount
in the $W^{1,2}$ norm. 
\begin{lemma}\label{lem:sigma3-cutoff}
    Let $U$ be an open neighborhood of $\Sigma^4$ and infinity.     There is a constant $C_U$ depending on the cluster $\Omega$ and $U$,
    such that for every $\delta > 0$, 
    there is a cutoff function $\xi$ such that
    $\xi \equiv 0$ on an open neighborhood of $\Sigma^{3} \setminus U$,     \[
        \int_{\Sigma^1} |\nabla \xi|^2 \, d\mu^{n-1} \le C_U,
    \]
    and
    \[
        \mu^{n-1} \{p \in \Sigma^1: \xi(p) < 1\} \le \delta.
    \]
    \end{lemma}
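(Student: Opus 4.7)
The plan is to mimic the proof of Lemma \ref{lem:sigma4-cutoff}, replacing the covering of $\Sigma^4$ (whose $\H^{n-3}$-measure vanishes, allowing the gradient term to be made arbitrarily small) with a covering of $\Sigma^3 \setminus U$, which has only locally finite -- but in general positive -- $\H^{n-3}$-measure. This is the structural reason the conclusion is asymmetric: the $W^{1,2}$-cost can only be bounded by a fixed $C_U$, while the small-measure conclusion for $\{\xi < 1\}$ survives.

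First I would reduce to a compact set: since $U$ is open and contains a neighborhood of infinity, fix $R \geq 1$ so that $\R^n \setminus B_R \subset U$, and set $K := \Sigma^3 \cap \overline{B_{R+2}} \setminus U$. This is compact, and by the local-finiteness assertion in Theorem \ref{thm:regularity} \ref{it:regularity-Sigma3} (only finitely many of the $(n-3)$-dimensional $C^{1,\alpha}$ manifolds comprising $\Sigma^3$ meet $\overline{B_{R+2}}$, and each has locally finite $\H^{n-3}$-measure), one has $M := \H^{n-3}(K) < \infty$. Fix also $\Lambda, r_0 > 0$ as in the density bound (\ref{eq:density}) on the compact set $\overline{B_{R+2}}$, so that $\mu^{n-1}(\Sigma^1 \cap B(x,4r)) \leq \Lambda r^{n-1}$ for every $x \in \Sigma \cap \overline{B_{R+2}}$ and $r \in (0, r_0)$.

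Second, perform the covering construction in direct analogy with Lemma \ref{lem:sigma4-cutoff}. Given $\delta > 0$, choose $\rho \in (0, r_0)$ (to be tuned) and use a standard Vitali-type covering to select centers $\{x_i\}_{i=1}^{N_\rho} \subset K$ with $\{B(x_i,\rho)\}$ covering $K$ and $N_\rho \cdot \rho^{n-3} \leq C_0 \cdot M$ for a dimensional constant $C_0$. The bound on $N_\rho$ uses that $\Sigma^3$ is locally a $C^{1,\alpha}$ $(n-3)$-manifold on $K$, so it has finite upper $(n-3)$-dimensional Minkowski content, uniformly on compact sets away from $\Sigma^4$. Build the same cutoffs $\xi_i \equiv 0$ on $B(x_i, 2\rho)$, $\xi_i \equiv 1$ outside $B(x_i, 3\rho)$, $|\nabla \xi_i| \leq 2/\rho$, take $\tilde \xi := \min_i \xi_i$, and mollify at a scale smaller than $\rho$ to make it smooth. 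The resulting $\xi$ vanishes on an open neighborhood of $K$, and hence (since $\R^n \setminus B_R \subset U$) on an open neighborhood of $\Sigma^3 \setminus U$.

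The two estimates then follow just as in Lemma \ref{lem:sigma4-cutoff}: since $\{\xi < 1\} \subset \bigcup_i B(x_i, 4\rho)$,
\[
\mu^{n-1}\{p \in \Sigma^1 : \xi(p) < 1\} \leq \sum_i \mu^{n-1}(\Sigma^1 \cap B(x_i, 4\rho)) \leq N_\rho \Lambda \rho^{n-1} \leq C_0 \Lambda M \rho^2,
\]
which is $\leq \delta$ once $\rho$ is small enough. For the gradient,
\[
\int_{\Sigma^1} |\nabla \xi|^2 \, d\mu^{n-1} \leq \sum_i \frac{4}{\rho^2} \mu^{n-1}(\Sigma^1 \cap B(x_i, 4\rho)) \leq \frac{4 \Lambda}{\rho^2} N_\rho \rho^{n-1} \leq 4 C_0 \Lambda M =: C_U,
\]
independent of $\rho$ and hence of $\delta$. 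The main obstacle -- and the reason the statement cannot be strengthened to make the $W^{1,2}$-cost small -- is precisely that each covering ball of radius $\rho$ contributes an order-$\rho^{n-3}$ gradient budget, and these necessarily sum to a quantity of order $M = \H^{n-3}(K) > 0$; the extra $\rho^2$ factor coming from the density estimate $\mu^{n-1}(\Sigma^1 \cap B(x_i,4\rho)) \leq \Lambda (4\rho)^{n-1}$ is what still allows the $\mu^{n-1}$-measure of $\{\xi < 1\}$ to be driven to zero.
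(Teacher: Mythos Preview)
Your proof is correct and follows essentially the same strategy as the paper: exploit compactness of $\Sigma^3 \setminus U$, cover it by small balls, build radial cutoffs, and use the density estimate~\eqref{eq:density} to bound both the gradient integral and the measure of $\{\xi<1\}$. The one technical difference is that the paper covers $\Sigma^3 \setminus U$ by balls of \emph{variable} radii $\delta_i < \delta_0$ taken directly from the definition of Hausdorff measure (so $\sum_i \delta_i^{n-3} \le C_n\,\H^{n-3}(\Sigma^3\setminus U)$ holds immediately), whereas you use balls of a \emph{uniform} radius $\rho$ and appeal to a Minkowski--content / covering-number bound $N_\rho\,\rho^{n-3}\le C_0 M$; note that your $C_0$ is not purely dimensional but depends on the local $C^{1,\alpha}$ geometry of $\Sigma^3$ near $K$ (a lower Ahlfors bound), which is harmless since it is absorbed into $C_U$.
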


\begin{proof}
    First, note that $\overline{\Sigma^3} \subset \Sigma^3 \cup \Sigma^4$ by the relative openness of $\Sigma^1\cup \Sigma^2$ in the closed $\Sigma$. It follows that $\Sigma^3 \setminus U$ is compact, since it is clearly bounded, but also closed, as it coincides with $\overline{\Sigma^3} \setminus U$.     
    Since $\Sigma^3$ has finite $\calH^{n-3}$-measure in a neighborhood of every point in $\Sigma^3$ (by Theorem \ref{thm:regularity}), compactness implies that
    $\calH^{n-3}(\Sigma^3 \setminus U)$ is finite. In addition, by the upper density estimate (\ref{eq:density}), there exist $\Lambda_U,r_U > 0$ so that:
    \begin{equation} \label{eq:cutoff-density2}
    \mu^{n-1}(\Sigma^1 \cap B(x,4 r)) \leq \Lambda_U r^{n-1} \;\;\; \forall x \in \Sigma^3 \setminus U \;\; \forall r \in (0,r_U) . 
    \end{equation}
    
             For any $\delta_0 \in (0,r_U)$, the definition of Hausdorff measure and compactness imply the existence of finite sequences $x_i \in \Sigma^3 \setminus U$ and $\delta_i \in (0, \delta_0)$, $i=1,\ldots,N$, such that the sets $\{B(x_i, \delta_i)\}$ cover $\Sigma^3 \setminus U$ and $\sum_i \delta_i^{n-3} \le C_n \calH^{n-3} (\Sigma^3 \setminus U)$, where $C_n >0$ is a numeric constant depending only on $n$. 
         The rest of the proof is identical to that of Lemma~\ref{lem:sigma4-cutoff}.
    For each $i$, we select a cutoff function $\xi_i$ which is identically zero on $B(x_i, 2 \delta_i)$,
    identically one outside of $B(x_i, 3 \delta_i)$, and satisfies $|\nabla \xi_i| \leq 2/\delta_i$.
    We then set $\tilde \xi(x) = \min_{i=1,\ldots,N} \xi_i(x)$ and define $\xi$ to be the mollification of $\tilde \xi$ using a smooth mollifier compactly supported in $B_{\delta}$, where $\delta = \min_{i=1,\ldots,N} \delta_i$. The cutoff function $\xi$ vanishes on $\cup_i  B(x_i, \delta_i)$, an open neighborhood of $\Sigma^3 \setminus U$. 
        It follows that:
    \[
        \int_{\Sigma^1} |\nabla \xi|^2 \, d\mu^{n-1} \le \sum_i (2/\delta_i)^2 \mu^{n-1}(B(x_i,4 \delta_i)) \le 4 \Lambda_U \sum_{i} \delta_i^{n-3} \leq 4 \Lambda_U C_n \calH^{n-3}(\Sigma^3 \setminus U),
    \]
    which proves the claimed gradient bound. On the other hand,
    \begin{align*}
        \mu^{n-1} \{p \in \Sigma^1: \xi(p) < 1\}
        &\le \sum_i \mu^{n-1}(\Sigma^1 \cap B(x_i, 4 \delta_i)) \\
        &\le \Lambda_U \sum_i \delta_i^{n-1} \le \Lambda_U C_n \delta^2_0 \calH^{n-3}(\Sigma^3 \setminus U),
    \end{align*}
    which can be made smaller than $\delta$ by an appropriate choice of $\delta_0$.
\end{proof}

\subsection{A version of Stokes' theorem}

\begin{lemma}[Stokes' theorem on $\Sigma_{ij} \cup \partial \Sigma_{ij}$] \label{lem:stokes}
    Let $\Omega$ be a stationary regular cluster.     Suppose that $Y_{ij}$ is a vector-field which is $C^\infty$ on $\Sigma_{ij}$ and continuous
    up to $\partial \Sigma_{ij}$.
    Suppose, moreover, that
    \begin{equation} \label{eq:stokes-assumptions}
        \int_{\Sigma_{ij}} |Y_{ij}|^2 \, d\mu^{n-1}, \quad
        \int_{\Sigma_{ij}} |\div_{\Sigma,\mu} Y_{ij}| \, d\mu^{n-1}, \quad
        \int_{\partial \Sigma_{ij}} |Y_{ij}^{\n_\partial}| \, d\mu^{n-2} < \infty . 
    \end{equation}
        Then:
    \[
        \int_{\Sigma_{ij}} \div_{\Sigma,\mu} Y_{ij} \, d\mu^{n-1}
        = \int_{\Sigma_{ij}} H_{ij,\mu} Y_{ij}^\n \, d\mu^{n-1} + \int_{\partial \Sigma_{ij}} Y_{ij}^{\n_{\partial}} \, d\mu^{n-2}.
    \]
\end{lemma}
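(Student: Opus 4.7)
The plan is to apply the weighted Stokes' theorem \eqref{eq:Stokes} to a truncated vector-field $\phi Y_{ij}$, where $\phi$ is a smooth compactly supported cutoff vanishing near $\Sigma^3 \cup \Sigma^4$, and then to pass to the limit as $\phi \uparrow 1$. For each $\epsilon > 0$ let $\eta = \eta_\epsilon$ be the cutoff from Lemma~\ref{lem:sigma4-cutoff}, and denote by $U_\epsilon$ the open neighborhood of $\Sigma^4 \cup \set{\infty}$ on which $\eta$ vanishes; for each $\delta > 0$ let $\xi = \xi_{\epsilon,\delta}$ be the cutoff from Lemma~\ref{lem:sigma3-cutoff} applied with this $U_\epsilon$. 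Set $\phi = \phi_{\epsilon,\delta} := \eta \, \xi$. Since $\overline{M_{ij}} \setminus M_{ij} \subset \Sigma^3 \cup \Sigma^4$ and $\phi$ vanishes on an open neighborhood of this set, $\phi Y_{ij}$ is smooth on $\Sigma_{ij}$, continuous up to $\partial \Sigma_{ij}$, and compactly supported in $M_{ij} = \Sigma_{ij} \cup \partial \Sigma_{ij}$. Thus \eqref{eq:Stokes} applies to it, and combining with the Leibniz identity $\div_{\Sigma,\mu}(\phi Y_{ij}) = \phi \div_{\Sigma,\mu} Y_{ij} + \inr{\nabla^\tang \phi}{Y_{ij}}$ yields
\begin{equation}\label{eq:stokes-prelim}
\int_{\Sigma_{ij}} \phi \div_{\Sigma,\mu} Y_{ij} \, d\mu^{n-1}
= \int_{\Sigma_{ij}} H_{ij,\mu} \, \phi Y_{ij}^\n \, d\mu^{n-1}
+ \int_{\partial \Sigma_{ij}} \phi Y_{ij}^{\n_\partial} \, d\mu^{n-2}
- \int_{\Sigma_{ij}} \inr{\nabla^\tang \phi}{Y_{ij}} \, d\mu^{n-1}.
\end{equation}

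The main obstacle will be controlling the last, gradient-of-cutoff term, since Lemma~\ref{lem:sigma3-cutoff} provides only the $\delta$-independent bound $\|\nabla \xi\|_{L^2(\Sigma^1)}^2 \le C_{U_\epsilon}$, so one cannot naively hope to make $\|\nabla \phi\|_{L^2}$ small. To circumvent this, I will write $\nabla \phi = \xi \, \nabla \eta + \eta \, \nabla \xi$ and use that the supports of $\nabla \eta$ and $\nabla \xi$ are contained in $\set{\eta < 1}$ and $\set{\xi < 1}$, respectively. Cauchy--Schwarz then gives
\[
\abs{\int_{\Sigma_{ij}} \inr{\nabla^\tang \phi}{Y_{ij}} \, d\mu^{n-1}}
\le \snorm{\nabla \eta}_{L^2(\Sigma^1)} \snorm{Y_{ij}}_{L^2(\set{\eta < 1} \cap \Sigma_{ij})}
+ \snorm{\nabla \xi}_{L^2(\Sigma^1)} \snorm{Y_{ij}}_{L^2(\set{\xi < 1} \cap \Sigma_{ij})}.
\]
Since $|Y_{ij}|^2 \in L^1(\Sigma_{ij},\mu^{n-1})$, absolute continuity of the integral makes $\snorm{Y_{ij}}_{L^2(A)}^2$ arbitrarily small whenever $\mu^{n-1}(A)$ is. Applying Lemma~\ref{lem:sigma4-cutoff}, the first summand is at most $\sqrt{\epsilon}$ times a quantity that vanishes as $\epsilon \to 0$; for each fixed $\epsilon$ (hence fixed $C_{U_\epsilon}$), Lemma~\ref{lem:sigma3-cutoff} lets the second summand be made $\le \sqrt{\epsilon}$ by choosing $\delta = \delta_\epsilon$ small enough. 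Passing along the diagonal sequence $(\epsilon, \delta_\epsilon) \to 0$ therefore drives this error term to $0$.

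For the remaining terms in \eqref{eq:stokes-prelim} I will pass to the limit along the same diagonal. By construction $\phi_{\epsilon,\delta_\epsilon} \to 1$ pointwise on $\R^n \setminus (\Sigma^3 \cup \Sigma^4)$, and since $\Sigma^3 \cup \Sigma^4$ is disjoint from $\Sigma_{ij}$ and meets $\partial \Sigma_{ij} \subset \Sigma^2$ only in a set of zero $\mu^{n-2}$-measure, the convergence holds $\mu^{n-1}$-a.e. on $\Sigma_{ij}$ and $\mu^{n-2}$-a.e. on $\partial \Sigma_{ij}$. The integrability hypotheses on $\div_{\Sigma,\mu} Y_{ij}$ and $Y_{ij}^{\n_\partial}$, the uniform bound $|\phi| \le 1$, and dominated convergence together yield
\[
\int_{\Sigma_{ij}} \phi \div_{\Sigma,\mu} Y_{ij} \, d\mu^{n-1} \to \int_{\Sigma_{ij}} \div_{\Sigma,\mu} Y_{ij} \, d\mu^{n-1},
\qquad
\int_{\partial \Sigma_{ij}} \phi Y_{ij}^{\n_\partial} \, d\mu^{n-2} \to \int_{\partial \Sigma_{ij}} Y_{ij}^{\n_\partial} \, d\mu^{n-2}.
\]
Since $H_{ij,\mu}$ is constant on $\Sigma_{ij}$ by stationarity (Lemma~\ref{lem:first-order-conditions}), the convergence of the three other terms in \eqref{eq:stokes-prelim} forces $H_{ij,\mu} \int_{\Sigma_{ij}} \phi Y_{ij}^\n \, d\mu^{n-1}$ to converge as well, and its limit is by definition the value of $\int_{\Sigma_{ij}} H_{ij,\mu} Y_{ij}^\n \, d\mu^{n-1}$ (interpreted as an improper integral; trivially correct when $H_{ij,\mu} = 0$, and coinciding with the Lebesgue integral whenever $H_{ij,\mu} Y_{ij}^\n$ is absolutely integrable). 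Passing to the limit in \eqref{eq:stokes-prelim} completes the proof.
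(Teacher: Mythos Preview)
Your proof is correct and follows essentially the same route as the paper: multiply by the two-stage cutoff $\eta\xi$, apply the classical Stokes' theorem \eqref{eq:Stokes}, and pass to the limit by splitting $\nabla(\eta\xi)$ and using Cauchy--Schwarz together with the measure bounds on $\{\eta<1\}$ and $\{\xi<1\}$. The one place where the paper is cleaner is the mean-curvature term: rather than treating $\int_{\Sigma_{ij}} H_{ij,\mu} Y_{ij}^\n\,d\mu^{n-1}$ as an ``improper'' integral forced to converge by the other terms, the paper simply observes that $Y_{ij}^\n \in L^1(\Sigma_{ij},\mu^{n-1})$ directly, since $H_{ij,\mu}$ is constant and $\int_{\Sigma_{ij}}|Y_{ij}^\n|\,d\mu^{n-1} \le \mu^{n-1}(\Sigma_{ij})^{1/2}\,\|Y_{ij}\|_{L^2(\Sigma_{ij})}<\infty$ (regularity ensures $\mu^{n-1}(\Sigma_{ij})<\infty$); this makes the statement of the lemma unambiguous and lets you apply dominated convergence to that term as well.
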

\begin{remark} \label{rem:stokes} 
Observe that whenever $Y_{ij}$ is the restriction to $\Sigma_{ij} \cup \partial \Sigma_{ij}$ of a tame vector-field on $\R^n$, all of the integrability requirements in (\ref{eq:stokes-assumptions}) are automatically satisfied, since $\mu^{n-1}(\Sigma_{ij}) < \infty$ and $\mu^{n-2}(\partial \Sigma_{ij} \cap K) < \infty$ for any compact $K$ disjoint from $\Sigma^4$ by Corollary \ref{cor:locally-finite-Sigma2}.  
\end{remark}
While we will only apply Lemma \ref{lem:stokes} to vector-fields which are supported outside a neighborhood of $\Sigma^4$ and infinity, we provide a proof for general vector-fields as above, as this comes at almost no extra cost.
\begin{proof}[Proof of Lemma \ref{lem:stokes}]
    Fix $\epsilon > 0$ and a cutoff function $\eta = \eta_\eps$ as in Lemma~\ref{lem:sigma4-cutoff}.
    Let $U = U_\eta$ be a neighborhood of $\Sigma^4$ and infinity on which $\eta$ vanishes,
    and let $C_\eta$ denote the constant $C_U$ from Lemma~\ref{lem:sigma3-cutoff}.
    Fix $\delta > 0$ and a function $\xi = \xi_{\eps,\delta}$ as in Lemma~\ref{lem:sigma3-cutoff}.
    Note that $\eta \xi$ vanishes on an open neighborhood of $\Sigma^3 \cup \Sigma^4$ and infinity, and therefore 
    $\eta \xi Y_{ij}$  has compact support in $\Sigma_{ij} \cup \partial \Sigma_{ij}$. Applying Stokes' theorem (\ref{eq:Stokes}) to $\eta \xi Y_{ij}$, we obtain:
    \begin{equation}
        \int_{\Sigma_{ij}} \div_{\Sigma,\mu} (\eta \xi Y_{ij}) \, d\mu^{n-1}
        = \int_{\Sigma_{ij}} H_{ij,\mu} \eta \xi Y_{ij}^\n \, d\mu^{n-1}
        + \int_{\partial \Sigma_{ij}} \eta \xi Y_{ij}^{\n_{\partial}} \, d\mu^{n-2}.
            \label{eq:stokes-1}
    \end{equation}
    
    Next, we check what happens when we send $\delta \to 0$ and then $\epsilon \to 0$.
    By the Dominated Convergence theorem, the right-hand-side of~\eqref{eq:stokes-1} converges to
    \[
        \int_{\Sigma_{ij}} H_{ij,\mu} Y_{ij}^\n \, d\mu^{n-1}
            + \int_{\partial \Sigma_{ij}} Y_{ij}^{\n_{\partial}} \, d\mu^{n-2} .
    \]
   Indeed, the second term is absolutely integrable directly by assumption, and the first one is too
   since  $H_{ij,\mu}$ is constant and:    \[
    \brac{\int_{\Sigma_{ij}} \abs{Y_{ij}^\n} d\mu^{n-1}}^2 \leq \mu^{n-1}(\Sigma_{ij}) \int_{\Sigma_{ij}} \abs{Y_{ij}}^2 d\mu^{n-1} < \infty .
    \]
    
    As for the left-hand-side of~\eqref{eq:stokes-1}, we split it as
    \[
        \int_{\Sigma_{ij}} \div_{\Sigma,\mu} (\eta \xi Y_{ij}) \, d\mu^{n-1}
        = 
        \int_{\Sigma_{ij}} \eta \xi \div_{\Sigma,\mu} Y_{ij} d\mu^{n-1} + \int_{\Sigma_{ij}} \nabla_{Y^{\tang}_{ij}} (\eta \xi) \, d\mu^{n-1}.
    \]
    Taking $\delta \to 0$ and then $\epsilon \to 0$,
    $\int_{\Sigma_{ij}} \eta \xi \div_{\Sigma,\mu} Y_{ij} \, d\mu^{n-1}
    \to \int_{\Sigma_{ij}} \div_{\Sigma,\mu} Y_{ij} \, d\mu^{n-1}$
    because we assumed $\div_{\Sigma,\mu} Y_{ij}$ to be absolutely integrable.
    On the other hand,
    \[
        \left|\int_{\Sigma_{ij}} \nabla_{Y^{\tang}_{ij}} (\eta \xi)\, d\mu^{n-1}\right|
        \le \int_{\Sigma_{ij}} (|\nabla \eta| + |\nabla \xi|) |Y_{ij}| \, d\mu^{n-1}.
    \]
    Applying the Cauchy-Schwarz inequality,
    \[
        \int_{\Sigma_{ij}} |\nabla \eta| |Y_{ij}| \, d\mu^{n-1} \le \left(\epsilon \int_{\Sigma_{ij}} |Y_{ij}|^2\, d\mu^{n-1}\right)^{1/2},
    \]
    which converges to zero as $\epsilon \to 0$, and
    \[
        \int_{\Sigma_{ij}} |\nabla \xi| |Y_{ij}| \, d\mu^{n-1} \le \left(C_\eta \int_{\Sigma_{ij}} 1_{\{\xi < 1\}} |Y_{ij}|^2\, d\mu^{n-1}\right)^{1/2}
    \]
    because $\nabla \xi = 0$ on the set $\{\xi = 1\}$. For any fixed $\epsilon$ and $\eta$, this
    last integral converges to zero as $\delta \to 0$ by the Dominated Convergence theorem.
    Taking $\delta \to 0$ and then $\epsilon \to 0$, it follows that
    $\int_{\Sigma_{ij}} \nabla_{Y^{\tang}_{ij}} (\eta \xi) \, d\mu^{n-1} \to 0$ and so
    \[
        \int_{\Sigma_{ij}} \div_{\Sigma,\mu} (\eta \xi Y_{ij}) \, d\mu^{n-1}
        \to \int_{\Sigma_{ij}} \div_{\Sigma,\mu} Y_{ij} \, d\mu^{n-1}.
    \]
    Plugging this back into~\eqref{eq:stokes-1} proves the claim.
\end{proof}

\subsection{Cancellation identities}

 To see that the boundary integrals simplify or even vanish after an application of Stokes' theorem on the divergence terms appearing in Lemma \ref{lem:formula}, we will require a couple of useful identities. Recall that by Corollary~\ref{cor:boundary-normal-sum}, for any distinct $i,j,k$, the three interfaces $\Sigma_{ij}$, $\Sigma_{jk}$ and $\Sigma_{ki}$ meet on $\Sigma_{ijk}$ at $120^\circ$ angles:
 \begin{equation} \label{eq:120}
 \sum_{(\ell, m) \in \cyclic(i, j, k)} \n_{\ell m} = 0 . 
 \end{equation}

\begin{lemma}\label{lem:boundary-curvature-cancellation0}
    At every point of $\Sigma_{ijk}$ and for every $x \in \R^n$,
    \[
        \sum_{(\ell, m) \in \cyclic(i, j, k)} \inr{x}{\n_{\ell m}} \inr{\nabla_{x^{\tang_{\ell m}}} \n_{\ell m}}{\n_{\partial \ell m}}
        = \sum_{(\ell, m) \in \cyclic(i,j,k)} \inr{x}{\n_{\ell m}} \inr{x}{\n_{\partial \ell m}} \II^{\ell m}_{\partial,\partial}.
    \]
 \end{lemma}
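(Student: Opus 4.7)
Fix $p \in \Sigma_{ijk}$ and write $T_p \R^n = T_p \Sigma_{ijk} \oplus W$ orthogonally, where $W$ is the $2$-dimensional normal plane to $\Sigma_{ijk}$. Since $\Sigma_{ijk}$ lies in the closure of each $\Sigma_{\ell m}$ with $(\ell, m) \in \cyclic(i,j,k)$, both $\n_{\ell m}$ and $\n_{\partial \ell m}$ lie in $W$, and $T_p \Sigma_{\ell m} = T_p \Sigma_{ijk} \oplus \R \n_{\partial \ell m}$. Because $\n_{\partial \ell m} \perp \n_{\ell m}$, the tangential projection of $x$ onto $\Sigma_{\ell m}$ splits as
\[
    x^{\tang_{\ell m}} = x^T + \inr{x}{\n_{\partial \ell m}} \n_{\partial \ell m},
\]
where $x^T$ is the orthogonal projection of $x$ onto $T_p \Sigma_{ijk}$---a quantity independent of $(\ell,m)$. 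The plan is to substitute this decomposition into the LHS and observe that the resulting ``off-diagonal'' terms cancel because the three unit normals co-rotate rigidly along $\Sigma_{ijk}$.

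Using the paper's convention $\II^{\ell m}(u,v) = \inr{\nabla_u \n_{\ell m}}{v}$ (consistent with $\div_{\Sigma} \n = H_{\Sigma}$) together with the bilinearity of $\II^{\ell m}$, the LHS expands to
\[
    \sum_{(\ell,m) \in \cyclic(i,j,k)} \inr{x}{\n_{\ell m}} \II^{\ell m}(x^T, \n_{\partial \ell m}) + \sum_{(\ell,m) \in \cyclic(i,j,k)} \inr{x}{\n_{\ell m}} \inr{x}{\n_{\partial \ell m}} \II^{\ell m}_{\partial, \partial}.
\]
The second sum is precisely the RHS, so the claim reduces to showing that the first sum vanishes. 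This in turn reduces to the key pointwise identity: for any $v \in T_p \Sigma_{ijk}$, the scalar $\II^{\ell m}(v, \n_{\partial \ell m}) = \inr{\nabla_v \n_{\ell m}}{\n_{\partial \ell m}}$ does not depend on $(\ell,m) \in \cyclic(i,j,k)$.

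To verify this identity, choose a smooth orthonormal frame $(e_1, e_2)$ of $W$ on a neighborhood of $p$ in $\Sigma_{ijk}$ (which is $C^\infty$ by Corollary~\ref{cor:smooth-Sigma2}). By Corollary~\ref{cor:boundary-normal-sum}, the three interfaces meet at constant $120^\circ$ angles along $\Sigma_{ijk}$, so we can write $\n_{\ell m} = \cos(\theta + c_{\ell m}) e_1 + \sin(\theta + c_{\ell m}) e_2$ for a common smooth function $\theta$ on $\Sigma_{ijk}$ and constants $c_{\ell m} \in \set{0, 2\pi/3, 4\pi/3}$; correspondingly, $\n_{\partial \ell m} = -\sin(\theta + c_{\ell m}) e_1 + \cos(\theta + c_{\ell m}) e_2$. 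Differentiating $\n_{\ell m}$ in direction $v$ and using the relations $\inr{\nabla_v e_1}{e_1} = \inr{\nabla_v e_2}{e_2} = 0$ and $\inr{\nabla_v e_1}{e_2} = -\inr{\nabla_v e_2}{e_1}$, a short computation yields
\[
    \inr{\nabla_v \n_{\ell m}}{\n_{\partial \ell m}} = \theta'(v) + \inr{\nabla_v e_1}{e_2},
\]
which is indeed independent of $(\ell,m)$.

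Denoting this common value by $\alpha(v)$, the first sum above becomes $\alpha(x^T) \inr{x}{\sum_{(\ell,m) \in \cyclic(i,j,k)} \n_{\ell m}}$, which vanishes by Corollary~\ref{cor:boundary-normal-sum}. The main subtlety lies in the co-rotation identity above; it encodes geometrically the fact that the $120^\circ$ constraint forces the three normals to differ by fixed rotations inside $W$, so they turn rigidly together as one moves along $\Sigma_{ijk}$, leaving $\inr{\nabla_v \n_{\ell m}}{\n_{\partial \ell m}}$ unchanged under the cyclic relabeling.
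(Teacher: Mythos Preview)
Your proof is correct and follows essentially the same approach as the paper's: decompose $x^{\tang_{\ell m}}$ into its $T_p\Sigma_{ijk}$-component plus the $\n_{\partial \ell m}$-component, show that the scalar $\inr{\nabla_v \n_{\ell m}}{\n_{\partial \ell m}}$ is independent of $(\ell,m)$ for $v \in T_p\Sigma_{ijk}$, and conclude via $\sum_{(\ell,m)} \n_{\ell m}=0$. The only variation is in verifying the independence step---the paper uses the explicit linear relations $\n_{jk}=\tfrac{\sqrt3}{2}\n_{\partial ij}-\tfrac12\n_{ij}$, $\n_{\partial jk}=-\tfrac{\sqrt3}{2}\n_{ij}-\tfrac12\n_{\partial ij}$ directly, whereas you parametrize all three normals via a common angle function $\theta$ in a smooth orthonormal frame of the normal plane $W$---but these are two equivalent ways of encoding the same $120^\circ$ rigidity.
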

 \begin{proof}
 To simplify notation, we will assume
    that $\{i, j, k\} = \{1, 2, 3\}$ and fix $p \in \Sigma_{123}$. 
    Let $y$ be the component of $x$ which is tangent to $\Sigma_{123}$ at $p$,
    so that for any distinct $i, j,k \in \{1, 2, 3\}$,
    $x^{\tang_{ij}} = y + \n_{\partial ij} \inr{x}{\n_{\partial ij}}$. Hence,
    \begin{align}
        \inr{\nabla_{x^{\tang_{ij}}} \n_{ij}}{\n_{\partial ij}}
        &= 
        \inr{x}{\n_{\partial ij}} \inr{\nabla_{\n_{\partial ij}} \n_{ij}}{\n_{\partial ij}}
        + \inr{\nabla_{y} \n_{ij}}{\n_{\partial ij}} \notag \\
        &= 
        \inr{x}{\n_{\partial ij}} \II^{ij}_{\partial,\partial}
        + \inr{\nabla_{y} \n_{ij}}{\n_{\partial ij}} .
        \label{eq:boundary-curvature}
    \end{align}
    Next, we observe that the second term is independent of
    $(i, j) \in \cyclic(1, 2, 3)$. Indeed, by (\ref{eq:120}), we know that
        $\n_{jk} = \frac{\sqrt 3}{2} \n_{\partial ij} - \frac 12 \n_{ij}$
    and $\n_{\partial jk} = - \frac{\sqrt 3}{2} \n_{ij} - \frac 12 \n_{\partial ij}$.
    Clearly, $\inr{\nabla_y \n_{ij}}{\n_{ij}} = \frac{1}{2} \nabla_y \inr{\n_{ij}}{\n_{ij}} = 0$ and similarly $\inr{\nabla_y \n_{\partial ij}}{\n_{\partial ij}} = 0$ and $\inr{\nabla_y \n_{\partial ij}}{\n_{ij}} = - \inr{\n_{\partial ij}}{\nabla_y \n_{ij}}$.
    Hence,
    \[
        \inr{\nabla_y \n_{jk}}{\n_{\partial jk}} = -\frac 34 \inr{\nabla_y \n_{\partial ij}}{\n_{ij}}
        + \frac 14 \inr{\nabla_y \n_{ij}}{\n_{\partial ij}}
        = \inr{\nabla_y \n_{ij}}{\n_{\partial ij}}.
    \]
    It follows by (\ref{eq:120}) again that:
        \[
        \sum_{(i,j) \in \cyclic(1,2,3)} \inr{x}{\n_{ij}} \inr{\nabla_y \n_{ij}}{\n_{\partial ij}} = \scalar{x,\sum_{(i,j) \in \cyclic(1,2,3)}  \n_{ij}} \inr{\nabla_y \n_{12}}{\n_{\partial 12}}  = 0.
    \]
    Multiplying~\eqref{eq:boundary-curvature} by $\inr{x}{\n_{ij}}$ and
    summing over $(i,j) \in \cyclic(1, 2, 3)$ completes the proof.
\end{proof}

 \begin{lemma}\label{lem:cancellation}
    At every point of $\Sigma_{ijk}$, the following $3$-tensor is identically zero:
    \[
    T^{\alpha \beta \gamma} = \sum_{(\ell,m) \in \cyclic(i,j,k)} 
    \brac{\n_{\ell m}^{\alpha} \n_{\ell m}^\beta \n_{\partial \ell m}^{\gamma} - \n_{\partial \ell m}^{\alpha} \n_{\ell m}^\beta \n_{\ell m}^{\gamma}} .
    \]
 \end{lemma}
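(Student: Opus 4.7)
The plan is to reduce the identity to a short computation in the two-dimensional plane $W := (T_p \Sigma_{ijk})^\perp \subset \R^n$. At any $p \in \Sigma_{ijk}$, all six vectors $\{\n_{\ell m}, \n_{\partial \ell m}\}_{(\ell,m) \in \cyclic(i,j,k)}$ lie inside $W$: each $\n_{\ell m}$ is normal to the $(n-1)$-dimensional interface $\Sigma_{\ell m}$ and hence normal to its $(n-2)$-dimensional boundary stratum $\Sigma_{ijk}$, while each $\n_{\partial \ell m}$ is by definition normal to $\Sigma_{ijk}$ as well. Consequently any component $T^{\alpha\beta\gamma}$ with at least one index orthogonal to $W$ vanishes term-by-term, so it suffices to check that $T^{\alpha\beta\gamma} = 0$ for indices ranging over an orthonormal basis of $W$.

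The two structural inputs come from Corollary~\ref{cor:boundary-normal-sum}: the three $\n_{\ell m}$ are unit vectors summing to zero (the $120^\circ$ condition), and there is a single $90^\circ$ rotation $R$ of $W$ such that $\n_{\partial \ell m} = R\,\n_{\ell m}$ for every $(\ell,m) \in \cyclic(i,j,k)$. A first observation, independent of $R$, is that swapping $\alpha \leftrightarrow \gamma$ in the defining sum interchanges the two terms, so
\[
T^{\gamma\beta\alpha} = -T^{\alpha\beta\gamma}.
\]
In particular, $T^{\alpha\beta\alpha} = 0$ holds summand-by-summand, and in two dimensions only the components $T^{1\beta 2}$ (with $\beta \in \{1,2\}$) remain to be verified.

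Choose an orthonormal basis of $W$ in which the rotation takes the matrix form $R = \begin{pmatrix} 0 & -1 \\ 1 & 0 \end{pmatrix}$, so that $(Rv)^1 = -v^2$ and $(Rv)^2 = v^1$ for every $v \in W$. Writing $v_m := \n_{\ell m}$ (where $m$ indexes the three pairs $(\ell,m) \in \cyclic(i,j,k)$) and using $\n_{\partial \ell m} = R v_m$, one computes
\[
T^{1\beta 2} = \sum_m \bigl( v_m^1\, v_m^\beta\, v_m^1 - (-v_m^2)\, v_m^\beta\, v_m^2 \bigr) = \sum_m v_m^\beta \bigl( (v_m^1)^2 + (v_m^2)^2 \bigr) = \sum_m v_m^\beta,
\]
which vanishes because $\sum_m v_m = 0$ and the $v_m$ are unit vectors. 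Combined with the antisymmetry and the triviality of the $\alpha = \gamma$ case, this forces $T \equiv 0$.

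There is no real obstacle: the content of the lemma is simply that the $120^\circ$ angle condition together with the orthogonality $\n_{\partial \ell m} \perp \n_{\ell m}$ force the asymmetric combination defining $T$ to vanish, and none of the analytic machinery (integrability of curvature, Stokes, elliptic regularity) from earlier in Section~\ref{sec:second-var} is needed. The only mild care required is observing that $T$ is fully supported in $W^{\otimes 3}$, which reduces an $n$-dimensional tensor identity to the two-component check above.
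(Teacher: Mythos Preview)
Your proof is correct and follows essentially the same approach as the paper: reduce to the two-dimensional plane $W$ containing all six vectors, use that a single $90^\circ$ rotation $R$ sends each $\n_{\ell m}$ to $\n_{\partial \ell m}$, and conclude via $\sum_{(\ell,m)} \n_{\ell m} = 0$. The only cosmetic difference is that the paper factors the tensor in a basis-free way by observing $u(Ru)^T - (Ru)u^T = -R$ for any unit $u$, obtaining $T^{\alpha\beta\gamma} = -R^{\alpha\gamma}\sum_{(\ell,m)} \n_{\ell m}^\beta$ directly, whereas you reach the same conclusion through antisymmetry plus an explicit coordinate check.
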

\begin{proof}

    Consider a two-dimensional Euclidean space $W$ and let $R : W \to W$ denote a $90^\circ$ clockwise rotation. 
    Note that for any unit vector $u \in W$, $u (R u)^T - (R u) u^T$ is independent of $u$ and is equal to $-R$, as immediately seen by expressing this operator in the orthogonal basis $\{u, Ru\}$. 
   
    Now assume, without loss of generality as before, that $\{i, j, k\} = \{1, 2, 3\}$, and fix $p \in \Sigma_{123}$.
    Let $W$ be the span of $\{\n_{12}, \n_{23}, \n_{31}\}$, which by (\ref{eq:120}) is two-dimensional. We will orient $W$ so that $\n_{12}$, $\n_{23}$, $\n_{31}$ are in clockwise order.
    It follows that for every $(\ell, m) \in \cyclic(1,2,3)$, $\n_{\partial \ell m}$ is a $90^\circ$
    clockwise rotation of $\n_{\ell m}$.
    By the previous remarks, $\n^\alpha_{\ell m} \n_{\partial \ell m}^\gamma - \n_{\partial \ell m}^\alpha \n_{\ell m}^\gamma = -R^{\alpha \gamma}$
    is independent of $(\ell, m) \in \cyclic(1, 2, 3)$. Hence,
    \[
    T^{\alpha \beta \gamma} = -R^{\alpha \gamma} \sum_{(\ell, m) \in \cyclic(1, 2, 3)} n^\beta_{\ell m}  ,
    \]
    which vanished identically by (\ref{eq:120}), as asserted. 
\end{proof}

\subsection{Proof of Theorem~\ref{thm:formula}}

    The first step is to apply Stokes' theorem to show
    that the second line in the formula of Lemma~\ref{lem:formula} vanishes. 
    Observe that on $\Sigma_{ij}$:
    \begin{equation} \label{eq:div-Y}
    \div_{\Sigma,\mu} (X^{\n} \nabla_{\n} X) + \div_{\Sigma,\mu} (X \div_{\Sigma,\mu} X) - H_{\Sigma,\mu} X^{\n} \div_\mu X 
        = \div_{\Sigma,\mu} Y_{ij},
    \end{equation}
     where $Y_{ij}$ is the following vector-field on $\Sigma_{ij} \cup \partial \Sigma_{ij}$:
   \begin{equation} \label{eq:Y-def}
        Y_{ij} := X^{\n_{ij}} \nabla_{\n_{ij}} X - X \inr{\nabla_{\n_{ij}} X}{\n_{ij}} + X^{\tang_{ij}} \div_\mu X . 
    \end{equation}
    Note that $Y_{ij}$ is $C^\infty$ on $\Sigma_{ij} \cup \partial \Sigma_{ij}$ thanks to Corollary \ref{cor:smooth-Sigma2},     and satisfies $|Y_{ij}| \le (2+n)|X| \|\nabla X\| + |X|^2 |\nabla \pot|$.  Since $X$ is tame, $|Y_{ij}|$ is
    uniformly bounded and has bounded support disjoint from $\Sigma^4$, and hence satisfies the first integrability condition
    of Lemma~\ref{lem:stokes}, as well as the third one by Corollary \ref{cor:locally-finite-Sigma2}. 
    In addition, one sees that $\div_{\Sigma,\mu} Y_{ij}$ is absolutely integrable on $\Sigma_{ij}$ by inspecting (\ref{eq:div-Y});
  each of the terms on the left-hand-side is bounded by $\|\II\|$ times a polynomial in $|X|$, $\|\nabla X\|$, and $\|\nabla^2 X\|$, and is therefore
    in $L^2(\Sigma_{ij},\mu^{n-1})$ thanks to tameness of $X$ and Proposition \ref{prop:Schauder} \ref{it:Schauder-Sigma1}, and hence in $L^1(\Sigma_{ij},\mu^{n-1})$ since $\mu^{n-1}(\Sigma_{ij}) < \infty$.
    It follows that we may apply Lemma~\ref{lem:stokes} (Stokes' theorem) to each $Y_{ij}$ on $\Sigma_{ij} \cup \partial \Sigma_{ij}$. Note that $Y_{ij}$ is tangential to $\Sigma_{ij}$ since $\inr{Y_{ij}}{\n_{ij}} = 0$, 
    and so
    \begin{align}
       \nonumber
        \sum_{i < j} \int_{\Sigma_{ij}} \div_{\Sigma,\mu} Y_{ij} \, d\mu^{n-1}
        &= \sum_{i < j} \int_{\partial \Sigma_{ij}} \inr{Y_{ij}}{\n_{\partial ij}}\, d\mu^{n-2} \\
        \label{eq:cyclic-boundary}
        &= \sum_{i < j < k} \int_{\Sigma_{ijk}} \sum_{(\ell,m) \in \cyclic(i,j,k)} \inr{Y_{\ell m}}{\n_{\partial \ell m}}\, d\mu^{n-2}.
    \end{align}
    In order to see that this vanishes, we will show that for every distinct $i,j,k$,
    \[
        \sum_{(\ell, m) \in \cyclic(i,j,k)} \inr{Y_{\ell m}}{\n_{\partial \ell m}} = 0 \text{ on } \Sigma_{ijk} . 
    \]
    Indeed, for the last term in (\ref{eq:Y-def}), note that 
    \[
        \inr{X^{\tang_{\ell m}}}{\n_{\partial \ell m}} \div_\mu X = \inr{X}{\n_{\partial \ell m}} \div_\mu X,
    \]
    which vanishes when summed over $(\ell, m) \in \cyclic(i,j,k)$ by (\ref{eq:120}). 
        The other two terms in (\ref{eq:Y-def}) vanish after taking inner product with $\n_{\partial \ell m}$ and summing, because by Lemma~\ref{lem:cancellation}:
    \[
    \sum_{(\ell, m) \in \cyclic(i,j,k)} \inr{X}{\n_{\ell m}} \inr{ \nabla_{\n_{\ell m}} X}{\n_{\partial \ell m}} - \inr{X}{\n_{\partial \ell m}} \inr{\nabla_{\n_{\ell m}} X}{\n_{\ell m}} = T^{\alpha \beta \gamma} X_\alpha \nabla_\beta X_\gamma = 0 .
    \]

    Summarizing, we see that the entire second line of the formula in Lemma~\ref{lem:formula}
    vanishes, and we are left with the formula
    \begin{equation}\label{eq:formula-1}
        \sum_{i < j}
        \int_{\Sigma_{ij}} \Bigg[
         |\nabla^\tang X^\n|^2 - (X^\n)^2 \|\II\|_2^2 - (X^\n)^2 \nabla_{\n,\n}^2 W - \div_{\Sigma,\mu} (X^\n \nabla_{X^\tang} \n)
        \Bigg] d\mu^{n-1}
    \end{equation}
    for $Q(X)$. Note that the integrand in Lemma \ref{lem:formula} as a whole is integrable by Lemma~\ref{lem:regular}, and hence so is the integrand in~\eqref{eq:formula-1} 
   (since the terms we have already removed were integrable and integrated to zero). 
    On the other hand, all of the terms in~\eqref{eq:formula-1} besides the last
    are individually integrable: the third term is integrable because $\norm{\nabla^2 W}$ and $\abs{X}$ are bounded on $X$'s compact support, 
    the second term is integrable by Proposition \ref{prop:Schauder} \ref{it:Schauder-Sigma1},
    and the first term is integrable because $|\nabla^\tang X^\n| \le \|\nabla X\| + |X| \|\II\|$,
    which is square integrable again by Proposition \ref{prop:Schauder} \ref{it:Schauder-Sigma1}.
    It follows that the remaining term, $\div_{\Sigma,\mu} (X^\n \nabla_{X^\tang} \n)$,
    is  integrable on $\Sigma_{ij}$. 
    
    It remains to establish that:
    \begin{equation} \label{eq:formula-final}
        \sum_{i < j} \int_{\Sigma_{ij}} \div_{\Sigma,\mu} (X^\n \nabla_{X^\tang} \n) \, d\mu^{n-1}
        = \sum_{i < j} \int_{\partial \Sigma_{ij}} X^\n X^{n_\partial} \II_{\partial,\partial} \, d\mu^{n-2} .
    \end{equation}
    This will follow by applying Lemma~\ref{lem:stokes} (Stokes' theorem) to the $C^\infty$ vector-field
    \[
       Z_{ij} := X^{\n_{ij}} \nabla_{X^{\tang_{ij}}} \n_{ij} ,
    \]
    defined on $\Sigma_{ij} \cup \partial \Sigma_{ij}$ for each $i < j$. Let us first verify the integrability conditions of Lemma~\ref{lem:stokes}: $\div_{\Sigma,\mu}(Z_{ij})$ is     integrable by the preceding paragraph, and as $|Z_{ij}| \le |X|^2 \|\II\|$ and $X$ is tame, we see that $Z_{ij}$ is square-integrable on $\Sigma_{ij}$
    and integrable on $\Sigma_{ijk}$ by Proposition~\ref{prop:Schauder}. Since $Z_{ij}$ is clearly tangential to $\Sigma_{ij}$, Lemma~\ref{lem:stokes} yields:
    \[
    \sum_{i < j} \int_{\Sigma_{ij}} \div_{\Sigma,\mu}(Z_{ij}) \, d\mu^{n-1} = \sum_{i < j} \int_{\partial \Sigma_{ij}} X^{\n_{ij}} \inr{\nabla_{X^{\tang_{ij}}} \n_{ij}}{\n_{\partial ij}}
     \, d\mu^{n-2} .
    \]
    An application of Lemma~\ref{lem:boundary-curvature-cancellation0} then establishes (\ref{eq:formula-final}), and completes the proof of Theorem \ref{thm:formula}.

\subsection{Proof of Theorem \ref{thm:combined-vector-fields}}

To establish Theorem \ref{thm:combined-vector-fields}, we will need the following polarization formula with respect to constant vector fields, whose proof is deferred to Appendix \ref{sec:calculation}.  
\begin{lemma} \label{lem:polarization}
For any stationary cluster $\Omega$ whose cells are volume and perimeter regular, and for any admissible vector-field $X$:
\begin{align*}
& Q(X + w) = Q(X) + Q(w) \\
& + \sum_{i < j} \int_{\Sigma_{ij}} \Big[  \div_{\Sigma,\mu} (\nabla_w X) - 
    2\div_{\Sigma,\mu} (\nabla_w W \cdot X) - 2 X^\n \nabla^2_{w,\n} W    \\
& \hspace{46pt} - H_{ij,\mu} \brac{w^\n \div_\mu X - X^\n \nabla_w \pot} \Big ] d\mu^{n-1} .
\end{align*}
\end{lemma}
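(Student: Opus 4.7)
The approach I would take is a direct polarization computation starting from Lemma~\ref{lem:formula}. Since Lemma~\ref{lem:formula} applies to any admissible vector-field when the cells of $\Omega$ are volume and perimeter regular, I may apply it to each of $X+w$, $X$, and $w$ (a constant field is admissible). The integrand on each $\Sigma_{ij}$ is, pointwise, a polynomial of degree $\leq 2$ in $(X,\nabla X,\nabla^2 X)$, so the difference $Q(X+w) - Q(X) - Q(w)$ reduces to twice the symmetric bilinear form $B(X,w)$ obtained by polarizing each summand. All the work is then to rewrite the resulting cross-term integrand in the claimed form, using crucially that $w$ is constant.

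I would process the seven summands of the integrand in Lemma~\ref{lem:formula} one at a time. The algebraic terms give
\[
2\langle \nabla^\tang X^\n, \nabla^\tang w^\n\rangle, \quad -2 X^\n w^\n \|\II\|_2^2, \quad -2 X^\n w^\n \nabla^2_{\n,\n} W,
\]
while the four divergence terms give (using $\nabla w=0$, hence $\nabla_\n w=0$, $\div w=0$, and $\div_{\Sigma,\mu} w = -\nabla_w W$)
\[
-\div_{\Sigma,\mu}\bigl(X^\n \nabla_{w^\tang}\n + w^\n \nabla_{X^\tang}\n\bigr), \quad \div_{\Sigma,\mu}(w^\n \nabla_\n X),
\]
\[
\div_{\Sigma,\mu}\bigl(-X\,\nabla_w W + w\,\div_{\Sigma,\mu} X\bigr), \quad -H_{ij,\mu}\bigl(w^\n \div_\mu X - X^\n \nabla_w W\bigr).
\]
The last of these is already the term $D$ from the claim. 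Everything else must be massaged into $\div_{\Sigma,\mu}(\nabla_w X)$, $-2\div_{\Sigma,\mu}((\nabla_w W) X)$, and $-2 X^\n \nabla^2_{w,\n} W$.

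The key pointwise identities I would invoke are the splitting $w = w^\n \n + w^\tang$ together with
\[
\nabla_w X = w^\n \nabla_\n X + \nabla_{w^\tang} X, \qquad \nabla^2_{w,\n} W = w^\n \nabla^2_{\n,\n}W + \nabla^2_{w^\tang,\n} W,
\]
and the product rules
\[
\div_{\Sigma,\mu}\bigl((\nabla_w W) X\bigr) = (\nabla_w W)\,\div_{\Sigma,\mu} X + \nabla^2_{X^\tang,w} W,
\]
\[
\div_{\Sigma,\mu}(w \div_{\Sigma,\mu} X) = -(\nabla_w W)\,\div_{\Sigma,\mu} X + \nabla_{w^\tang}(\div_{\Sigma,\mu} X),
\]
whose sum produces the needed $-\div_{\Sigma,\mu}((\nabla_w W) X)$ factor (doubling the contribution already present from term~6) up to a remainder controlled by the Hessian. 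The remaining $2\langle \nabla^\tang X^\n,\nabla^\tang w^\n\rangle$ term is then handled by observing that, because $w$ is constant, $\nabla^\tang w^\n = (\nabla^\tang \n)\cdot w^\tang$ at each point, so this term is shape-operator valued and absorbs into the $\div_{\Sigma,\mu}(\nabla_{w^\tang} X)$ piece of $\div_{\Sigma,\mu}(\nabla_w X)$ together with the $\nabla_{w^\tang}\n$ and $\nabla_{X^\tang}\n$ contributions coming from term~4.

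The main obstacle is the bookkeeping: there is no conceptual difficulty (no analytic issue — integrability is handled by admissibility plus the regularity hypothesis, and no integration by parts is required since we are working pointwise with the integrand of Lemma~\ref{lem:formula}), but seven summands must be expanded and collected carefully, with repeated use of the normal--tangential decomposition of $w$, symmetry of $\nabla^2 W$, and the product rule for $\div_{\Sigma,\mu}$. In practice I would organize the check by matching the coefficient of each scalar invariant ($X^\n w^\n \|\II\|^2$, Hessian contractions, divergences of $X$, and mixed first-derivative traces) on both sides of the asserted identity; once the pointwise identity is verified, integrating over each $\Sigma_{ij}$ yields the stated formula.
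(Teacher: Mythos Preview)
Your approach is correct in principle but differs from the paper's and is considerably more laborious. You polarize the fully-processed integrand of Lemma~\ref{lem:formula}; the paper instead goes back one step to the intermediate formula~\eqref{eq:second-variation-of-area-1} for $\delta_X^2 A$, \emph{before} Lemma~\ref{lem:div-nabla-nabla-div} has been applied. The point is that in~\eqref{eq:second-variation-of-area-1} almost every term depends on $X$ only through $\nabla X$ or $\nabla^2 X$, so plugging in $Y=X+w$ with $\nabla w=0$ leaves those terms literally unchanged; only the four terms involving an undifferentiated $X$ (namely $\div_\Sigma\nabla_X X$, $\nabla_X W\cdot\div_\Sigma X$, $\langle\nabla_X X,\nabla W\rangle$, $(\nabla_X W)^2$, $\nabla^2_{X,X}W$) pick up cross-terms, and these collapse in two or three lines to $\div_{\Sigma,\mu}(\nabla_w X)-2\div_{\Sigma,\mu}((\nabla_w W)X)-2X^\n\nabla^2_{w,\n}W$ via the product rule and the splitting $\nabla^2_{w,X}W=\nabla^2_{w,X^\tang}W+X^\n\nabla^2_{w,\n}W$. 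The volume part is handled directly from~\eqref{eq:second-variation-of-volume-for-one-component}. Your route requires re-assembling cross-terms that Lemma~\ref{lem:div-nabla-nabla-div} had already rearranged (the $|\nabla^\tang X^\n|^2$, $\|\II\|^2$, and $\nabla_{X^\tang}\n$ pieces), which is why your bookkeeping balloons; the paper's shortcut avoids this entirely.
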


Recall that $\Omega$ is assumed to be stationary and regular, and $X$ is assumed to be tame. 
To simplify the integral in Lemma \ref{lem:polarization}, we start by applying Stokes'  theorem  (Lemma~\ref{lem:stokes}) to $Z_{ij}$, the restriction of the tame vector-field $Z = \nabla_w X - 2 X \nabla_w W$ to $\Sigma_{ij} \cup \partial \Sigma_{ij}$; by Remark \ref{rem:stokes}, the integrability assumptions of Lemma~\ref{lem:stokes} are in force. 
    It follows that:
   \begin{align*}
   & \sum_{i<j} \int_{\Sigma_{ij}} \Big [ \div_{\Sigma,\mu} (\nabla_w X) - 2\div_{\Sigma,\mu} (\nabla_w W \cdot X) \Big ] d\mu^{n-1} = \\
   & \sum_{i<j} \int_{\Sigma_{ij}} H_{ij,\mu} \brac{ \inr{\nabla_w X}{\n} - 2X^\n \nabla_w W} d\mu^{n-1} + \sum_{i<j} \int_{\partial \Sigma_{ij}} \inr{Z}{\n_{\partial ij}} d\mu^{n-2} . 
   \end{align*}
   But since $\sum_{(\ell,m) \in \cyclic(i,j,k)} n_{\partial \ell m} = 0$, the total contribution of the boundary integral is zero. 

   Plugging this into the formula of Lemma \ref{lem:polarization}, we deduce that:
   \begin{align}
   \label{eq:Q-almost-polarized}
   & Q(X + w) =  Q(X) + Q(w) \\
   \nonumber
   & +  \sum_{i < j} \int_{\Sigma_{ij}}  \Big [ H_{ij,\mu} \brac{\inr{\nabla_w X}{\n} - X^\n \nabla_w W - w^\n \div_\mu X  } - 2 X^\n \nabla^2_{w,\n} W \Big ] d\mu^{n-1} . 
   \end{align}
   We now claim that the first three terms of the above integrand are equal to $H_{ij,\mu} \div_{\Sigma,\mu} (X^\n w - w^\n X)$. Indeed, using the Leibniz rule $\div_{\Sigma,\mu} (f X) = f \div_{\Sigma,\mu} X + \nabla_{X^\tang} f$, the facts that $\div_{\Sigma,\mu} w = - \nabla_w W$ and that $\nabla w = 0$, and the symmetry of the second fundamental-form:
   \begin{align*}
   \div_{\Sigma,\mu} (X^\n w - w^\n X)
   &= X^\n \div_{\Sigma,\mu} w - w^\n \div_{\Sigma,\mu} X + \nabla_{w^{\tang}} (X^\n) - \nabla_{X^\tang} (w^\n) \\
   &= - X^\n \nabla_w W - w^\n \div_{\Sigma,\mu} X + \inr{\nabla_{w^\tang} X}{\n} \\
   &= - X^\n \nabla_w W - w^\n \div_{\Sigma,\mu} X + \inr{\nabla_{w} X}{\n} - w^\n \inr{\nabla_\n X}{\n} \\
   &= - X^\n \nabla_w W - w^\n \div_{\mu} X + \inr{\nabla_{w} X}{\n}.
                      \end{align*}
   Setting $Y_{ij} = X^{\n_{ij}} w - w^{\n_{ij}} X$ on $\Sigma_{ij} \cup \partial \Sigma_{ij}$, we would like to apply Stokes' theorem (Lemma~\ref{lem:stokes}) again. 
   The first and third integrability assumptions from (\ref{eq:stokes-assumptions}) are satisfied because $X$ is tame (as in Remark \ref{rem:stokes}). The second assumption is also satisfied because, as usual, $\div_{\Sigma,\mu} Y_{ij}$ is bounded by $\norm{\II^{ij}}$ times a polynomial in $\abs{w}$, $\abs{X}$ and $\norm{\nabla X}$, and hence is integrable on $\Sigma_{ij}$ by tameness, Proposition \ref{prop:Schauder} \ref{it:Schauder-Sigma1} and $\mu(\Sigma_{ij}) < \infty$; alternatively, $\div_{\Sigma,\mu} Y_{ij}$ must be integrable by Lemma \ref{lem:regular}, as all of the other terms in (\ref{eq:Q-almost-polarized}) are integrable. 
 
 Applying Lemma~\ref{lem:stokes}, since $Y_{ij}$ is clearly tangential, we obtain as in (\ref{eq:cyclic-boundary}):
 \[
 \sum_{i<j} \int_{\Sigma_{ij}} H_{ij, \mu} \div_{\Sigma,\mu} Y_{ij} d\mu^{n-1} = \sum_{i<j<k} \int_{\Sigma_{ijk}} \sum_{(\ell,m) \in \cyclic(i,j,k)} H_{\ell m ,\mu} \scalar{Y_{\ell m} , \n_{\partial \ell m}} d\mu^{n-2} . 
\]
We now claim that the above integrand is pointwise zero. Indeed, introducing the following vector-field on $\Sigma_{ijk}$:
\[
\Lambda  := - \frac{2}{\sqrt{3}} (\lambda_i \n_{\partial jk} + \lambda_j \n_{\partial ki} + \lambda_k \n_{\partial ij}) ,
\]
it is immediate to check that $\scalar{\Lambda , n_{\ell m}} = \lambda_\ell - \lambda_m = H_{\ell m , \mu}$. Hence, \[
 H_{\ell m ,\mu} \scalar{Y_{\ell m} , \n_{\partial \ell m}}  =X^{\n_{\ell m}} \Lambda^{\n_{\ell m}} w^{\n_{\partial \ell m}} - w^{\n_{\ell m}} \Lambda^{\n_{\ell m}} X^{\n_{\partial \ell m}} ,
\]
and so summing over all $(\ell, m) \in \cyclic(i,j,k)$ we obtain $T^{\alpha \beta \gamma} X_\alpha \Lambda_\beta w_\gamma = 0$, where $T^{\alpha \beta \gamma}$ is the $3$-tensor from Lemma~\ref{lem:cancellation} which vanishes identically. 

We conclude from the above discussion that:
\begin{equation} \label{eq:QX+w}
Q(X + w) =  Q(X) + Q(w) - 2 \sum_{i<j} \int_{\Sigma_{ij}} X^\n \nabla^2_{w,\n} W  d\mu^{n-1} .
\end{equation}
In particular, we formally deduce by setting $X = w$ and using $Q(2 w) = 4 Q(w)$, that:
\begin{equation} \label{eq:Qw}
Q(w) = - \sum_{i<j} \int_{\Sigma_{ij}} w^\n \nabla^2_{w,\n} W  d\mu^{n-1} .
\end{equation}
This is only formal, since $w$ is not a tame vector-field, but this actually holds rigorously by Lemma \ref{lem:cor-Q}. 
 Plugging (\ref{eq:Qw}) and the expression for $Q(X)$ from Theorem~\ref{thm:formula} into (\ref{eq:QX+w}), the assertion of Theorem \ref{thm:combined-vector-fields} readily follows.

\section{Approximate Inward Fields} \label{sec:inward-fields}

An additional crucial consequence of Theorem~\ref{thm:regularity} is that we can define a
consistent family of (approximate) ``inward'' vector-fields: for each cell $\Omega_i$, we
will try to define a smooth vector-field $X_i$ such that
$\inr{X_i}{\n_{\Omega_i}} \equiv -1$ (recall that $\n_{\Omega_i}$ is the
outward unit normal to $\Omega_i$, defined on $\partial^* \Omega_i$).
This family of approximate inward fields will be crucial for our variational arguments.

Theorem~\ref{thm:Almgren} allows us to construct the inward fields on $\Sigma^1
= \bigcup_{ij} \Sigma_{ij}$, since $X_i := -\n_{ij}$ is smooth on $\Sigma_{ij}$.
The issue is to extend these vector fields smoothly to all of $\R^n$,
and this is where Theorem~\ref{thm:regularity} comes in: it will allow
us to extend our inward fields smoothly to $\Sigma^{2}$ (the triple points), and to construct
approximate inward fields near $\Sigma^3$ (the quadruple points).

As usual, let $\delta_{ij}$ denote the Kronecker delta (i.e., $1$ if $i = j$ and $0$
otherwise), and let $\nabla^\tang$ denote the tangential component of the derivative.

\begin{proposition}[Existence of Approximate Inward Fields] \label{prop:inward-fields}
    Let $\Omega$ be a stationary regular cluster with respect to $\mu$. 
    For every $\epsilon_1 > 0$, there is a subset $K \subset \R^n$,
        such that for every $\epsilon_2 > 0$, there is a family of vector-fields
    $X_1, \dots, X_q$ with the following properties:
    \begin{enumerate}[(1)]
        \item $K$ is compact, disjoint from $\Sigma^4$, satisfies $\mu^{n-1}(\Sigma \setminus K) \le \epsilon_1$, and each $X_k$ is $C_c^\infty$ and supported inside $K$;
            \label{it:inward-fields-smooth}
        \item for every $k$,             $\displaystyle
                \int_{\Sigma^1} |\nabla^\tang X_k^\n|^2 \, d\mu^{n-1} \le \epsilon_1;
            $
            \label{it:inward-fields-gradient}
        \item             $\sum_{i < j} \mu^{n-1}\{p \in \Sigma_{ij} : \exists k \in \{ 1,\ldots,q \} \;\;\; X_k^{\n_{ij}}(p) \ne \delta_{kj} - \delta_{ki}\} \le \epsilon_1 + \epsilon_2;$ 
            \label{it:inward-fields-inward}
        \item For every $i \neq j$, for every $p \in \Sigma_{ij}$, there is
            some $\alpha \in [0, 1]$ such that for every $k$, 
            $|X_k^{\n_{ij}}(p) - \alpha (\delta_{kj} - \delta_{ki})| \le \alpha \epsilon_2;$
            \label{it:inward-fields-pointwise}
        \item for every $k$, and at every point in $\R^n$, $|X_k| \le \sqrt {3/2}$.             \label{it:inward-fields-bounded}
    \end{enumerate}
\end{proposition}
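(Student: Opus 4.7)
The plan is to construct each $X_k$ as the product $X_k := \eta Y_k$, where $\eta$ is a cutoff function from Lemma~\ref{lem:sigma4-cutoff} and $Y_k$ is a ``model inward vector field'' built stratum-by-stratum using the local structure of $\Sigma$ near each codimension.

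First, we apply Lemma~\ref{lem:sigma4-cutoff} with $\eps = \eps_1/2$ to obtain a compactly supported cutoff $\eta$ vanishing on a neighborhood of $\Sigma^4$, with $\int_{\Sigma^1}|\nabla \eta|^2 d\mu^{n-1} \leq \eps_1/2$ and $\mu^{n-1}\{\eta<1\}\leq\eps_1/2$. Setting $K := \supp(\eta)$, which is compact and disjoint from $\Sigma^4$, gives property~(1), since $\Sigma^2 \cup \Sigma^3 \cup \Sigma^4$ is $\mu^{n-1}$-null.

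For the model field $Y_k$, we use Theorem~\ref{thm:regularity}. At each $p$ lying in a stratum where $r$ cells meet ($r = 2, 3, 4$), the incidence relations (Corollary~\ref{cor:boundary-normal-sum} for $r=3$ and its tetrahedral analog for $r=4$) imply that the linear system $\scalar{v,\n_{\ell m}} = \delta_{km}-\delta_{k\ell}$ over the incident interfaces is consistent and uniquely determines a vector $v_k^{(r)}(p)$ in the $(r-1)$-dimensional span of the local normals, with $|v_k^{(r)}| \leq \sqrt{2(r-1)/r} \leq \sqrt{3/2}$. We extend these values via tubular neighborhoods of each stratum and a partition of unity, noting that convex combinations preserve both the affine condition $\scalar{\cdot,\n_{ij}}=\delta_{kj}-\delta_{ki}$ and the norm bound $\sqrt{3/2}$. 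The resulting $Y_k$ is $C^\infty$ on $\R^n \setminus \Sigma^4$ away from $\Sigma^3$ and $C^{1,\alpha}$ near $\Sigma^3$; we then mollify in a $\rho$-tubular neighborhood of $\Sigma^3$ (with $\rho$ chosen as a function of $\eps_2$) to obtain a globally $C^\infty$ field whose pointwise deviation from the ideal is at most $\eps_2$, supported in a region of $\mu^{n-1}$-measure at most $\eps_2$, with the same scalar rescaling applied across all $k$.

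Setting $X_k := \eta Y_k$, property~(5) is immediate from $|X_k|\leq|Y_k|\leq\sqrt{3/2}$. For~(4), we take $\alpha(p) := \eta(p)$: outside the mollification region $X_k^{\n_{ij}} = \eta(\delta_{kj}-\delta_{ki})$ exactly, while inside the error satisfies $|\eta e_k|\leq \eta\eps_2 = \alpha\eps_2$. Property~(3) follows because the exceptional set is contained in $\{\eta<1\}\cup(\text{mollification region})$, of total $\mu^{n-1}$-measure $\leq\eps_1/2+\eps_2$. For~(2), $\nabla^\tang X_k^{\n_{ij}} = (\delta_{kj}-\delta_{ki})\nabla^\tang\eta + \eta\nabla^\tang e_k$; the first term integrates to $\leq\eps_1/2$, and the second contributes $\leq C\rho^{2\alpha}\calH^{n-3}(\Sigma^3\cap K)$ by standard H\"older-mollification estimates (using that $\calH^{n-3}(\Sigma^3\cap K)$ is finite by compactness of $K$ disjoint from $\Sigma^4$), which is absorbed into $\eps_1/2$ by using internally an effective $\eps_2' := \min(\eps_2, c(\eps_1))$ when choosing $\rho$.

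The main obstacle will be the construction of $Y_k$ near $\Sigma^3$: the full vectors $v_k^{(3)}$ (on triple manifolds) and $v_k^{(4)}$ (at quadruple manifolds) differ by a component in the triple-line direction, so no $C^\infty$ vector field can satisfy the exact normal-component conditions at every point, making the $\eps_2$-tolerance in (3)--(4) intrinsic. Moreover, the same-$\alpha$-across-all-$k$ structure of (4) forces the mollification to rescale all $Y_k$ by a common scalar factor, which must be arranged through a uniform partition-of-unity and mollifier construction.
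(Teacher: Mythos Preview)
Your architecture is close to the paper's, but two steps are under-justified. First, the regularity near $\Sigma^3$: any field with \emph{exact} normal components on all six interfaces meeting at a quadruple point must be built by inverting the differential of the $C^{1,\alpha}$ diffeomorphism of Theorem~\ref{thm:regularity}\ref{it:regularity-Sigma3}, and $(d\phi)^{-1}$ is only $C^{0,\alpha}$, not $C^{1,\alpha}$ as you claim. The paper sidesteps this entirely by using \emph{constant} vectors $-\sqrt{3/2}(d_p\phi)^{-1}w_i$ near each $p\in\Sigma^3$ (Lemma~\ref{lem:approx-inward-fields}), which are trivially $C^\infty$ but only $\epsilon$-approximate in their normal components; it then blends this approximate field $Y_k$ with an exact field $Z_k$ defined on $\R^n\setminus\Sigma^{\ge 3}$ (Lemma~\ref{lem:inward-fields}) via a second cutoff $\xi$ from Lemma~\ref{lem:sigma3-cutoff}, setting $X_k=\eta(\xi Z_k+(1-\xi)Y_k)$. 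The gradient bound~\ref{it:inward-fields-gradient} works because $\epsilon$ is chosen so that $(Z_k-Y_k)^\n$ is uniformly of order $\sqrt{\epsilon_1/C_\eta}$, which compensates for $\nabla\xi$ having $L^2$ norm bounded by $C_\eta$ but not small; the residual term $\eta(1-\xi)\nabla^\tang Y_k^\n$ is handled by choosing the $\delta$-parameter of $\xi$ small and invoking the curvature integrability of Proposition~\ref{prop:Schauder}. Your mollification scheme may be salvageable with the corrected $C^{0,\alpha}$ regularity (the relevant integral still scales like $\rho^{2\alpha}$), but the paper's two-field blend is cleaner.

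Second, and more seriously, you gloss over the hardest technical step: constructing a $C^\infty$ field near $\Sigma^2$ with exact normal components on all three incident interfaces simultaneously. Extending $v_k^{(3)}(p)$ as a constant off $\Sigma_{ijk}$ does not preserve $\inr{\cdot}{\n_{ij}}=\delta_{kj}-\delta_{ki}$ along $\Sigma_{ij}$, since $\n_{ij}$ varies; hence your claim that ``convex combinations preserve the affine condition'' across overlapping patches presupposes that each patch already satisfies the condition at the point in question, which is precisely what must be constructed. The paper's Lemma~\ref{lem:inward-fields} pulls back model vectors via the $C^\infty$ diffeomorphism of Corollary~\ref{cor:isometry-Sigma2} and then \emph{renormalizes} by a smooth scalar $f_i$; the consistency of $f_i$ across the three interfaces (so that the resulting field is well-defined and smooth through $\Sigma_{ijk}$) relies on the conformal property established in Lemma~\ref{lem:angle-preservation}, which you do not invoke.
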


\begin{definition}[Approximate Inward Fields]
A family $X_1,\ldots,X_q$ of vector-fields satisfying properties (1)-(5) above is called a family of $(\eps_1,\eps_2)$-approximate inward fields. 
\end{definition}

The distinction between $\eps_1$ and $\eps_2$ and their order of quantification will be important in only a single instance in this work (Lemma \ref{lem:R}), but in all other applications of Proposition \ref{prop:inward-fields} we will simply use $\eps_1=\eps_2 = \eps$.

\subsection{Conformal properties}

To construct our inward fields we will require a bit more information on the conformal properties of the diffeomorphisms appearing in Theorem~\ref{thm:regularity}. Below, $\Omega$ is assumed to be a stationary regular cluster. 

\begin{lemma}\label{lem:angle-preservation}
    Suppose that $\phi$ is a local $C^1$ diffeomorphism defined on a neighborhood $N_p$ of $p \in
    \Sigma_{ijk}$ that maps $p$ to the origin and $\Sigma$ to $\Y \times \R^{n-2}$.
    Let $E = E^{(2)}$, and write $P_E$ for the orthogonal projection onto $E$.
    For $r \in N_p \cap \Sigma_{ijk}$, let
    $W_{ijk}(r) = \spn\{\n_{\partial ij}(r), \n_{\partial jk}(r), \n_{\partial ki}(r)\}$.

    Then for any $r \in N_p \cap \Sigma_{ijk}$, $d_r \phi$ maps $W_{ijk}(r)^\perp$ into
    $\{0\} \times \R^{n-2}$, and
    $P_E \circ d_r \phi$ is a conformal (i.e. angle-preserving) transformation from $W_{ijk}(r)$ to $E$.
\end{lemma}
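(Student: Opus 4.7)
The plan is to analyze $d_r\phi$ separately on $W_{ijk}(r)^\perp$ and on $W_{ijk}(r)$, exploiting the $120^\circ$ meeting angles established in Corollary~\ref{cor:boundary-normal-sum}. For the first claim, I would identify $W_{ijk}(r)^\perp$ with $T_r\Sigma_{ijk}$: each $\n_{\partial\ell m}(r)$ is, by definition, tangent to $\Sigma_{\ell m}$ and normal to its boundary $\partial\Sigma_{\ell m}\supset\Sigma_{ijk}$, so $\{\n_{\partial ij},\n_{\partial jk},\n_{\partial ki}\}$ lies in the two-dimensional normal plane to $\Sigma_{ijk}$ at $r$, and by Corollary~\ref{cor:boundary-normal-sum} any two of them span it. Hence $W_{ijk}(r)$ is exactly that normal plane, and $W_{ijk}(r)^\perp=T_r\Sigma_{ijk}$. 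Since $\phi$ is a diffeomorphism carrying $\Sigma_{ijk}$ into $\{0\}_E\times\R^{n-2}$, its differential sends $T_r\Sigma_{ijk}$ into the tangent space of that submanifold, namely $\{0\}_E\times\R^{n-2}$; by dimension count and injectivity of $d_r\phi$, the inclusion is in fact an equality.

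For the conformal claim, fix $(\ell,m)\in\cyclic(i,j,k)$. A small connected neighborhood of $r$ in $\Sigma_{\ell m}$ avoids $\Sigma_{ijk}$, so its $\phi$-image lies in a single one of the three open half-hyperplanes making up $\Y\times\R^{n-2}$; write $\tilde L_{\ell m}^+\subset E$ for the corresponding open half-line, with unit direction $\tilde u_{\ell m}$. Because $\n_{\partial\ell m}$ is tangent to $\Sigma_{\ell m}$, the vector $d_r\phi(\n_{\partial\ell m})$ lies in the tangent space $\spn(\tilde u_{\ell m})\oplus(\{0\}_E\times\R^{n-2})$ of the closed half-hyperplane, so $P_E\circ d_r\phi(\n_{\partial\ell m})=c_{\ell m}\tilde u_{\ell m}$ for some scalar $c_{\ell m}$. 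This scalar is nonzero, for otherwise $d_r\phi(\n_{\partial\ell m})\in\{0\}_E\times\R^{n-2}=d_r\phi(W_{ijk}(r)^\perp)$ would force $\n_{\partial\ell m}\in W_{ijk}(r)^\perp$ by injectivity of $d_r\phi$, contradicting $0\ne\n_{\partial\ell m}\in W_{ijk}(r)$. Moreover $c_{\ell m}>0$: the $\phi$-image of a curve in $\Sigma_{\ell m}$ starting at $r$ with initial velocity $\n_{\partial\ell m}$ must land for small positive times in $\tilde L_{\ell m}^+\times\R^{n-2}$, so its initial $E$-velocity $c_{\ell m}\tilde u_{\ell m}$ must point into $\tilde L_{\ell m}^+$.

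Finally, I would apply $P_E\circ d_r\phi$ to the cancellation identity $\sum_{(\ell,m)\in\cyclic(i,j,k)}\n_{\partial\ell m}=0$, obtaining $\sum c_{\ell m}\tilde u_{\ell m}=0$. The three unit vectors $\tilde u_{\ell m}$ themselves lie at $120^\circ$ in $E$ and thus already sum to zero, so any two of them form a basis of $E$; equating coefficients forces $c_{ij}=c_{jk}=c_{ki}=:c>0$. Hence $P_E\circ d_r\phi$ sends the basis $(\n_{\partial ij},\n_{\partial jk})$ of $W_{ijk}(r)$ -- unit vectors at $120^\circ$ -- to $(c\tilde u_{ij},c\tilde u_{jk})$ in $E$ -- vectors of common length $c$ at the same angle. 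Any linear map between two-dimensional inner-product spaces with this property is the composition of an isometry with the homothety $x\mapsto cx$, hence conformal. The only delicate step is the sign determination $c_{\ell m}>0$ (as opposed to just $c_{\ell m}\ne 0$), which is exactly where the half-line (as opposed to full-line) structure of $\Y$ and the outward-pointing convention for $\n_{\partial\ell m}$ are essential.
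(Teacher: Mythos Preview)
Your proof is correct and follows essentially the same route as the paper: identify $W_{ijk}(r)^\perp$ with $T_r\Sigma_{ijk}$ (the paper phrases this as ``$v\in W_{ijk}^\perp$ is tangent to each $\Sigma_{\ell m}$''), then show $P_E\circ d_r\phi$ sends each $\n_{\partial\ell m}$ to a scalar multiple of a $\Y$-arm direction and use $\sum\n_{\partial\ell m}=0$ to force the scalars equal. The paper packages the second half as an abstract lemma (``any linear $A:E\to E$ preserving $\Y$ is conformal''), but the content is identical.

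One small correction: your sign argument is backwards. Since $\n_{\partial\ell m}$ is the \emph{outward} boundary normal, a curve in $\Sigma_{\ell m}$ emanating from $r$ has initial velocity $-\n_{\partial\ell m}$, not $+\n_{\partial\ell m}$; running your argument with the correct sign gives $c_{\ell m}<0$. Fortunately this is irrelevant: once you know $\sum c_{\ell m}\tilde u_{\ell m}=0$ with the $\tilde u_{\ell m}$ at $120^\circ$, the $c_{\ell m}$ are forced equal regardless of sign, and a nonzero scalar multiple of an isometry is conformal either way. So the step you flagged as ``delicate'' is actually unnecessary (and the paper's proof, which also records positivity, does not use it either); only $c_{\ell m}\ne 0$ matters, and your injectivity argument for that is fine.
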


\begin{proof}
    First, note that if $A: E \to E$ is any linear transformation that
    preserves $\Y$ then $A$ is conformal.
    Indeed, up to relabelling coordinates we may assume that $A$
    preserves each ``arm'' of the $\Y$. For $\{i, j, k\} = \{1, 2, 3\}$,
    let $w_i = e_j + e_k - 2e_i$, so that each arm of the $\Y$ has the
    form $\{\lambda w_i : \lambda \ge 0\}$. Since $A$ preserves the arms,
    $A w_i$ is a positive multiple of $w_i$; say, $A w_i = \lambda_i w_i$.
    On the other hand, $\sum_i \lambda_i w_i = A \sum_i w_i = 0$, and since the $w_i$'s are affinely independent, it follows that the
    $\lambda_i$'s are all equal, and so $A$ is a multiple of the identity.

    Now consider $r \in N_p \cap \Sigma_{123}$ (without loss of generality, $\{i,j,k\} = \{1,2,3\}$).
    Up to relabeling the coordinates,
    we may assume that for distinct $i,j,k \in \{1, 2, 3\}$, $\phi$ maps $\Sigma_{ij}$ into $\Sigma_{ij}^m := \Y_{ij} \times \R^{n-2}$, where $\Y_{ij} := \{x \in E : x_i = x_j > x_k\}$.
    To prove the first claim, note that if $v \in W_{123}(r)^\perp$ then $v$ is
    tangent to each of the surfaces $\Sigma_{ij}$ at $r$. It follows that
    $(d_r \phi) v$ is tangent to each of the surfaces $\Sigma_{ij}^m$,
    and so $(d_r \phi) v \in \{0\} \times \R^{n-2}$.

    To prove the second claim, note that
    by Corollary~\ref{cor:boundary-normal-sum}, the boundary
    normals $\n_{\partial ij}$ meet at $120^\circ$ angles, and so the cone
    $\tilde \Y = \bigcup_{i < j} \{\lambda \n_{\partial ij}(r): \lambda \le 0\}$
    is isometric to $\Y$. Since $\n_{\partial ij}$ is tangent to
    $\Sigma_{ij}$, $(d_r \phi) \n_{\partial ij}$ is tangent to $\Sigma_{ij}^m$,
    and so $P_E (d_r \phi) \n_{\partial ij}$ is tangent to $\Y_{ij}$.
    It follows that (when restricted to $W_{123}(r)$) $P_E \circ d_r \phi$ maps $\tilde \Y$
    to $\Y$, and so by the first paragraph, it is conformal on $W_{123}(r)$. 
    \end{proof}

A convenient consequence of Lemma \ref{lem:angle-preservation} is that, after composing with a linear function if necessary, we may always assume that each local diffeomorphism as in Theorem~\ref{thm:regularity} \ref{it:regularity-Sigma2}
is an isometry at $p$: 
\begin{corollary}\label{cor:isometry-Sigma2}
        For every $p \in \Sigma^2$ there is a $C^{\infty}$ diffeomorphism $\phi$ defined on a neighborhood $N_p \ni p$
    such that $\phi(p) = 0$, $\phi(N_p \cap \Sigma) \subset \Y \times \R^{n-2}$, and $d_p \phi$ is an isometry.
\end{corollary}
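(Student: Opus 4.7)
The plan is to post-compose the $C^\infty$ diffeomorphism $\phi_0$ provided by Corollary \ref{cor:smooth-Sigma2} with a suitably chosen invertible linear map $L \colon \R^n \to \R^n$ that preserves $\Y \times \R^{n-2}$, chosen so that $d_p(L \circ \phi_0) = L \circ d_p \phi_0$ becomes an isometry. Setting $\phi := L \circ \phi_0$ then retains all the properties of $\phi_0$: it is a $C^\infty$ diffeomorphism on the same neighborhood $N_p$, sends $p$ to $0$, and maps $N_p \cap \Sigma$ into $L(\Y \times \R^{n-2}) \subset \Y \times \R^{n-2}$.

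To analyze $A := d_p \phi_0$, I would invoke Lemma \ref{lem:angle-preservation}: decompose the codomain as $\R^n = E \oplus \R^{n-2}$ and the domain as $\R^n = W \oplus W^\perp$ where $W := W_{ijk}(p)$. In these bases, $A$ has block form
\[
A = \begin{pmatrix} C & 0 \\ B & D \end{pmatrix},
\]
with $C \colon W \to E$ conformal (hence $C = \lambda O$ for some $\lambda > 0$ and isometry $O$), $B \colon W \to \R^{n-2}$ arbitrary, and $D \colon W^\perp \to \R^{n-2}$ an isomorphism (since $A$ is invertible).

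Next I would characterize the linear maps preserving $\Y \times \R^{n-2}$. Since $\Y$ has three non-collinear arms, any convex subset of $\Y \times \R^{n-2}$ lies in one of the three closed half-hyperplanes $H_1, H_2, H_3$ comprising the cone. Hence an invertible $L$ preserving the cone must permute the $H_i$'s, and therefore preserve their common boundary $\{0\} \times \R^{n-2}$. Consequently, in the $E \oplus \R^{n-2}$ decomposition of the codomain, $L$ has block lower-triangular form
\[
L = \begin{pmatrix} L_1 & 0 \\ L_3 & L_2 \end{pmatrix}
\]
with $L_1 \colon E \to E$ invertible and preserving $\Y$. A short argument (writing the arms along unit vectors $u_1, u_2, u_3$ with $u_1 + u_2 + u_3 = 0$ and using that $L_1$ must permute them up to positive scalars, then summing this identity) shows such an $L_1$ is a positive multiple of an element of the dihedral group on $E$.

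Finally I would choose $L_1, L_2, L_3$ so that $LA$ is an isometry. A direct computation gives
\[
LA = \begin{pmatrix} L_1 C & 0 \\ L_3 C + L_2 B & L_2 D \end{pmatrix},
\]
and one checks that $LA$ is an isometry if and only if $L_1 C \colon W \to E$ is an isometry, $L_2 D \colon W^\perp \to \R^{n-2}$ is an isometry, and $L_3 C + L_2 B = 0$. The choices $L_1 = \lambda^{-1} \Id_E$ (which preserves $\Y$), $L_2 = D^{-1}$, and $L_3 = -L_2 B C^{-1}$ simultaneously satisfy all three conditions. The main conceptual point is that the stabilizer of $\Y \times \R^{n-2}$ inside the general linear group is large enough to absorb the distortion of $A$; once the block structure of $A$ from Lemma \ref{lem:angle-preservation} is in hand, the rest is essentially book-keeping, and no step presents a serious obstacle.
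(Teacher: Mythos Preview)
Your proposal is correct and follows essentially the same strategy as the paper: post-compose the $C^\infty$ diffeomorphism from Corollary~\ref{cor:smooth-Sigma2} with a linear map preserving $\Y \times \R^{n-2}$, using the block structure of $d_p\phi_0$ furnished by Lemma~\ref{lem:angle-preservation}. The paper's choice of $L$ is in fact exactly $M^{-1}$ (the inverse of the matrix of $d_p\phi_0$ in suitably chosen orthonormal bases), which coincides with your choice once bases are fixed so that $O = \Id$.

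Two minor remarks. First, your ``if and only if'' for $LA$ being an isometry is not quite right: a block-lower-triangular matrix $\left(\begin{smallmatrix} P & 0 \\ Q & R \end{smallmatrix}\right)$ can be orthogonal with $Q \neq 0$ (the condition is $P^T P + Q^T Q = \Id$, $R^T Q = 0$, $R^T R = \Id$). Only the ``if'' direction holds as stated, but that is all you use. Second, the assignment $L_2 = D^{-1}$ only makes literal sense once you have fixed orthonormal bases for $W^\perp$ and $\R^{n-2}$ and are working with matrices; as abstract linear maps, $D^{-1}$ lands in $W^\perp$, not $\R^{n-2}$. Since your block decomposition already implicitly fixes such bases, this is fine, but it would be cleaner to say so.
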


\begin{proof}
    Suppose that $p \in \Sigma_{ijk}$ and let $W_{ijk} = \spn\{\n_{\partial ij}(p), \n_{\partial jk}(p), \n_{\partial ki}(p)\}$.
    By Corollary~\ref{cor:smooth-Sigma2}, there exists a $C^\infty$ diffeomorphism $\phi$ satisfying $\phi(p) = 0$
    and $\phi(N_p \cap \Sigma) \subset \Y \times \R^{n-2}$. By Lemma~\ref{lem:angle-preservation}, 
    $P_E \circ d_p \phi$ is conformal on $W_{ijk}$. Let $\{v_1, \dots, v_n\}$ be an orthonormal basis
    for $\R^n$ such that $\{v_1, v_2\}$ is an orthonormal basis for $W_{ijk}$.
    Since $P_E \circ d_p \phi$
    is conformal, there exists an orthonormal basis $\{w_1,w_2\}$ for $E$ and $\alpha > 0$ so that 
    $P_E (d_p \phi) v_i = \alpha w_i$ for $i=1,2$; let us complete $\{w_1,w_2\}$  to
    an orthonormal basis $\{w_1, \dots, w_n\}$ for $E \times \R^{n-2}$.
        
    Now consider the linear operator $d_p \phi$ expressed as an invertible matrix $M$ in the bases $\{v_1, \dots, v_n\}$
    and $\{w_1, \dots, w_n\}$; that is, $M_{\ell m} = \inr{w_\ell}{(d_p \phi) v_m}$.
    We will write $M$ in blocks as $(\begin{smallmatrix} A & B \\ C &D\end{smallmatrix})$,
    where $A$ is $2 \times 2$ and $D$ is $(n-2) \times (n-2)$. Our choice of $w_1$ and $w_2$ ensures
    that $A = \alpha \Id$. By Lemma~\ref{lem:angle-preservation},
    $(d_p \phi) v_m \in \{0\} \times \R^{n-2}$ for every $m \ge 3$, and it follows that $B = 0$.
    Then
    \[
        M^{-1} = \begin{pmatrix} A^{-1} & 0 \\ -D^{-1} C A^{-1} & D^{-1} \end{pmatrix}
        = \begin{pmatrix} \frac{1}{\alpha} \Id & 0 \\ - \frac{1}{\alpha} D^{-1} C & D^{-1} \end{pmatrix}.
    \]
    Let $f: E \times \R^{n-2} \to E \times \R^{n-2}$ be the linear function that sends $w_m$ to $\sum_\ell M^{-1}_{\ell m} w_\ell$; 
    that is, $f$ is the linear function that is represented in the basis $\{w_1, \dots, w_n\}$
    by the matrix $M^{-1}$. The form of $M^{-1}$ above implies that $f(\Y \times \R^{n-2}) \subset \Y \times \R^{n-2}$,
    because if $(x, y) \in \Y \times \R^{n-2}$ then the $E$-component of $f(x,y)$ is $\alpha^{-1} x \in \Y$.
    Hence, $\tilde \phi = f \circ \phi$ is a local $C^\infty$ diffeomorphism defined on $N_p$ satisfying
    $\tilde \phi(p) = 0$ and $\tilde \phi(N_p \cap \Sigma) \subset \Y \times \R^{n-2}$. Moreover, $d_p \tilde \phi$
    is an isometry because $(d_p \tilde \phi) v_m = (d_0 f) (d_p \phi) v_m = w_m$ by the definition of $f$.
\end{proof}

By an analogous argument,
we may also choose the local diffeomorphisms of Theorem~\ref{thm:regularity} \ref{it:regularity-Sigma3}
to be isometries at $p$:

\begin{corollary}\label{cor:isometry-Sigma3}
        For every $p \in \Sigma^3$ there is a $C^{1,\alpha}$ diffeomorphism $\phi$ defined on a neighborhood $N_p \ni p$
    such that $\phi(p) = 0$, $\phi(N_p \cap \Sigma) \subset \T \times \R^{n-3}$, and $d_p \phi$ is an isometry.
\end{corollary}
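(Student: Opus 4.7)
The proof parallels that of Corollary \ref{cor:isometry-Sigma2}, and requires first an analog of Lemma \ref{lem:angle-preservation} at quadruple points.

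I would begin with a rigidity lemma: any invertible linear map $A : E^{(3)} \to E^{(3)}$ with $A(\T) = \T$ is of the form $\alpha \sigma$, where $\alpha > 0$ and $\sigma$ is the linear isometry of $E^{(3)}$ induced by a permutation in $S_4$ of the four coordinates of $\R^4$. Indeed, the one-dimensional singular set of $\T$ is the union of the four half-lines generated by the vectors $v^{(l)} = \mathbf{1} - 4 e_l \in E^{(3)}$, $l \in \{1,2,3,4\}$, each corresponding to a configuration where three coordinates tie for the maximum while the fourth is negative. Since $A$ is a homeomorphism preserving $\T$, it must permute these half-lines, so $A v^{(l)} = \lambda_l v^{(\sigma(l))}$ for some $\sigma \in S_4$ and positive scalars $\lambda_l$. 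Applying $A$ to $\sum_l v^{(l)} = 0$ yields $\sum_l \lambda_l v^{(\sigma(l))} = 0$, and since any three of the $v^{(l)}$'s are linearly independent in the three-dimensional space $E^{(3)}$, all $\lambda_l$ must equal a common value $\alpha > 0$. Hence $A = \alpha \sigma$, which is in particular conformal.

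Next, I would establish the analog of Lemma \ref{lem:angle-preservation}. Let $\phi$ be a $C^{1,\alpha}$ diffeomorphism as in Theorem \ref{thm:regularity} \ref{it:regularity-Sigma3}, with $p \in \Sigma_{ijkl}$, $\phi(p)=0$, and $\phi(N_p \cap \Sigma) \subseteq \T \times \R^{n-3}$. Let $W \subseteq \R^n$ denote the three-dimensional subspace spanned by the six interface normals $\{\n_{ab}(p)\}$ for distinct $a,b \in \{i,j,k,l\}$. Tangent vectors in $W^\perp$ are tangent at $p$ to every interface through $p$, hence to $\Sigma^3$, so their images under $d_p\phi$ lie in $\{0\} \times \R^{n-3}$. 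On the other hand, since $d_p\phi$ sends the tangent directions of each $\Sigma_{ab}$ into the corresponding two-dimensional sector of $\T \times \R^{n-3}$ and the tangent directions of each triple-edge $\Sigma_{abc}$ into the corresponding half-line of $\T$, the map $P_{E^{(3)}} \circ d_p \phi$ restricted to $W$ is a linear bijection onto $E^{(3)}$ carrying $\T \cap W$ (the preimage of $\T \times \{0\}$ in $W$) onto $\T$. By the rigidity lemma, this restriction equals $\alpha \tau$ for some $\alpha > 0$ and linear isometry $\tau : W \to E^{(3)}$.

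With this conformal property in hand, I can follow the argument from Corollary \ref{cor:isometry-Sigma2} verbatim. Choose an orthonormal basis $\{v_1, v_2, v_3\}$ of $W$, set $w_m := \tau(v_m)$ for $m=1,2,3$, and extend these to orthonormal bases $\{v_1, \ldots, v_n\}$ of $\R^n$ (with $\{v_4,\ldots,v_n\}$ spanning $W^\perp$) and $\{w_1, \ldots, w_n\}$ of $E^{(3)} \times \R^{n-3}$. In these bases, $d_p \phi$ is represented by the block matrix $M = \left(\begin{smallmatrix} \alpha \Id_3 & 0 \\ C & D \end{smallmatrix}\right)$ (the upper-right block vanishing because $d_p\phi(W^\perp) \subseteq \{0\} \times \R^{n-3}$), so $M^{-1} = \left(\begin{smallmatrix} \alpha^{-1}\Id_3 & 0 \\ -\alpha^{-1}D^{-1}C & D^{-1} \end{smallmatrix}\right)$. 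Let $f : E^{(3)} \times \R^{n-3} \to E^{(3)} \times \R^{n-3}$ be the linear map represented in the basis $\{w_m\}$ by $M^{-1}$; its $E^{(3)}$-component is multiplication by $\alpha^{-1}$, so $f(\T \times \R^{n-3}) \subseteq \T \times \R^{n-3}$. Then $\tilde\phi := f \circ \phi$ satisfies $d_p \tilde\phi(v_m) = w_m$, hence $d_p\tilde\phi$ is an isometry, completing the proof. The only substantively new ingredient is the rigidity lemma; the rest is a direct transposition of the argument for Corollary \ref{cor:isometry-Sigma2}.
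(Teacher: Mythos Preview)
Your overall structure is right, and the rigidity lemma for $\T$ is correct and is indeed the key new ingredient. However, there is a genuine gap in how you apply it. Your rigidity lemma is stated for self-maps $A: E^{(3)} \to E^{(3)}$ with $A(\T)=\T$, but you then apply it to the map $B = P_{E^{(3)}} \circ d_p\phi|_W : W \to E^{(3)}$, concluding directly that $B = \alpha\tau$ for some isometry $\tau$. This does not follow: to invoke the rigidity lemma you would need an isometry $\iota: E^{(3)} \to W$ carrying $\T$ onto the tangent cone $C \subset W$ (so that $B\circ\iota$ is a self-map of $E^{(3)}$ preserving $\T$). The existence of such an $\iota$ is exactly the statement that $C$ is already isometric to $\T$ --- and you have not established this. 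Absent that, a linear bijection $W \to E^{(3)}$ sending \emph{some} cone onto $\T$ need not be conformal.

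The missing step is precisely what the paper supplies: by Corollary~\ref{cor:boundary-normal-sum} (the $120^\circ$ condition at triple points) together with continuity of the normals up to $\Sigma^3$, the vectors $\n_{ij},\n_{jk},\n_{ki}$ at $p$ are coplanar and meet at $120^\circ$ for every triple $\{i,j,k\}\subset\{i,j,k,l\}$. This forces the cone $\tilde\T$ built from these normals to be isometric to $\T \times \R^{n-3}$, and in particular its three-dimensional slice in $W$ is isometric to $\T$. Only then can your rigidity lemma be applied (via composition with that isometry). Once you insert this step, the block-matrix argument you give goes through exactly as in Corollary~\ref{cor:isometry-Sigma2}.
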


\begin{proof} The proof is essentially identical to the proof of
    Corollary~\ref{cor:isometry-Sigma2}. The first main point is that (like
    $\Y$) any linear transformation that preserves $\T$ must be conformal. The second is that, by Lemma~\ref{lem:angle-preservation} and continuity up to $\Sigma^3$, 
    at every $p \in \Sigma_{1234}$ and for every distinct $i,j,k \in \{1, 2, 3, 4\}$, $\n_{ij}$, $\n_{jk}$
    and $\n_{ki}$ are co-planar and meet at $120^\circ$ angles, and it follows that at any $r \in \Sigma^3$,
    the cone
    \[
        \tilde \T = \bigcup_{\{i,j,k,\ell\} = \{1,2,3,4\}} \{x \in \R^n: \inr{x}{\n_{ij}} = 0, \inr{x}{\n_{ik}} \le 0, \inr{x}{\n_{jk}} \le 0\}
    \]
    is isometric to $\T \times \R^{n-3}$. The rest of the proof remains unchanged. 
\end{proof}

\subsection{Construction}
 
Denote for brevity $\Sigma^{\ge 3} = \Sigma^3 \cup \Sigma^4$. 
We begin with the exact construction of the inward fields
on $\R^n \setminus \Sigma^{\ge 3}$.

\begin{lemma}\label{lem:inward-fields}
    There is a family of $C^\infty$ vector-fields $Z_1, \dots, Z_q$ defined 
    on $\R^n \setminus \Sigma^{\ge 3}$ such that for every $k$, for every $i \ne j$,
    and for every $p \in \Sigma_{ij}$,
    \[
        |Z_k(p)| \le \sqrt {3/2} \text{ and }
        \inr{Z_k(p)}{\n_{ij}(p)} = \delta_{kj} - \delta_{ki}.
    \]
\end{lemma}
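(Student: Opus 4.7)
I would construct $Z_k$ in stages, working outward from the singular part of $\Sigma$.

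First, I define $Z_k$ pointwise on $\Sigma^2$. For $p \in \Sigma_{ijk}$, Corollary~\ref{cor:boundary-normal-sum} gives $\n_{ij}(p)+\n_{jk}(p)+\n_{ki}(p)=0$, so the three conditions $\inr{Z_k(p)}{\n_{\ell m}(p)} = \delta_{km}-\delta_{k\ell}$ for $(\ell,m) \in \cyclic(i,j,k)$ are consistent (they sum to zero when $k \in \{i,j,k\}$, and are trivially consistent otherwise). Since the three normals span a $2$-dimensional plane $W_p$, any two of the conditions determine $Z_k(p)|_{W_p}$ uniquely, and I set $Z_k(p) \in W_p$ with $W_p^\perp$-component zero. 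For $k \notin \{i,j,k\}$, all three conditions force $Z_k(p)|_{W_p}=0$, so $Z_k(p)=0$. Solving in the basis $\{\n_{ij},\n_{jk}\}$ with $\inr{\n_{ij}}{\n_{jk}} = -\tfrac 12$ gives, for $k=i$, the explicit value $Z_i(p) = -\tfrac{4}{3}\n_{ij}-\tfrac{2}{3}\n_{jk}$, whose length is $2/\sqrt 3$. This matches the pullback at $p$ of the model vector $\sqrt 2(e_i - \tfrac 13 \sum e_\ell)$ under the isometric chart of Corollary~\ref{cor:isometry-Sigma2}. Crucially, $2/\sqrt 3 < \sqrt{3/2}$, leaving slack for the extension.

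Second, I check that $p \mapsto Z_k(p)$ is $C^\infty$ on $\Sigma^2$: by Corollary~\ref{cor:smooth-Sigma2}, each $\Sigma_{ijk}$ is a smooth $(n-2)$-manifold and is the boundary of the smooth manifold-with-boundary $\Sigma_{ij} \cup \partial \Sigma_{ij}$, so the normal vectors $\n_{ij},\n_{jk},\n_{ki}$ extend to smooth maps on $\Sigma_{ijk}$. Because the linear system defining $Z_k|_{\Sigma_{ijk}}$ has uniformly non-degenerate coefficient matrix (the three $120^\circ$ angles are constant), its solution depends smoothly on the data, and the pieces on different components of $\Sigma^2$ extend independently.

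Third, I extend $Z_k$ from $\Sigma^2$ into each $\Sigma_{ij}$. On $\Sigma_{ij}\cup\partial\Sigma_{ij}$ I write $Z_k = (\delta_{kj}-\delta_{ki})\n_{ij} + T_k$ with $T_k$ tangential to $\Sigma_{ij}$, and note that the boundary value of $T_k$ on each $\Sigma_{ijk} \subset \partial \Sigma_{ij}$ is already determined by Stage~1 (it is automatically tangential to $\Sigma_{ij}$, since $\inr{Z_k,\n_{ij}} = \delta_{kj}-\delta_{ki}$ was a defining condition there). Using a collar neighborhood of $\partial \Sigma_{ij}$ inside $\Sigma_{ij}\cup\partial\Sigma_{ij}$ together with a smooth cutoff $\chi$ equal to $1$ near $\partial\Sigma_{ij}$ and vanishing outside a thin neighborhood, I set $T_k$ equal to its boundary value (parallel transported along the collar) multiplied by $\chi$, and equal to $0$ elsewhere in $\Sigma_{ij}$. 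By shrinking the collar, $|T_k|$ can be kept within a small excess of $2/\sqrt 3 - 1$, so $|Z_k| \le \sqrt{3/2}$ throughout $\Sigma_{ij}$.

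Finally, I extend $Z_k$ off $\Sigma^1\cup\Sigma^2$ to a tubular neighborhood in $\R^n\setminus\Sigma^{\ge 3}$ by declaring it constant along fibers of the normal projection (using the tubular neighborhood theorem, noting that $\Sigma^1\cup\Sigma^2$ may not be closed in $\R^n$ but is locally a smooth manifold-with-corners away from $\Sigma^{\ge 3}$, which suffices), and multiply by a smooth bump that vanishes off the tube. The main technical obstacle is the simultaneous smooth matching of the boundary data on $\Sigma^2$ from the three different interfaces $\Sigma_{ij},\Sigma_{jk},\Sigma_{ki}$; this is resolved by Stage~1, where the $120^\circ$-angle identity forces these three boundary values to be a single well-defined vector in $W_p$. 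The bound $|Z_k|\le\sqrt{3/2}$ is dictated by the worst case, which is the triple-point value $2/\sqrt 3$, with the collar and tubular extensions designed to introduce no additional growth.
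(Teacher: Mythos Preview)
Your construction on $\Sigma^1 \cup \Sigma^2$ (Stages~1--3) is sound, but the final extension step has a genuine gap. The set $\Sigma^1 \cup \Sigma^2$ is \emph{not} locally a manifold-with-corners near a triple point: three half-hyperplanes meeting along a common $(n-2)$-plane is not of the form $[0,\infty)^k \times \R^{n-k}$, so no tubular neighborhood theorem is available, and ``constant along fibers of the normal projection'' produces at best a piecewise-smooth field (the nearest-point projection onto a $\Y$-configuration has derivative discontinuities across the bisecting half-planes). More to the point, matching zeroth-order values on $\Sigma_{ijk}$ is not enough for a $C^\infty$ ambient extension: you would also need the three directional derivatives $\nabla_{-\n_{\partial ij}} Z_k$, $\nabla_{-\n_{\partial jk}} Z_k$, $\nabla_{-\n_{\partial ki}} Z_k$ (each computed on its own interface) to be compatible with a single linear map $DZ_k(p)$. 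Since $\n_{\partial ij}+\n_{\partial jk}+\n_{\partial ki}=0$, this forces the three outputs to sum to zero; with your choice of $T_k$ parallel along the collar, the sum reduces (say for $k=i$) to $\nabla_{\n_{\partial ki}}\n_{ki} - \nabla_{\n_{\partial ij}}\n_{ij}$, which has no reason to vanish when the interfaces are curved.

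The paper avoids this entirely by never defining $Z_k$ on the stratified set first. It uses a partition of unity on the \emph{open} set $\R^n \setminus \Sigma^{\ge 3}$ and, in a chart neighborhood $N_p$ of each triple point, defines $Z_k$ directly on all of $N_p$ as $-(d\phi)^{-1}(w_k/f_k)$: the pullback under the $C^\infty$ diffeomorphism $\phi$ of Corollary~\ref{cor:isometry-Sigma2} of a constant model vector $w_k \in E^{(2)}$, divided by a scalar normalization $f_k$ that makes the normal component exactly $\delta_{kj}-\delta_{ki}$. The delicate point is showing that $f_k$, a~priori given by two different formulae on two model hyperplanes, extends to a single smooth non-vanishing function on $\phi(N_p)$; this relies on the conformal property of $P_E \circ d\phi|_{W_{ijk}}$ established in Lemma~\ref{lem:angle-preservation}. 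Your approach could be repaired by invoking these charts for the extension step, but at that point it becomes the paper's argument.
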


\begin{proof}
    By applying a partition of unity, it suffices to prove the claim locally:
    we will show that for every $p \in \R^n \setminus \Sigma^{\ge 3}$, we may
    define vector fields $Z_1, \dots, Z_q$ on a neighborhood of $p$ to satisfy the
    required properties on that neighborhood.

    For $p$ outside of (the closed) $\Sigma$, this is easy:
    we may choose a neighborhood of $p$ which is disjoint from $\Sigma$,
    and define $Z_i \equiv \cdots \equiv Z_q \equiv 0$ on that neighborhood.
    
    For $p \in \Sigma_{ij}$, we may (by
    Theorem~\ref{thm:Almgren}) choose a neighborhood of $p$ that does not
    intersect any other $\Sigma_{k\ell}$; on that neighborhood, define $Z_i$ to
    be a smooth extension of $\n_{ji}$, define $Z_j$ to be a smooth extension of $\n_{ij}$,
    and define all other $Z_k$'s to be identically zero.

    Finally, we will describe the construction on the triple-point set $\Sigma^2$: suppose (in
    order to simplify notation) that $p \in \Sigma_{123}$ and set $E = E^{(2)}$. 
    We will choose a neighborhood $N_p$ and a local $C^\infty$ diffeomorphism $\phi$
    according to Corollary~\ref{cor:isometry-Sigma2}.
    By relabeling the coordinates,
    we may assume that $\phi$ sends $N_p \cap \Omega_i$ into $\{x \in E :  x_i > \max\{x_j , x_k\} \} \times \R^{n-2}$,
    and it follows that $\phi$ sends
    $N_p \cap \Sigma_{ij}$ into $\Sigma_{ij}^m := Y_{ij} \times \R^{n-2}$, for all distinct $i,j,k \in \set{1,2,3}$ (recall that $Y_{ij} := \{x \in E : x_i = x_j > x_k\}$).
    Let $w_i := \frac{1}{\sqrt{6}} (e_j + e_k - 2 e_i) \in E \times \R^{n-2}$,
            and note that the vector-fields $\{ -\frac{2}{\sqrt{3}} w_i \}$ satisfy the assertion on the model cone $\Sigma^m := \Y \times \R^{n-2}$: $w_i$ is tangent to $\Sigma^m_{jk}$, and it has a constant normal component on $\Sigma^m_{ij}$ and $\Sigma^m_{ik}$. 
        The basic idea is to define $Z_i$ as the pull-back of $-w_i$ via $\phi$, renormalized so as to satisfy the condition on
    $\inr{Z_i}{\n_{\ell m}}$. Let us describe the construction a little more carefully, in
    order to verify that the renormalization preserves smoothness.
                            
    Consider the metric $\inr{\cdot}{\cdot}_\phi$ on $\phi(N_p)$ obtained by pushing
    forward the Euclidean metric under $\phi$.
    According to
    Lemma~\ref{lem:angle-preservation}, at every point $r \in N_p \cap \Sigma_{123}$,
    there is some $c_r > 0$ such that for every $u, v \in W_{123}(r)$, $\inr{P_E (d_r \phi) u}{P_E (d_r\phi) v} = c_r \inr{u}{v}$. The same formula holds even if only one of $u$ and $v$ belongs to $W_{123}(r)$, since Lemma~\ref{lem:angle-preservation} implies that $P_E (d_r \phi) (u - P_{W_{123}(r)} u) = 0$ for every $u \in \R^n$.
    If, in addition, either $(d_r \phi) u$ or $(d_r \phi) v$ belongs to $E \times \{0\}$, then
    $\inr{(d_r \phi) u}{(d_r \phi) v} =  \inr{P_E (d_r \phi) u}{P_E (d_r\phi) v} = c_r \inr{u}{v}$.
    After changing variables, it follows that whenever at least one of $u$ and $v$ belongs to $E \times \{0\}$
    and at least one of $(d_r \phi)^{-1} u$ and $(d_r \phi)^{-1} v$ belongs to $W_{123}(r)$
    then $\inr{u}{v} = c_r \inr{(d_r \phi)^{-1} u}{(d_r \phi)^{-1} v} = c_r \inr{u}{v}_\phi$.

            Let $\tilde \n_{ij}$ be a vector-field on $\spn \Sigma^m_{ij} = \{x \in E^{(2)} : x_i = x_j\} \times \R^{n-2}$
    obtained by smoothly extending $(d\phi) \n_{ij}$ (for example, such a vector-field can be constructed by starting with $e_j - e_i$ and applying the
    Gram-Schmidt process with respect to the inner product $\inr{\cdot}{\cdot}_\phi$).
    Since, for any $r \in \Sigma_{123} \cap N_p$, 
    $w_k \in E \times \{0\}$ and $(d_r \phi)^{-1} \tilde \n_{ij} \in W_{123}(r)$,
    the previous paragraph implies that
    $\inr{w_k}{\tilde \n_{ij}(\phi(r))} = c_r \inr{(d_r \phi)^{-1} (w_k)}{\n_{ij}(r)} = 0$,
    because $w_k$ is tangent to $\Sigma^m_{ij}$ and so $(d_r \phi)^{-1} (w_k)$ is tangent to $\Sigma_{ij}$ at $r$.
    On the other hand,
    $\inr{P_E \tilde \n_{ij}}{P_E \tilde \n_{ik}} = c_r \inr{\n_{ij}}{\n_{ik}} = c_r/2$ and $|P_E \tilde\n_{ij}|^2 = c_r$ at the point $\phi(r)$. Combining the previous observations, it follows that
    $P_E \tilde \n_{ij}(\phi(r)) = \pm \sqrt{c_r/2} (e_j - e_i)$ for every $i \ne j \in \{1, 2, 3\}$.
    In fact, the assumption that $\phi$ maps $N_p \cap \Omega_i$ into $\{x \in E : x_i > \max \{x_j,x_k\} \} \times \R^{n-2}$ implies
    that $P_E \tilde \n_{ij}(\phi(r)) = \sqrt{c_r/2} (e_j - e_i)$.

    Now define the function $f_i$ on $\spn \Sigma^m_{ij} \cup \spn \Sigma^m_{ik}$
    by
    \[
        f_i(x) = \begin{cases}
            \inr{w_i}{\tilde \n_{ij}(x)}_\phi & \text{for $x \in \spn \Sigma^m_{ij}$} \\
            \inr{w_i}{\tilde \n_{ik}(x)}_\phi & \text{for $x \in \spn \Sigma^m_{ik}$}.
        \end{cases}
    \]
    This definition is consistent for $x \in \spn \Sigma^m_{ij} \cap \spn \Sigma^m_{ik} =
    \{0\} \times \R^{n-2}$: if $r = \phi^{-1}(x)$ then $w_i \in E \times \{0\}$ and $(d_r \phi)^{-1} \tilde \n_{ij} = \n_{ij} \in W_{123}(r)$, and so
    $c_r \inr{w_i}{\tilde \n_{ij}}_\phi = \inr{w_i}{\tilde \n_{ij}} = \sqrt{c_r/2} \inr{w_i}{e_j - e_i} = \sqrt{c_r/3}$
    (and similarly with $j$ replaced by $k$).
    Hence, $f_i$ may be extended to a $C^\infty$ function on all of $\R^n$, for example
    by setting $f_i(\frac{x + y}{2}) = \frac{f_i(x) + f_i(y)}{2}$ whenever $x \in \spn \Sigma^m_{ij}$, $y \in \spn \Sigma^m_{ik}$
    and $x-y \in \spn \Sigma^m_{ij} \cap \spn \Sigma^m_{ik}$, i.e. $x$ and $y$ agree in the last $n-2$ coordinates.
        Clearly $f_i$ does not vanish at the origin, and so by continuity, after possibly shrinking $N_p$ if necessary, $f_i$ does not vanish on the entire $\phi(N_p)$ for all $i=1,2,3$.

    Finally, for $i = 1, 2, 3$, define $Z_i = -(d \phi)^{-1} (w_i / f_i)$.
    By the definition of $f_i$, we have
    \[
        \inr{Z_i}{\n_{ij}} = -\inr{w_i/f_i}{\tilde \n_{ij}}_\phi = -1 \text{ on $\Sigma_{ij}$,}
    \]
    and similarly on $\Sigma_{ik}$. On the other hand, $w_i$
    is tangential (with respect to $\inr{\cdot}{\cdot}$ and therefore also with respect to $\inr{\cdot}{\cdot}_\phi$) to $\Sigma^m_{jk}$, and so $\inr{Z_i}{\n_{jk}} \equiv 0$ on
    $\Sigma_{jk}$.

    To prove the claim on the boundedness of $Z_i$, recall that $\inr{\cdot}{\cdot}_\phi$
    agrees with the Euclidean metric at the origin (as $d_p \phi$ is an isometry). It follows that
    $|Z_i(p)| = \sqrt{4/3}$. By continuity, we may shrink the neighborhood $N_p$
    in order to ensure that $|Z_i| \le \sqrt{3/2}$ on the whole neighborhood.
\end{proof}

Next, we will construct approximate inward fields on $\R^{n}\setminus
\Sigma^4$. Note that
we cannot simply imitate our construction from Lemma~\ref{lem:inward-fields},
because in a neighborhood of $p \in \Sigma^3$, $\Sigma$ is only $C^{1,\alpha}$-diffeomorphic
to the model cone $\T \times \R^{n-3}$. In particular, if we were to imitate that construction
then we would end up with merely $C^{0,\alpha}$ vector-fields. This would
not be sufficient for our purposes, for example because such non-Lipschitz vector-fields
do not necessarily admit flows.
To avoid this issue, we will drop the requirement that the normal components
be exactly constant, and settle for a pointwise approximation. This in fact makes the construction a fair amount simpler. 

\begin{lemma}\label{lem:approx-inward-fields}
    For every $\epsilon > 0$,
    there is a family of $C^\infty$ vector fields $Y_1, \dots, Y_q$ defined
    on $\R^n \setminus \Sigma^4$ such that
    for every $k$, for every $i \ne j$ and every $p \in \Sigma_{ij}$, 
    \[
        |Y_k(p)| \le \sqrt{3/2} \text{ and } |\inr{\n_{ij}(p)}{Y_k} - (\delta_{kj} - \delta_{ki})| \le \epsilon.
    \]
\end{lemma}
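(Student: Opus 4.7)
The plan is to construct the $Y_k$'s by combining the exact inward fields $Z_1,\ldots,Z_q$ of Lemma~\ref{lem:inward-fields} on $\R^n \setminus \Sigma^{\ge 3}$ with carefully chosen constant vector-fields defined near each $p \in \Sigma^3$, via a $C^\infty$ partition of unity. The obstruction preventing an exact construction near $\Sigma^3$ is that the local diffeomorphism from Corollary~\ref{cor:isometry-Sigma3} is only $C^{1,\alpha}$, so pulling back the natural model inward field would produce a merely $C^{0,\alpha}$ field; instead I will use the constant ``pullback at $p$'', which is $C^\infty$ and agrees with the exact pullback only at the single point $p$, forcing the $\eps$-approximation in place of the exact equality from Lemma~\ref{lem:inward-fields}.

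For each $p \in \Sigma^3$, say with $p \in \Sigma_{i_1 i_2 i_3 i_4}$, apply Corollary~\ref{cor:isometry-Sigma3} to produce a $C^{1,\alpha}$ diffeomorphism $\phi$ from a neighborhood $N_p$ of $p$ onto a neighborhood of the origin in $E^{(3)} \times \R^{n-3}$, mapping $p \mapsto 0$, $\Sigma \mapsto \T \times \R^{n-3}$, with $d_p \phi$ an isometry. Up to relabeling the model coordinates, $\phi$ sends $\Omega_{i_\alpha} \cap N_p$ into $\{x \in E^{(3)} : x_\alpha = \max_\beta x_\beta\} \times \R^{n-3}$, so that $\Sigma_{i_\alpha i_\beta}$ is mapped into the model interface $\Sigma^m_{\alpha\beta}$ co-oriented by $\n^m_{\alpha\beta} = \tfrac{1}{\sqrt{2}}(e_\beta - e_\alpha) \in E^{(3)}$. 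On the model side define the constant vector-fields $Y^m_{i_\alpha} := \sqrt{2}\, P_{E^{(3)}}(e_\alpha)$ for $\alpha = 1,2,3,4$, and $Y^m_k := 0$ for $k \notin \{i_1,\ldots,i_4\}$; a direct calculation gives $|Y^m_{i_\alpha}| = \sqrt{3/2}$ and $\inr{Y^m_k}{\n^m_{ij}} = \delta_{kj} - \delta_{ki}$ at every model interface (for $k \notin \{i_1,\ldots,i_4\}$ both sides vanish since then $k \ne i$ and $k \ne j$ locally). Set $Y^p_k := (d_p \phi)^{-1}(Y^m_k)$, a constant vector-field in $\R^n$.

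Since $d_p \phi$ is an isometry sending the continuous extension $\n_{ij}(p)$ to $\n^m_{ij}$ --- with the correct sign because the co-orientation is inherited from the cell labeling --- we obtain $\inr{Y^p_k}{\n_{ij}(p)} = \delta_{kj} - \delta_{ki}$ exactly at $p$, while $|Y^p_k| \le \sqrt{3/2}$ everywhere. The unit normal $\n_{ij}$ extends continuously to the closure of $\Sigma_{ij}$ in a neighborhood of $p$, because that closure is $C^{1,\alpha}$-diffeomorphic via $\phi$ to the closure of a piece of $\Sigma^m_{\alpha\beta} \times \R^{n-3}$; by this continuity, possibly after shrinking $N_p$ to a smaller $N_p^\eps$, we guarantee $|\inr{Y^p_k}{\n_{ij}} - (\delta_{kj} - \delta_{ki})| < \eps$ on $N_p^\eps \cap \Sigma_{ij}$ for every $i \ne j$ and every $k$.

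Finally, the family $\{N_p^\eps\}_{p \in \Sigma^3} \cup \{\R^n \setminus \Sigma^{\ge 3}\}$ is an open cover of $\R^n \setminus \Sigma^4$, using that $\overline{\Sigma^3} \subset \Sigma^3 \cup \Sigma^4$ as in the proof of Lemma~\ref{lem:sigma3-cutoff}. Choose a locally-finite $C^\infty$ partition of unity $\{\eta_\beta\}$ subordinate to this cover, and for each $\beta$ let $X^\beta_k$ be $Z_k$ if $\supp \eta_\beta \subset \R^n \setminus \Sigma^{\ge 3}$, and $Y^{p_\beta}_k$ if $\supp \eta_\beta \subset N_{p_\beta}^\eps$. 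Setting $Y_k := \sum_\beta \eta_\beta X^\beta_k$ yields a $C^\infty$ vector-field on $\R^n \setminus \Sigma^4$ which, at each $r \in \Sigma_{ij}$, is a convex combination of vectors each with norm at most $\sqrt{3/2}$ and normal component within $\eps$ of $\delta_{kj} - \delta_{ki}$, yielding both required conclusions. The one nontrivial step is the $\eps$-approximation in the previous paragraph, which hinges on the isometric normalization of $d_p \phi$ supplied by Corollary~\ref{cor:isometry-Sigma3}, together with the continuity of $\n_{ij}$ up to $\Sigma^3$.
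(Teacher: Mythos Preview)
Your proof is correct and follows essentially the same approach as the paper: reduce to a local construction via a partition of unity, use the exact fields $Z_k$ away from $\Sigma^{\ge 3}$, and near each $p\in\Sigma^3$ take the constant vector-field $(d_p\phi)^{-1}$ applied to a model inward vector in $E^{(3)}$, then shrink the neighborhood by continuity of $\n_{ij}$. Your model vector $\sqrt{2}\,P_{E^{(3)}}(e_\alpha)$ is literally equal to the paper's $-\sqrt{3/2}\,w_\alpha$ with $w_\alpha=\tfrac{1}{\sqrt{12}}(e_\beta+e_\gamma+e_\delta-3e_\alpha)$, so the two arguments coincide up to notation.
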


\begin{proof}
    By applying a partition of unity, it suffices to prove the claim locally:
    for every $p \in \R^n \setminus \Sigma^4$, we will locally define $Y_1, \dots, Y_{q}$
    with the required properties. As long as $p \in \R^n \setminus \Sigma^{\ge 3}$,
    we may set $Y_i$ to be identical to $Z_i$ from Lemma~\ref{lem:inward-fields}
    in a neighborhood of $p$; it remains to describe the construction in a neighborhood of 
    $p \in \Sigma^3$.

    Suppose (to simplify notation) that $p \in \Sigma_{1234}$. 
    Choose a neighborhood $N_p \ni p$ and a local $C^{1,\alpha}$ diffeomorphism $\phi$
    according to Corollary~\ref{cor:isometry-Sigma3}.
    By relabeling the coordinates, we may assume
    that $\phi$ sends $N_p \cap \Omega_i$ into $\{x \in E^{(3)} : x_i > \max_{j \neq i} x_j\} \times \R^{n-3}$.
    Since $d_p \phi$ is an isometry, it follows that $(d_p \phi) \n_{ij} = \frac{1}{\sqrt 2}(e_j - e_i)$
    for every distinct $i,j \in \{1,2,3,4\}$.

    For $\{i, j, k, \ell\} = \{1, 2, 3, 4\}$, let
    $w_i = \frac{1}{\sqrt{12}} (e_j + e_k + e_\ell - 3 e_i) \in E^{(3)} \times \R^{n-3}$.
    Then each $w_i$ satisfies
    $\inr{\n_{ij}(p)}{(d_p \phi)^{-1} w_i} = \sqrt{2/3}$,
    and $\inr{\n_{jk}(p)}{(d_p \phi)^{-1}w_i} = 0$ if $i \not \in \{j,k\}$. For each $i=1,\ldots,4$, define
    $Y_i$ in a neighborhood of $p$ to be the constant vector-field
    $-\sqrt{3/2} (d_p \phi)^{-1} w_i$. By continuity, there is a neighborhood
    of $p$ on which $\inr{Y_i}{\n_{ij}} \in [-1-\epsilon, -1+\epsilon]$
    and $\inr{Y_i}{\n_{jk}} \in [-\epsilon, \epsilon]$ whenever
    $i \not \in \{j, k\}$. Moreover, the fact $d_p \phi$ is an isometry implies
    that $|Y_i| = \sqrt{3/2}$. Finally, we define $Y_5, \dots, Y_q$ to be zero on
    this neighborhood.
\end{proof}

We can now combine the exact inward fields of Lemma~\ref{lem:inward-fields}
with the approximate inward fields of Lemma~\ref{lem:approx-inward-fields},
using the cutoff functions of Subsection \ref{subsec:cutoff}. 

\begin{proof}[Proof of Proposition~\ref{prop:inward-fields}]
    Fix $\epsilon_1 > 0$, and
    choose $\eta$ as in Lemma~\ref{lem:sigma4-cutoff}, with $\epsilon=\epsilon_1$. 
    Let $K$ be the support of $\eta$, and let $K_1 := \{ \eta = 1 \} \subset K$; by Lemma~\ref{lem:sigma4-cutoff}, $K$ and $K_1$ 
    are compact, disjoint from $\Sigma^4$, and satisfy $\mu^{n-1}(\Sigma \setminus K) \le \mu^{n-1}(\Sigma \setminus K_1) \le \epsilon_1$.
        Let $Z_1, \dots, Z_q$ be vector fields as in Lemma~\ref{lem:inward-fields}.

    Let $C_\eta$ be the constant $C_U$ of Lemma~\ref{lem:sigma3-cutoff} applied to $U = \R^n \setminus K$.
    Then let $Y_1, \dots, Y_q$ be vector-fields as in Lemma~\ref{lem:approx-inward-fields} with parameter $\eps = \sqrt{\epsilon_1/C_\eta}$.
    For some $\delta_0 > 0$ to be determined (depending on $K$, $\{Y_k\}$ and $\Sigma$),
    choose $\xi$ as in Lemma~\ref{lem:sigma3-cutoff}, with parameter $\delta=\min(\epsilon_2,\delta_0)$. 
    Define
    \[
        X_k = \eta \cdot (\xi Z_k + (1-\xi) Y_k).
    \]
    Note that this defines a $C^\infty$ vector-field on $\R^n$, because
    $\supp (\eta \xi)$ is contained in the domain of $Z_k$, namely $\R^n \setminus \Sigma^{\geq 3}$, 
    and $\supp \eta$ is contained in the domain of $Y_k$, namely $\R^n \setminus \Sigma^4$; as $\supp X_k \subset \supp \eta = K$, this establishes requirement~\ref{it:inward-fields-smooth}.

    Next, since $\nabla^{\tang} Z_k^{\n} = 0$ on $\Sigma^1$, observe that
    \[
        \nabla^\tang X_k^\n = (\xi Z_k + (1-\xi) Y_k)^\n \nabla^\tang \eta
        + \eta \cdot (Z_k - Y_k)^\n \nabla^\tang \xi
        + \eta (1-\xi) \nabla^\tang Y_k^\n.
    \]
    To control the first term above, note that $|(\xi Z_k + (1-\xi) Y_k)^\n| \le \sqrt {3/2}$.
    For the second term, note that $|\eta \cdot (Z_k - Y_k)^\n| \le \sqrt{\epsilon_1/C_\eta}$.
    For the third term (which vanishes outside the set $\{\xi < 1\}$), we split $\nabla^\tang Y^\n_k$
    into its normal and tangential components as $(\nabla^\tang Y_k)^\n + \nabla_{Y^\tang_k} \n$, and observe that
    $|\nabla_{Y^\tang_k} \n| \le |Y_k| \|\II\|_2 \le \sqrt{3/2} \|\II\|_2$.
    Hence,
    \[
        |\nabla^\tang X_k^\n|^2 \le 3 |\nabla \eta|^2 + \frac{2\epsilon_1}{C_\eta} |\nabla \xi|^2
        + 1_{\{\xi < 1\}} \eta^2 \brac{2 \|\nabla^{\tang} Y_k \|_2^2 + 3 \|\II\|_2^2}.
    \]
    Integrating, we use Lemma~\ref{lem:sigma4-cutoff} to control $|\nabla \eta|^2$
    and Lemma~\ref{lem:sigma3-cutoff} to control $|\nabla \xi|^2$:
    \[
        \int_{\Sigma^1} |\nabla^\tang X_k^\n|^2 \, d\mu^{n-1}
        \le 5 \epsilon_1 + 3 \int_{\Sigma^1} 1_{\{\xi < 1\}} \eta^2 \brac{ \|\nabla^\tang Y_k \|_2^2 + \|\II\|_2^2} d\mu^{n-1}.
    \]
    Now, $\nabla Y_k$ is uniformly bounded on the compact support of $\eta$, and hence $\eta^2 \|\nabla^\tang Y_k\|^2$
    is uniformly bounded and therefore integrable, as $\mu^{n-1}(\Sigma^1) < \infty$. In addition,
    $\eta^2 \|\II\|_2^2$ is integrable by Proposition~\ref{prop:Schauder} \ref{it:Schauder-Sigma1}, as the cutoff function $\eta$ is compactly supported away from $\Sigma^4$.
    Hence, we may choose $\delta_0 > 0$ to be sufficiently small so that the integral
    on the right hand side above is at most $\epsilon_1$.
    By appropriately modifying $\epsilon_1$ by a constant factor, this proves requirement~\ref{it:inward-fields-gradient}.

    For requirement~\ref{it:inward-fields-inward}, note that
    $X_k^{\n_{ij}} = Z_k^{\n_{ij}} = \delta_{kj} - \delta_{ki}$ whenever $\eta = \xi = 1$.
    Since $K_1 = \set{\eta = 1}$, we deduce that
    \[
    \bigcup_{i<j} \{p \in \Sigma_{ij} \cap K_1: \exists k \;\; X_k^{\n_{ij}} \ne \delta_{kj} - \delta_{ki}\} \subset \{p \in \Sigma^1 \cap K_1: \xi < 1\}.
    \]
     Hence, requirement~\ref{it:inward-fields-inward} follows by the union-bound from Lemma~\ref{lem:sigma3-cutoff} and the fact that $\mu^{n-1}(\Sigma^1 \setminus K_1) \leq \eps_1$.
    Requirements~\ref{it:inward-fields-pointwise} and~\ref{it:inward-fields-bounded}
    follow from Lemmas~\ref{lem:inward-fields}
    and~\ref{lem:approx-inward-fields}, and the fact that both of these properties
    are preserved by pointwise convex combinations (the $\alpha$ in requirement~\ref{it:inward-fields-pointwise}
    will be $\eta(p)$).
\end{proof}

\subsection{Some useful estimates}

In this subsection we record some useful estimates which follow from the properties of approximate inward fields. 
Recall the definition of the quadratic form $L_A$ from (\ref{eq:L_A}). 

\begin{lemma} \label{lem:inward-combination} 
Let $\Omega$ be a stationary regular cluster, let $A_{ij} = \mu^{n-1}(\Sigma_{ij})$, and let $(X_1,\ldots,X_q)$ be a collection of $(\eps_1,\eps_2)$-approximate inward fields. Given $a \in \R^q$, set $X := \sum_{k=1}^q a_k X_k$. Then:
\begin{enumerate}[(i)]
\item \label{it:inward-nabla} $\int_{\Sigma^1}  |\nabla^{\tang} X^{\n}|^2 d\mu^{n-1} \leq q |a|^2 \eps_1$ . 
\item \label{it:inward-L2} $\int_{\Sigma^1} (X^{\n})^2 d\mu^{n-1} \geq a^T L_A a -  2 |a|^2  (\eps_1 + \eps_2)$. 
\item \label{it:inward-delta-V} For all $i$, $\abs{(\delta_X V(\Omega) + L_A a)_i} \leq \max(1,\eps_2)(\eps_1+\eps_2) \sqrt{q} |a|$. 
\end{enumerate}
\end{lemma}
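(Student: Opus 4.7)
The plan is to exploit the smallness of the \emph{bad set} on each interface: set
\[
B_{ij} := \{p \in \Sigma_{ij} : \exists\, k \in \{1,\ldots,q\},\ X_k^{\n_{ij}}(p) \neq \delta_{kj}-\delta_{ki}\}.
\]
Property~\ref{it:inward-fields-inward} gives $\sum_{i<j}\mu^{n-1}(B_{ij}) \leq \eps_1 + \eps_2$, and for any fixed $i$ the subsum $\sum_{j \neq i}\mu^{n-1}(B_{ij})$ satisfies the same bound, since it ranges over the $q-1$ unordered pairs containing $i$, a subset of those counted by property~\ref{it:inward-fields-inward}. The key observation is that on $\Sigma_{ij}\setminus B_{ij}$, linearity of $X = \sum_k a_k X_k$ gives the exact identity $X^{\n_{ij}} = \sum_k a_k(\delta_{kj}-\delta_{ki}) = a_j - a_i$, so all discrepancies between $X$ and the ``ideal'' field are localized to $B_{ij}$.

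Item~\ref{it:inward-nabla} is the quickest: pointwise Cauchy--Schwarz gives $|\nabla^\tang X^\n|^2 \leq |a|^2 \sum_{k=1}^q |\nabla^\tang X_k^\n|^2$, and integrating over $\Sigma^1$ and applying property~\ref{it:inward-fields-gradient} to each $X_k$ delivers the bound. For item~\ref{it:inward-L2}, I would use the exact identity on $\Sigma_{ij}\setminus B_{ij}$, discard the (nonnegative) contribution from $B_{ij}$, and bound $(a_j - a_i)^2 \leq 2|a|^2$ to get $\int_{\Sigma_{ij}} (X^{\n_{ij}})^2\,d\mu^{n-1} \geq A_{ij}(a_j-a_i)^2 - 2|a|^2\,\mu^{n-1}(B_{ij})$; summing over $i<j$ and recognizing $a^T L_A a = \sum_{i<j} A_{ij}(a_j-a_i)^2$ from the definition~\eqref{eq:L_A} closes the argument.

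The main work is item~\ref{it:inward-delta-V}. Since $X \in C_c^\infty$ by property~\ref{it:inward-fields-smooth} and $\Omega$ is stationary, Lemma~\ref{lem:Lagrange} applies and yields $(\delta_X V)_i = \sum_{j\neq i}\int_{\Sigma_{ij}} X^{\n_{ij}}\,d\mu^{n-1}$. A direct expansion of~\eqref{eq:L_A} gives $(L_A a)_i = \sum_{j\neq i} A_{ij}(a_i - a_j)$, so only the bad-set pieces survive:
\[
(\delta_X V + L_A a)_i = \sum_{j\neq i}\int_{B_{ij}}\bigl[X^{\n_{ij}} - (a_j-a_i)\bigr]\,d\mu^{n-1}.
\]
The crux is then the pointwise estimate on $B_{ij}$: property~\ref{it:inward-fields-pointwise} supplies a \emph{single} $\alpha = \alpha(p) \in [0,1]$ (uniform in $k$) with $|X_k^{\n_{ij}} - \alpha(\delta_{kj}-\delta_{ki})| \leq \alpha\eps_2$, so adding and subtracting $\alpha(\delta_{kj}-\delta_{ki})$ produces the convex-combination bound
\[
|X_k^{\n_{ij}} - (\delta_{kj}-\delta_{ki})| \leq \alpha\eps_2 + (1-\alpha)|\delta_{kj}-\delta_{ki}| \leq \alpha\eps_2 + (1-\alpha) \leq \max(1,\eps_2).
\]
Cauchy--Schwarz in $k$ then yields $|X^{\n_{ij}} - (a_j-a_i)| \leq \sqrt{q}\,|a|\,\max(1,\eps_2)$ on $B_{ij}$; multiplying by $\mu^{n-1}(B_{ij})$ and summing over $j \neq i$ via the estimate from the first paragraph gives the stated bound. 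The one mildly delicate ingredient is the uniformity of $\alpha$ across $k$, which is exactly what makes the convex-combination bound sharp; everything else is routine bookkeeping.
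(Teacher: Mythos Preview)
Your proof is correct and follows essentially the same approach as the paper's. Both arguments introduce the bad set $B_{ij}$ (the paper calls its measure $q_{ij}$), use Cauchy--Schwarz for part~\ref{it:inward-nabla}, discard the nonnegative bad-set contribution for part~\ref{it:inward-L2}, and for part~\ref{it:inward-delta-V} derive the pointwise bound $|X_k^{\n_{ij}}-(\delta_{kj}-\delta_{ki})|\le\max(1,\eps_2)$ from property~\ref{it:inward-fields-pointwise} via the same convex-combination trick, then control $\sum_{j\neq i}\mu^{n-1}(B_{ij})\le\eps_1+\eps_2$; the only cosmetic difference is that the paper bounds $\sum_k|a_k|\le\sqrt q\,|a|$ after pulling the absolute values inside, whereas you apply Cauchy--Schwarz in $k$ directly.
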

\begin{proof}
The first assertion follows since by property \ref{it:inward-fields-gradient} of approximate inward fields and Cauchy--Schwarz:
\[
\int_{\Sigma^1}  |\nabla^{\tang} X^{\n}|^2 d\mu^{n-1} \leq \int_{\Sigma^1} |a|^2 \sum_{k=1}^q |\nabla^{\tang} X^{\n}_k|^2 d\mu^{n-1} .
\]
The second assertion follows since, denoting:
\[
q_{ij} := \mu^{n-1}(p \in \Sigma_{ij} \; ; \;  \exists k=1,\ldots,q \;\; X^{\n}_k \neq \delta_{kj} - \delta_{ki} ),
\]
we have $\sum_{i<j} q_{ij} \leq \eps_1 + \eps_2$ by property \ref{it:inward-fields-inward}.  Therefore:
\[
\int_{\Sigma_{ij}} (X^{\n})^2 d\mu^{n-1} \geq (a_j - a_i)^2 \brac{\mu^{n-1}(\Sigma_{ij}) - q_{ij} } ,
\]
and after summation over $i<j$, we obtain:
\begin{align*}
\sum_{i < j} \int_{\Sigma_{ij}} (X^{\n})^2 d\mu^{n-1} & \geq \sum_{i<j} A_{ij} (a_j - a_i)^2 - (\eps_1 + \eps_2) \max_{i < j}  (a_j - a_i)^2 \\
& \geq a^T L_A a -   (\eps_1 + \eps_2) 2 |a|^2  .
\end{align*}

To see the third assertion, first note that:
\[
\int_{\Sigma_{ij}} |X_k^\n  - (\delta_{kj} - \delta_{ki})| d\mu^{n-1} \leq q_{ij} \max(1,\eps_2) ,
\]
 since by property \ref{it:inward-fields-pointwise}:
\[
\sup_{p \in \Sigma_{ij}} |X_k^\n(p) - (\delta_{jk} - \delta_{ik})| \le \max(1,\eps_2) . 
\]
It follows by (\ref{eq:first-variation-volume}) that:
\begin{align*}
     |(\delta_X V(\Omega) + L_A a)_i| & \leq \abs{\sum_{j \neq i} \int_{\Sigma_{ij}} X^{\n} \, d\gamma^{n-1} - \sum_{j \neq i} (a_j - a_i) A_{ij}} \\
    & \leq \sum_{j \neq i} \int_{\Sigma_{ij}} \abs{ \sum_{k} a_k X^\n_k  - (a_j - a_i) } \, d\gamma^{n-1} \\
    & \le \sum_{j \neq i} \int_{\Sigma_{ij}}  \sum_k  \abs{a_k} \abs{X^\n_k -  (\delta_{jk} - \delta_{ik})} \,  d\gamma^{n-1} \\
        &\le \max(1,\eps_2) \sum_{j \neq i}  q_{ij} \sum_k |a_k| \le  \max(1,\eps_2)(\eps_1+\eps_2) \sqrt{q} |a|. 
\end{align*}        
\end{proof}

\begin{lemma} \label{lem:trace} 
Let $(X_1,\ldots,X_q)$ be a collection of $(\eps_1,\eps_2)$-approximate inward vector-fields with $\eps_2 < 1$. Then for all distinct $i,j,k$, and at every point of $\Sigma_{ijk}$, 
\begin{enumerate}
\item There exists $\alpha \in [0,1]$ so that:
\[
\norm{P_{W_{ijk}} \brac{\sum_{\ell=1}^q X_\ell X_\ell^T -  2 \alpha \text{Id}} P_{W_{ijk}}} \leq C_q \alpha \eps_2 ,
\]
where $P_{W_{ijk}}$ denote the orthogonal projection onto the two-dimensional subspace $W_{ijk} = \spn(\n_{ij},\n_{jk},\n_{ki})$, $\norm{\cdot}$ denotes the Hilbert-Schmidt norm, and $C_q$ is a constant depending solely on $q$. 
\item For any linear operator $Z : \R^n \rightarrow \R^n$ so that $P_{W_{ijk}} Z  = Z P_{W_{ijk}} = Z$ and $\tr(Z) =0$:
\[
\abs{\tr\brac{ (\sum_{\ell=1}^q X_\ell X_\ell^T) Z }} \leq C_q \norm{Z} \eps_2  .
\]
\end{enumerate}
\end{lemma}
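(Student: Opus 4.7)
The key geometric input at $p\in\Sigma_{ijk}$ is that, by Corollary~\ref{cor:boundary-normal-sum}, the three unit normals $\n_{ij}, \n_{jk}, \n_{ki}$ span the two-dimensional subspace $W := W_{ijk}$, sum to zero, and consequently meet at $120^\circ$; they therefore form a tight frame for $W$:
\[
\sum_{(a,b) \in \cyclic(i,j,k)} \n_{ab} \n_{ab}^T = \tfrac{3}{2} P_W,
\qquad\text{equivalently}\qquad
P_W v = \tfrac{2}{3} \sum_{(a,b)} \scalar{v, \n_{ab}} \n_{ab} \quad \forall v \in \R^n.
\]
Property (4) of approximate inward fields (extended by continuity from $\Sigma_{ab}$ to $p$) supplies for each pair $(a,b) \in \cyclic(i,j,k)$ a value $\alpha_{ab} \in [0,1]$ with $\abs{X_\ell^{\n_{ab}}(p) - \alpha_{ab}(\delta_{\ell b} - \delta_{\ell a})} \leq \alpha_{ab} \eps_2$ for every $\ell$. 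Summing over $(a,b) \in \cyclic(i,j,k)$ and using $\sum \n_{ab} = 0$ yields $\sum_{(a,b)} X_\ell^{\n_{ab}}(p) = 0$; evaluating this for $\ell = i, j, k$ in turn, the approximation errors cancel to reveal that the three $\alpha_{ab}$'s pairwise differ by at most $O(\alpha_{\max} \eps_2)$. We may therefore set $\alpha_0 := \alpha_{\max} \in [0,1]$ and absorb the residuals into the error, obtaining the uniform estimate
\[
\abs{X_\ell^{\n_{ab}}(p) - \alpha_0 (\delta_{\ell b} - \delta_{\ell a})} \leq C \alpha_0 \eps_2 \qquad \forall \ell \in \{1,\ldots,q\},\ \forall (a,b) \in \cyclic(i,j,k).
\]

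\textbf{Main identity and Hilbert-Schmidt estimate.} Applying the frame expansion to $X_\ell$, write $P_W X_\ell = u_\ell + e_\ell$ where
\[
u_\ell := \tfrac{2 \alpha_0}{3} \sum_{(a,b) \in \cyclic(i,j,k)} (\delta_{\ell b} - \delta_{\ell a}) \n_{ab},
\qquad \abs{e_\ell} \leq C' \alpha_0 \eps_2.
\]
Then $u_\ell = 0$ for $\ell \notin \{i,j,k\}$, while $\abs{u_\ell} = 2\alpha_0/\sqrt{3}$ for $\ell \in \{i,j,k\}$. A short calculation invoking $\sum \n_{ab} \n_{ab}^T = \tfrac{3}{2} P_W$ and $\sum \n_{ab} = 0$---or equivalently, by noting that the rotational symmetry among $i,j,k$ forces $\sum_\ell u_\ell u_\ell^T$ to be a multiple of $P_W$, and computing its trace $\sum_\ell \abs{u_\ell}^2 = 4\alpha_0^2$ on the two-dimensional $W$---yields the identity $\sum_\ell u_\ell u_\ell^T = 2\alpha_0^2 P_W$. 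Expanding
\[
\sum_\ell (P_W X_\ell)(P_W X_\ell)^T - 2\alpha_0^2 P_W = \sum_\ell \brac{u_\ell e_\ell^T + e_\ell u_\ell^T + e_\ell e_\ell^T}
\]
and applying the triangle inequality in Hilbert-Schmidt norm (noting that only three $u_\ell$'s are nonzero, while up to $q$ of the $e_\ell$'s may contribute) bounds this error by $C_q \alpha_0^2 \eps_2$ in HS norm. Setting $\alpha := \alpha_0^2 \in [0,1]$ and observing that $P_W\brac{\sum_\ell X_\ell X_\ell^T - 2\alpha\, \Id} P_W = \sum_\ell (P_W X_\ell)(P_W X_\ell)^T - 2\alpha P_W$ establishes part~(1).

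\textbf{Part (2) and main difficulty.} For part~(2), the assumption $P_W Z = Z P_W = Z$ gives $Z = P_W Z P_W$, so by cyclicity of trace:
\[
\tr\brac{(\textstyle\sum_\ell X_\ell X_\ell^T) Z} = \tr\brac{P_W (\textstyle\sum_\ell X_\ell X_\ell^T) P_W \cdot Z} = 2\alpha \tr(Z) + \tr(EZ) = \tr(EZ),
\]
where $E$ is the error matrix from part~(1) and the last equality uses $\tr Z = 0$. The Cauchy-Schwarz inequality $\abs{\tr(EZ)} \leq \norm{E} \norm{Z} \leq C_q \alpha \eps_2 \norm{Z}$ completes the proof. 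The only genuinely delicate step is the unification in the first paragraph: while the construction in Proposition~\ref{prop:inward-fields} gives $\alpha_{ab} = \eta(p)$ independently of $(a,b)$, deducing this solely from the abstract properties (1)--(5) of approximate inward fields essentially requires the $120^\circ$ cancellation $\sum \n_{ab} = 0$. Everything else reduces to linear algebra in the tight frame $\{\n_{ab}\}_{(a,b) \in \cyclic(i,j,k)}$.
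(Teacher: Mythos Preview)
Your proof is correct and follows essentially the same route as the paper: both identify the ``ideal'' projection $P_W X_\ell$ (you via the tight-frame identity $P_W = \tfrac{2}{3}\sum_{(a,b)} \n_{ab}\n_{ab}^T$, the paper via the explicit vector $\tfrac{2}{\sqrt{3}}\n_{\partial jk}$, which is the same object), verify that these ideal vectors give $\sum u_\ell u_\ell^T = 2\alpha_0^2 P_W$, bound the cross and quadratic error terms in Hilbert--Schmidt norm, and deduce part (2) from part (1) via $\tr Z = 0$ and Cauchy--Schwarz. Your choice $\alpha = \alpha_0^2$ is exactly what the paper's ``reduce to $\alpha=1$ by scaling'' argument produces.

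Where you differ---and improve on the paper---is in the unification step. The paper simply asserts that the $\alpha$ from property~(4) is ``a common scaling factor'' at $p\in\Sigma_{ijk}$; this is true for the specific construction in Proposition~\ref{prop:inward-fields} (where $\alpha=\eta(p)$), but is not part of the abstract Definition of approximate inward fields, which a~priori allows a different $\alpha_{ab}$ for each interface. Your argument---using $\sum_{(a,b)} X_\ell^{\n_{ab}} = \langle X_\ell, \sum \n_{ab}\rangle = 0$ for $\ell\in\{i,j,k\}$ to conclude $|\alpha_{ab}-\alpha_{a'b'}|\le 3\alpha_{\max}\eps_2$---closes this gap cleanly from the abstract properties alone, at the cost only of a larger absolute constant. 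This is a genuine refinement.
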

\begin{proof}
By continuity, property \ref{it:inward-fields-pointwise} extends from $\Sigma_{ij}$ to $\Sigma_{ijk}$. Given $p \in \Sigma_{ijk}$, the constant $\alpha = \alpha_p \in [0,1]$ from property \ref{it:inward-fields-pointwise} is a common scaling factor to all $\{X_\ell\}_{\ell=1}^q$, and so it is enough to prove the claim for $\alpha=1$. Since $|\scalar{X_i,\n_{ji}} - 1|\leq \eps_2$, $|\scalar{X_i,\n_{ki}} - 1|\leq \eps_2$ and $|\scalar{X_i,\n_{jk}}| \leq \eps_2$, an easy calculation verifies that:
\[
|P_{W_{ijk}} X_i - \frac{2}{\sqrt{3}} \n_{\partial jk} | \leq \frac{2}{\sqrt{3}} \eps_2 .  \]
Denoting $V_i = \frac{2}{\sqrt{3}} \n_{\partial jk}$ and $E_i = P_{W_{ijk}} X_i - V_i$, note that:
\[
\norm{P_{W_{ijk}} X_i X_i^T P_{W_{ijk}} - V_i V_i^T} \leq 2 \norm{E_i V_i^T} + \norm{E_i E_i^T} = 2 \abs{E_i} \abs{V_i} + \abs{E_i}^2 \leq \frac{8}{3} \eps_2 + \frac{4}{3} \eps^2_2 . 
\]
On the other hand, if $\ell \notin \set{i,j,k}$ then similarly: 
\[
\norm{P_{W_{ijk}} X_\ell X_\ell^T P_{W_{ijk}}} = \abs{P_{W_{ijk}} X_\ell}^2 \leq \frac{4}{3} \eps_2^2 . 
\]
Since:
\[
V_i V_i^T + V_j V_j^T + V_k V_k^T = 2 P_{W_{ijk}}  ,
\]
it follows that:
\[
\norm{P_{W_{ijk}} \brac{\sum_{\ell=1}^q X_\ell X_\ell^T - 2 \text{Id}} P_{W_{ijk}} } \leq 8 \eps_2 + \frac{4}{3} q \eps_2^2 ,
\]
establishing the first assertion. 

For the second assertion, denote $T = \sum_{\ell=1}^q X_\ell X_\ell^T$ and $W = W_{ijk}$, and write:
\[
\tr(T Z) = \tr(P_W T P_W Z) + \tr((T - P_W T P_W) Z) = \tr(P_W (T - 2\alpha\text{Id}) P_W Z) ,
\]
using both properties of $Z$. It remains to apply the Cauchy--Schwarz inequality and the estimate from the first assertion. 
\end{proof}

\section{Stable Regular Clusters for the Gaussian Measure} \label{sec:stable}

From this point on, unless otherwise stated, we will switch from working with the general measure $\mu$
to the standard Gaussian measure $\gamma$. In this section, we identify several crucial properties 
of Gaussian-stable regular clusters when the number
of cells is at most $n+1$. In fact, we can say a bit more, as described below.

\begin{definition}[Flat Cluster]      A regular cluster $\Omega$ is called \emph{flat} and is said to have \emph{flat cells} if for every $i \ne j$, $\II^{ij} = 0$ on $\Sigma_{ij}$. 
    \end{definition}

\begin{theorem}[Stable Regular Clusters] \label{thm:flat}
   Let $\Omega$ denote a Gaussian-stable regular $q$-cluster, and assume that $q \leq n+1$. Then:
   \begin{enumerate}[(i)]
   \item $\Omega$ has flat, connected and convex cells. 
   \item    $\n_{ij}$ is constant on every non-empty $\Sigma_{ij}$, and $\Sigma_{ij}$ is contained in a single hyperplane perpendicular to $\n_{ij}$. In addition $\dim \spn \{ \n_{ij} \}_{i < j} \leq q-1$. 
   \item  All non-empty cells are convex polyhedra with at most $q-1$ facets, given by:
    \begin{equation} \label{eq:polyhedron-formula}
    \Omega_i = \bigcap_{j \neq i : A_{ij} > 0 } \{ x \in \R^n : \scalar{\n_{ij},x} < \lambda_j - \lambda_i \} ,
    \end{equation}
    where, recall, $\lambda \in E^*$ is given by Lemma \ref{lem:first-order-conditions} \ref{it:first-order-cyclic}. 
        \end{enumerate}
\end{theorem}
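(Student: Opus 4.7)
The plan is to establish part (i)---flatness of all interfaces, the main novel content---first, and then deduce (ii), (iii), and convexity using flatness, stability applied to constant vector fields, and the higher-codimension regularity of Theorem~\ref{thm:regularity}. To prove flatness, assume in the contrapositive that $\|\II^{i_0 j_0}(p_0)\| > 0$ for some $p_0 \in \Sigma_{i_0 j_0}$. Since $\nabla^2 W = \Id$ for the standard Gaussian, the index-form formula of Theorem~\ref{thm:formula} reads
\[
Q(X) = \sum_{i<j} \int_{\Sigma_{ij}} \brac{|\nabla^\tang X^\n|^2 - (X^\n)^2(\|\II\|^2 + 1)} d\gamma^{n-1} - \sum_{i<j} \int_{\partial \Sigma_{ij}} X^\n X^{\n_\partial} \II_{\partial,\partial} d\gamma^{n-2}.
\]
The goal is to produce an admissible $X$ with $\delta_X V(\Omega) = 0$ and $Q(X) < 0$, contradicting stability. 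The crucial strictly negative contribution will come from $-\int (X^\n)^2 \|\II\|^2 d\gamma^{n-1}$ near $p_0$; the $-(X^\n)^2$ term also works in our favor, while the gradient and boundary terms will be kept small or absorbed.

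\textbf{The full-dimensional case.} For small $\eps > 0$, take $(\eps, \eps)$-approximate inward fields $X_1,\dots,X_q$ from Proposition~\ref{prop:inward-fields} and set $X_a := \sum_k a_k X_k$ for $a \in E^{(q-1)}$. Lemma~\ref{lem:inward-combination} yields $X_a^\n \approx (a_j - a_i)$ on $\Sigma_{ij}$, with $\int_{\Sigma^1} |\nabla^\tang X_a^\n|^2 d\gamma^{n-1} = O(\eps|a|^2)$ and $\delta_{X_a} V = -L_A a + O(\eps|a|)$. When the map $\R^n \ni w \mapsto \delta_w V \in E^{(q-1)}$ is surjective (equivalently, $\dim \spn\{m_i\} = q-1$ with $m_i := \int_{\Omega_i} x\, d\gamma$), pick a constant field $w = w(a)$ so that $Y := X_a + w$ satisfies $\delta_Y V = 0$. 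By Theorem~\ref{thm:combined-vector-fields} and $\nabla^2_{w^\tang,\n} W = 0$ for the Gaussian, $Q(Y)$ has the same form as $Q(X_a)$ with $(X_a^\n)^2$ replaced by $(Y^\n)^2$ in the $-(X^\n)^2$ term, so every bulk term apart from $|\nabla^\tang X_a^\n|^2$ remains non-positive. Choose $a$ with $a_{j_0} - a_{i_0} \neq 0$: the curvature integral near $p_0$ then contributes an amount bounded away from zero by continuity of $\II$; the boundary integral is uniformly bounded using Proposition~\ref{prop:Schauder}\ref{it:Schauder-Sigma2} together with the conformal cancellations of Lemmas~\ref{lem:cancellation} and~\ref{lem:trace}; and the remaining errors are $O(\eps)$. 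Sending $\eps \to 0$ gives $Q(Y) < 0$.

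\textbf{The dimension-deficient case.} If $\dim \spn\{m_i\} < q-1$, pick a nonzero constant $w \perp \spn\{m_i\}$; then $\delta_w V = 0$, and stability applied to $Q(w) = -\sum_{i<j}\int (w^\n)^2 d\gamma^{n-1} \leq 0$ forces $w^\n \equiv 0$ on all $\Sigma_{ij}$, so $w$ is tangent everywhere to $\Sigma^1$ and $\Omega$ splits after rotation as $\Omega = \tilde\Omega \times \R$ with $\tilde\Omega \subset \R^{n-1}$ a stable regular $q$-cluster. Construct approximate inward fields $\tilde X_\ell$ on $\tilde\Omega$, set $\tilde X := \sum_\ell a_\ell \tilde X_\ell$, and (after truncating in the fiber direction at scale $R$ and passing $R \to \infty$) consider the skew field
\[
X(x, t) := t \, \tilde X(x).
\]
Oddness of $t$ under $\gamma_1$ combined with $\Sigma_{ij} = \tilde \Sigma_{ij} \times \R$ yields $\delta_X V = 0$ automatically by Fubini. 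The saturation identity $\int t^2 d\gamma_1 = \int 1 \, d\gamma_1$ (equality in the one-dimensional Gaussian Poincar\'e inequality) produces an exact cancellation of the $\int (\tilde X^{\tilde\n})^2$ contribution from $|\nabla^\tang X^\n|^2$ against the $-\int t^2 (\tilde X^{\tilde\n})^2$ contribution from $-(X^\n)^2$. What remains is
\[
Q(X) = \int_{\tilde \Sigma^1} \brac{|\nabla^{\tilde\tang} \tilde X^{\tilde\n}|^2 - (\tilde X^{\tilde\n})^2 \|\tilde \II\|^2} d\tilde\gamma^{n-2} - \text{boundary},
\]
which is strictly negative for $\eps$ small and $a$ suitably chosen, since $\tilde\II^{i_0 j_0}$ remains nonzero at the descent of $p_0$ by the product structure.

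\textbf{Deducing (ii), (iii), and convexity.} Given flatness, the stationarity $H_{ij,\gamma} = \lambda_i - \lambda_j$ simplifies on $\Sigma_{ij}$ to $\scalar{x,\n_{ij}} = \lambda_j - \lambda_i$, forcing $\n_{ij}$ to be constant and $\Sigma_{ij}$ to be planar, which yields (ii) and the polyhedral formula in (iii). The dimension bound $\dim \spn\{\n_{ij}\} \leq q-1$ follows from the translation argument above: $w \perp \spn\{m_i\}$ implies $w^\n \equiv 0$, hence $\spn\{\n_{ij}\} \subseteq \spn\{m_i\}$, whose dimension is at most $q-1$ since $\sum_i m_i = 0$. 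Connectedness of each cell is a standard stability argument (a disconnected component allows a volume-preserving, area-decreasing variation). The reverse inclusion in~(\ref{eq:polyhedron-formula}) (up to null sets) uses the $120^\circ$ angle condition of Corollary~\ref{cor:boundary-normal-sum} together with $\H^{n-2}(\Sigma^3 \cup \Sigma^4) = 0$, which guarantees no additional boundary pieces beyond the flat facets. The main obstacle throughout is the dimension-deficient case: the skew-product construction needs the Poincar\'e saturation to hold \emph{exactly}, while the curvature-dependent boundary integral must be kept absorbable; this requires a delicate interplay between Lemma~\ref{lem:inward-combination}, the curvature estimates of Proposition~\ref{prop:Schauder}, and the Stokes-type theorem of Lemma~\ref{lem:stokes}.
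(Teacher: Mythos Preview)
Your overall architecture matches the paper's: split into full-dimensional versus dimension-deficient, use linear combinations of approximate inward fields, add a constant field $w$ in the first case and multiply by a linear fiber function in the second. But there is a genuine gap in how you choose the coefficient vector $a$, and this is precisely the crux of Lemma~\ref{lem:R}.

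You write: ``Choose $a$ with $a_{j_0}-a_{i_0}\neq 0$: the curvature integral near $p_0$ then contributes an amount bounded away from zero \dots; the boundary integral is uniformly bounded \dots; and the remaining errors are $O(\eps)$.'' The problem is that for a \emph{fixed} $a$, the boundary term
\[
R_2(a)=\sum_{i<j}\int_{\partial\Sigma_{ij}} X_a^{\n}\, X_a^{\n_\partial}\, \II_{\partial,\partial}\, d\gamma^{n-2}
\]
is indeed bounded (by Proposition~\ref{prop:Schauder}~\ref{it:Schauder-Sigma2}), but it is \emph{not} small and it has no preferred sign. Its magnitude is controlled only by $\int_{\Sigma^2\cap K}\|\II\|$, a constant depending on the compact set $K$ fixed at the outset, which may well exceed your curvature gain $\int_{\Sigma_{i_0 j_0}\cap N_{p_0}}\|\II\|^2$. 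Lemma~\ref{lem:trace} does \emph{not} bound $R_2(a)$ for an individual $a$; it bounds only $\tr R_2=\sum_\ell R_2(e_\ell)$, because at each point of $\Sigma_{ijk}$ one has $\bigl|\sum_\ell X_\ell^{\n_{ij}}X_\ell^{\n_{\partial ij}}\bigr|\le C_q\eps_2$ (this is $\tr\bigl((\sum_\ell X_\ell X_\ell^T)\,\n_{ij}\n_{\partial ij}^T\bigr)$ with a traceless $Z$). The paper's Lemma~\ref{lem:R} therefore argues via the trace: $\tr(R_1+R_2)\ge \delta - C_K C_q\eps_2\ge \delta/2$ for $\eps_2$ small, and only then concludes that \emph{some} unit vector $a$ satisfies $R_1(a)+R_2(a)\ge \delta/(2q)$. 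Your choice ``any $a$ with $a_{j_0}-a_{i_0}\neq 0$'' is not justified, and without the trace argument there is no mechanism to absorb $R_2(a)$. The same issue recurs verbatim in your dimension-deficient case on $\tilde\Omega$.

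Two smaller points. First, flatness gives $\II\equiv 0$, hence $\n_{ij}$ is only \emph{locally} constant on $\Sigma_{ij}$; the equation $\langle \n_{ij},x\rangle=\lambda_j-\lambda_i$ does not by itself force a single value of $\n_{ij}$ across possibly disconnected components. The paper deduces constancy only \emph{after} establishing convexity of the cells (Section~7.5). Second, your convexity argument is underspecified: ``no additional boundary pieces beyond the flat facets'' does not establish that the open connected cell is globally convex. The paper requires a nontrivial local-to-global convexity statement (Proposition~\ref{prop:local-to-global}) allowing an exceptional set $A\subset\partial\Omega_i$ with $\H^{n-2}(A)=0$, proved via an integral-geometric slicing argument; this is what handles the a priori unknown structure near $\Sigma^4$.
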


A further refinement of this theorem will be given in Theorem \ref{thm:pull-back}. 
The main task in the proof of Theorem~\ref{thm:flat} is to prove that the
cells are flat. This task will be broken up into two cases, depending on
whether the cluster is full-dimensional or dimension-deficient (recall Definition \ref{def:effective}). 

In both cases, we will assume that the cluster is not flat, and produce a vector-field to demonstrate that it is not stable. 
When the cluster is dimension-deficient (and without any restrictions on $q$!), 
we will build such a vector field by taking a carefully chosen linear combination of inward fields, multiplied by a ``height''-dependent linear function in a direction of dimension-deficiency.
When the cluster is full-dimensional and $q \leq n+1$, we will take again a carefully chosen linear combination of inward fields together with a constant field. Vector-fields of the latter form were previously considered in~\cite{McGonagleRoss:15} but in the context of a single set with smooth boundary instead of a cluster (of course, in a single set setting, having moreover a smooth boundary, there is no need to construct the inward field as it is simply given by the interior unit-normal, and its variations are well-known and classical).

\subsection{Negative contribution of curvature terms}

In this subsection we temporarily revert to treating a general measure $\mu$, as the Gaussian measure's properties will not be used.

The key lemma we will require pertains to the following quantity associated to a regular cluster $\Omega$ (with respect to $\mu$) and a tame vector-field $X$:
\[ R(X) :=
        \sum_{i < j} \Big[\int_{\Sigma_{ij}} \brac{ |\nabla^\tang X^\n|^2 - (X^\n)^2 \|\II\|_2^2 } d\mu^{n-1}
    - \int_{\partial \Sigma_{ij}} X^\n X^{\n_\partial} \II_{\partial,\partial} \, d\mu^{n-2}\Big].
\] 
\begin{lemma} \label{lem:R}
Let $\Omega$ be a stationary regular cluster (with respect to $\mu$) which is non-flat. Then there exist $\eps_1,\eps_2 > 0$, a collection $\set{X_1,\ldots,X_q}$ of $(\eps_1,\eps_2)$-approximate inward fields, and a linear combination $X = \sum_{i=1}^q a_i X_i$, so that $R(X) < 0$. 
\end{lemma}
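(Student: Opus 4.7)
The plan is to take $X$ to be a single approximate inward field $X_m$ (i.e., $a = e_m$ for an appropriate index $m$), and to find such $m$ by showing that $\sum_{m=1}^q R(X_m) < 0$ for a suitable choice of parameters. First, I would extract from non-flatness a compact set $K_0 \subset \Sigma_{i_0 j_0}$, disjoint from $\Sigma^3 \cup \Sigma^4$, on which $\delta := \int_{K_0} \|\II^{i_0 j_0}\|^2 d\mu^{n-1} > 0$. This is possible because $\Sigma_{i_0 j_0}$ is $C^\infty$ and $\II^{i_0 j_0}$ is continuous, so $\|\II^{i_0 j_0}\|^2 > 0$ on a neighborhood of the non-flat point, and $\overline{\Sigma^3} \subset \Sigma^3 \cup \Sigma^4$ is closed (by the relative openness of $\Sigma^1 \cup \Sigma^2$ in $\Sigma$), so a sufficiently small compact subset of this neighborhood works. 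I would then invoke Proposition~\ref{prop:inward-fields}, observing that by choosing the cutoff functions $\eta$ and $\xi$ in Lemmas~\ref{lem:sigma4-cutoff} and~\ref{lem:sigma3-cutoff} with neighborhoods of $\Sigma^4$ and $\Sigma^3$ small enough (which is within the latitude of the construction, since $K_0$ lies at positive distance from the closed set $\overline{\Sigma^3} \cup \Sigma^4$), the resulting fields satisfy $X_k \equiv Z_k$ on $K_0$, so that $X_k^{\n_{i_0 j_0}} = \delta_{k j_0} - \delta_{k i_0}$ pointwise there.

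Next, I would estimate $\sum_m R(X_m)$ term by term. The gradient term is bounded above by $q \eps_1$ via property~(2) of Proposition~\ref{prop:inward-fields}. The curvature term satisfies
\[
\sum_m \int_{\Sigma^1} (X_m^\n)^2 \|\II\|^2 d\mu^{n-1} \;\geq\; \int_{K_0} \sum_m (X_m^{\n_{i_0 j_0}})^2 \|\II^{i_0 j_0}\|^2 d\mu^{n-1} \;=\; 2 \delta,
\]
since on $K_0$ only $m \in \{i_0, j_0\}$ contribute, each with $(X_m^{\n_{i_0 j_0}})^2 = 1$. For the boundary term at each triple junction $\Sigma_{ijk}$, the key observation is that $\n_{ij}, \n_{\partial ij} \in W_{ijk}$, so
\[
\sum_m X_m^{\n_{ij}} X_m^{\n_{\partial ij}} = \scalar{P_{W_{ijk}} \bigl(\textstyle\sum_m X_m X_m^T\bigr) P_{W_{ijk}} \n_{ij}, \n_{\partial ij}}.
\]
By Lemma~\ref{lem:trace}(1), the matrix $P_{W_{ijk}}(\sum_m X_m X_m^T) P_{W_{ijk}}$ lies within $C_q \eps_2$ (in Hilbert-Schmidt norm) of $2 \alpha\, \mathrm{Id}_{W_{ijk}}$, and since $\n_{ij} \perp \n_{\partial ij}$, this yields $|\sum_m X_m^{\n_{ij}} X_m^{\n_{\partial ij}}| \leq C_q \eps_2$ pointwise. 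Integrating against $|\II^{ij}_{\partial,\partial}|$ and invoking Proposition~\ref{prop:Schauder}(2), the total boundary contribution is at most $C_q \eps_2 C_K$, where $C_K := \sum_{i<j} \int_{\partial \Sigma_{ij} \cap K} \|\II^{ij}\| d\mu^{n-2} < \infty$ because $K$ is compact and disjoint from $\Sigma^4$.

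Combining, $\sum_{m=1}^q R(X_m) \leq q \eps_1 - 2\delta + C_q \eps_2 C_K$. I would first choose $\eps_1 < \delta / q$, which (together with the choice of $\eta$) also determines $K$ and the constant $C_K$; then choose $\eps_2 < \min(1, \delta / (2 C_q C_K))$. The outcome is $\sum_m R(X_m) < -\delta/2 < 0$, so $R(X_m) < 0$ for some $m$, and taking $a = e_m$ completes the proof. The chief obstacle is controlling the boundary term: it does not vanish for any individual $X_m$, but averaging over $m$ exploits the conformal structure of $\Sigma$ at triple junctions (encoded in Lemma~\ref{lem:trace}) together with the orthogonality $\n_{ij} \perp \n_{\partial ij}$ to reduce the total boundary contribution to an $O(\eps_2)$ error, which is absorbed by the strictly positive curvature term.
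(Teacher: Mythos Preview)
Your proposal is correct and follows essentially the same strategy as the paper's proof: both compute the trace $\sum_m R(X_m)$, bound the gradient contribution by $q\eps_1$ via property~(2), bound the curvature contribution below by a fixed $\delta>0$ coming from non-flatness, and control the boundary term by $C_q\eps_2 C_K$ using Lemma~\ref{lem:trace} (the paper applies part~(2) with $Z=\n_{ij}\n_{\partial ij}^T$, which is equivalent to your use of part~(1) together with $\n_{ij}\perp\n_{\partial ij}$). The only cosmetic differences are: (i) you secure the lower bound on the curvature integral by arranging $\eta=\xi=1$ on $K_0$ via the latitude in the construction of Lemmas~\ref{lem:sigma4-cutoff}--\ref{lem:sigma3-cutoff}, whereas the paper stays within the black-box statement of Proposition~\ref{prop:inward-fields} and uses property~(3) together with absolute continuity of $\int \|\II\|^2\,d\mu^{n-1}$ (the finiteness in~\eqref{eq:non-flat-start}); and (ii) you conclude by pigeonhole on $\sum_m R(X_m)<0$ to get $a=e_m$, while the paper picks a unit $a$ maximizing $R_1(a)+R_2(a)$---but since the trace argument already gives such an $a$ among the $e_m$, the two conclusions coincide.
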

\begin{proof}
   By assumption there is some non-flat $\Sigma_{ij}$, and we may assume without loss of generality that it is $\Sigma_{12}$. By continuity and relative openness of $\Sigma_{12}$ in $\Sigma$, there exists a bounded neighborhood $N_p$ of $p \in\Sigma_{12}$ so that $\II^{12} \neq 0$ on $\Sigma_{12} \cap N_p =  \Sigma \cap  N_p$, 
         and we denote
    \[
    \delta :=  \int_{\Sigma_{12} \cap N_p} \|\II\|_2^2 \, d\mu^{n-1}  > 0 . 
    \]

    Fix $\eps_1 := \delta/ (4 q^2)$, and let $K$ be the compact set from Proposition~\ref{prop:inward-fields}, which is disjoint from $\Sigma^4$ and satisfies $\mu^{n-1}(\Sigma^1 \setminus K) \leq \eps_1$. We can always assume that $K$ contains $N_p$. Then:
                        \begin{equation} \label{eq:non-flat-start}
        \infty > \int_{\Sigma_{12} \cap  K} \| \II\|_2^2 \, d\mu^{n-1} \ge \delta > 0 ,
    \end{equation}
    where the finiteness is ensured by Proposition \ref{prop:Schauder} \ref{it:Schauder-Sigma1}. 
                    By Proposition~\ref{prop:inward-fields}, for every $\eps_2 \in (0,1)$, there exists a collection $\{ X_1, \dots,  X_q\}$ of $(\eps_1,\eps_2)$-approximate inward vector-fields for $\Omega$ which are supported inside $K$. Thanks to the finiteness in (\ref{eq:non-flat-start}), by taking $\eps_2>0$ small-enough, we may always ensure by part (3) of Proposition~\ref{prop:inward-fields} that 
    \begin{equation}\label{eq:large-curvature}
        \int_{\Sigma_{12} \cap K} (( X_1^{\n})^2 + (X_2^{\n})^2) \|\II\|_2^2 \, d\mu^{n-1} \ge \delta .
    \end{equation}
    As $\{ X_i \}$ are supported in $K$, we will henceforth drop the intersection with $K$ in all subsequent integrals. 

    For $a \in \R^{q}$, define $X_a := \sum_{\ell=1}^q a_\ell X_\ell$, and introduce the following quadratic forms:
    \begin{align*}         R_1(a)  & := \sum_{i < j} \int_{\Sigma_{ij}} (X_a^{\n})^2 \| \II \|_2^2\, d\mu^{n-1} , \\
        R_2(a)  & := \sum_{i < j} \int_{\partial \Sigma_{ij}} X_a^{\n} X_a^{\n_{\partial}} \II_{\partial,\partial} \, d\mu^{n-2} .
    \end{align*}     Note that $R_1$ and $R_2$ are well-defined, as $\II^{ij}$ is square-integrable on $\Sigma_{ij} \cap K$ and $\II^{ij}_{\partial,\partial}$ is integrable on $\partial\Sigma_{ij} \cap K$ by Proposition \ref{prop:Schauder}. Note that:
    \[
    \tr(R_1) \geq \int_{\Sigma_{12}}  ((X_1^{\n})^2 + (X_2^{\n})^2) \|\II\|_2^2 \, d\mu^{n-1} \ge \delta 
    \]
    by (\ref{eq:large-curvature}). In addition, observe that given $p \in \partial \Sigma_{ij}$ and setting $Z = \n_{ij} \n_{\partial ij}^T$, Lemma \ref{lem:trace} implies:
    \[
    \abs{\sum_{\ell=1}^q X_\ell^{\n_{ij}} X_\ell^{\n_{\partial ij}}} = \abs{\tr\brac{ (\sum_{\ell=1}^q  X_\ell  X_\ell^{T} ) Z }} \leq C_{q} \eps_2 .
    \]
     Consequently, the integrability of $\II_{\partial,\partial}$ on $\Sigma^2 \cap K$ implies that $\tr(R_2) \geq - C_{K} C_{q} \eps_2$. Hence, by choosing $\eps_2 >0$ small-enough, we can ensure that:
     \[
     \tr(R_1 + R_2) \geq \delta - C_{K} C_{q} \eps_2 \geq \frac{\delta}{2} .
     \]
     It follows that we may find a unit-vector $a \in \R^q$ so that 
     \begin{equation} \label{eq:trace-R1-R2}
     R_1(a) + R_2(a) \geq \frac{\delta}{2q}.
     \end{equation}
     Let us fix this $a = a_{\eps_1,\eps_2}$ and set $X = X_{\eps_1,\eps_2} := X_{a}$.
     
     It remains to apply Lemma \ref{lem:inward-combination} \ref{it:inward-nabla}, which together with $|a|=1$ yields:
                            \[
        R(X_{\eps_1,\eps_2}) = \sum_{i<j} \int_{\Sigma_{ij}} |\nabla^\tang X^{\n}|^2 \, d\mu^{n-1} - R_1(a) - R_2(a) 
        \le q \eps_1  - \frac{\delta}{2q} .
     \]
     Recalling that $\eps_1$ was chosen to be $\frac{\delta}{4 q^2}$, we see that $R(X_{\eps_1,\eps_2}) < 0$ for all $\eps_2 > 0$ sufficiently small. 
\end{proof}

We now return to treat the Gaussian measure $\gamma$ solely. 

\subsection{Dimension-deficient stable regular clusters are flat}

\begin{proposition}\label{prop:dimension-deficient-implies-flat}
    If a Gaussian-stable regular cluster is dimension-deficient then it is flat.
\end{proposition}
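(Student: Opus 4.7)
The plan is to reduce to a lower-dimensional cluster and construct a test field violating stability. Since $\Omega$ is dimension-deficient, Lemma \ref{lem:dim-deficient} (together with the rotational invariance of $\gamma$) allows us to assume $\Omega = \tilde\Omega \times \R$, where $\tilde\Omega$ is a $q$-cluster on $\R^{n-1}$ with respect to $\tilde\gamma := \gamma^{n-1}$. Writing $x = (y,t) \in \R^{n-1} \times \R$, the product structure gives $\Sigma_{ij} = \tilde\Sigma_{ij} \times \R$, $\n_{ij}(y,t) = \tilde\n_{ij}(y)$, $\|\II^{ij}\|^2(y,t) = \|\tilde\II^{ij}\|^2(y)$, and $\Sigma^k = \tilde\Sigma^k \times \R$ for $k=2,3,4$, from which $\tilde\Omega$ is easily seen to be stationary and regular with respect to $\tilde\gamma$. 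I will assume by way of contradiction that $\Omega$ is non-flat; then $\tilde\Omega$ is non-flat too.

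By Lemma \ref{lem:R} applied to $\tilde\Omega$, there is a linear combination $\tilde X = \sum_k a_k \tilde X_k$ of approximate inward fields for $\tilde\Omega$ satisfying $\tilde R(\tilde X) < 0$, where $\tilde R$ denotes the analogue of $R$ computed for $\tilde\Omega$ on $\R^{n-1}$. For $\rho \ge 1$ let $\psi_\rho \in C_c^\infty(\R)$ be an odd function with $\psi_\rho(t) = t$ on $[-\rho,\rho]$, supported in $[-\rho-1,\rho+1]$, satisfying $|\psi_\rho| \le |t|$ and $|\psi_\rho'| \le 2$. Define the vector-field
\[
    X_\rho(y,t) := \psi_\rho(t)\, \tilde X(y).
\]
It is tame, being $C_c^\infty$ with support in $\supp(\tilde X)\times[-\rho-1,\rho+1]$, a compact set disjoint from $\Sigma^4 \subset \tilde\Sigma^4 \times \R$. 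By oddness of $\psi_\rho$ and (\ref{eq:first-variation-volume}), every component of $\delta_{X_\rho} V(\Omega)$ vanishes, so stability of $\Omega$ forces $Q(X_\rho) \ge 0$ for every $\rho$.

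It remains to compute $Q(X_\rho)$ from Theorem \ref{thm:formula} and see why it must become negative in the limit. The decomposition $d\gamma^{n-1} = d\tilde\gamma^{n-2}\otimes d\gamma_1$ on $\Sigma_{ij}$ (and similarly on $\partial\Sigma_{ij}$), the identity $\nabla^2_{\n_{ij},\n_{ij}} W \equiv 1$ (since $W(x) = |x|^2/2 + \mathrm{const}$), and the tangent-space splitting $T_{(y,t)}\Sigma_{ij} = T_y\tilde\Sigma_{ij} \oplus \R e_n$ (which yields $|\nabla^\tang X_\rho^{\n_{ij}}|^2 = \psi_\rho(t)^2|\tilde\nabla^\tang \tilde X^{\tilde\n_{ij}}|^2 + \psi_\rho'(t)^2(\tilde X^{\tilde\n_{ij}})^2$) collapse the formula to
\[
    Q(X_\rho) \;=\; A_\rho\, \tilde R(\tilde X) \;+\; (B_\rho - A_\rho)\, C,
\]
where $A_\rho := \int_\R \psi_\rho^2\, d\gamma_1$, $B_\rho := \int_\R (\psi_\rho')^2\, d\gamma_1$, and $C := \sum_{i<j}\int_{\tilde\Sigma_{ij}} (\tilde X^{\tilde\n_{ij}})^2 \, d\tilde\gamma^{n-2} \ge 0$. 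Dominated convergence (using $|\psi_\rho|\le|t|$ and $|\psi_\rho'|\le 2$ with pointwise convergence to $t$ and $1$) gives $A_\rho \to 1$ and $B_\rho \to 1$ as $\rho \to \infty$, so $Q(X_\rho) \to \tilde R(\tilde X) < 0$, contradicting $Q(X_\rho) \ge 0$. The one delicate point will be this cancellation $B_\rho - A_\rho \to 0$: it is the saturation of the one-dimensional Gaussian Poincar\'e inequality by linear functions that leaves just enough room for the strictly negative term $A_\rho\, \tilde R(\tilde X)$ to dominate.
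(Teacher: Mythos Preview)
Your approach is essentially the same as the paper's: reduce to $\tilde\Omega$, invoke Lemma~\ref{lem:R} for $\tilde R(\tilde X)<0$, multiply by a compactly supported approximation to the linear function $t$ (which saturates the one-dimensional Gaussian Poincar\'e inequality), and let the cutoff exhaust $\R$. Your exact identity $Q(X_\rho)=A_\rho\tilde R(\tilde X)+(B_\rho-A_\rho)C$ is a slightly cleaner bookkeeping than the paper's inequality, but the content is identical.

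There is one small technical slip: a $C_c^\infty$ odd function equal to $t$ on $[-\rho,\rho]$ and supported in $[-\rho-1,\rho+1]$ cannot satisfy $|\psi_\rho'|\le 2$ once $\rho>2$, since $\psi_\rho$ must drop from $\rho$ to $0$ on an interval of length $1$. The fix is immediate---either enlarge the support to $[-2\rho,2\rho]$ (so the transition has room), or keep the support and observe directly that $\int_{|t|>\rho}(\psi_\rho')^2\,d\gamma_1 \le C\rho^2 e^{-\rho^2/2}\to 0$; either way $B_\rho\to 1$ and your conclusion stands.
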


\begin{proof}
    Suppose that $\Omega$ is stationary, regular, dimension-deficient, and non-flat; we will prove
    that it is unstable. Without loss of generality, we will assume that $\Omega$ is
    dimension-deficient in the last coordinate. By Lemma \ref{lem:dim-deficient}, there exists a cluster  
    $\tilde \Omega$ in $\R^{n-1}$ such that $\Omega = \tilde \Omega \times \R$ up to null-sets.
    Since $\partial (\tilde \Omega \times \R) = \partial \tilde \Omega \times \R$ and since $\Omega$ is stationary and regular with respect to $\gamma^n$,  it is immediate to verify that $\tilde{\Omega}$ is equally stationary and regular with respect to $\gamma^{n-1}$. Since $\Omega$ is assumed non-flat, the same clearly holds for $\tilde \Omega$. 
    We will use a tilde to denote everything related to $\tilde \Omega$: $\tilde \Sigma_{ij}$
    are the interfaces of $\tilde \Omega$, $\tilde \n_{ij}$ are the normals to those
    interfaces,     and so on. 
    
    By Lemma \ref{lem:R} applied to $\tilde \Omega$, there exist $\eps_1,\eps_2 > 0$, a collection $\{\tilde X_1,\ldots, \tilde X_q \}$ of $(\eps_1,\eps_2)$-approximate inward fields for $\tilde \Omega$, and a linear combination $\tilde X = \tilde X_{\eps_1,\eps_2} := \sum_{i=1}^q a_i \tilde X_i$, so that $\tilde R(\tilde X) < 0$, where:
    \[ 
    \tilde R(\tilde X) :=
    \sum_{i < j} \Big[\int_{\tilde \Sigma_{ij}} \brac{ |\nabla^\tang \tilde X^{\tilde \n}|^2 - (\tilde X^\n)^2 \|\tilde \II\|_2^2 } d\gamma^{n-2}
    - \int_{\partial \tilde \Sigma_{ij}} \tilde X^{\tilde \n} \tilde X^{\tilde \n_\partial} \tilde \II_{\partial,\partial} \, d\gamma^{n-3}\Big].
    \]

        Now take $f = f_{\eps} \in C_c^\infty(\R)$ so that $\int f \, d\gamma^1 = 0$, $\int f^2 \, d\gamma^1 = 1$
    and $\int (f'(x))^2 \, d\gamma^1 \le 1+\eps$; note that the Gaussian Poincar\'e inequality (e.g. \cite{BGL-Book}) ensures that $\int (f'(x))^2 \, d\gamma^1 \geq 1$ with equality for the (non-compactly supported) linear function $f(x) = x$, so an appropriate smooth cutoff of this function will do the job. 
    We define a tame vector-field $X = X_{\eps}$ on $\R^n$ by $X(x) = f(x_n) \tilde X(\tilde x)$, where $x = (\tilde x, x_n)$.
    By~\eqref{eq:formula}, as $\nabla^2_{\n,\n} W = 1$ for the Gaussian measure, we have:
    \begin{equation} \label{eq:dim-def-Q1}
    Q(X) = R(X) - \sum_{i<j} \int_{\Sigma_{ij}} (X^{\n})^2 \, d\gamma^{n-1} ,
    \end{equation}
    where:
    \[
    R(X) :=
        \sum_{i < j} \Big[\int_{\Sigma_{ij}} \brac{ |\nabla^\tang X^\n|^2 - (X^\n)^2 \|\II\|_2^2 } d\gamma^{n-1}
    - \int_{\partial \Sigma_{ij}} X^\n X^{\n_\partial} \II_{\partial,\partial} \, d\gamma^{n-2}\Big].
    \]
        Note that for $x = (\tilde x, x_n) \in \Sigma_{ij}$, we have $\tilde x \in \tilde \Sigma_{ij}$,  $X^\n(x) = f(x_n) \tilde X^{\tilde \n} (\tilde x)$, $X^{\n_{\partial}}(x) = f(x_n) \tilde X^{\tilde \n_{\partial}} (\tilde x)$, 
    $\|\II(x)\|_2^2 = \|\tilde \II(\tilde x)\|_2^2$, and for $x \in \partial \Sigma_{ij}$, we have $\tilde x \in \partial \tilde \Sigma_{ij}$ and $\II_{\partial,\partial}(x) = \tilde \II_{\partial,\partial}(\tilde x)$. Consequently, thanks to the product structure of $\Omega$ and $\gamma^{n-1} = \gamma^{n-2} \otimes \gamma^1$, Fubini's theorem and the fact that $\int_\R f^2 \, d\gamma^1 = 1$ imply:
    \begin{align*}
        \sum_{i < j} \int_{\Sigma_{ij}} (X^\n)^2 \|\II\|_2^2  \, d\gamma^{n-1} & =
        \sum_{i < j} \int_{\tilde \Sigma_{ij}} (\tilde X^{\tilde \n})^2 \|\tilde \II\|_2^2  \, d\gamma^{n-2} , \\
                \sum_{i < j} \int_{\partial \Sigma_{ij}} X^\n X^{\n_\partial} \II_{\partial,\partial} \, d\gamma^{n-2} & =
        \sum_{i < j} \int_{\partial \tilde \Sigma_{ij}} \tilde X^{\tilde \n} \tilde X^{\tilde \n_\partial} \tilde \II_{\partial,\partial} \, d\gamma^{n-3} ,\\
                 \sum_{i < j} \int_{\Sigma_{ij}} (X^\n)^2 \, d\gamma^{n-1} & = \sum_{i < j} \int_{\tilde \Sigma_{ij}} (\tilde X^\n)^2 \, d\gamma^{n-2}  .
             \end{align*}
            As for the integral involving $|\nabla^\tang X^\n|^2$, observe that for $x \in \Sigma_{ij}$:
    \[
     \nabla^\tang X^\n (x) = e_n f'(x_n) \tilde X^{\tilde \n} (\tilde x) + f(x_n) (\tilde \nabla^\tang \tilde X^{\tilde \n}(\tilde x), 0) ,
    \]
    and since the two terms on the right are orthogonal,
    \[
        |\nabla^\tang X^\n(x)|^2 = (f'(x_n))^2 (\tilde X^{\tilde \n}(\tilde x))^2 + f^2(x_n) |\tilde \nabla^\tang \tilde X^{\tilde \n}(\tilde x)|^2.
    \]
    By Fubini's theorem and the fact that $\int_\R f^2 \, d\gamma^1 = 1$ and $\int_\R (f')^2 \, d\gamma^1 \le 1 + \eps$,
    \[
        \int_{\Sigma_{ij}} |\nabla^\tang X^\n|^2 \, d\gamma^{n-1}
        \le \int_{\tilde \Sigma_{ij}} \brac{(1 + \eps) (\tilde X^{\tilde \n})^2 + |\tilde \nabla^\tang \tilde X^{\tilde \n}|^2 } d\gamma^{n-2} .
    \]
    Plugging all of these relations into (\ref{eq:dim-def-Q1}), we conclude that:
    \[
    Q(X_{\eps}) \leq \sum_{i < j} \int_{\tilde \Sigma_{ij}} \eps (\tilde X^{\tilde \n})^2 d\gamma^{n-2} + \tilde R(\tilde X) .
    \]
    But by Proposition~\ref{prop:inward-fields}, $\tilde X^\n = \sum_{i=1}^q a_i \tilde X^\n_i$ is uniformly bounded on $\tilde \Sigma^1$, and since $\gamma^{n-2}(\tilde \Sigma^1) = \gamma^{n-1}(\Sigma^1) < \infty$, we see that the contribution of the first term on the right can be made arbitrarily small. 
                            Recalling that $\tilde R(\tilde X) < 0$, it follows that $Q(X_{\eps}) < 0$ for $\eps > 0$ sufficiently small.
    Finally, recalling the simple formula (\ref{eq:first-variation-volume}) for the first variation of volume, it immediately follows that $\delta_{X_{\eps}} V = 0$ by Fubini's theorem and the fact that $\int f\, d\gamma^1 = 0$. 
    By definition, this verifies that the cluster $\Omega$ is unstable. 
\end{proof}

\subsection{Full-dimensional stable regular clusters are flat}

\begin{proposition}\label{prop:full-dimensional-implies-flat}
    If $q \le n + 1$ then every full-dimensional, Gaussian-stable regular $q$-cluster is flat and necessarily $q = n+1$. 
\end{proposition}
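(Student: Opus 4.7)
The plan is to argue by contradiction. Assuming $\Omega$ is stable, regular, full-dimensional, with $q \le n+1$, I will first show $\Omega$ must be flat, and then deduce $q = n+1$ from flatness together with full-dimensionality. The strategy for flatness follows the template outlined in the introduction: take a linear combination $X = \sum_k a_k X_k$ of approximate inward fields that yields a negative curvature contribution, and add a constant vector-field $w$ to kill the first-order change in volume.

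For flatness, suppose for contradiction that $\Omega$ is not flat. By Lemma~\ref{lem:R}, there exist $\eps_1, \eps_2 > 0$, $(\eps_1, \eps_2)$-approximate inward fields $X_1, \ldots, X_q$, and $a \in \R^q$ so that $X := \sum_k a_k X_k$ satisfies $R(X) < 0$; since $\sum_k X_k^\n$ is $O(\eps_2)$-close to zero on each $\Sigma_{ij}$ by property (4) of approximate inward fields, I may take $a \in E^{(q-1)}$ at the cost of an $O(\eps_1+\eps_2)$ perturbation of $R(X)$. To preserve the first-order volume constraint, I then seek a constant $w \in \R^n$ with $Y := X + w$ satisfying $\delta_Y V(\Omega) = 0$, i.e.\ $T(w) = -\delta_X V(\Omega)$, where $T : \R^n \to E^{(q-1)}$ is the map $T(w) := \delta_w V(\Omega)$. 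Two applications of the Gauss--Green formula~(\ref{eq:integration-by-parts}) to each $\Omega_i$ using a constant test vector-field (with $\div_\gamma w = -\scalar{w, x}$ for the Gaussian) give the identities $T(w)_i = -\scalar{w, b_i}$ and $b_i = -\sum_{j \ne i} \bar{\n}_{ij}$, where $b_i := \int_{\Omega_i} x \, d\gamma$ and $\bar{\n}_{ij} := \int_{\Sigma_{ij}} \n_{ij} \, d\gamma^{n-1}$. Hence $\spn\{b_i\} \subset N := \spn\{\n_{ij}(x) : x \in \Sigma_{ij}, i \ne j\}$. Under full-dimensionality ($N = \R^n$) together with $q - 1 \le n$, I will establish that $\dim \spn\{b_i\} = q - 1$, making $T$ surjective onto $E^{(q-1)}$. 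The heart of this surjectivity claim is to rule out nontrivial $c \in E^{(q-1)}$ satisfying $\sum_{i<j} (c_i - c_j) \bar{\n}_{ij} = 0$, and I expect it to be the main obstacle of the proof, requiring careful exploitation of the regular-cluster structure.

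Given surjectivity of $T$, pick any $w \in \R^n$ with $T(w) = -\delta_X V$, so that $\delta_Y V = 0$. For the standard Gaussian, $\nabla^2 W = \Id$, so $\nabla^2_{\n,\n} W = 1$ and $\nabla^2_{w^\tang,\n} W = \scalar{w^\tang, \n} = 0$. Consequently, Theorem~\ref{thm:combined-vector-fields} applied to $Y = X + w$ collapses to
\[
    Q(Y) = R(X) - \sum_{i < j} \int_{\Sigma_{ij}} (Y^\n)^2 \, d\gamma^{n-1} \le R(X) < 0.
\]
Together with $\delta_Y V(\Omega) = 0$, this contradicts Lemma~\ref{lem:unstable}, proving $\Omega$ is flat.

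Finally, with $\Omega$ flat, every $\n_{ij}$ is constant on $\Sigma_{ij}$, and Corollary~\ref{cor:boundary-normal-sum} gives $\n_{ij} + \n_{jk} + \n_{ki} = 0$ at each triple-point set $\Sigma_{ijk}$. Chaining these relations through the adjacencies of the cluster, the assignment $e_i - e_j \mapsto \n_{ij}$ respects the relations $(e_i - e_j) + (e_j - e_k) + (e_k - e_i) = 0$ in $E^{(q-1)}$ and extends linearly to a well-defined map $E^{(q-1)} \to \R^n$ with image $\spn\{\n_{ij}\}$. Hence $\dim \spn\{\n_{ij}\} \le q - 1$. Full-dimensionality forces $\spn\{\n_{ij}\} = \R^n$, so $q - 1 \ge n$; combined with $q \le n+1$, we conclude $q = n+1$.
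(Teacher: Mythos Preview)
Your argument has a genuine gap at its self-identified ``main obstacle'': you never actually prove that $T$ is surjective onto $E^{(q-1)}$. Your barycenter computation $T(w)_i = -\scalar{w, b_i}$ is correct, but the claim that $\dim \spn\{b_i\} = q-1$ (equivalently, that no nontrivial $c \in E^{(q-1)}$ satisfies $\sum_{i<j}(c_i - c_j)\bar{\n}_{ij} = 0$) is not established, and it is not clear how ``the regular-cluster structure'' alone would yield it without further input.

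The paper closes this gap by a much simpler route that you overlooked: it uses stability itself to characterize $\Ker T$. From (\ref{eq:Qw}) one has $Q(w) = -\sum_{i<j}\int_{\Sigma_{ij}}(w^\n)^2\,d\gamma^{n-1} \le 0$ for any constant field $w$. If $T(w) = \delta_w V = 0$, stability forces $Q(w) \ge 0$, hence $w^\n \equiv 0$ on every $\Sigma_{ij}$, i.e.\ $w$ is a direction of dimension-deficiency. Thus $\Ker T$ equals the deficiency directions (Lemma~\ref{lem:MKernelAndDef}). Full-dimensionality then gives $\Ker T = \{0\}$, so $T:\R^n \to E^{(q-1)}$ is injective; dimension count yields $n \le q-1$, and with $q \le n+1$ one concludes $q = n+1$ \emph{before} any flatness argument. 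Surjectivity of $T$ is then immediate, and your use of Theorem~\ref{thm:combined-vector-fields} goes through unchanged. Note also that your final paragraph deducing $q = n+1$ from flatness is itself incomplete: the ``chaining'' of relations $\n_{ij} + \n_{jk} + \n_{ki} = 0$ to get a well-defined linear map only works directly when $\Sigma_{ijk} \neq \emptyset$ for all triples, and extending it in general requires a nontrivial homology argument (cf.\ Theorem~\ref{thm:homology}). The paper's order---$q=n+1$ first, flatness second---avoids this entirely.
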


Given a $q$-cluster $\Omega$, recall that $M : \R^n \rightarrow E^{(q-1)}$ (defined in Subsection \ref{subsec:M}) denotes the first variation of volume operator under translation fields:
\[
M w := \delta_w V . 
\]
By Lemma \ref{lem:MKernelAndDef}, if $\Omega$ is full-dimensional, regular and stable then $M$ has trivial kernel and $q \geq n+1$, so if we are given that $q \leq n+1$ then necessarily $q=n+1$ and $M$ must be surjective. This reduces the proof of Proposition \ref{prop:full-dimensional-implies-flat} to the following slightly more general claim:
  
\begin{proposition}\label{prop:surjective-implies-flat}
    A Gaussian-stable regular cluster for which $M$ is surjective is flat. 
\end{proposition}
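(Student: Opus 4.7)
The plan is to argue by contradiction: I will suppose $\Omega$ is a stationary regular cluster for which $M$ is surjective but which is \emph{not} flat, and construct an admissible vector-field $Y$ with $\delta_Y V(\Omega) = 0$ and $Q(Y) < 0$, contradicting stability.

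First I would invoke Lemma~\ref{lem:R}, which under precisely the hypotheses of stationarity, regularity, and non-flatness (with no constraint on $q$ vs.\ $n$) produces parameters $\eps_1,\eps_2 > 0$, a family $\{X_1,\ldots,X_q\}$ of $(\eps_1,\eps_2)$-approximate inward fields, and a linear combination $\tilde X := \sum_{k=1}^q a_k X_k$ satisfying $R(\tilde X) < 0$. Since each $X_k$ is tame and tameness is preserved under finite linear combinations, $\tilde X$ itself is tame. The defect of $\tilde X$ is that it need not preserve volume; to correct this I would use the surjectivity assumption to pick any linear right-inverse $M^{\dagger}\colon E^{(q-1)}\to\R^n$ of $M$ and set $w := -M^{\dagger}\,\delta_{\tilde X} V(\Omega)\in\R^n$. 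Viewed as a constant vector-field, $w$ is admissible and $\delta_w V = Mw = -\delta_{\tilde X}V$, so $Y := \tilde X + w$ is admissible (of exactly the ``tame plus constant'' form required by Theorem~\ref{thm:combined-vector-fields}) and satisfies $\delta_Y V = 0$.

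Next I would apply Theorem~\ref{thm:combined-vector-fields} to compute $Q(Y)$. For the Gaussian potential $W(x)=\tfrac12|x|^2+\mathrm{const}$ one has $\nabla^2 W = \Id$, hence $\nabla^2_{\n,\n}W\equiv 1$ on every $\Sigma_{ij}$ and, crucially, $\nabla^2_{w^{\tang},\n}W = \inr{w^{\tang}}{\n} = 0$ since $w^{\tang}\perp\n$ by definition. The formula of Theorem~\ref{thm:combined-vector-fields} therefore collapses to
\[
 Q(Y) \;=\; R(\tilde X) \;-\; \sum_{i<j}\int_{\Sigma_{ij}} (Y^{\n})^2\, d\gamma^{n-1} \;\leq\; R(\tilde X) \;<\; 0,
\]
the additional integral being manifestly non-positive. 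This contradicts stability of $\Omega$ applied to the admissible volume-preserving field $Y$, and forces $\Omega$ to be flat.

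The entire substance of the argument is in fact packaged inside Lemma~\ref{lem:R}, whose proof combines the construction of approximate inward fields (Proposition~\ref{prop:inward-fields}) with the curvature integrability estimates of Proposition~\ref{prop:Schauder}; once that lemma is in hand, the surjective case is essentially mechanical. What makes this case \emph{easier} than the genuinely delicate dimension-deficient case handled in Proposition~\ref{prop:dimension-deficient-implies-flat} is precisely that surjectivity of $M$ allows the volume defect to be cancelled \emph{exactly} by a single constant translation, and that constant translations contribute only the non-positive term $-\int(Y^{\n})^2\,d\gamma^{n-1}$ to $Q$ in the Gaussian setting, thanks to the vanishing of the cross-term $\nabla^2_{w^{\tang},\n}W$. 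So there is really no obstacle here beyond invoking the prior machinery; any potential obstacle has already been dealt with upstream.
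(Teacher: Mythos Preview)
Your proof is correct and follows essentially the same approach as the paper: invoke Lemma~\ref{lem:R} to obtain a tame field $X$ with $R(X)<0$, use surjectivity of $M$ to correct the volume defect by a constant field $w$, and apply Theorem~\ref{thm:combined-vector-fields} together with $\nabla^2 W=\Id$ for the Gaussian to conclude $Q(X+w)\le R(X)<0$. The only cosmetic difference is your explicit use of a right-inverse $M^\dagger$, whereas the paper simply picks some $w$ with $Mw=-\delta_X V$.
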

\begin{proof}   
    Let $\Omega$ be a stationary, regular and non-flat cluster,  for which $M$ is surjective; we will show that it is unstable.
    
   By Lemma \ref{lem:R}, there exist $\eps_1,\eps_2 > 0$ and a linear combination $X = \sum_{i=1}^q a_i X_i$ of $(\eps_1,\eps_2)$-approximate inward fields $\{ X_1,\ldots, X_q \}$, so that $R(X) < 0$, where recall:
   \[
    R(X) :=
        \sum_{i < j} \Big[\int_{\Sigma_{ij}} \brac{ |\nabla^\tang X^\n|^2 - (X^\n)^2 \|\II\|_2^2 } d\gamma^{n-1}
    - \int_{\partial \Sigma_{ij}} X^\n X^{\n_\partial} \II_{\partial,\partial} \, d\gamma^{n-2}\Big].
    \]
    
    Now choose $w \in \R^n$ such that $\delta_w V(\Omega) = M w = - \delta_{X} V(\Omega)$, which is always possible because $M$ is surjective, and denote $Y = X + w$. By
    Theorem~\ref{thm:combined-vector-fields} and the fact that $\nabla^2_{\n,\n} W = 1$ and $\nabla^2_{w^{\tang},\n} W = 0$ for the Gaussian measure, we have:
    \[
    Q(Y) = R(X) - \sum_{i < j} \int_{\Sigma_{ij}} (Y^{\n})^2 d\gamma^{n-1} \leq R(X) < 0 . 
    \]
    But since $\delta_Y V(\Omega) = \delta_X V(\Omega) +\delta_w V(\Omega) = 0$, it follows by definition that $\Omega$ is unstable.

\end{proof}

By combining Propositions~\ref{prop:dimension-deficient-implies-flat} and~\ref{prop:full-dimensional-implies-flat}, we know that all Gaussian-stable regular
$q$-clusters are flat whenever $2 \leq q\leq n+1$. Consequently, 
the reader only interested in the proof of the Gaussian Multi-Bubble Conjecture and not in the structure of Gaussian-stable regular clusters may safely skip to the next section; the remaining parts of the present section pertain to establishing further properties of Gaussian-stable regular clusters, which are not required for the proof of Theorem \ref{thm:main-I-I_m}. However, the resulting formula (\ref{eq:polyhedron-formula}) for the cluster's convex polyhedral cells which we will obtain at the end of this section, will be useful in the proof of the uniqueness Theorem \ref{thm:main-uniqueness} in Section \ref{sec:uniqueness}.

\subsection{Stable regular clusters have connected cells}

To show that the cells of Gaussian-stable regular clusters must be connected, we will show that if there is a disconnected cell of a stationary, regular, flat cluster $\Omega$ then
$\Omega$ is unstable.

\medskip

Let $\Omega$ be a stationary regular flat cluster, and recall our convention from Theorem \ref{thm:Almgren} \ref{it:Almgren-ii}. 
If $\Omega$ has a disconnected cell, we may suppose without loss of generality that $\Omega_1$ is disconnected. 
We can then define a $(q+1)$-cluster $\tilde \Omega$ by splitting $\Omega_1$ into
two non-trivial pieces $\tilde \Omega_1$ and $\tilde \Omega_{q+1}$, each consisting of unions of connected components of $\Omega_1$, 
while leaving the other cells unaltered, i.e. $\tilde \Omega_i = \Omega_i$ for $i=2,\ldots,q$. Note that $\partial \Omega_1 = \partial \tilde \Omega_1 \cup \partial \tilde \Omega_{q+1}$ and that $\partial^* \Omega_1 = \partial^* \tilde \Omega_1 \cup \partial^* \tilde \Omega_{q+1}$. Consequently, $\tilde \Omega$ is still a stationary and regular cluster. It also follows that $\delta^k_X V(\Omega) _1 = \delta^k_X V(\tilde \Omega) _1 + \delta^k_X V(\tilde \Omega) _{q+1}$ and $\delta^k_X A(\Omega) = \delta^k_X A(\tilde \Omega)$ for all $k \geq 0$ and $C_c^\infty$ vector-fields $X$. In addition, denoting $\tilde H_{ij,\gamma} = H_{\tilde \Sigma_{ij,\gamma}}$, 
since $\tilde H_{1j,\gamma} = \tilde H_{(q+1)j,\gamma} = H_{1j,\gamma}$ for all $j=2,\ldots,q$, it follows that $\tilde H_{ij,\gamma} = \tilde \lambda_{i} - \tilde \lambda_{j}$ for all $i\neq j = 1,\ldots,q+1$ where $\tilde \lambda_1 = \tilde \lambda_{q+1} = \lambda_1$ and $\tilde \lambda_j = \lambda_j$ for all $j=2,\ldots,q$. In particular, it follows that $Q_{\tilde \Omega}(X) = Q_{\Omega}(X)$. 

We will construct a $C_c^\infty$ vector-field $X$ with
\begin{equation}\label{eq:connected-goal}
    \delta_X V(\tilde \Omega) = e_{q+1} - e_1 \text{ and } Q_{\tilde \Omega}(X) < 0.
\end{equation}
Applying the same vector field to $\Omega$, it will follow by the previous comments that $\delta_X V(\Omega) = 0$ but $Q_{\Omega}(X)  < 0$, demonstrating that $\Omega$ is unstable.

To construct $X$, take $\epsilon > 0$ and let $\{X^{\eps}_i\}_{i=1,\ldots,q+1}$ be an $(\eps,\eps)$-approximate
family of inward fields for $\tilde \Omega$. Given $a \in \R^{q+1}$, set $X^\eps_a = \sum_{i=1}^{q+1} a_i X^\eps_i$,
and let $V_\epsilon: E^{(q)} \to E^{(q)}$ be the linear map defined by
$V_\epsilon a = \delta_{X^\eps_a} V(\tilde \Omega)$. Observe that by Lemma \ref{lem:inward-combination} \ref{it:inward-delta-V},
 $V_\epsilon \to -L_A$ as $\epsilon \to 0$ (say, in the operator norm), where $A = A(\tilde \Omega)$
and $L_A$ was defined in (\ref{eq:L_A}). By Lemma~\ref{lem:L-nondegenerate} $L_A$ is non-degenerate, 
and hence $V_\epsilon^{-1}$
exists and is bounded uniformly in $\eps$ for all $\eps > 0$ sufficiently small.
Let $a^\eps = V_{\eps}^{-1} (e_{q+1} - e_1)$, denote $X_\eps = X^{\eps}_{a^{\eps}}$, and note that 
 $\delta_{X_\eps} V(\tilde \Omega) = V_\eps a^{\eps} = e_{q+1} - e_1$ by construction. By Theorem~\ref{thm:formula}
and the flatness of $\Omega$ and hence of $\tilde \Omega$, we have:
\[
Q_{\tilde \Omega}(X_\eps) = \sum_{i < j} \int_{\tilde \Sigma_{ij}} \brac{|\nabla^\tang X^\n_\eps|^2 - (X^\n_\eps)^2 } d\gamma^{n-1}  .
\]
Applying Lemma \ref{lem:inward-combination} \ref{it:inward-nabla} and \ref{it:inward-L2}, it follows that:
\[
Q_{\tilde \Omega}(X_\eps) \leq  - (a^{\eps})^T L_A a^\eps + (q+5) \eps|a^\eps|^2  .
\]
Since $a^{\eps} \to -L_A^{-1} (e_{q+1} - e_1)$ as $\eps \to 0$,  $|a^{\eps}|$ is bounded away from $0$ and $\infty$ for small-enough $\eps > 0$. Recalling that $L_A$ is strictly positive-definite, we conclude that $Q(X_\eps) < 0$
for sufficiently small $\eps > 0$. This completes the construction of a vector-field
satisfying~\eqref{eq:connected-goal}, and thus the proof of
Theorem~\ref{thm:flat}.

\subsection{Stable regular clusters have convex cells}

To proceed with the proof of Theorem~\ref{thm:flat}, we will show that the cells of a stationary, regular, flat cluster $\Omega$ with connected open cells must necessarily be convex. 

\smallskip

We will require the following proposition, which may be of independent interest. As we could not find a reference for this in the literature (in the case that $A \neq \emptyset$ below), we provide a proof. 

\begin{proposition}[From Almost Local to Global Convexity] \label{prop:local-to-global}
Let $\Omega$ be an open connected subset of $\R^n$, and let $A \subset \partial \Omega$ be a Borel set with $\H^{n-2}(A) = 0$. Assume that for every $p \in \partial \Omega \setminus A$ there exists an open neighborhood $N_p$ of $p$ so that $\Omega \cap N_p$ is convex. Then $\Omega$ is convex. 
\end{proposition}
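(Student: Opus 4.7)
The plan is to prove convexity in two stages: first show that $C := \overline{\Omega}$ is convex, and then deduce convexity of $\Omega$ itself. The preliminary observation is that $C$ is locally convex at every point of $C \setminus A$: for $p \in \Omega$ this is trivial, while for $p \in \partial \Omega \setminus A$ one takes the convex-giving neighborhood $N_p$ from the hypothesis together with a smaller ball $B(p,r)$ satisfying $\overline{B(p,r)} \subset N_p$, and notes that $C \cap B(p,r) = \overline{\Omega \cap N_p} \cap B(p,r)$ is the intersection of a closed convex set and an open convex set, hence convex.

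The heart of the argument is a Tietze/Nakajima-type visibility argument applied to $C$. Fix $x \in \Omega$ and introduce the exceptional set $E_x := \set{y \in \R^n : (x,y) \cap A \neq \emptyset}$ together with the visibility set
\[
W_x := \set{y \in \Omega \setminus E_x : [x,y] \subseteq C}.
\]
Closedness of $W_x$ in $\Omega \setminus E_x$ follows from closedness of $C$ and the Hausdorff continuity of segments in their endpoints. Openness of $W_x$ in $\Omega \setminus E_x$ follows from the standard ``tube'' argument: compactness of $[x,y]$ together with the local convexity established above yields a finite cover of $[x,y]$ by balls $B(p_i,r_i)$ with $C \cap B(p_i,r_i)$ convex, along which $[x,y'] \subseteq C$ propagates for $y'$ sufficiently close to $y$ in $\Omega \setminus E_x$. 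The crucial input of the hypothesis $\H^{n-2}(A) = 0$ is that $E_x$, being the image of $A \times (0,1)$ under the locally Lipschitz map $(a,t) \mapsto x + (a-x)/t$, satisfies $\H^{n-1}(E_x) = 0$; such a set cannot disconnect the open set $\Omega$, so $\Omega \setminus E_x$ is path-connected. Since $W_x$ is nonempty (containing $x$), open, and closed in $\Omega \setminus E_x$, we deduce $W_x = \Omega \setminus E_x$. Approximating arbitrary $y \in \Omega$ by points of $\Omega \setminus E_x$ and using closedness of $C$ yields $[x,y] \subseteq C$ for every $y \in \Omega$. By density of $\Omega$ in $C$ and closedness of $C$, $C$ is starshaped from every point of $C$, and hence convex.

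It remains to deduce $\Omega = \interior C$, from which $\Omega$ inherits convexity. Suppose for contradiction that $p \in \interior C \cap \partial\Omega$, so that $B(p,\rho) \subseteq C = \overline{\Omega}$ for some $\rho > 0$ and in particular $\Omega$ is dense in $B(p,\rho)$. If $p \notin A$, taking the convex neighborhood $N_p$ from the hypothesis and a ball $B(p,r) \subseteq N_p \cap B(p,\rho)$, the set $\Omega \cap B(p,r) = (\Omega \cap N_p) \cap B(p,r)$ is convex and dense in $B(p,r)$, hence equal to $B(p,r)$, contradicting $p \in \partial\Omega$. If $p \in A$, one invokes the structural property valid in the cluster application (where $\partial\Omega_i = \overline{\partial^*\Omega_i} \subseteq \overline{\partial\Omega_i \setminus A}$) that $A \subseteq \overline{\partial\Omega \setminus A}$, approximates $p$ by $p_k \in B(p,\rho) \cap (\partial\Omega \setminus A) \subseteq \interior C$, and applies the previous case at $p_k$.

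The main obstacle is the Tietze-style propagation in the second step: one must leverage the codimension-$2$ smallness of $A$ to produce codimension-$1$ smallness of $E_x$, thus ensuring that $\Omega \setminus E_x$ remains connected so that the openness-plus-closedness mechanism can be run globally, and then pass to the limit across $E_x$ using closedness of $C$. The role of $\H^{n-2}(A) = 0$ is precisely to make $E_x$ thin enough not to disconnect $\Omega$, enabling the visibility relation to propagate across the exceptional set.
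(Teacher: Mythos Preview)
Your approach---a direct Tietze--Nakajima visibility argument in $\R^n$, with an exceptional ``cone'' $E_x$ removed---is genuinely different from the paper's. The paper instead reduces to two-dimensional affine slices: using the integral-geometric Favard measure and the hypothesis $\H^{n-2}(A)=0$, it finds a $2$-plane through perturbations of any given pair $x_0,z_0$ that misses $A$ entirely, and then invokes the classical Tietze--Nakajima lemma inside that plane. This slice-then-Tietze route sidesteps the issue below.

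There is a real gap in your openness argument for $W_x$. You assert that ``$[x,y']\subset C$ propagates'' along a finite cover of $[x,y]$ by balls $B(p_i,r_i)$ with $C\cap B(p_i,r_i)$ convex. But the segments $[x,y']$ for $y'$ near $y$ form a fan from $x$ that \emph{widens} toward $y$, and the convex hull of the known points of $C$ inside any $B(p_i,r_i)$---namely the arc of $[x,y]$ and the portion of $[x,y']$ already shown to lie in $C$---fails to contain the continuation of $[x,y']$. Concretely, at the first exit point $q=\gamma'(t^*)$ of $[x,y']$ from $C$ one has a convex neighborhood $C\cap B(q,r)$, and it contains $\gamma(t)$ for $t$ near $t^*$ and $\gamma'(t)$ for $t\le t^*$; but a direct computation in the $2$-plane through $x,y,y'$ shows that $\gamma'(t^*+\varepsilon)$ lies \emph{outside} the convex hull of these points for every $\varepsilon>0$. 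The deeper obstruction is that while $[x,y]$ and $[x,y']$ avoid $A$, the intermediate segments $[x,z]$ for $z\in(y,y')$ may meet $A$, so one cannot interpolate through the triangle $\mathrm{conv}(x,y,y')$ using local convexity. The paper's $2$-plane trick is precisely what makes this work: once the entire plane avoids $A$, classical Tietze--Nakajima applies without any exceptional set.

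Your final remark is well taken: the passage from ``$\overline\Omega$ convex'' to ``$\Omega$ convex'' genuinely requires extra structure. Without an assumption such as $A\subset\overline{\partial\Omega\setminus A}$, the proposition as stated is false---take $\Omega=B(0,1)\setminus\{0\}$ in $\R^n$ with $n\ge 3$ and $A=\{0\}$, which satisfies all hypotheses yet is not convex. The paper's own proof is silent on this point (it simply asserts that the interior of $\overline\Omega$ is $\Omega$), whereas you correctly flag the need for the additional density property that holds in the cluster application.
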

It is easy to see that the claim is false when $\H^{n-2}$ is replaced by $\H^{n-k}$ with $k <2$, or when $\Omega$ is not assumed open or closed. For the proof, we will require the following well-known fact (due to Tietze and to Nakajima, see e.g. \cite{KarshonEtAl-TietzeNakajima})
\begin{lemma}[From Local to Global Convexity]\label{lem:local-to-global}
Let $\Omega_1$ be an open connected subset of $\R^n$, so that for any $p \in \partial \Omega_1$, there exists an open neighborhood $N_p$ of $p$ so that $\Omega_1 \cap N_p$ is convex. Then $\Omega_1$ is convex. \end{lemma}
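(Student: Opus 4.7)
The plan is to fix an arbitrary $x_0 \in \Omega_1$ and consider
\[
U_{x_0} := \{y \in \Omega_1 : [x_0, y] \subset \Omega_1\},
\]
and then show that $U_{x_0}$ is nonempty, open, and relatively closed in $\Omega_1$. By connectedness, this will force $U_{x_0} = \Omega_1$, and since $x_0$ is arbitrary this is precisely the convexity of $\Omega_1$. Non-emptiness is immediate from $x_0 \in U_{x_0}$. For openness, if $y \in U_{x_0}$ then the compact segment $[x_0,y]$ lies in the open set $\Omega_1$, so some uniform $\eps$-tube about $[x_0,y]$ is contained in $\Omega_1$; for any $y'$ with $|y'-y|<\eps$, the segment $[x_0,y']$ lies in this tube, hence in $\Omega_1$.

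The substantive step will be relative closedness. Suppose $y_n \in U_{x_0}$ with $y_n \to y \in \Omega_1$, and assume for contradiction that $[x_0,y] \not\subset \Omega_1$. I will set
\[
t^* := \inf\{t \in [0,1] : x_0 + t(y-x_0) \notin \Omega_1\} \in (0,1),
\]
where positivity uses openness of $\Omega_1$ at $x_0$ and the strict upper bound uses openness at $y$. Let $p := x_0 + t^*(y-x_0)$. Then $p \in \overline{\Omega_1}$ as a limit of points $x_0 + t(y-x_0) \in \Omega_1$ with $t \nearrow t^*$, while $p \notin \Omega_1$ (else openness of $\Omega_1$ at $p$ would contradict the defining infimum), so $p \in \partial\Omega_1$. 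Invoking the local convexity hypothesis, I pick an open ball $B(p,r) \subset N_p$ so that $\Omega_1 \cap B(p,r)$ is a nonempty open convex set with $p$ on its boundary; it admits a supporting hyperplane at $p$, i.e.\ there is a unit vector $u \in \R^n$ with
\[
\Omega_1 \cap B(p,r) \subset \{z \in \R^n : \langle z-p, u\rangle < 0\},
\]
the strictness being forced by openness of $\Omega_1$.

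The contradiction will come from analyzing $\alpha := \langle y-x_0, u\rangle$. If $\alpha > 0$, I fix $\eta>0$ small enough that $t^*+\eta \leq 1$ and $x_0+(t^*+\eta)(y-x_0) \in B(p,r/2)$; the points $z_n := x_0 + (t^*+\eta)(y_n-x_0)$ then lie in $[x_0,y_n] \subset \Omega_1$ and in $B(p,r)$ for $n$ large, yet $\langle z_n-p, u\rangle \to \eta\alpha > 0$, violating the strict supporting inequality. The case $\alpha < 0$ is symmetric, using $t = t^*-\eta$ and the fact that $x_0 + (t^*-\eta)(y-x_0) \in \Omega_1$ directly from the definition of $t^*$. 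Finally, if $\alpha=0$, then $\langle x_0 + t(y-x_0) - p, u\rangle = (t-t^*)\alpha = 0$ for every $t$, so any $t < t^*$ sufficiently close to $t^*$ places $x_0 + t(y-x_0)$ in $\Omega_1 \cap B(p,r)$ with inner product zero against $u$, again contradicting strict negativity.

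The main obstacle is just the triple case analysis in the contradiction step; the geometric heart of the argument is that local convexity at the ``first exit point'' $p \in \partial \Omega_1$ produces a supporting hyperplane, which is a rigid linear constraint incompatible with segments $[x_0,y_n]$ (and their limit $[x_0,y]$) threading through a small neighborhood of $p$ while remaining in $\Omega_1$.
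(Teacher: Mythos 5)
Your proof is correct and complete. Note that the paper does not actually prove Lemma \ref{lem:local-to-global}: it is quoted as the classical Tietze--Nakajima theorem with a pointer to the literature, where the standard arguments (e.g.\ in the cited Bjorndahl--Karshon paper) proceed rather differently, via polygonal paths and a reduction of the number of segments, and are usually phrased for closed sets. Your argument is a clean, self-contained alternative tailored to the open case: the set $U_{x_0}$ of points visible from $x_0$ is shown to be open and relatively closed, and connectedness finishes the job. The two points that could have caused trouble are both handled correctly: (a) the infimum $t^*$ is attained (the bad set of parameters is closed since $\Omega_1$ is open), so $p$ really lies in $\partial\Omega_1$ and is approached from below along the segment; and (b) the supporting half-space inequality for the nonempty open convex set $\Omega_1\cap B(p,r)$ at its boundary point $p$ is \emph{strict} on the set (otherwise openness would let you push past the hyperplane), which is exactly what makes the $\alpha=0$ and $\alpha<0$ cases immediate contradictions rather than merely non-strict ones. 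The only cosmetic remark is that the cases $\alpha\le 0$ already contradict the definition of $t^*$ using the segment $[x_0,y]$ alone, so the approximating sequence $y_n$ is genuinely needed only when $\alpha>0$; but as written everything is sound.
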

\begin{proof}[Proof of Proposition \ref{prop:local-to-global}]
Given distinct $x_0, z_0 \in \Omega$, we will show that:
\begin{equation} \label{eq:L2G-goal}
\forall \eps > 0 \;\;\; \exists x \in B(x_0,\eps) \;\;\; \exists z \in B(z_0,\eps) \;\;\; [x,z] \subset \Omega . 
\end{equation}
This will be enough to conclude the claim, since this will imply that $[x_0,z_0]$ lies inside $\overline{\Omega}$, which will imply that $\overline{\Omega}$ is convex, which in turn will imply that its interior $\Omega$ is convex. 

 Recall that connectedeness and path-connectedeness are equivalent topological properties on open subsets of $\R^n$. As $\Omega$ is path-connected (being open), there exists a closed path from $x_0$ to $z_0$ which lies in $\Omega$. By openness of $\Omega$ and compactness of the path, a standard argument implies that we may assume that the path is piecewise linear, i.e. consists of a finite number of intervals $\{[y_i,y_{i+1}]\}_{i=0,\ldots,N-1}$, with $y_0 = x_0$ and $y_N = z_0$, so that $[y_i, y_{i+1}] \subset \Omega$. We will show, by induction on $N$, that this implies (\ref{eq:L2G-goal}). 
 The base case $N=1$ is trivial, and we will establish the case $N=2$. To establish the case of $N+1$, by openness of $\Omega$ and compactness of intervals, there exists $\delta > 0$ such that for all $y_N' \in B(y_N , \delta)$, $[y_N' , y_{N+1}] \subset \Omega$;  by applying the induction hypothesis with $\min(\eps/2,\delta)$ to the piecewise linear path from $y_0$ to $y_N$, and the case $N=2$ to the path $[y_0', y_N'] \cup [y_N' , y_{N+1}]$, we find $x \in B(x_0,\eps)$ and $z \in B(z_0,\eps/2)$ with $[x,z] \subset \Omega$, as required. 
 
To establish the case $N=2$, let $y_0 \in \Omega$ be such that $[x_0,y_0] \subset \Omega$ and $[y_0,z_0] \subset \Omega$. Consider the affine subspace $E_{x,y,z}$ spanned by $x,y,z$.  If $E_{x_0,y_0,z_0}$ is one-dimensional, it trivially follows that $[x_0,z_0] \subset \Omega$ and (\ref{eq:L2G-goal}) is established, so let us assume that $E_{x_0,y_0,z_0}$ is two-dimensional. By openness of $\Omega$ and compactness of the intervals, there exists $\delta > 0$ so that for all $x \in B(x_0,\delta)$, $y \in B(y_0,\delta)$ and $z \in B(z_0,\delta)$, $[x,y] \subset \Omega$, $[y,z] \subset \Omega$ and $E_{x,y,z}$ is two-dimensional. We now claim that for all $\eps \in (0,\delta)$, there exist $x \in B(x_0,\eps)$, $y \in B(y_0,\eps)$ and $z \in B(z_0,\eps)$ so that $E = E_{x,y,z}$ is disjoint from $A$. 
  Once this is established, we will consider the set $\Omega_0 := \Omega \cap E$, which is relatively open in $E$; we subsequently consider the relative topology of sets in their corresponding affine hulls. We do not a-priori know that $\Omega_0$ is a (path-)connected set, but as $[x,y]$ and $[y,z]$ are both in $\Omega_0$, we do know that $x,y,z$ all lie in the same (path-)connected component $\Omega_1 \subset \Omega_0$. Since $\partial \Omega_1 \subset \partial \Omega_0 \subset \partial \Omega  \cap E \subset \partial \Omega \setminus A$, we know that for any $p \in \partial \Omega_1$ there exists an open neighborhood $N_p$ of $p$ so that $\Omega_1 \cap N_p = \Omega_0 \cap N_p = (\Omega \cap N_p) \cap E$ is convex. Applying Lemma \ref{lem:local-to-global} to the open connected $\Omega_1$, it follows that $\Omega_1$ is convex, and hence $[x,z] \subset \Omega_1 \subset \Omega$, thereby establishing (\ref{eq:L2G-goal}). 

To see the existence of a subspace $E = E_{x,y,z}$ which is disjoint from $A$, let $\sigma^n_2$ denote the natural Haar measure on $\Eps^n_2$, the homogeneous space of all two-dimensional affine subspaces in $\R^n$, normalized so that $\sigma_2 \{ E \in \Eps^n_2 : E \cap B(0,1) \neq \emptyset\} = \H^{n-2}(B^{n-2})$, where $B^{n-2}$ denotes the Euclidean unit-ball in $\R^{n-2}$. Introduce the following intergral-geometric Favard outer measure:
\begin{equation} \label{eq:Favard}
\F^{n-2}(U) := \int_{\Eps^n_2} \H^0(U \cap E) d\sigma^n_2(E). 
\end{equation}
It is known that $\F^{n-2}$ is a Borel measure, and by a generalized Crofton formula proved by Federer (see \cite[Theorem 3.2.26]{FedererBook}), that $\F^{n-2}$ and $\H^{n-2}$ coincide on $\H^{n-2}$-rectifiable sets. While this may not be the case on general Borel sets, we always have $\F^{n-2}(U) \leq \H^{n-2}(U)$ for all Borel sets $U$ (see \cite[Theorem 2.10.15 and Section 2.10.6]{FedererBook}). Given $\eps > 0$, consider the following variant:
\[
\F^{n-2}_\eps(U) := \int_{\Eps^n_2(\eps)} \H^0(U \cap E) d\sigma^n_2(E) ,
\]
where:
\[
\Eps^n_2(\eps) := \set{ E \in \Eps^n_2 : E \cap B(x_0,\eps) \neq \emptyset, E \cap B(y_0,\eps) \neq \emptyset , E \cap B(z_0,\eps) \neq \emptyset } .
\]
Clearly $\F^{n-2}_{\eps}(U) \leq \F^{n-2}(U)$ for all $U$. 

Applying this to our set $A$, we deduce that $\F^{n-2}(A) = 0$ (and in fact, the proposition holds under this weaker assumption). Consequently, also $\F^{n-2}_\eps(A) = 0$, and we deduce that for $\sigma_2^n$-a.e. $E \in \Eps^n_2(\eps)$, $\H^0(A \cap E) = 0$, i.e. $E$ is disjoint from $A$. Applying this to all $\eps \in (0,\delta)$ and using that $\sigma^n_2(\Eps^n_2(\eps)) > 0$, the existence of the desired $E$, disjoint from $A$ and intersecting $B(x_0,\eps)$, $B(y_0,\eps)$ and $B(z_0,\eps)$, is deduced, concluding the proof. 
\end{proof}

Now let $\Omega$ be a stationary, regular, flat cluster with connected open cells $\{\Omega_i\}$. Note that by Theorems \ref{thm:Almgren}, \ref{thm:regularity} and flatness of the interfaces, the local convexity condition of Proposition \ref{prop:local-to-global} holds on $\partial \Omega_i \setminus \Sigma^4$: indeed, around $p_1 \in \partial \Omega_i \cap \Sigma^1$, $p_2 \in \partial \Omega_i \cap \Sigma^2$ and $p_3 \in \partial \Omega_i \cap \Sigma^3$, $\Omega_i$ looks locally like the convex flat cell of a model simplicial cluster around its interface, triple points and quadruple points, respectively. 
As $\H^{n-2}(\partial \Omega_i \cap \Sigma^4) = 0$, we deduce by Proposition \ref{prop:local-to-global} that $\Omega_i$ are all convex. 

\subsection{Formula for polyhedral cells of stable regular clusters}

Every non-empty open convex set $\Omega \subset \R^n$ is the intersection of its open supporting halfspaces over all boundary points $\partial \Omega$ (see e.g. \cite{Schneider-Book}). Clearly, by continuity (as the boundary of a convex set is a Lipschitz manifold), it is enough to intersect the supporting halfspaces of any subset $A \subset \partial \Omega$ which is dense in $\partial \Omega$. 
Recalling our convention from Theorem \ref{thm:Almgren} \ref{it:Almgren-ii} and (\ref{eq:top-nothing-lost}), it follows that $\Omega_i$ is the intersection of its open supporting halfspaces over the points of its interfaces $\cup_{j \neq i} \Sigma_{ij}$. As the interfaces are smooth, the unique open supporting halfspace to $\Omega_i$ at $p \in \Sigma_{ij}$ is of the form $\{x \in \R^n : \scalar{\n_{ij}(p),x} < C_{ij}(p) \}$. On the other hand, by the flatness of the interfaces, we know that for all $x \in \Sigma_{ij}$:
\[
    H_{ij,\gamma} = H_{\Sigma_{ij}}(x) - \nabla_{\n_{ij}} \pot(x) = -\inr{\n_{ij}}{x},
\]
and hence $C_{ij} \equiv -H_{ij,\gamma}$ is constant on $\Sigma_{ij}$.

From this, we deduce that $\n_{ij}$ must be constant over (any non-empty) $\Sigma_{ij}$. Otherwise, if there exist $p_k \in \Sigma_{ij}$, $k=1,2$, with $\n_{ij}(p_1) \neq \n_{ij}(p_2)$, it would follow (as $\n_{ji} = -\n_{ij}$ and $H_{ji,\gamma} = -H_{ij,\gamma}$) that:
\[
\Omega_i \subset \bigcap_{k=1,2} \{x \in \R^n : \scalar{\n_{ij}(p_k),x} < -H_{ij,\gamma} \} ~,~ \Omega_j \subset \bigcap_{k=1,2} \{x \in \R^n : \scalar{\n_{ij}(p_k),x} > -H_{ij,\gamma} \} .
\]
This would either imply that one of the cells $\Omega_i$ or $\Omega_j$ is empty (if $\n^{ij}(p_1) = -\n^{ij}(p_2)$), or that $\partial \Omega_i \cap \partial \Omega_j$ is contained in the $n-2$ dimensional plane $\bigcap_{k=1,2} \{x \in \R^n : \scalar{\n^{ij}(p_k),x} = -H_{ij,\gamma} \}$ (otherwise); in either case, it would follow that $A_{ij} = \gamma^{n-1}(\Sigma_{ij}) = 0$, i.e. that $\Sigma_{ij}$ is empty, a contradiction. As $\n_{ij}$ and $H_{ij,\gamma}$ are constant on $\Sigma_{ij}$, $\Sigma_{ij}$ is contained in a single hyperplane. By Lemma \ref{lem:MKernelAndDef}, $\dim \spn \{ \n_{ij} \}_{i < j} \leq q-1$. 

It follows that the non-empty cells $\Omega_i$ are convex polyhedra, obtained as the intersection of a finite number (at most $q-1$) open supporting halfspaces to its non-empty interfaces $\Sigma_{ij}$. The explicit formula in (\ref{eq:polyhedron-formula}) is obtained after recalling that $H_{ij,\gamma} = \lambda_i  - \lambda_j$, where $\lambda \in E^*$ is given by Lemma \ref{lem:first-order-conditions} \ref{it:first-order-cyclic}. This concludes the proof of Theorem \ref{thm:flat}.

\section{Proof of the Multi-Bubble Theorem} \label{sec:multi-bubble}

All of the work we have done so far culminates in the following key theorem, which constitutes a rigorous version of our sought-after matrix-valued partial differential inequality $\nabla^2 I \le -L_A^{-1}$ (the latter is presently only formal in view of the fact that we don't yet know $I$ to be differentiable). 

\begin{theorem}[Main Differential Inequality] \label{thm:hessian-bound-for-I}
    Let $\Omega$ be a Gaussian-stable regular $q$-cluster with $\gamma(\Omega) = v \in \interior \simplex^{(q-1)}$ and $2 \leq q \leq n+1$. Let $A = A(\Omega) = \set{A_{ij}}$ denote the collection of its interface measures. 
    Then for every $y \in E^{(q-1)}$ and every $\epsilon > 0$,
    there exists a $C_c^\infty$ vector-field $X$ such that $|\delta_X V - y| \le \epsilon$
    and $Q(X) \le - y^T L_A^{-1} y + \epsilon$.
\end{theorem}
\begin{proof}[Proof of Theorem \ref{thm:hessian-bound-for-I}]
    Given $y \in E^{(q-1)}$, since $L_A$ is full-rank by Lemma~\ref{lem:L-nondegenerate}, there exists $a \in E^{(q-1)}$ so that $L_A a = -y$. 
    Fix $\eps \in (0,1)$ and construct a family 
    $\{X_1, \dots, X_q\}$ of $(\eps,\eps)$-approximate inward fields by Proposition~\ref{prop:inward-fields}. 
    Setting $X = \sum_k a_k X_k$, it follows by Lemma \ref{lem:inward-combination} \ref{it:inward-delta-V} that for all $i$:     \[
    |(\delta_X V(\Omega) + L_A a)_i| \leq 2 \epsilon \sqrt{q} |a| .
    \]
    As $L_A a = -y$, we conclude that $|\delta_X V - y| \le 2 \epsilon q |a|$.

    Since $\Omega$ is stable and regular and $q \leq n+1$, Theorem~\ref{thm:flat} asserts that it is in addition flat. Hence, Theorem \ref{thm:formula} implies that:
    \[
        Q(X)
        = \sum_{i < j} \int_{\Sigma_{ij}} \brac{|\nabla^\tang X^\n|^2 - (X^\n)^2 } \, d\gamma^{n-1}.
    \]
    Applying Lemma \ref{lem:inward-combination} \ref{it:inward-nabla} and \ref{it:inward-L2}, it follows that:
                                                               \[
        Q(X) \le - a^T L_A a + (q+4) \epsilon |a|^2 .
    \]
    Finally, since $a^T L_A a = y^T L_A^{-1} y$, the assertion is proved (after adjusting $\epsilon$).
\end{proof}

We will also require the following standard fact:

\begin{lemma}\label{lem:semi-cont}
    $I$ is lower semi-continuous on $\simplex$.
\end{lemma}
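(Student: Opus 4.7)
The plan is to invoke the direct method in the calculus of variations. Take a sequence $v_m \to v$ in $\simplex$; we must show $\liminf_m I(v_m) \geq I(v)$. We may assume $L := \liminf_m I(v_m) < \infty$, as otherwise there is nothing to prove. After passing to a subsequence realizing the liminf, Theorem \ref{thm:Almgren} \ref{it:Almgren-i} supplies for each $m$ a minimizing $q$-cluster $\Omega^m$ with $\gamma(\Omega^m) = v_m$ and $\per_\gamma(\Omega^m) = I(v_m)$. In particular, each $\per_\gamma(\Omega^m_i) = \sum_{j \neq i} A_{ij}(\Omega^m) \leq 2 I(v_m)$ is bounded uniformly in $m$.

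The next step is to extract an $L^1(\gamma)$-convergent subsequence of cells. Since on any bounded ball $B_R$ the Gaussian density $e^{-W}$ is bounded between two positive constants, the uniform bound on $\per_\gamma(\Omega^m_i)$ yields a uniform bound on the unweighted perimeter of $\Omega^m_i \cap B_R$. By the classical local BV compactness theorem (see e.g.~\cite[Corollary 12.27]{MaggiBook}), together with a diagonal argument across an increasing exhaustion $B_{R_k} \uparrow \R^n$, we extract a further subsequence (still indexed by $m$) and Borel sets $\Omega^\infty_1,\ldots,\Omega^\infty_q$ such that $1_{\Omega^m_i} \to 1_{\Omega^\infty_i}$ in $L^1_{\mathrm{loc}}(\R^n)$ for each $i$. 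Since $\gamma$ is a probability measure with rapidly decaying tails, the tightness estimate $\gamma(\R^n \setminus B_R) \to 0$ as $R \to \infty$ upgrades this to convergence in $L^1(\gamma)$.

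Passing to the $L^1(\gamma)$-limit preserves disjointness (up to $\gamma$-null sets), the covering relation $\gamma(\R^n \setminus \cup_i \Omega^\infty_i) = 0$, and the measure constraint $\gamma(\Omega^\infty_i) = \lim_m \gamma(\Omega^m_i) = v_i$. Furthermore, $\per_\gamma$ is lower semi-continuous under $L^1(\gamma)$ convergence, as one sees immediately by writing it as a supremum of continuous linear functionals over the test vector-fields $X$ from its definition and applying Fatou's lemma. Hence $\per_\gamma(\Omega^\infty_i) \leq \liminf_m \per_\gamma(\Omega^m_i) < \infty$ for each $i$, confirming that $\Omega^\infty$ is a bona-fide $q$-cluster with $\gamma(\Omega^\infty) = v$. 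Therefore
\[
I(v) \leq \per_\gamma(\Omega^\infty) = \tfrac{1}{2}\sum_i \per_\gamma(\Omega^\infty_i) \leq \tfrac{1}{2}\sum_i \liminf_m \per_\gamma(\Omega^m_i) \leq \liminf_m \per_\gamma(\Omega^m) = L,
\]
proving the claim.

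The only non-routine point is the passage from local to global $L^1$-convergence of the cells, since $\R^n$ is non-compact. The concentration of $\gamma$ handles this cleanly via tightness, so no essentially new ideas beyond the classical compactness/lower semi-continuity package for sets of finite perimeter are required.
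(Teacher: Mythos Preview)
Your proof is correct and follows the standard direct-method argument (compactness of sets of finite perimeter plus lower semi-continuity of $\per_\gamma$), which is precisely what the paper defers to by citing~\cite[Lemma~7.1]{EMilmanNeeman-GaussianDoubleBubbleConj} rather than reproducing a proof.
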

\begin{proof} 
The proof is identical to the one of Theorem \ref{thm:Almgren} (i).  If $v_r \to v \in \simplex$, let $\Omega^r$ be a minimizing cluster with $\gamma(\Omega^r) = v_r$.  Note that for all $r$, $P_\gamma(\Omega^r) = I(v_r) \leq I_m(v_r) \leq \max_{v \in \Delta} I_m(v) =: C < \infty$, and that $P_\gamma(\Omega^r_i) \leq P_\gamma(\Omega^r)$ for each $i$. Consequently, by passing to a subsequence, each of the cells $\Omega^r_i$ converges in $L^1(\gamma)$ to $\Omega_i$. By Dominated Convergence (as the total mass is finite), we must have $\gamma(\Omega_i) = v_i$, and 
   the limiting $\Omega$ is easily seen to be a cluster (possibly after a measure-zero modification to ensure disjointness of the cells).  Consequently:
  \[
    I(v) \le P_\gamma(\Omega) \le \liminf_{r \to \infty} P_\gamma(\Omega^r) = \liminf_{r \to \infty} I(v_r).
  \]
\end{proof}

We are now ready to prove the Gaussian Multi-Bubble Theorem~\ref{thm:main-I-I_m}. 

\begin{proof}[Proof of Theorem~\ref{thm:main-I-I_m}]     Our task is to prove that $I^{(q-1)} = I^{(q-1)}_m$ on $\simplex^{(q-1)}$ for $2 \leq q \leq n+1$. Fixing $n$, we will prove this by induction on $q$. 
    Our base case is $q=2$, which is the classical (single-bubble) Gaussian isoperimetric
    inequality
    (in fact, we can start the induction with $q=1$ if we define $I^{(0)}= I^{(0)}_m \equiv 0$ on $\simplex^{(0)} = \{ 1\}$, thereby yielding an independent proof of the single-bubble case). 
    By induction, we will assume that $I^{(q-2)} = I^{(q-2)}_m$ on $\simplex^{(q-2)}$. Since
    (by Lemma~\ref{lem:tripod-profile-continuous}) $I^{(q-1)}_m$ agrees with $I^{(q-2)}_m$
    on $\partial \simplex^{(q-1)}$, and since $I^{(q-1)}$ agrees with $I^{(q-2)}$ on $\partial \simplex^{(q-1)}$
    by definition, it follows by the induction hypothesis that $I^{(q-1)} = I^{(q-1)}_m$ on $\partial \simplex^{(q-1)}$.
    For brevity, we will henceforth write $I$, $I_m$ and $\simplex$ for $I^{(q-1)}$, $I^{(q-1)}_m$ and $\simplex^{(q-1)}$, respectively. 

    By the construction of Section \ref{sec:model}, $I \le I_m$, so we will show the converse inequality.
    Since $I_m$ is continuous on $\simplex$ by Lemma \ref{lem:tripod-profile-continuous} and $I$ is lower semi-continuous on $\simplex$ by Lemma~\ref{lem:semi-cont},
    $I - I_m$ must attain a global minimum at some $v_0 \in \simplex$. Assume in the contrapositive that $I(v_0) - I_m(v_0) < 0$; by the comments in the previous paragraph,
    necessarily $v_0 \in \interior \simplex$.

    Let $\Omega$ be a Gaussian-minimizing $q$-cluster with $\gamma(\Omega) = v_0$, as ensured by Theorem \ref{thm:Almgren}. 
    By the results of Section \ref{sec:minimizers}, $\Omega$ is interface-regular, stationary and stable with (uniformly) volume and perimeter regular cells,
    and by the results of Section \ref{sec:higher}, $\Omega$ is in addition regular (all with respect to $\gamma$).
    Since $v_0$ is a minimum of $I - I_m$, for any smooth flow $F_t$ along an admissible vector-field $X$,
    we have
    \begin{equation}\label{eq:I_m-and-I}
        I_m(\gamma(F_t(\Omega))) \le I(\gamma(F_t(\Omega)) - (I - I_m)(v_0) \le \per_{\gamma}(F_t(\Omega)) - (I - I_m)(v_0),
    \end{equation}
    where the second inequality follows from the definition of $I$. The two sides
    are equal when $t = 0$, and twice differentiable in $t$. By comparing first and second
    derivatives at $t=0$, it follows that:
    \begin{equation}
        \label{eq:compare-first}
        \inr{\nabla I_m(v_0)}{\delta_X V} = \delta_X A ,
    \end{equation}
    \begin{equation}
        \label{eq:compare-second}
        (\delta_X V)^T \nabla^2 I_m(v_0) \delta_X V  + \inr{\nabla I_m(v_0)}{\delta_X^2 V} \le \delta_X^2 A.
    \end{equation}
    
    On the other hand, recall that $\delta_X A = \inr{\lambda}{\delta_X V}$ by Lemma \ref{lem:Lagrange}, where $\lambda$ is the
    unique element of $E^*$ such that $\lambda_i - \lambda_j = H_{ij,\gamma}(\Omega)$.
    Comparing with~\eqref{eq:compare-first}, and using that (by Theorem~\ref{thm:hessian-bound-for-I})
    for $y = \nabla I_m(v_0) - \lambda$ and any $\epsilon > 0$ there is a $C_c^\infty$ vector-field $X$
    with $|\delta_X V - y| \le \epsilon$, it follows that:
    \begin{equation} \label{eq:compare-fourth}
     \nabla I_m(v_0) = \lambda .
     \end{equation}
    Plugging this into~\eqref{eq:compare-second}, we obtain:
    \begin{equation} \label{eq:compare-third}
        (\delta_X V)^T \nabla^2 I_m(v_0) \delta_X V  \le \delta_X^2 A - \inr{\lambda}{\delta_X^2 V} = Q(X).
    \end{equation}
    Now by Theorem~\ref{thm:hessian-bound-for-I}, for any $y \in E$ and $\epsilon > 0$
    we may find a $C_c^\infty$ vector-field $X$ so that $|\delta_X V - y| \le \epsilon$
    and
    \[
       (\delta_X V)^T \nabla^2 I_m(v_0) \delta_X V \le Q(X) \le - y^T L_A^{-1} y + \epsilon
    \]
    (where recall $A = A(\Omega) = \set{A_{ij}}$ is the collection of interface measures). Taking $\eps \rightarrow 0$, 
    it follows that $\nabla^2 I_m(v_0) \le - L_A^{-1}$ in the positive semi-definite sense, and since $L_A$ is positive-definite, we deduce that $-\nabla^2 I_m(v_0)^{-1} \le L_A$. It follows by Proposition~\ref{prop:I_m-equation} that:
    \[
        2 I_m(v_0) = -\tr[(\nabla^2 I_m(v_0))^{-1}] \le \tr(L_A) = 2 \sum_{i<j} A_{ij} = 2 I(v_0) ,
    \]
    in contradiction to the assumption that $I(v_0) < I_m(v_0)$.
\end{proof}

\section{Proof of the Double-Bubble Theorem -- a simplified argument} \label{sec:double-bubble}

For the reader only interested in the Double-Bubble Theorem (when $q=3$), we include a simplified argument which completely avoids the contents of Sections \ref{sec:higher} through \ref{sec:stable} (as well as Appendices \ref{sec:calculation} and \ref{sec:Schauder}). Instead of invoking the complicated argument leading up to the proof of our main differential inequality in Theorem \ref{thm:hessian-bound-for-I}, we present an alternative argument for deducing the case $q=3$ of this theorem. This alternative argument will also provide us with vital information when analyzing the question of uniqueness of minimizers in the next section. 

\smallskip
Once the main differential inequality is established, the short proof of Theorem~\ref{thm:main-I-I_m} described in the previous section remains unchanged (and in fact slightly simplifies since we may simply use $\eps=0$ above), and so we concentrate on proving:

\begin{theorem}[Main Differential Inequality for $q=3$] \label{thm:hessian-bound-for-I2}
Let $\Omega$ be a Gaussian-stable $3$-cluster with $\gamma(\Omega) = v \in \interior \simplex^{(2)}$ and perimeter regular cells. Let $A = A(\Omega) = \set{A_{ij}}$ denote the collection of its interface measures. Then for any $y \in E^{(2)}$, there exists an admissible vector-field $X$ such that:
\begin{equation} \label{eq:Qinq}
    \delta_X V = y \;\; \text{ and } \;\; Q(X) \le - y^T L_A^{-1} y . 
\end{equation}
\end{theorem}

\subsection{Main Differential Inequality assuming $M$ is surjective}

Given a $q$-cluster $\Omega$, recall once more that $M : \R^n \rightarrow E^{(q-1)}$ (defined in Subsection \ref{subsec:M}) denotes the first variation of volume operator under translation fields:
\[
M w := \delta_w V . 
\]
Our simplified argument for establishing Theorem \ref{thm:hessian-bound-for-I2} relies on the following observation, which in fact holds for arbitrary $q$.

\begin{theorem}[Main Differential Inequality assuming $M$ is surjective] \label{thm:hessian-bound-for-I-surjective-M}
    Let $\Omega$ be a Gaussian-stationary $q$-cluster with $\gamma(\Omega) = v \in \interior \simplex^{(q-1)}$ and perimeter regular cells. 
    Let $A = A(\Omega) = \set{A_{ij}}$ denote the collection of its interface measures. Assume that $M : \R^n \rightarrow E^{(q-1)}$ is surjective. 
    Then for every $y \in E^{(q-1)}$, there exists a constant (translation) field $w \in \R^n$ such that:
\[ 
    \delta_w V = y \;\; \text{ and } \;\; Q(w) \le - y^T L_A^{-1} y . 
\]
\end{theorem}

For the proof, we will require the following matrix form of the Cauchy--Schwarz inequality:

\begin{lemma}\label{lem:cs}
    If $X$ is a random vector in $\R^n$ and $Y$ is a random vector in $\R^k$ such that
    $\E |X|^2 < \infty$, $\E |Y|^2 < \infty$, and $\E Y Y^T$ is non-singular,
    then as quadratic forms on $\R^n$:
    \[
        (\E XY^T) (\E Y Y^T)^{-1} (\E YX^T) \le \E X X^T ,
    \]
    with equality if and only if $X = BY$ almost-surely, for a deterministic matrix $B$.
\end{lemma}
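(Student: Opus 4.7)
The plan is to use the classical ``completing the square'' trick: for any deterministic $n\times m$ matrix $B$ the random matrix $(X-BY)(X-BY)^T$ is pointwise positive semi-definite, hence so is its expectation. Expanding gives
\[
0 \le \E(X-BY)(X-BY)^T = \E XX^T - B\,\E YX^T - \E XY^T B^T + B\,\E YY^T B^T,
\]
and the optimal choice is $B_* := (\E XY^T)(\E YY^T)^{-1}$, which is well-defined since $\E YY^T$ was assumed non-singular. First I would substitute $B = B_*$ to observe the algebraic identity
\[
\E XX^T - (\E XY^T)(\E YY^T)^{-1}(\E YX^T) = \E(X - B_* Y)(X - B_* Y)^T,
\]
which immediately yields the inequality as quadratic forms.

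For the equality case, I would argue in both directions. If the matrix inequality is saturated, then $\E(X - B_* Y)(X - B_* Y)^T = 0$; taking the trace, $\E |X - B_* Y|^2 = 0$, so $X = B_* Y$ almost surely, which is the desired conclusion with $B = B_*$. Conversely, if $X = BY$ almost surely for some deterministic $B$, then $\E XY^T = B\,\E YY^T$, and invertibility of $\E YY^T$ forces $B = B_*$; a direct substitution then verifies that equality holds.

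This argument is entirely standard and essentially computational; there is no real obstacle beyond keeping track of the matrix identities. The only point that deserves a line of comment is that the non-singularity of $\E YY^T$ is precisely what is needed to both define $B_*$ and to pin down $B$ uniquely in the equality case. Everything else follows from the positive semi-definiteness of a covariance-type matrix together with the fact that a positive semi-definite matrix with vanishing trace is zero.
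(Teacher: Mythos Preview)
Your proof is correct and is the standard ``completing the square'' argument for the matrix Cauchy--Schwarz inequality; the paper itself does not supply a proof but merely cites \cite[Lemma~6.4]{EMilmanNeeman-GaussianDoubleBubbleConj}, where the same argument appears.
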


\noindent
While this inequality may be found in the literature (e.g.~\cite{Tripathi:99}), we present a short independent proof, including that of the equality case, which we will require for establishing uniqueness of minimizing clusters. 

\begin{proof}[Proof of Lemma \ref{lem:cs}]
  Let $Z = X - (\E X Y^T) (\E Y Y^T)^{-1} Y$. Then $Z Z^T \ge 0$ (in the positive semi-definite sense),
  and since convex combinations of positive semi-definite matrices are also positive semi-definite,
  $\E Z Z^T \ge 0$. A computation verifies that:
  \[
    \E Z Z^T = \E X X^T - (\E XY^T) (\E Y Y^T)^{-1} (\E Y X^T) ,
  \]
  completing the proof of the inequality.

  To check the equality case, note that equality occurs iff $\E Z Z^T = 0$, iff $Z = 0$ a.s., iff $X = (\E X Y^T) (\E Y Y^T)^{-1} Y$ a.s. .
\end{proof}

\begin{proof}[Proof of Theorem \ref{thm:hessian-bound-for-I-surjective-M}]
Recall that by Corollary \ref{cor:Gaussian-regular}, the cells of $\Omega$ are in addition (uniformly) volume regular (with respect to $\gamma$). 
Define the ``average'' normals $\navg_{ij}$ by
\[
  \navg_{ij} = \begin{cases} \frac{1}{A_{ij}} \int_{\Sigma_{ij}} \n_{ij}\, d\gamma^{n-1} & A_{ij} > 0 \\ 0 & A_{ij} = 0 \end{cases} . 
\]
By (\ref{eq:first-variation-volume}), we have for all $w \in \R^n$:
\[
(\delta_w V)_i = \sum_{j \neq i} \int_{\Sigma_{ij}} \inr{w}{\n_{ij}} \, d\gamma^{n-1} = \sum_{j \neq i} A_{ij} \inr{w}{\navg_{ij}} .
\]
Since $\navg_{ji} = -\navg_{ij}$, we can write this as:
\begin{equation} \label{eq:Mw}
   M w =\delta_w V = \sum_{i<j} A_{ij} (e_i - e_j) \navg_{ij}^T w ,
\end{equation}
and we see that the linear operator $M : \R^n \rightarrow E^{(q-1)}$ is given by:
\begin{equation} \label{eq:M}
    M = \sum_{i<j} A_{ij} (e_i - e_j) \navg_{ij}^T .
\end{equation}

By Gaussian-stationarity and perimeter regularity we may invoke \eqref{eq:trans-formula}; applying the Cauchy--Schwarz (or Jensen) inequality we conclude that for all $w \in \R^n$:
\begin{equation} \label{eq:Nw}
    Q(w) = -\sum_{i<j} \int_{\Sigma_{ij}} \inr{w}{\n_{ij}}^2\, d\gamma^{n-1} \le -\sum_{i<j} \inr{w}{\navg_{ij}}^2 A_{ij} = - w^T N w ,
\end{equation}
where $N : \R^n \rightarrow \R^n$ denotes the following linear operator:
\begin{equation} \label{eq:N}
    N := \sum_{i<j} A_{ij} \navg_{ij} \navg_{ij}^T .
\end{equation}

Since $\gamma(\Omega) \in \interior \Delta^{(q-1)}$, clearly $\sum_{i<j} A_{ij} > 0$ (see Remark \ref{rem:empty-interfaces}). 
Now select random vectors $X \in \R^n$ and $Y \in E^{(q-1)}$, which are assigned the values  $X = \navg_{ij}$  and $Y = e_i - e_j$ with probability $A_{ij} / \sum_{i < j} A_{ij}$.
Then, by definition of $N$, $M$ and $L_A$:
\[
\E X X^T = \frac{1}{ \sum_{i<j} A_{ij}} N ~,~ \E Y X^T = \frac{1}{ \sum_{i<j} A_{ij}} M ~,~ \E Y Y^T = \frac{1}{ \sum_{i<j} A_{ij}} L_A .
\]
Recalling that $L_A$ is invertible on $E^{(q-1)}$ by Lemma~\ref{lem:L-nondegenerate}, Lemma~\ref{lem:cs} implies that
\begin{equation} \label{eq:MLAN}
M^T L_A^{-1} M \le N, 
\end{equation}
yielding:
\[
Q(w) \leq -w^T N w \leq -w^T M^T L_A^{-1} M w \;\;\; \forall w \in \R^n . 
\]
Consequently, when $M$ is surjective, for any $y \in E$, we may choose $w \in \R^n$ so that
$\delta_w V = M w = y$ and $Q(w) \leq -y L_A^{-1} y$, completing the proof. 
\end{proof} 

\subsection{Effectively One-Dimensional Clusters}

It remains to deal with the case that $M$ is not surjective. Without invoking the results of Sections \ref{sec:higher} through \ref{sec:stable} and Appendices \ref{sec:calculation} and \ref{sec:Schauder}, we only know how to do this when $q=3$, as we shall briefly explain. Note that \emph{a-posteriori}, we will show that the unique Gaussian-minimizing clusters $\Omega$ (with $\gamma(\Omega) \in \interior \Delta^{(q-1)}$ and $2 \leq q \leq n+1$) are (up to null-sets) the model simplicial clusters, for which $M$ \emph{is always} surjective, but we cannot exclude the possibility that $M$ is not full-rank \emph{beforehand}. Moreover, we will show in Theorem \ref{thm:minimizers-char} that $M$ is necessarily \emph{not} surjective for a Gaussian-stable $q$-cluster which is non-minimizing, and so if our only tool is stability, we need to take this possibility into account. 

\medskip

Recall that by Lemma \ref{lem:MKernelAndDef}, the effective-dimension of a Gaussian-stable $q$-cluster $\Omega$ (whose cells are perimeter regular) is equal to the rank of $M : \R^n \rightarrow E^{(q-1)}$, and so its effective dimension is at most $q-1$, with equality if and only if $M$ is surjective. Of course, since we assume $\gamma(\Omega) = v \in \interior \simplex^{(q-1)}$, the cluster's effective dimension is strictly positive (see Remark \ref{rem:empty-interfaces}). So if $M$ is not surjective, the effective dimension must be between $1$ and $q-2$. We immediately deduce the following dichotomy, which is particular to the case $q=3$:

\begin{corollary}[Dichotomy when $q=3$] Let $\Omega$ be a Gaussian-stable $3$-cluster whose cells are perimeter regular (in particular, this holds if $\Omega$ is Gaussian-minimizing) with $\gamma(\Omega) \in \interior \simplex^{(2)}$. Then exactly one of the following possibilities holds: either $M: \R^n \to E^{(2)}$ is surjective, or $\Omega$ is effectively one-dimensional.
\end{corollary}

We conclude that when $q=3$, it remains to deal with the case that $\Omega$ is effectively one-dimensional.
Note that for $q > 3$, the effective dimension could be as high as $q-2 > 1$ if $M$ is not surjective, and we do not know how to handle such effectively higher-dimensional clusters without first establishing that their interfaces are flat and running the full multi-bubble argument described in this work. For effectively one-dimensional clusters, the interfaces are automatically flat (in particular, the curvature is automatically integrable), the higher codimensional boundary is trivial (in particular, there is no need for any regularity theory), and we may simply construct our inward fields by hand -- this is what we do below.

\medskip

To lighten our notation, we will assume that $\Omega$ itself is a cluster in $\R$; by the product structure of the Gaussian measure, everything
that we do here can be lifted to the original cluster in $\R^n$ by taking Cartesian product with $\R^{n-1}$. At this point, there is nothing special about $q=3$, and we can assume that $\Omega$ is a Gaussian-stationary $q$-cluster on $\R$. 

\smallskip

Now, the fact that $H_{ij,\gamma}$ is constant implies that each $\Sigma_{ij}$ can contain at most two points.
For each $i \ne j$, define the vector-field $X_{ij}$ by defining $X_{ij} = \n_{ij}$ on $\Sigma_{ij}$,
$X_{ij} = 0$ on all other interfaces, and extending $X_{ij}$ to a $C_c^\infty$
vector-field on $\R$ (it is possible to make it have compact support because there are at most finitely many points
in $\Sigma_{ij}$).
Given $y \in E$, let $a = L_A^{-1} y$, and set $X = \sum_{i<j} (a_i - a_j) X_{ij}$.
Because the first two derivatives of the
one-dimensional Gaussian density $\varphi$ are $\varphi'(x) = -x\varphi(x)$ and $\varphi''(x) = (x^2 - 1) \varphi(x)$,
we easily compute that 
\begin{align*}
  \delta_X V_i &= \sum_{j \ne i} (a_i - a_j) \sum_{x \in \Sigma_{ij}} \varphi(x) = (L_A a)_i = y_i ~,\\
  \delta_X^2 A &= \sum_{i<j} (a_i - a_j)^2 \sum_{x \in \Sigma_{ij}} (x^2 - 1) \varphi(x) ~,\\
  \delta_X^2 V_i &= \sum_{j \ne i} (a_i - a_j)^2 \sum_{x \in \Sigma_{ij}} \n_{ij}(x) \cdot (-x \varphi(x)) ~. 
\end{align*}
On the other hand, at $x \in \Sigma_{ij}$ one has $H_{ij,\gamma} = \n_{ij}(x) \cdot (-x)$.
Since $\n_{ij} \in \{-1, 1\}$, it follows that
$H_{ij,\gamma} \n_{ij}(x) \cdot (-x) = x^2$, and so:
\begin{align*}
& \inr{\lambda}{\delta_X^2 V} = \sum_{i} \lambda_i \delta_X^2 V_i = \sum_{i<j} (\lambda_i - \lambda_j) (a_i - a_j)^2 \sum_{x \in \Sigma_{ij}} \n_{ij}(x) \cdot (-x \varphi(x)) \\
& = \sum_{i<j} H_{ij,\gamma} (a_i - a_j)^2 \sum_{x \in \Sigma_{ij}} \n_{ij}(x) \cdot (-x \varphi(x)) = \sum_{i<j} (a_i - a_j)^2 \sum_{x \in \Sigma_{ij}} x^2 \varphi(x) .
\end{align*}
Consequently:
\begin{align*}
    Q(X) &= \delta_X^2 A - \inr{\lambda}{\delta_X^2 V} = -\sum_{i<j} (a_i - a_j)^2 \sum_{x \in \Sigma_{ij}} \varphi(x) \\
    &= -\sum_{i<j} (a_i - a_j)^2 A_{ij}  = -a^T L_A a = -y^T L_A^{-1} y.
\end{align*}
This completes the proof of Theorem~\ref{thm:hessian-bound-for-I2} and thus of Theorem~\ref{thm:main-I-I_m} in the case $q=3$.

\section{Uniqueness of Minimizers} \label{sec:uniqueness}

Now that we know Theorem~\ref{thm:main-I-I_m}, the idea is to revisit the various inequalities
used in that proof, and observe that they must be sharp in the case of minimizing clusters.
We begin by restating the relationship between variations and the derivatives of $I$.
We actually already used this relationship in the proof of Theorem~\ref{thm:main-I-I_m},
but this is easier to state now that we know that $I = I_m$ is smooth on $\interior \Delta$. 

\begin{lemma}\label{lem:minimizer-variation}
  Let $\Omega$ be a minimizing $q$-cluster in $\R^n$ with respect to $\gamma$
    with $\gamma(\Omega) = v \in \interior \simplex^{(q-1)}$ and $2 \leq q \leq n+1$, and let $\lambda \in E^*$ be given by Lemma \ref{lem:first-order-conditions}.     Then $\nabla I(v) = \lambda$, and for any admissible vector-field $X$ we have:
  \begin{equation} \label{eq:minimizer-variation-conc}
  (\delta_X V)^T \nabla^2 I(v) \delta_X V \le \delta^2_X A - \inr{\lambda}{\delta^2_X V} = Q(X) . 
  \end{equation}
\end{lemma}
\begin{proof} 
As in the proof of Theorem \ref{thm:main-I-I_m} in Section \ref{sec:multi-bubble}, $I(\gamma(F_t(\Omega))) \le P_\gamma(F_t(\Omega))$, with equality at $t=0$. Differentiating twice at $t=0$ (since we now know that $I=I_m$ is smooth), we obtain (\ref{eq:compare-first}) and (\ref{eq:compare-second}) at \emph{any} point $v_0 \in \interior \simplex^{(q-1)}$. As explained in the proof of Theorem \ref{thm:main-I-I_m}, since $\delta_X V$ spans the entire $E$ (e.g. by Theorem~\ref{thm:hessian-bound-for-I}, or Theorem~\ref{thm:hessian-bound-for-I2} when $q=3$), we deduce (\ref{eq:compare-fourth}) and hence (\ref{eq:compare-third}), as asserted. 
\end{proof}

We immediately deduce from Lemma~\ref{lem:minimizer-variation} and Theorem~\ref{thm:hessian-bound-for-I} that $\nabla^2 I(v) \le - L_A^{-1}$ in the positive-definite sense.
We now observe that this in fact must be an equality, and that this already determines the boundary measures $A_{ij}$ of a minimizing cluster.

\begin{lemma}\label{lem:equality}
            Let $\Omega$ be a minimizing $q$-cluster in $\R^n$ with respect to $\gamma$
    with $\gamma(\Omega) = v \in \interior \simplex^{(q-1)}$ and $2 \leq q \leq n+1$. Let $\Omega^m$ be a model $q$-cluster in $\R^n$ with $\gamma(\Omega^m) = v$. 
    Let $A_{ij}$ and $A^m_{ij}$ denote the Gaussian-weighted areas of the interfaces of $\Omega$ and $\Omega^m$, respectively. Then:
                 \begin{enumerate}[(i)]
        \item $\nabla^2 I(v) = - L_A^{-1}$;
        \item $A_{ij} = A_{ij}^m$ for all $i \ne j$; and
        \item the first and second variations of $\Omega$ satisfy the following inequality for every admissible vector-field $X$:
            \begin{equation} \label{eq:uniqueness-Q}
                -(\delta_X V)^T L_A^{-1} (\delta_X V) \leq Q(X).
            \end{equation}
    \end{enumerate}
\end{lemma}
\begin{proof}
    By the preceding comments, we know that $\nabla^2 I(v) \le - L_A^{-1}$, and hence $-(\nabla^2 I(v))^{-1} \leq L_A$. On the other hand, by Proposition~\ref{prop:I_m-equation}:
    \[
    \tr[-(\nabla^2 I(v))^{-1}] = \tr[-(\nabla^2 I_m(v))^{-1}] = \tr(L_{A^m}) = 2 I_m(v) = 2 I(v) = \tr(L_A) . 
    \]
    Since $A \geq 0$ and $\tr(A) = 0$ imply that $A=0$, it follows that necessarily $-(\nabla^2 I(v))^{-1} = L_A$, thereby concluding the proof of the first assertion. The second follows by inspecting the off-diagonal elements of the matrices on either side of the following equality:
    \[
    L_A = - (\nabla^2 I(v))^{-1} = - (\nabla^2 I_m(v))^{-1} = L_{A^m} .
    \]
      The third assertion follows from the first one and (\ref{eq:minimizer-variation-conc}).
\end{proof}

\smallskip

For the rest of this section, we fix a minimizing $q$-cluster $\Omega$
with $\gamma(\Omega) = v \in \interior \simplex^{(q-1)}$ and $2 \leq q \leq n+1$. Note that $A^m_{ij} > 0$
for all $i \ne j$, and so Lemma~\ref{lem:equality} implies that $A_{ij} > 0$ for all $i \neq j$ as well -- this is the \emph{crucial} property of a minimizing cluster which we will repeatedly use (see also the next section for a more general result involving this property, which yields an alternative proof of uniqueness of minimizers). 
It follows by Theorem \ref{thm:flat} that $\n_{ij}$ is constant on (the non-empty) $\Sigma_{ij}$ for all $i \neq j$. 

\smallskip

Recall from (\ref{eq:Mw}) and (\ref{eq:Nw}) that for the constant (translation) field $w \in \R^n$, since $\n_{ij}$ is constant on $\Sigma_{ij}$, we have:
\[
    \delta_w V = M w ~,~ M = \sum_{i < j} A_{ij} (e_i - e_j) \n_{ij}^T  ,
\]
and 
\[
Q(w) = - \scalar{N w,w} ~,~ N =  \sum_{i < j} A_{ij} \n_{ij} \n_{ij}^T . 
\]
Comparing with (\ref{eq:uniqueness-Q}), it follows that as quadratic forms on $\R^n$: \begin{equation}\label{eq:matrix-inequality}
    M^T L_A^{-1} M \geq N .
\end{equation}
On the other hand, the Cauchy-Schwarz Lemma \ref{lem:cs} implies the converse inequality (\ref{eq:MLAN}). 
It follows that the latter inequality is in fact an equality, and so by the equality case of Lemma \ref{lem:cs}, there exists a linear map $B:  E^{(q-1)} \rightarrow \R^n$ so that $\n_{ij} = B (e_j - e_i)$ for all $i \ne j$ (here we crucially use that $A_{ij} >0$ in the random vector construction leading up to (\ref{eq:MLAN})). In particular, $\sum_{(\ell,m) \in \cyclic(i,j,k)} \n_{\ell m} = B \sum_{(\ell, m) \in \cyclic(i,j,k)} (e_m - e_\ell) = 0$ for all distinct $i,j,k$,
and it follows that $\n_{ij}$, $\n_{jk}$, and $\n_{ki}$ are unit vectors with $120^\circ$ angles
between them. 

Moreover,  since $E_{ij} = (e_j - e_i) (e_j - e_i)^T$ span the space of all symmetric matrices on $E^{(q-1)}$ (by checking dimensions),  
and $\tr(B^T B E_{ij} ) = |\n_{ij}|^2 = 1$ for all $i < j$ (as $A_{ij} > 0$), it follows that $B^T B = \frac 1 2 \Id$, i.e. $\sqrt 2 B$ is an isometry from $E^{(q-1)}$ onto $\Img B = \spn\{\n_{ij}\}$.

Applying Theorem \ref{thm:flat} and using that $A_{ij} > 0$ for all $i \neq j$ one last time, we know that after modifying $\Omega$ by null-sets according to our convention from  Theorem \ref{thm:Almgren} \ref{it:Almgren-ii}, we have:
\begin{align*}
\Omega_i  & =  \{ x \in \R^n : \forall j \neq i  \;\; \inr{\n_{ij}}{x} < \lambda_j - \lambda_i \} \\
& = \{ x \in \R^n : \forall j \neq i  \;\; \inr{e_j - e_i}{B^T x} < \lambda_j - \lambda_i \} \\
& = \{ x \in \R^n : \forall j \neq i  \;\; \inr{e_j - e_i}{B^T x - \lambda} < 0 \} .
\end{align*}
In other words, $\Omega$ is the pull-back via $B^T : \R^n \rightarrow E^{(q-1)}$ of the canonical model $q$-cluster on $E^{(q-1)}$ centered at $\lambda$. Since $\sqrt{2} B$ is an isometry, it follows that $\{\Omega_i\}$ are the Voronoi cells of $q$ equidistant points in $\R^n$, i.e. that $\Omega$ is a simplicial $q$-cluster. This concludes the proof of Theorem~\ref{thm:main-uniqueness}.

\section{Concluding Remarks}  \label{sec:conclude}

\subsection{Towards a characterization of stable regular clusters}

It is an interesting problem to obtain a complete characterization of stable regular $q$-clusters for the Gaussian measure when $q \leq n+1$. 
We add here an additional necessary property of stable regular clusters on top of the information given in Theorem \ref{thm:flat}.
As usual, we denote by $A_{ij} = \gamma^{n-1}(\Sigma_{ij})$ the measures of the interfaces of $\Omega$. 
 
\begin{theorem} \label{thm:pull-back}
Let $\Omega$ denote a stable regular $q$-cluster in $\R^n$ with respect to $\gamma$, and assume that $q \leq n+1$. 
Then $\Omega$ is the pull-back via a linear map $\R^n \mapsto E^{(q-1)}$ of a canonical model simplicial $q$-cluster in $E^{(q-1)}$. 
Specifically:
\begin{enumerate}
\item There exists a linear operator $B : E^{(q-1)} \rightarrow \R^n$ so that
\begin{equation} \label{eq:n-B}
\n_{ij} = B (e_j - e_i)
\end{equation}
for all $i \neq j$ such that $A_{ij} > 0$; and
\item  $\Omega$ is the pull-back via $B^T$ of the canonical model simplicial $q$-cluster in $E = E^{(q-1)}$ centered at $\lambda$: 
\begin{equation} \label{eq:pull-back-polyhedron-formula}
    \Omega_i = \bigcap_{j \neq i } \set{  x \in \R^n : \scalar{e_j - e_i,B^T x - \lambda} < 0 } ,
    \end{equation} 
  where $\lambda \in E^*$ is given by Lemma \ref{lem:first-order-conditions} \ref{it:first-order-cyclic}.
\end{enumerate}
\end{theorem}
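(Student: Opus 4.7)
The plan is to adapt the uniqueness argument of Section \ref{sec:uniqueness} so that it depends only on stability rather than minimality. By Theorem \ref{thm:flat}, $\Omega$ is flat with convex polyhedral cells, $\n_{ij}$ is constant on each non-empty $\Sigma_{ij}$, and Lemma \ref{lem:L-nondegenerate} guarantees that the graph with edges $\{(i,j): A_{ij}>0\}$ is connected and that $L_A$ is positive-definite on $E^{(q-1)}$. As in Section \ref{sec:uniqueness}, introduce
\[
M := \sum_{i<j} A_{ij}(e_i - e_j)\n_{ij}^T, \qquad N := \sum_{i<j} A_{ij} \n_{ij} \n_{ij}^T,
\]
so that $Mw = \delta_w V(\Omega)$ and $-Q(w) = w^T N w$ (by (\ref{eq:Qw}), using $\nabla^2_{\n,\n} W = 1$ for the Gaussian measure). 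The main target is the identity
\[
M^T L_A^{-1} M = N
\]
as quadratic forms on $\R^n$; given this, both parts of the theorem then follow by matrix Cauchy--Schwarz (Lemma \ref{lem:cs}) and the polyhedral description (\ref{eq:polyhedron-formula}) from Theorem \ref{thm:flat}.

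The $\leq$ direction is immediate from Lemma \ref{lem:cs} applied to the random pair $(\xi,\eta)=(\n_{ij},e_i-e_j)$ sampled with probability proportional to $A_{ij}$, exactly as in Section \ref{sec:uniqueness}. For the $\geq$ direction, fix $w \in \R^n$, set $a := L_A^{-1} M w \in E^{(q-1)}$, and for small $\epsilon > 0$ take a family $\{X_1, \dots, X_q\}$ of $(\epsilon,\epsilon)$-approximate inward fields from Proposition \ref{prop:inward-fields}. Apply Theorem \ref{thm:combined-vector-fields} to the admissible field $Y := X + w$ with $X := \sum_k a_k X_k$; since $\Omega$ is flat and $\nabla^2 W = \Id$ for the Gaussian measure, the second-variation formula collapses to
\[
Q(Y) = \sum_{i<j} \int_{\Sigma_{ij}} \brac{|\nabla^\tang X^\n|^2 - ((X+w)^\n)^2}\, d\gamma^{n-1}.
\]
Using the pointwise expansion $(X+w)^\n \approx (a_j - a_i) + \scalar{\n_{ij},w}$ on $\Sigma_{ij}$ together with Lemma \ref{lem:inward-combination}\ref{it:inward-nabla},\ref{it:inward-L2}, and the algebraic identity $a^T L_A a - 2a^T M w + w^T N w = -w^T M^T L_A^{-1} M w + w^T N w$ arising from the choice $a = L_A^{-1} M w$, one obtains
\[
Q(Y) \leq w^T(M^T L_A^{-1} M - N) w + C \epsilon |w|^2, \qquad |\delta_Y V| \leq C \epsilon |w|.
\]
A small inward-field correction of norm $O(\epsilon|w|)$ --- constructed as in the proof of connectedness in Section \ref{sec:stable} --- adjusts $Y$ to $\tilde Y$ with $\delta_{\tilde Y} V = 0$ exactly, while perturbing $Q$ only by $O(\epsilon|w|^2)$; stability then forces $0 \leq Q(\tilde Y) \leq w^T(M^T L_A^{-1} M - N) w + O(\epsilon)$, and $\epsilon \to 0$ yields the desired inequality.

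With the identity $M^T L_A^{-1} M = N$ in hand, equality in matrix Cauchy--Schwarz (Lemma \ref{lem:cs}) produces a linear operator $B: E^{(q-1)} \to \R^n$ with $\n_{ij} = B(e_j - e_i)$ almost surely on the support of the sampling measure, i.e.\ for every $i \neq j$ with $A_{ij} > 0$, establishing part (1). Substituting this into (\ref{eq:polyhedron-formula}) gives
\[
\Omega_i = \bigcap_{j \neq i : A_{ij} > 0} \set{x \in \R^n : \scalar{e_j - e_i, B^T x - \lambda} < 0},
\]
and part (2) reduces to verifying that the additional constraints for pairs with $A_{ij} = 0$ in (\ref{eq:pull-back-polyhedron-formula}) are redundant. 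This follows by a tiling comparison: the right-hand side of (\ref{eq:pull-back-polyhedron-formula}), taken over all $i$, is a measurable partition of $\R^n$ obtained as the pull-back via $B^T$ of the canonical model cluster shifted by $\lambda$; each of its cells contains the corresponding $\Omega_i$, and since $\{\Omega_i\}$ already partitions $\R^n$ up to null-sets, the two partitions must agree.

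The principal obstacle is the inequality $M^T L_A^{-1} M \geq N$: for stable but non-minimizing clusters one cannot extract it from a PDE on the isoperimetric profile as in Section \ref{sec:uniqueness}, and it must instead be produced directly from stability via the hybrid second-variation computation above. The delicate point is controlling the $O(\epsilon)$ error in the volume constraint tightly enough to construct a genuinely admissible vector field $\tilde Y$ with $\delta_{\tilde Y} V = 0$ before invoking the stability hypothesis.
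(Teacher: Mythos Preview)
Your argument is correct, and it takes a genuinely different route from the paper's own proof. The paper proves Theorem \ref{thm:pull-back} via a topological observation: it associates to $\Omega$ a two-dimensional simplicial complex $\S$ (vertices $=$ non-empty cells, edges $=$ non-empty interfaces, triangles $=$ non-empty triple junctions), proves that $H_1(\S)=0$ (Theorem \ref{thm:homology}) using the incidence structure at triple points and the Hausdorff negligibility of $\Sigma^3\cup\Sigma^4$, and then reads off the existence of $B$ as a ``potential'' for the cocycle $(i,j)\mapsto \n_{ij}$ from the relation $\n_{ij}+\n_{jk}+\n_{ki}=0$ on triangles. Your approach bypasses the homology result entirely: you recycle the variational machinery (inward fields, Theorem \ref{thm:combined-vector-fields}, flatness) to test stability against the hybrid field $X_{a}+w$ with $a=L_A^{-1}Mw$, obtaining $N\le M^T L_A^{-1}M$ directly; combined with the Cauchy--Schwarz bound $M^T L_A^{-1}M\le N$ this forces equality, and $B$ drops out of the equality case of Lemma \ref{lem:cs}. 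This is essentially the Section \ref{sec:uniqueness} argument with Lemma \ref{lem:equality}(iii) replaced by a stability computation, which is exactly the right substitution. One small simplification: rather than first taking $a=L_A^{-1}Mw$ and then correcting by $b=O(\epsilon|w|)$, you can set $a'=-V_\epsilon^{-1}(Mw)$ from the outset (as in the connectedness proof) so that $\delta_{X_{a'}+w}V=0$ holds exactly for every $\epsilon$; since $a'\to L_A^{-1}Mw$ as $\epsilon\to 0$, the same estimate on $Q$ goes through. What your approach buys is directness and economy---no detour through simplicial homology or Favard measure. What the paper's approach buys is an independently interesting structural statement (Theorem \ref{thm:homology}) about the combinatorics of stable regular clusters, and it does not require invoking Lemma \ref{lem:L-nondegenerate} (i.e.\ $\gamma(\Omega)\in\interior\simplex^{(q-1)}$) to make $L_A^{-1}$ well-defined.
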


Note that this is indeed a strengthening of Theorem \ref{thm:flat}; the additional information that (\ref{eq:n-B}) holds is a strong way to encapsulate the fact that $\Omega$ is stationary, since it implies that $\n_{ij} + \n_{jk} + \n_{ki} = 0$ for all distinct $i,j,k$ so that $A_{ij}, A_{jk} , A_{ki} > 0$ (even without knowing whether $\Sigma_{ijk}$ is non-empty). 
Before providing a proof of Theorem \ref{thm:pull-back}, let us remark that it immediately yields an alternative proof of the uniqueness of minimizers from the previous section, and in fact implies the following strengthening:

\begin{corollary} \label{cor:complete-graph}
Let $\Omega$ denote a stable regular $q$-cluster in $\R^n$ with respect to $\gamma$, and assume that $q \leq n+1$. Assume that $A_{ij} > 0$ for all $i \neq j$, where $A_{ij} = \gamma^{n-1}(\Sigma_{ij})$ denote as usual the interface measures. Then up to null-sets, $\Omega$ is a simplicial cluster. 
\end{corollary}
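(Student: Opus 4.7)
The corollary follows rather directly from Theorem~\ref{thm:pull-back} once we upgrade the linear map $B$ to (a multiple of) an isometry. The plan is to exploit the assumption $A_{ij} > 0$ for all $i \ne j$ to make the full system of ``normal equations'' $\n_{ij} = B(e_j - e_i)$ available, then use a dimension count to pin down $B^T B$ exactly.

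First, I invoke Theorem~\ref{thm:pull-back} to obtain a linear operator $B : E^{(q-1)} \to \R^n$ such that $\n_{ij} = B(e_j - e_i)$ for every pair with $A_{ij} > 0$, and such that the cells of $\Omega$ satisfy the pull-back formula~\eqref{eq:pull-back-polyhedron-formula}. By hypothesis, $A_{ij}>0$ for every $i \ne j$, so this relation holds for all distinct $i,j \in \{1,\ldots,q\}$. In particular, since $\n_{ij}$ is a unit vector, we get the family of scalar identities
\[
(e_j - e_i)^T B^T B (e_j - e_i) = |\n_{ij}|^2 = 1 \quad \text{for all } i < j,
\]
or equivalently $\tr\!\bigl(B^T B \cdot E_{ij}\bigr) = 1$ where $E_{ij} := (e_j - e_i)(e_j - e_i)^T$.

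Next, I observe that the $\binom{q}{2}$ symmetric rank-one operators $\{E_{ij}\}_{i<j}$ span the entire space of symmetric bilinear forms on $E^{(q-1)}$, whose dimension is also $\binom{q-1+1}{2} = \binom{q}{2}$ (this is exactly the linear-algebraic fact used in the uniqueness argument of Section~\ref{sec:uniqueness}). Hence the symmetric operator $B^T B$ on $E^{(q-1)}$ is uniquely determined by the values $\tr(B^T B \cdot E_{ij})$. A direct check shows that the candidate $B^T B = \tfrac{1}{2}\Id_{E^{(q-1)}}$ satisfies these constraints, since
\[
\tr\!\bigl(\tfrac12 \Id \cdot (e_j-e_i)(e_j-e_i)^T\bigr) = \tfrac12 |e_j - e_i|^2 = 1.
\]
By uniqueness, $B^T B = \tfrac12 \Id_{E^{(q-1)}}$, so $\sqrt 2\, B : E^{(q-1)} \to \R^n$ is an isometric embedding onto $\Img B$. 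Taking transposes, $\tfrac{1}{\sqrt 2} B^T : \R^n \to E^{(q-1)}$ restricts to an isometry on $(\ker B^T)^\perp = \Img B$.

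Finally, substituting $B^T B = \tfrac12 \Id$ into~\eqref{eq:pull-back-polyhedron-formula} and using the map $\Phi := B^T : \R^n \to E^{(q-1)}$, we see that $\Omega_i = \Phi^{-1}(\Omega^m_i + \Phi_0)$, where $\Omega^m$ is the canonical model simplicial cluster of Section~\ref{sec:model} and $\Phi_0 \in E^{(q-1)}$ is the shift determined by $\lambda$. Since $\sqrt 2\, B$ is an isometry of $E^{(q-1)}$ onto $\Img B \subset \R^n$, the cells $\Omega_i$ are precisely the Voronoi cells of $q$ equidistant points in $\R^n$ (namely the images under $\sqrt 2 B$ of the $q$ simplex vertices of $\Omega^m$, suitably translated), with the orthogonal complement of $\Img B$ contributing a trivial product factor. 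Thus, up to null-sets, $\Omega$ is a simplicial $q$-cluster, proving the corollary. The only potentially subtle point is the spanning property of $\{E_{ij}\}_{i<j}$, but this is a standard dimension count and is precisely what made the analogous argument in Section~\ref{sec:uniqueness} work.
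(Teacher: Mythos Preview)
Your proof is correct and follows essentially the same route as the paper: invoke Theorem~\ref{thm:pull-back} to obtain $\n_{ij}=B(e_j-e_i)$ for all $i\ne j$ (using $A_{ij}>0$), then use the spanning of $\{E_{ij}\}$ and $\tr(B^T B\,E_{ij})=1$ to conclude $B^T B=\tfrac12\Id$, exactly as in the uniqueness argument of Section~\ref{sec:uniqueness}. The paper's own justification of the corollary points to precisely this reasoning.
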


Indeed, for any minimizing $\Omega$ with $\gamma(\Omega) \in \interior \simplex^{(q-1)}$, Lemma \ref{lem:equality} implies that $A_{ij} = A^m_{ij} > 0$ for all $i \neq j$, which together with Corollary \ref{cor:complete-graph} yields Theorem \ref{thm:main-uniqueness} on the uniqueness of minimizers. To see why Corollary \ref{cor:complete-graph} follows from Theorem \ref{thm:pull-back}, simply note that the only other crucial property which was used in the proof of Theorem \ref{thm:main-uniqueness} in the previous section, besides the property that $A_{ij} > 0$ for all $i \neq j$, was that $\n_{ij} = B(e_j - e_i)$ for all $i \neq j$; once this is known, it follows that $\sqrt{2} B$ is an isometry onto its image, and so Theorem \ref{thm:pull-back} implies that $\{\Omega_i\}$ are the Voronoi cells of $q$ equidistant points in $\R^n$, i.e. that $\Omega$ is a simplicial $q$-cluster.

\medskip

The proof of Theorem \ref{thm:pull-back} is based on an interesting topological observation. Consider the following two-dimensional simplicial complex $\S$ associated to a regular  stable cluster $\Omega$ with respect to $\gamma$: its vertices are given by $\{ (i) : \Omega_i \neq \emptyset \}$, its edges are given by $\{ (i,j) : \Sigma_{ij} \neq \emptyset \}$, and its triangles are given by $\{ (i,j,k) : \Sigma_{ijk} \neq \emptyset \}$. This is indeed a simplicial complex since the face of any simplex in $\S$ is clearly also in $\S$. 
By a trivial adaptation of the proof of Lemma \ref{lem:L-nondegenerate}, it is immediate to see that the $1$-skeleton of $\S$, i.e. the graph obtained by considering only its edges, is necessarily connected. In other words, the zeroth (simplicial) homology of $\S$ is trivial. To this we add:

\begin{theorem} \label{thm:homology}
The first (simplicial) homology of $\S$ is trivial. 
\end{theorem}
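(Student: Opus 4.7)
The plan is to realize every $1$-cycle of $\S$ as the trace cycle of a loop in $\R^n$, bound that loop by a $2$-disk using $\pi_1(\R^n)=0$, and read off the trace cycle from the triple points that a generic such disk transversely crosses --- each contributing the boundary of a $2$-simplex of $\S$. Since $Z_1(\S)$ is generated by cycles supported on closed edge-loops, it suffices to treat a single closed loop $\sigma=(i_1\to i_2\to\cdots\to i_m\to i_1)$ in the $1$-skeleton of $\S$, with $\Sigma_{i_k i_{k+1}}\neq\emptyset$ for each $k$. Using that each $\Sigma_{ij}$ is a $C^\infty$ hypersurface relatively open in $\Sigma$ (Theorem~\ref{thm:Almgren}) and that the cells are open and connected (Theorem~\ref{thm:flat}), I would construct a piecewise-linear closed path $\gamma\subset\R^n$ visiting the cells $\Omega_{i_1},\dots,\Omega_{i_m},\Omega_{i_1}$ in order, transversely crossing $\Sigma^1$ at exactly the interfaces $\Sigma_{i_k i_{k+1}}$ (at points away from $\Sigma^{\geq 2}$), so that $\gamma$'s trace cycle in $\S$ is exactly $\sigma$.

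Since $\R^n$ is simply connected, $\gamma$ bounds a piecewise-smooth $2$-disk $D\subset\R^n$. The key step is a generic perturbation of $D$ (rel $\partial D=\gamma$) so that: (i) $D\cap\Sigma^4=\emptyset$; (ii) $D\cap\Sigma^3=\emptyset$; (iii) $D$ meets $\Sigma^2$ transversely in finitely many points; and (iv) $D$ meets $\Sigma^1$ transversely in a smoothly embedded graph whose vertices are exactly $D\cap\Sigma^2$, with three arcs emanating at $120^\circ$ from each vertex by Corollary~\ref{cor:boundary-normal-sum}. For the smooth strata --- $\Sigma^1$, $\Sigma^2$ ($C^\infty$ by Corollary~\ref{cor:smooth-Sigma2}), and $\Sigma^3$ ($C^{1,\alpha}$) --- items (ii)--(iv) follow from standard transversality. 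The delicate point is (i): $\Sigma^4$ is only assumed to be closed with $\H^{n-3}(\Sigma^4)=0$, so no smooth structure is available. To handle it, I would consider an $(n-2)$-parameter family $\{D_t\}_{t\in\R^{n-2}}$ of perturbations of $D$ whose sweep-out $\Phi\colon B^2\times\R^{n-2}\to\R^n$ is a submersion of rank $n$ on an open set, e.g.\ $D_t(x)=D(x)+\psi(x)\cdot t$ with $\psi$ a smooth bump vanishing on $\partial B^2$ and $\R^{n-2}\hookrightarrow\R^n$ in general position; then the preimage bound $\H^{n-3}(\Phi^{-1}(\Sigma^4))=0$ pushes down under the $1$-Lipschitz projection onto $\R^{n-2}$ to a Lebesgue-null set of bad parameters, yielding a good $t$. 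This integral-geometric slicing, parallel to the Favard-measure argument in the proof of Proposition~\ref{prop:local-to-global}, is the main technical obstacle: controlling intersection with a merely $\H^{n-3}$-null set requires this extra input beyond smooth transversality.

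Once such a generic $D$ is in hand, I would orient $D$ and label the $2$-faces of $D\setminus(D\cap\Sigma)$ by the cells containing them. A small positively-oriented loop around a triple point $p\in D\cap\Sigma_{ijk}$ has trace cycle $\epsilon_p\,\partial[i_p,j_p,k_p]\in C_1(\S)$, where $\epsilon_p\in\{\pm 1\}$ records the orientation sign of the transverse intersection. A standard combinatorial identity --- provable by induction on the number of triple points via successive local homotopies of $D$ that excise one triple point at a time, with the base case $D\cap\Sigma\subset\Sigma^1$ following from the fact that every arc in this smooth $1$-submanifold separates $D$ into two labeled regions and contributes two cancelling $\pm[i,j]$ crossings to the trace of $\partial D$ --- then yields
\[
\sigma \;=\; \sum_{p\in D\cap\Sigma^2}\epsilon_p\,\partial[i_p,j_p,k_p]\;\in\;B_1(\S).
\]
Since each $[i_p,j_p,k_p]$ is by definition a $2$-simplex of $\S$, this exhibits $[\sigma]=0\in H_1(\S)$, completing the plan.
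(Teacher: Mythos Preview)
Your proposal is correct and follows essentially the same strategy as the paper: realize the cycle by a loop in $\R^n$, bound it by a $2$-disk, arrange via an integral-geometric argument that the disk avoids $\Sigma^3\cup\Sigma^4$ and meets $\Sigma^2$ transversely in finitely many points, and read off the bounding $2$-chain from those triple-point crossings. The only difference is packaging: the paper uses the concrete cone $P_t=(1-t)P+t\cdot o$ over a piecewise-linear loop (so the disk is a finite union of flat triangles, and perturbing the finitely many vertices yields $2$-planes whose law is absolutely continuous with respect to the Haar measure on the affine Grassmannian, making the Favard-measure step immediate), whereas you work with a general smooth disk and an $(n-2)$-parameter family of bump-translations---a setup in which the full-rank condition on $\Phi$ requires the fixed $\R^{n-2}\hookrightarrow\R^n$ to be transverse to \emph{every} tangent plane of $D$, which may fail for a generic smooth $D$ but is easily arranged by taking $D$ piecewise linear (finitely many tangent planes) as the paper does.
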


\noindent
It is easy to check that Theorem \ref{thm:homology} is false for a general finite convex tessellation of $\R^n$. The crucial property we will use in the proof is the combinatorial incidence structure of cells along triple-points $\Sigma^2$, and the fact that $\H^{n-2}(\Sigma^3 \cup \Sigma^4) = 0$ (by regularity).  As this is not our main focus in this work, we only sketch the proof. 

\begin{proof}[Sketch of Proof of Theorem \ref{thm:homology}]
Let $\C$ denote a directed cycle in the $1$-skeleton of $\S$. Our goal will be to show that it is the boundary of a $2$-chain $\sum_{h=1}^T \Tr_h$, where $\Tr_h$ are oriented triangles in $\S$. Consider a closed piecewise linear directed path $P$ in $\R^n$ which emulates $\C$, meaning that it crosses from $\Omega_i$ to $\Omega_j$ transversely through $\Sigma_{ij}$ in the order specified by $\C$ (and without intersecting $\Sigma^3 \cup \Sigma^4$). Note that it is always possible to construct such a path thanks to the connectivity of the cells (and moreover, the convexity of the cells makes the construction especially simple). Assume that $P = \cup_{i=0}^{N-1} [y_i,y_{i+1}]$ with $y_N := y_0$, 
and fix a point $o$ in one of the (non-empty open) cells which is not co-linear with any of these segments. Consider the convex interpolation $P_t = (1-t) \cdot P + t \cdot o$, so that $[0,1] \ni t \mapsto P_t$ is a contraction of $P$ onto $o$. We claim that there exists a perturbation of the points $\mathcal{Y} := \{y_i\}_{i=0,\ldots,N-1} \cup \{o \}$ so that:
\begin{enumerate}[(i)]
\item $o$ remains in its original cell,  $P$ still emulates $\C$, and $o$ is not co-linear with any of the segments $[y_i,y_{i+1}]$. \label{it:homology-1}
\item $P_t$ does not intersect $\Sigma^3 \cup \Sigma^4$ for all times $t \in [0,1]$. \label{it:homology-2}
\item $P_t$ does not intersect $\Sigma^2$ except for a finite set of times $t \in (0,1)$. \label{it:homology-3}
\item For all times $t \in [0,1]$, $P_t$ crosses $\Sigma^1$ transversely.  \label{it:homology-4}
\end{enumerate}
The argument is similar to the one detailed in the proof of Proposition \ref{prop:local-to-global}, and we employ the same notation introduced there. 
Consider open balls of radius $\eps>0$ around the points of $\mathcal{Y}$; by selecting $\eps$ small-enough, we may always ensure that \ref{it:homology-1} holds for any perturbation inside these balls. Let $K$ denote a closed ball containing the convex hull of the above small balls around $\mathcal{Y}$. 
Consider a perturbation of the points of $\mathcal{Y}$ selected uniformly and independently inside each small ball. 
For each $i=0,\ldots,N-1$, consider the two-dimensional affine subspace $F_i$ spanned by the perturbed points $\{y_i,y_{i+1},o\}$; it is easy to see that the distribution of $F_i$ is absolutely continuous with respect to $\sigma_2^n$, the Haar measure on the homogeneous space of two-dimensional affine subspaces of $\R^n$. 
 Recall the definition (\ref{eq:Favard}) of the integral-geometric Favard measure $\F^{n-2}$, for which we know by Theorem \ref{thm:regularity}  that $\F^{n-2}(\Sigma^3 \cup \Sigma^4) = \H^{n-2}(\Sigma^3 \cup \Sigma^4) = 0$ and that $\F^{n-2}(K \cap \Sigma^2) = \H^{n-2}(K \cap \Sigma^2) < \infty$ (note that the latter statement employs for simplicity the local finiteness asserted in Theorem \ref{thm:regularity} \ref{it:regularity-Sigma2}, but as promised in Remark \ref{rem:relaxed-regularity}, this can be avoided by performing the above perturbation in two separate steps). It follows that with probability $1$, a random two-dimensional subspace selected according to $\sigma_2^n$ will be disjoint from $\Sigma^3 \cup \Sigma^4$ and intersect $K \cap \Sigma^2$ a finite number of times. By absolute continuity, the same holds for all of our randomly selected $F_i$ with probability $1$. In particular, this ensures \ref{it:homology-2} and \ref{it:homology-3}. 
Finally, note that the linear segments of $P_t$ remain parallel to those of $P$ along the contraction to $o$, and since 
$\Sigma^1$ has a finite number of normal directions $\n_{ij}$, with probability $1$ the segments of $P$ (and hence of $P_t$) will cross $\Sigma^1$ transversely, thereby ensuring \ref{it:homology-4}. 

The strategy is now clear: for every $t \in [0,1)$, consider the directed cycle $\C_t$ in the $1$-skeleton of $\S$ which the path $P_t$ traverses; it is well-defined since $P_t$ crosses from one cell to the next transversely through $\Sigma^1$, and is of finite length (the length is bounded e.g. by $N (q-1)$ since every segment can cross at most $q-1$ cells by convexity). 
By openness, $\C_t$ remains constant between consecutive times when $P_t$ intersects $\Sigma^2$, and at such times there are two possibilities: either a directed edge $(i,j)$ in $\C_t$ will transform into two consecutive directed edges $(i,k),(k,j)$, or vice versa, two consecutive directed edges $(i,k),(k,j)$ will collapse into a single directed edge $(i,j)$ (depending on how $P_t$ traverses $\Sigma_{ijk}$ as $t$ varies). By \ref{it:homology-3}, this can happen only a finite number of times, and for times $t$ close enough to $1$, $\C_t$ must be empty since the path $P_t$ is already close enough to $o$ so as to remain inside a single cell. This procedure produces a description of our original cycle $\C = \C_0$ as the boundary of the 2-chain  $\sum_{h=1}^T \Tr_h$, where $\Tr_h$ are the oriented triangles $(i,j,k)$ or $(i,k,j)$ corresponding to the above two possibilities for how $P_t$ traversed $\Sigma_{ijk}$. This concludes the proof. 
\end{proof}

\begin{proof}[Proof of Theorem \ref{thm:pull-back}] 
We first claim that for any directed cycle $\C$ in the $1$-skeleton of $\S$:
\begin{equation} \label{eq:sum-on-cycle}
 \sum_{(i,j) \in \C} \n_{i j} = 0 .
\end{equation}
Indeed, by Theorem \ref{thm:homology}, any such directed cycle is the boundary of a $2$-chain $\sum_{h=1}^T \Tr_h$, where $\Tr_h$ are oriented triangles in $\S$. Note that $\n_{ij} + \n_{jk} + \n_{ki} = 0$ for any oriented triangle $\Tr = (i,j,k)$ in $\S$, since this holds by Corollary \ref{cor:boundary-normal-sum} at any point $x$ in the non-empty $\Sigma_{ijk}$. Summing this up over all oriented triangles $\{ \Tr_h \}_{h = 1,\ldots,T}$, and using that $\n_{m \ell} = -\n_{\ell m}$, (\ref{eq:sum-on-cycle}) immediately follows. 

Clearly, for each coordinate $a=1,\ldots,n$, (\ref{eq:sum-on-cycle}) implies the existence of $b_a \in \E^{(q-1)}$ so that $\scalar{\n_{i j}, e_a} = \scalar{b_a, e_j -e_i}$ for all edges $(i,j) \in \S$; this may be seen as the triviality of the first (simplicial) cohomology. In fact, $b_a$ is necessarily unique if there are no empty cells, by connectivity of the $1$-skeleton on the set of all vertices $\{1,\ldots,q\}$. Defining $B = \sum_{a=1}^n e_a  b_a^T$, this establishes (\ref{eq:n-B}).

It follows by Theorem \ref{thm:flat} that:
\[
 \Omega_i = \bigcap_{j \neq i : A_{ij} > 0} \set{ x \in \R^n : \scalar{e_j - e_i,B^T x - \lambda} < 0 } .
\]
Note the subtle difference with (\ref{eq:pull-back-polyhedron-formula}), where the intersection is taken over all $j \neq i$ (no requirement that $A_{ij} > 0$) -- let us denote the latter variant by $\tilde \Omega_i$. We claim that $\Omega_i = \tilde \Omega_i$ for all $i$. Indeed, $\tilde \Omega$ is a genuine cluster, being the pull-back of a canonical model cluster in $E^{(q-1)}$ centered at $\lambda$, and so its cells cover $\R^n$ (up to null-sets). On the other hand, $\Omega$ is itself a cluster by assumption, and clearly $\tilde \Omega_i \subset \Omega_i$ for all $i$. Since all cells are open, it follows that necessarily $\Omega = \tilde \Omega$, concluding the proof.
\end{proof}

\medskip

An interesting question, which we leave for another investigation, is whether Theorem \ref{thm:pull-back} admits a converse:
\smallskip
\begin{addmargin}[3em]{3em}Is every $q$-cluster which is the pull-back via a linear map $B^T: \R^n \rightarrow E^{(q-1)}$ of a canonical model $q$-cluster in $E^{(q-1)}$, so that $\abs{B (e_i - e_j)}=1$ for all $i \neq j$ such that $A_{ij} > 0$, necessarily stable when $q \leq n+1$?
\end{addmargin}
\smallskip
A positive answer would yield a full characterization of stable regular $q$-clusters when $q \leq n+1$. To appreciate the difficulty in determining the stability of a given cluster, note that we do not even know how to directly show that the model simplicial clusters are stable, without invoking our main theorem to show that they are in fact minimizing (and hence in particular stable). 

\subsection{Characterizations of minimizing clusters}

Let us summarize the characterizing properties of minimizing clusters among all stable regular clusters which we have obtained so far (as usual, when $q \leq n+1$), and add an additional characterizing property to the list.

\begin{theorem} \label{thm:minimizers-char}
Let $\Omega$ denote a stable regular $q$-cluster in $\R^n$ with respect to $\gamma$ with $\gamma(\Omega) \in \interior \simplex^{(q-1)}$, and assume that $q \leq n+1$. 
Let $A_{ij} = \gamma^{n-1}(\Sigma_{ij})$ denote the measures of its interfaces. 
Then the following are equivalent:
\begin{enumerate}[(i)]
\item $\Omega$ is a minimizing cluster. \label{it:equiv-min}
\item $\Omega$ is (up to null-sets) a model simplicial cluster. \label{it:equiv-model} 
\item $A_{ij} > 0$ for all $i \neq j$.  \label{it:equiv-positive}
\item $\Omega$ is effectively $(q-1)$-dimensional (recall Definition \ref{def:effective}). \label{it:equiv-dim}
\end{enumerate}
\end{theorem}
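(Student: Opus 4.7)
The plan is to close the cycle (i) $\Rightarrow$ (ii) $\Rightarrow$ (iv) $\Rightarrow$ (iii) $\Rightarrow$ (ii) $\Rightarrow$ (i). Four of the five arrows come essentially for free from results already established in the excerpt. Specifically, (i) $\Rightarrow$ (ii) is exactly Theorem~\ref{thm:main-uniqueness}; (ii) $\Rightarrow$ (i) follows from Theorem~\ref{thm:main1}, since a canonical model simplicial cluster of prescribed measure $v \in \interior \simplex^{(q-1)}$ achieves the perimeter $I_m^{(q-1)}(v) = I^{(q-1)}(v)$ and is hence minimizing; (iii) $\Rightarrow$ (ii) is Corollary~\ref{cor:complete-graph}, whose hypotheses (stability and regularity) are part of our standing assumptions; and (ii) $\Rightarrow$ (iii) is immediate, since the Voronoi cells of $q$ equidistant points in $\R^n$ meet pairwise along non-empty, relatively open subsets of codimension-$1$ affine hyperplanes, each of which has positive $\gamma^{n-1}$-measure.

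For (ii) $\Rightarrow$ (iv), observe that if $\Omega$ is a simplicial cluster obtained from equidistant points $x_1, \ldots, x_q \in \R^n$, then every $\n_{ij}$ is a unit vector proportional to $x_j - x_i$; since $\{x_1, \ldots, x_q\}$ is a regular $(q-1)$-simplex, the differences $\{x_j - x_i\}_{i \neq j}$ span exactly a $(q-1)$-dimensional subspace, so $\Omega$ is effectively $(q-1)$-dimensional.

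The heart of the argument is the new implication (iv) $\Rightarrow$ (iii). Here the plan is to invoke Theorem~\ref{thm:pull-back}: there exists a linear operator $B : E^{(q-1)} \to \R^n$ such that $\n_{ij} = B(e_j - e_i)$ for every $i \neq j$ with $A_{ij} > 0$, and
\[
\Omega_i = \bigcap_{j \neq i} \set{ x \in \R^n : \scalar{e_j - e_i, B^T x - \lambda} < 0 } .
\]
By Lemma~\ref{lem:L-nondegenerate}, the graph on $\{1, \ldots, q\}$ with edges $\{i, j\}$ for which $A_{ij} > 0$ is connected, so $\{e_j - e_i : A_{ij} > 0\}$ spans all of $E^{(q-1)}$. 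Consequently $\Img B = \spn\{\n_{ij} : A_{ij} > 0\}$, which (because $\n_{ij}$ is constant on each non-empty $\Sigma_{ij}$ by Theorem~\ref{thm:flat}) coincides with the span of all defined interface normals of $\Omega$, i.e.\ $\dim \Img B$ equals the effective dimension of $\Omega$. Under hypothesis (iv) this dimension equals $q - 1 = \dim E^{(q-1)}$, so $B$ is injective, and therefore $B^T : \R^n \to E^{(q-1)}$ is surjective.

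Finally, I would conclude as follows. For every $i \neq j$, the interface between cells $i$ and $j$ of the canonical model simplicial cluster on $E^{(q-1)}$ centered at $\lambda$ is a non-empty relatively open subset of the hyperplane $\{y \in E^{(q-1)} : \scalar{e_j - e_i, y - \lambda} = 0\}$. The explicit formula (\ref{eq:pull-back-polyhedron-formula}) identifies $\Sigma_{ij}(\Omega)$ with the preimage of this set under the surjective linear map $B^T$, which is therefore a non-empty relatively open subset of the genuine affine hyperplane $\{x \in \R^n : \scalar{B(e_j - e_i), x} = \lambda_j - \lambda_i\}$ in $\R^n$. Any such set has positive $\gamma^{n-1}$-measure, so $A_{ij} > 0$ for every $i \neq j$, which is (iii). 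No serious obstacle is expected: the only mildly subtle point is the identification of the preimage of an interface as a non-empty relatively open subset of a true hyperplane in $\R^n$, but this is immediate from the surjectivity of $B^T$ and the explicit pull-back formula.
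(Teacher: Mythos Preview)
Your proof is correct, and for the equivalences among (i), (ii), (iii) it matches the paper's treatment. The genuine difference is in the implication (iv) $\Rightarrow$ (iii).

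You invoke Theorem~\ref{thm:pull-back}: using graph connectedness to see that $\{e_j - e_i : A_{ij} > 0\}$ spans $E^{(q-1)}$, you identify $\Img B$ with the span of the interface normals, so hypothesis (iv) forces $B$ injective and $B^T$ surjective; then each $\Sigma_{ij}(\Omega)$ contains the $B^T$-preimage of the non-empty model interface $\Sigma^m_{ij}$, which is a non-empty relatively open subset of a genuine hyperplane in $\R^n$ and hence has positive $\gamma^{n-1}$-measure. The paper instead argues directly with convex geometry: after reducing to the full-dimensional case $q = n+1$, it shows that no cell $\overline{\Omega_i}$ contains a line (using Lemma~\ref{lem:F=F} and graph connectedness to propagate a putative line direction to all cells, contradicting full-dimensionality), hence every cell has a zero-dimensional face, which must lie in at least $n = q-1$ facets; since (\ref{eq:polyhedron-formula}) gives at most $q-1$ facets, all are present and $A_{ij} > 0$.

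The paper's route is more elementary in that it avoids Theorem~\ref{thm:pull-back} and its underlying homology argument (Theorem~\ref{thm:homology}); indeed the paper stresses this, noting that the vertex so obtained lies in $\cap_j \overline{\Omega_j}$, making the simplicial complex $\S$ trivially complete. Your route is shorter once Theorem~\ref{thm:pull-back} is in hand and exploits its linear-algebraic content directly. Both arguments avoid the PDI machinery of Section~\ref{sec:multi-bubble} for this particular implication.
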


Indeed, \ref{it:equiv-min} implies \ref{it:equiv-positive} by Lemma \ref{lem:equality}, \ref{it:equiv-positive} implies \ref{it:equiv-model} by Corollary \ref{cor:complete-graph}, and \ref{it:equiv-model} implies \ref{it:equiv-min} by Theorem \ref{thm:main-I-I_m}. To this list of equivalences, we add \ref{it:equiv-dim}; recall that by Lemma \ref{lem:MKernelAndDef}, the effective dimension of any stable regular $q$-cluster is at most $q-1$. We remark that for the proof that \ref{it:equiv-dim} implies \ref{it:equiv-positive} and hence \ref{it:equiv-model}, we do not need to invoke our Partial Differential Inequality argument from Section \ref{sec:multi-bubble} nor the simplified argument from Section \ref{sec:double-bubble} (as none of the required implications invoke Theorem \ref{thm:main1} or Lemma \ref{lem:equality}).
In other words, if for a given $v \in \interior \simplex^{(q-1)}$, a minimizing cluster were known to be effectively $(q-1)$-dimensional,
we could directly deduce that it must be a model simplicial cluster, thereby simultaneously verifying Theorems \ref{thm:main-I-I_m} and \ref{thm:main-uniqueness}. While this would result in a conceptual simplification of the overall argument for establishing our main results (even beyond the simplified argument of Section \ref{sec:double-bubble}), unfortunately we do not know how to a-priori exclude the possibility that
a given minimizing cluster is effectively lower-dimensional.

\smallskip

We first establish an almost obvious lemma regarding the convex polyhedral cells of $\Omega$. 

\begin{lemma} \label{lem:F=F}
With the same assumptions as in Theorem \ref{thm:minimizers-char}, assume that $\Sigma_{ij}$ is non-empty for some $i \neq j$.  If $F_{ij}$ denotes the (closed) facet of $\Omega_i$ which contains $\Sigma_{ij}$, then necessarily $F_{ij} = \overline{\Sigma_{ij}}$. In particular, $F_{ij} = F_{ji}$. 
\end{lemma}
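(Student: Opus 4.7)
My plan is to leverage the explicit polyhedral description of $\Omega_i$ provided by Theorem~\ref{thm:pull-back}. Set $\phi_{ab}(x) := \inr{\n_{ab}}{x} - (\lambda_b - \lambda_a)$ with $\n_{ab} := B(e_b - e_a)$ for all $a \neq b$, so that (\ref{eq:pull-back-polyhedron-formula}) reads $\Omega_a = \bigcap_{b \neq a}\{\phi_{ab} < 0\}$. Since $\Sigma_{ij}$ is non-empty and, by Theorem~\ref{thm:flat}, lies in the single hyperplane $H_{ij} = \{\phi_{ij} = 0\}$, the facet $F_{ij}$ of $\Omega_i$ containing it is unambiguously $F_{ij} = \overline{\Omega_i} \cap \{\phi_{ij} = 0\}$. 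The inclusion $\overline{\Sigma_{ij}} \subset F_{ij}$ is immediate, so I would focus on the reverse inclusion, which I would achieve by proving $\relint F_{ij} \subset \Sigma_{ij}$.

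First, $F_{ij}$ is genuinely $(n-1)$-dimensional because it contains the non-empty $C^\infty$ hypersurface $\Sigma_{ij}$ (Theorem~\ref{thm:Almgren}). Consequently $\relint F_{ij}$ is dense in the closed convex polytope $F_{ij}$, so once the inclusion above is established, taking closures gives $F_{ij} = \overline{\relint F_{ij}} \subset \overline{\Sigma_{ij}}$ as required.

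For the core step, fix $p \in \relint F_{ij}$. By the description of the relative interior of a facet of a convex polyhedron, $\phi_{ij}(p) = 0$ while $\phi_{ik}(p) < 0$ strictly for every $k \neq i,j$. The key linear identity is $\phi_{jk} = \phi_{ik} - \phi_{ij}$, which is an immediate consequence of the relation $\n_{jk} = \n_{ik} - \n_{ij}$ supplied by (\ref{eq:n-B}) for \emph{all} triples $i,j,k$ (not only those indexing non-empty triple junctions). Evaluating at $p$ gives $\phi_{jk}(p) = \phi_{ik}(p) < 0$ for every $k \neq i,j$, while $\phi_{ji}(p) = -\phi_{ij}(p) = 0$. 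Thus $p$ also lies in the relative interior of the facet of $\Omega_j$ across $H_{ij}$, and in particular $p \in \overline{\Omega_j}$.

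Finally, by continuity $\phi_{ik}(x) < 0$ on a small ball $B(p,r)$ for every $k \neq i,j$, so $B(p,r) \cap \Omega_k = \emptyset$ for all such $k$. Hence $B(p,r)$ is covered up to null-sets by $\Omega_i \cup \Omega_j$, the two cells being separated inside $B(p,r)$ by the smooth hyperplane piece $H_{ij} \cap B(p,r)$. Both $\Omega_i$ and $\Omega_j$ therefore have $C^\infty$ boundary in $B(p,r)$, so $\partial \Omega_i = \partial^* \Omega_i$ and $\partial \Omega_j = \partial^* \Omega_j$ near $p$, yielding $p \in \partial^*\Omega_i \cap \partial^* \Omega_j = \Sigma_{ij}$. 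This establishes $F_{ij} = \overline{\Sigma_{ij}}$, and the identity $F_{ji} = F_{ij}$ follows by symmetry since $\Sigma_{ji} = \Sigma_{ij}$. I do not anticipate a serious obstacle — the only input beyond elementary convex analysis is that Theorem~\ref{thm:pull-back} provides a \emph{global} polyhedral description in which the constraints $\phi_{ab}$ are defined and linearly compatible for all pairs, which is exactly what allows the $i$-perspective at $p$ to be converted into the corresponding $j$-perspective.
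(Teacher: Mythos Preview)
Your argument is correct and takes a genuinely different route from the paper. The paper argues by contradiction using the \emph{regularity stratification}: if $F_{ij} \supsetneq \overline{\Sigma_{ij}}$, a relative isoperimetric inequality inside the convex facet $F_{ij}$ forces $\H^{n-2}(\relint F_{ij} \cap (\overline{\Sigma_{ij}} \setminus \Sigma_{ij})) > 0$; since $\overline{\Sigma_{ij}} \setminus \Sigma_{ij} \subset \Sigma^2 \cup \Sigma^3 \cup \Sigma^4$ and $\H^{n-2}(\Sigma^3 \cup \Sigma^4)=0$, one finds a triple point $p \in \relint F_{ij} \cap \Sigma^2$, contradicting the fact that $\Omega_i$ has only one outer normal at a relative-interior facet point but two at $120^\circ$ at any point of $\Sigma^2$. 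Your approach instead exploits the \emph{global algebraic} structure of Theorem~\ref{thm:pull-back}: the linear identity $\phi_{jk}=\phi_{ik}-\phi_{ij}$, valid for \emph{all} triples because the $\n_{ab}$ come from a single linear map $B$, lets you transfer strict inequalities from the $i$-side to the $j$-side and conclude $\relint F_{ij}\subset\Sigma_{ij}$ directly. The trade-off is that the paper's argument needs only Theorem~\ref{thm:flat} and the local regularity of Theorem~\ref{thm:regularity}, whereas yours imports the heavier Theorem~\ref{thm:pull-back} (and with it the homology computation of Theorem~\ref{thm:homology}); on the other hand, once that input is granted your proof is pure convex-polytope algebra with no measure-theoretic subtleties.

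One point deserves a line of justification. Your assertion that $\phi_{ik}(p)<0$ strictly for every $k\neq i,j$ at $p\in\relint F_{ij}$ is not automatic for an arbitrary (possibly redundant) halfspace description. What is true in general is that if $\phi_{ik}(p)=0$ then, since $\phi_{ik}\le 0$ on $F_{ij}$ and attains an interior maximum, $\phi_{ik}\equiv 0$ on $\operatorname{aff} F_{ij}=H_{ij}$; hence $\phi_{ik}=c\,\phi_{ij}$ for some $c>0$ (or $\n_{ik}=0$ with $\lambda_k=\lambda_i$). Either alternative forces one of $\Omega_i,\Omega_j,\Omega_k$ to be empty via the same identity $\phi_{jk}=\phi_{ik}-\phi_{ij}$, contradicting $\gamma(\Omega)\in\interior\simplex^{(q-1)}$. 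With this sentence added your proof is complete.
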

\begin{proof}
Recall that the interfaces $\Sigma_{k \ell}$ are relatively open in $\Sigma = \overline{\Sigma^1}$  by Theorem \ref{thm:Almgren}, and hence $\overline{\Sigma_{ij}} \setminus \Sigma_{ij} \subset \Sigma^2 \cup \Sigma^3 \cup \Sigma^4$. 
Assume in the contrapositive that $F_{ij} \neq \overline{\Sigma_{ij}}$. Since $F_{ij}$ is the closure of its relative interior (by convexity), it follows that $\H^{n-1}(F_{ij} \setminus \overline{\Sigma_{ij}}) > 0$; in addition $\H^{n-1}(\Sigma_{ij}) > 0$ by relative-openness. It follows by the relative isoperimetric inequality inside the convex $F_{ij}$ that $\H^{n-2}(\relint F_{ij} \cap (\overline{\Sigma_{ij}} \setminus \Sigma_{ij})) > 0$ (for instance, this can be deduced by endowing $F_{ij}$ with a Gaussian density and employing the Gaussian single-bubble isoperimetric inequality inside a convex set, as in Theorem \ref{thm:CaffarelliCor} below). Since $\overline{\Sigma_{ij}} \setminus \Sigma_{ij} \subset \Sigma^2 \cup \Sigma^3 \cup \Sigma^4$ and $\H^{n-2}(\Sigma^3 \cup \Sigma^4) = 0$ by Theorem \ref{thm:regularity}, it follows that there exists a point $p \in \relint F_{ij} \cap \Sigma^2$. But this contradicts Theorem~\ref{thm:regularity} and Corollary~\ref{cor:boundary-normal-sum},  because $p \in \relint F_{ij}$ implies that $\Omega_i$ has only one outer normal at $p$, and so it cannot have two outer normals with a $120^\circ$ angle between them.
\end{proof}

\begin{proof}[Proof of equivalence with \ref{it:equiv-dim} in Theorem \ref{thm:minimizers-char}]
Clearly, \ref{it:equiv-model} implies \ref{it:equiv-positive} and \ref{it:equiv-dim}, so let us assume that \ref{it:equiv-dim} holds and establish \ref{it:equiv-positive}. 
By Lemma~\ref{lem:dim-deficient} and the product structure of the Gaussian measure, we may assume that $\Omega$ is not dimension-deficient (if it were, we could pass to the subspace perpendicular to the directions of dimension-deficiency). That is, $\Omega$ is full-dimensional and so $q=n+1$. 

As a first step, we observe that there is no cell $\Omega_i$ for which
$\overline \Omega_i$ contains a line.  Assume in the contrapositive  that
$\overline{\Omega_i}$ contains a line parallel to the one-dimensional subspace $E$.
Since $\Omega_i$ is convex and open, it follows that $\Omega_i$ can be
written as $\tilde \Omega_i \times E$. Now let $j \ne i$ be such that $A_{ij} > 0$;
we will show that $\overline \Omega_j$ also contains a line parallel to $E$. Since the undirected graph in which $(i, j)$
is an edge if $A_{ij} > 0$ is connected by Lemma \ref{lem:L-nondegenerate}, it will then follow that every cell contains a line
parallel to $E$, which will imply that every cell $\Omega_j$ can be written as $\tilde \Omega_j \times E$,
contradicting the assumption that $\Omega$ is full-dimensional.

To see that $\overline \Omega_j$ contains a line parallel to $E$, let $F_{ij}$ be the facet of $\Omega_i$
that contains $\Sigma_{ij}$ and let $F_{ji}$ be the facet of $\Omega_j$
that contains $\Sigma_{ij}$. Since $\Omega_i$ is a cylinder in the direction $E$, it follows that $F_{ij}$
contains a line parallel $E$, and so by Lemma \ref{lem:F=F} so does $F_{ji}$ and hence $\overline{\Omega_j}$.

Now fix $i \in \{1, \dots, q\}$.  Since $\overline \Omega_i$ does not contain a line, it follows that $\Omega_i$ has
a face of dimension zero (if not, a face of minimal dimension would necessarily contain a line). So let
$\{p\} \subset \overline \Omega_i$ be a face of dimension zero. Then $p$ is contained in at least $n = q - 1$ facets of $\Omega_i$.
Since $\Omega_i$ has at most $q-1$ facets by (\ref{eq:polyhedron-formula}), 
it follows that $\Omega_i$ has exactly $q-1$ facets. Since every facet has positive $\calH^{n-1}$-measure, it follows by Lemma \ref{lem:F=F} that $A_{ij} > 0$ for every $j$. Since $i$ was arbitrary, this proves~\ref{it:equiv-positive}. Moreover, it actually follows that $p \in  \overline{\Omega_i} \cap (\cap_{j \neq i} \overline{\Sigma_{ij}}) \subset \cap_{j} \overline{\Omega_j}$, and hence our two-dimensional simplicial complex $\S$ is complete (i.e. contains all vertices, edges and triangles), and so trivially has vanishing first homology, without requiring to pass through the proof of  Theorem \ref{thm:homology} to deduce Corollary \ref{cor:complete-graph}. 
\end{proof}

\subsection{Are all stable regular clusters flat?}

We have shown in Theorem \ref{thm:flat} that whenever $2 \leq q \leq n+1$, a stable regular $q$-cluster in $\R^n$ must be flat (thus having convex polyhedral cells). 
In fact, by Proposition \ref{prop:dimension-deficient-implies-flat}, no restriction on $q$ is necessary for the latter statement to hold if the cluster is dimension-deficient. 
One could wonder whether this is also the case for full-dimensional clusters. In other words:
\smallskip
\begin{addmargin}[3em]{3em}Is every stable regular cluster necessarily flat? 
\end{addmargin}
\smallskip
A positive answer would resolve a conjecture of Corneli-et-al \cite[Conjecture 2.1]{CorneliCorwinEtAl-DoubleBubbleInSandG}, which asserts that all minimizing clusters in $\R^2$ are flat. 

\smallskip

One possible way to tackle the above question is to show that stability tensorizes, namely that if $\Omega$ is a stable cluster in $\R^n$ then so is $\Omega \times \R$ in $\R^{n+1}$, since then we could invoke Proposition \ref{prop:dimension-deficient-implies-flat} for the dimension-deficient cluster $\Omega \times \R$. Unfortunately, we do not have a clear feeling of how reasonable this might be. 

\subsection{Further Extensions}

For completeness, we mention that in a follow-up work after our resolution in \cite{EMilmanNeeman-GaussianDoubleBubbleConj} of the Gaussian Double-Bubble Conjecture, and in concurrence to circulating our resolution of the Gaussian Multi-Bubble Conjecture  and posting it on the arXiv in \cite{EMilmanNeeman-GaussianMultiBubbleConj}, Heilman posted \cite{Heilman-FundamentalTone}, where he attempted to further push our simplified double-bubble argument from \cite{EMilmanNeeman-GaussianDoubleBubbleConj} to treat the triple-bubble case $q=4$. Note that with our simplified approach, one just needs to find a single additional vector-field beyond translations for which (\ref{eq:Qinq}) holds, to treat the possibility that the minimizing cluster is effectively two-dimensional (the other two possibilities, that $M$ is surjective or that the cluster is effectively one-dimensional, were already treated in our work). Heilman attempted to show that under a certain supplementary assumption (\cite[Assumption 1.6]{Heilman-FundamentalTone}) on the minimizing cluster's structure, such an additional vector-field may be produced, 
and so by repeating our MPDI argument (see \cite[pp. 6343--6344]{Heilman-FundamentalTone}), this would yield a conditional verification of Theorem 1.1 in the case $q=4$ (under the supplementary assumption).
Note that this supplementary assumption does not appear to hold even for the model simplicial $q$-cluster in $\R^n$ whenever $q \leq n$, but that seems correctable; see \cite{HeilmanGaps} for further comments regarding Heilman's work. 

\medskip

Using standard machinery, it is also possible to obtain a functional version of the Gaussian multi-bubble isoperimetric inequality for locally Lipschitz functions $f : \R^n \rightarrow \Delta^{(q-1)}$, in the spirit of Bobkov's functional version of the classical single-bubble one \cite{BobkovGaussianIsopInqViaCube}. 
This and additionally related analytic directions will be described in \cite{EMilmanNeeman-FunctionalVersions}.
 
\medskip

Finally, we mention that Theorem \ref{thm:main-I-I_m} may be immediately extended to probability measures having strongly convex potentials.

\begin{definition}
A probability measure $\mu$ on $\R^n$ is said to have a $K$-strongly convex potential, $K > 0$, if 
$\mu = \exp(-W(x)) dx$ with $W \in C^\infty$ and $\text{Hess} \; W \geq K \cdot \Id$. 
\end{definition}

\begin{theorem} \label{thm:CaffarelliCor}
Let $\mu$ be a probability measure having $K$-strongly convex potential. 
Denote by $I^{(q-1)}_\mu : \Delta^{(q-1)} \rightarrow \Real_+$ its associated $(q-1)$-bubble isoperimetric profile, given by:
\[
I^{(q-1)}_\mu(v) := \inf \set{P_\mu(\Omega) : \text{$\Omega$ is a $q$-cluster with $\mu(\Omega) = v$}} .
\]
Then:
\[
I^{(q-1)}_\mu \geq \sqrt{K} I^{(q-1)}_m \text{ on } \Delta^{(q-1)} . 
\]
\end{theorem}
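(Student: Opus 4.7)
The plan is to reduce to the Gaussian case (Theorem \ref{thm:main1}) via Caffarelli's contraction theorem, which asserts that if $\mu = e^{-W} dx$ satisfies $\operatorname{Hess} W \geq K \cdot \Id$ then the Brenier optimal transport map $T: \R^n \to \R^n$ pushing $\gamma$ forward to $\mu$ is $(1/\sqrt{K})$-Lipschitz (Caffarelli, \emph{Monotonicity properties of optimal transportation and the FKG and related inequalities}, 2000). The idea is to use $T^{-1}$ to pull any competitor $\mu$-cluster back to a competitor $\gamma$-cluster of the same measure, while losing at most a factor $1/\sqrt{K}$ on the weighted perimeter, and then quote the just-proved Theorem \ref{thm:main1}.

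The key analytical step is the pointwise perimeter-contraction estimate
\[
\per_\gamma(T^{-1}(U)) \leq \frac{1}{\sqrt{K}}\, \per_\mu(U)
\]
for every Borel $U \subset \R^n$ with $\per_\mu(U) < \infty$. To derive this, approximate $1_U$ in $BV(\mu)$ by $C^1$ functions $u_\eps$ with $u_\eps \to 1_U$ in $L^1(\mu)$ and $\int |\nabla u_\eps|\, d\mu \to \per_\mu(U)$. Since $T$ is $(1/\sqrt{K})$-Lipschitz,
\[
|\nabla(u_\eps \circ T)(x)| \leq \frac{1}{\sqrt{K}}\, |\nabla u_\eps(T(x))| \quad \text{for a.e. } x,
\]
and since $T_\# \gamma = \mu$, integrating yields $\int |\nabla(u_\eps \circ T)|\, d\gamma \leq (1/\sqrt{K}) \int |\nabla u_\eps|\, d\mu$. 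Moreover $u_\eps \circ T \to 1_{T^{-1}(U)}$ in $L^1(\gamma)$, so by lower semi-continuity of perimeter under $L^1$-convergence the claim follows.

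Next I would check that for a $q$-cluster $\Omega$ with $\mu(\Omega) = v \in \simplex^{(q-1)}$, the tuple $\Omega' := (T^{-1}(\Omega_1), \ldots, T^{-1}(\Omega_q))$ is a bona fide $q$-cluster for $\gamma$: continuity of $T$ makes the $T^{-1}(\Omega_i)$ pairwise disjoint Borel sets, $T_\# \gamma = \mu$ gives $\gamma(T^{-1}(\Omega_i)) = \mu(\Omega_i)$ and hence $\gamma(\Omega') = v$ with $\gamma\bigl(\R^n \setminus \cup_i T^{-1}(\Omega_i)\bigr) = 0$, and the previous estimate forces each $T^{-1}(\Omega_i)$ to have finite $\gamma$-perimeter. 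Summing the contraction estimate over cells and applying Theorem \ref{thm:main1},
\[
\per_\mu(\Omega) = \frac{1}{2}\sum_i \per_\mu(\Omega_i) \geq \sqrt{K}\cdot \frac{1}{2}\sum_i \per_\gamma(T^{-1}(\Omega_i)) = \sqrt{K}\, \per_\gamma(\Omega') \geq \sqrt{K}\, I^{(q-1)}_m(v),
\]
and taking the infimum over admissible $\Omega$ gives Theorem~\ref{thm:CaffarelliCor}.

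There is no serious obstacle here: once Caffarelli's contraction is in hand, the argument is a standard transfer technique for lifting Gaussian functional and geometric inequalities to measures with strongly convex potentials. The only mild subtlety is the $BV$-approximation in the contraction step, which is routine and could alternatively be bypassed by pulling back admissible test vector-fields through $T$ directly in the variational definition of $\per_\mu$. It is worth noting, however, that this approach does not transfer uniqueness: a Caffarelli map $T$ is merely Lipschitz, so the image $T(\Omega^m)$ of a Gaussian simplicial cluster is generally not stationary for $\mu$ (its interfaces are Lipschitz rather than flat), and Theorem \ref{thm:main-uniqueness} does not yield a uniqueness counterpart for $I^{(q-1)}_\mu$ by this route.
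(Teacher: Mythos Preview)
Your proposal is correct and follows exactly the approach the paper indicates: the paper states that the proof is an immediate consequence of Caffarelli's contraction theorem, identical to the $q=3$ case in \cite[Subsection 9.1]{EMilmanNeeman-GaussianDoubleBubbleConj}, and does not reproduce the details. Your write-up fills in precisely those details---pulling a $\mu$-cluster back to a $\gamma$-cluster via the Lipschitz Brenier map, controlling perimeter by the Lipschitz constant, and invoking Theorem~\ref{thm:main1}---and your closing remark on why uniqueness does not transfer is an accurate additional observation.
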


The proof is an immediate consequence of the following remarkable theorem by L.~Caffarelli \cite{CaffarelliContraction} (see also \cite{KimEMilmanGeneralizedCaffarelli} for an alternative proof and an extension to a more general scenario):

\begin{theorem}[Caffarelli's Contraction Theorem]
If $\mu$ is a probability measure having a $K$-strongly convex potential,
there exists a $C^{\infty}$ diffeomorphism $T : \R^n \rightarrow \R^n$ which pushes forward the Gaussian measure $\gamma$ onto $\mu$ which is $\frac{1}{\sqrt{K}}$-Lipschitz,  i.e. $\abs{T(x) - T(y)} \leq \frac{1}{\sqrt{K}} \abs{x-y}$ for all $x,y \in \R^n$. 
\end{theorem}

We will require the following calculation:
\begin{lemma}
Let $T : \R^n \to \R^n$ denote a $C^{\infty}$ diffeomorphism pushing-forward $\mu_1 = \Psi_1(x) dx$ onto $\mu_2 = \Psi_2(y) dy$, where $\Psi_1,\Psi_2$ are strictly-positive $C^{\infty}$ densities. 
Let $X$ denote a $C^\infty$ vector-field $X$ on $\R^n$, and let $Y$ denote the vector-field obtained by push-forwarding $X$ via the differential $dT$, $Y = (dT)_* X$, given by:
\[
Y(y) = (dT)_* X(y) := (dT \cdot X)(T^{-1} y) .
\]
Then $Y$ is $C^{\infty}$-smooth and satisfies:
\[
(\div_{\mu_2} Y)(T x) = (\div_{\mu_1} X) (x) \;\;\; \forall x \in \R^n . 
\]
\end{lemma}
\noindent
The proof, which we leave as an exercise, is a simple application of the change-of-variables formula:
\[
\frac{\Psi_1(x)}{\Psi_2(T x)} = \text{det} \; dT (x) . 
\]

\begin{proof}[Proof of Theorem \ref{thm:CaffarelliCor}]
Let $T$ be the map from Caffarelli's theorem, pushing forward $\gamma$ onto $\mu$.  For any $C^{\infty}$ compactly-supported vector-field $X$ with $\abs{X} \leq 1$, denote $Y := (dT)_* X$, and observe that since $T$ is $\frac{1}{\sqrt{K}}$-Lipschitz, then $Y$ is also $C^{\infty}$, compactly-supported, and satisfies $\abs{Y} \leq \frac{1}{\sqrt{K}}$. In addition, for any Borel subset $U$ in $\R^n$, observe that:
\[
\int_U \div_{\mu} Y (y) d\mu(y) = \int_{T^{-1} U}  ( \div_{\mu} Y) (Tx) d\gamma(x) = \int_{T^{-1} U}  \div_{\gamma^n} X (x) d\gamma(x) .
\]
Consequently, taking supremum over all such vector-fields $X$, we deduce that:
\[
\frac{1}{\sqrt{K}} P_{\mu}(U) \geq P_{\gamma}(T^{-1} U) . 
\]
On the other hand, by definition, $\mu(U) = \gamma(T^{-1} U)$. Applying these observations to the cells of an arbitrary $q$-cluster $\Omega$, and applying Theorem \ref{thm:main-I-I_m}, we deduce:
\[
 \frac{1}{\sqrt{K}} P_{\mu}(\Omega) \geq P_{\gamma}(T^{-1} \Omega) \geq I_m(\gamma(T^{-1}(\Omega))) = I_m(\mu(\Omega)) . 
\]
It follows that $I_\mu \geq \sqrt{K} I_m$, as asserted. 
\end{proof}

\begin{remark}
It is possible to extend the above argument to measures $\mu$ with \emph{non-smooth} densities having $K$-strongly convex potentials, namely $\mu = \exp(-W(x)) dx$ with $U(x) = W(x) - \frac{K}{2} \abs{x}^2 : \R^n \rightarrow \R \cup \{+\infty\}$ being convex, 
but we do not insist on this generality here. 
\end{remark}

\appendix

\section{Volume and Perimeter Regularity} \label{app:per-regularity}

In this appendix, we derive a criterion for volume and perimeter regularity of cells with respect to a given measure $\mu = e^{-W} dx$, and verify that it holds for the Gaussian measure.  For simplicity, we assume that $\mu$ is a probability measure and that:
\begin{equation} \label{eq:decreasing}
\exists R_* \geq 0 \;\;\; [R_*,\infty) \ni R \mapsto \mu^{n-1}(\partial B_R)  \text{ is decreasing.}
\end{equation}
Recall that $B_R$ denotes the open Euclidean ball of radius $R>0$ centered at the origin. 

\begin{lemma} \label{lem:regular-cond}
For each $i \geq 0$, let $f_i : \Real_+ \rightarrow \Real_+$ denote a $C^1$ increasing function so that:
\[
\norm{\nabla^i W(x)} \leq f_i(\abs{x}) \;\;\; \forall x \in \R^n . 
\]
Assume that for all $i,j \geq 0$, there exists $\delta_{ij} > 0$, so that defining $F_{ij} : \Real_+ \rightarrow \Real_+$ by:
\[
F_{ij}(R) := f_i(R + \delta_{ij})^j e^{\delta_{ij} f_1(R+\delta_{ij})} ,
\]
we have:
\begin{equation} \label{eq:integrability-conds}
\int_{\R^n} F_{ij}(\abs{x}) d\mu < \infty ~,~ \int_{\R^n} F'_{ij}(\abs{x}) d\mu < \infty . 
\end{equation} 
Then any Borel set $U \subset \R^n$ is volume regular, and the cells of any isoperimetric $\mu$-minimizing cluster are perimeter regular (all with respect to $\mu$). \\
Furthermore, if $\delta_{ij} \geq \delta > 0$ uniformly in $i,j \geq 0$, the above sets are in fact uniformly volume and perimeter regular, respectively. 
\end{lemma}

For the proof of the perimeter regularity, we require the following perimeter decay estimate:
\begin{lemma}\label{lem:area-decay}
    If $\Omega$ is an isoperimetric $\mu$-minimizing $q$-cluster then:
    \[
        \sum_{i=1}^q \mu^{n-1}(\partial^* \Omega_i \setminus B_R) \leq 3 q \mu^{n-1}(\partial B_R) 
    \]
    for all $R \geq R_*$. 
\end{lemma}
\begin{proof}
   Let $R \geq R_*$. 
   Since $\mu(\R^n \setminus B_R) = \sum_{i=1}^q \mu(\Omega_i \setminus B_R)$, we may choose a non decreasing sequence $R = R_0 \leq R_1 \leq R_2 \leq \ldots R_{q-1} \leq R_q = \infty$ so that $\mu(B_{R_i} \setminus B_{R_{i-1}}) = \mu(\Omega_i \setminus B_R)$ for all $i=1,\ldots,q$. 
   Now define the cells of a competing cluster $\tilde \Omega$ as follows:
    \[
        \tilde \Omega_i := (\Omega_i \cap B_R) \cup (B_{R_i} \setminus B_{R_{i-1}})  .
    \]
    Clearly $\mu(\tilde \Omega) = \mu(\Omega)$.
                    Now observe that (see e.g. \cite[Lemma 12.22, Theorem 16.3]{MaggiBook}):
    \begin{align*}
        \per_\mu(\tilde \Omega_i)
        &\le \per_\mu(\Omega_i \cap B_R) + \per_\mu(B_{R_i} \setminus B_{R_{i-1}}) \\
        & \le \mu^{n-1}(\partial^* \Omega_i \cap B_R) + \mu^{n-1}(\partial B_R) + \mu^{n-1}(\partial B_{R_i}) + \mu^{n-1}(\partial B_{R_{i-1}}) \\
        &\le \mu^{n-1}(\partial^* \Omega_i\cap B_R) + 3 \mu^{n-1}(\partial B_R) .
    \end{align*}
   Summing in $i$ and dividing by $2$,
    \[
        \per_\mu(\tilde \Omega) \leq \frac{1}{2} \sum_{i=1}^q \mu^{n-1}(\partial^* \Omega_i\cap B_R) + \frac{3}{2} q \mu^{n-1}(\partial B_R) .
            \]
    On the other hand, the fact that $\Omega$ is $\mu$-minimizing and $\mu(\Omega) = \mu(\tilde \Omega)$ implies that
    \[
        \per_\mu(\tilde \Omega) \ge \per_\mu(\Omega) = \frac{1}{2} \sum_{i=1}^q \mu^{n-1}(\partial^* \Omega_i) .
    \]
    Combining these inequalities yields the assertion.
\end{proof}

\begin{proof}[Proof of Lemma \ref{lem:regular-cond}]
Note that by the intermediate value theorem: 
\[
\abs{\pot(z) -\pot(x)} \leq \abs{z-x} f_1(\max(\abs{x},\abs{z})) .
\]
Consequently:
\[
\sup_{z \in B(x,\delta_{ij})} \norm{\nabla^i \pot(z)}^j e^{-\pot(z)} \leq f_i(\abs{x}+\delta_{ij})^j e^{\delta_{ij} f_1(\abs{x}+\delta_{ij})} e^{-\pot(x)} = F_{ij}(\abs{x}) e^{-\pot(x)} ,
\]
and so if for any $i,j \geq 0$, the right-hand function is integrable on $\R^n$, then all Borel sets are volume regular with respect to $\mu$. 

Now if $\Omega$ is an isoperimetric $\mu$-minimizing $q$-cluster, then for each of its cells $U$:
\[
\int_{\partial^* U} \sup_{z \in B(x,\delta_{ij})} \norm{\nabla^i \pot(z)}^j e^{-\pot(z)} d\H^{n-1} \leq \int_{\partial^* U} F_{ij}(\abs{x}) d\mu^{n-1}(x) .
\]
Integrating by parts and applying Lemma \ref{lem:area-decay}, it follows that:
\begin{align*}
& \leq F_{ij}(R_*) \mu^{n-1}(\partial^* U) + \int_{R_*}^\infty F_{i,j}'(R) \mu^{n-1}(\partial^* \Omega_i \setminus B_R) dR \\
& \leq  F_{ij}(R_*) \mu^{n-1}(\partial^* U) + 3 q \int_{R_*}^\infty F_{i,j}'(R) \mu^{n-1}(\partial B_R) dR \\
& =  F_{ij}(R_*) \mu^{n-1}(\partial^* U) + 3 q \int_{\R^n \setminus B_{R_*}} F'_{i,j}(\abs{x}) d\mu(x) . 
\end{align*}
Since $\mu^{n-1}(\partial^* U) < \infty$ for any cell of a (minimizing) cluster, the perimeter regularity of $U$ with respect to $\mu$ follows as soon as the second term above is finite for all $i,j \geq 0$, as asserted. 
\end{proof}

\begin{corollary} \label{cor:Gaussian-regular}
For the standard Gaussian measure $\gamma$, any Borel set $U \subset \R^n$ is uniformly volume regular, and the cells of any isoperimetric minimizing cluster are uniformly perimeter regular, with respect to $\gamma$. 
\end{corollary}
\begin{proof}
Note that (\ref{eq:decreasing}) indeed holds since $\gamma^{n-1}(\partial B_R) = c_n R^{n-1} e^{-R^2/2}$. Setting $f_0(R) = \frac{R^2}{2}$, $f_1(R) = R$, $f_2(R) = \sqrt{n}$, $f_i(R) = 0$ for all $i \geq 3$ and $\delta = 1$, it is immediate to verify the integrability conditions (\ref{eq:integrability-conds}), as the Gaussian measure decays faster than any polynomial or exponential function. The assertion therefore follows by Lemma \ref{lem:regular-cond}. 
\end{proof}

\section{First variation for non-compactly-supported fields} \label{app:first-var}

In this appendix, we derive formulas for the first variations of (weighted) volume and perimeter. While these are well-known for sets with smooth boundaries and compactly supported vector-fields (see e.g. \cite{RCBMIsopInqsForLogConvexDensities}), for the purposes of this work, we need to extend them to more general volume and perimeter regular sets and admissible vector-fields. 

\medskip

The following lemma will be very useful in this work for calculating first and second variations of non-compactly-supported vector-fields. In several of these instances the vector-field will not be bounded in magnitude, and so we formulate it quite generally. 

\begin{definition}[Cutoff function $\eta_R$]
Given $R > 0$, we denote by $\eta_R : \R^n \rightarrow [0,1]$ a smooth compactly-supported cutoff function on $\R^n$ with $\eta_R(x) = 1$ for $\abs{x} \leq R$ and $\abs{\nabla \eta_R} \leq 1$.
\end{definition}

\begin{lemma} \label{lem:cutoff}
\hfill
\begin{enumerate}
\item
Let $X$ denote a $C^1$ vector-field on $\R^n$ so that $\abs{X}, \abs{\div X} \leq P(\abs{\nabla \pot},\ldots,\norm{\nabla^p \pot})$ for some real valued polynomial $P$ and $p \geq 1$. Assume that $U$ is volume regular with respect to the measure $\mu$. Then: 
\[
\int_U \div_\mu X \; d\mu = \lim_{R \rightarrow \infty} \int_U \div_{\mu}( \eta_R X) \; d\mu . 
\]
\item
Let $X$ denote a $C^1$ vector-field on a smooth hypersurface $\Sigma \subset \R^n$ so that $\abs{X}, \abs{\div_{\Sigma} X} \leq P(\abs{\nabla \pot},\ldots,\norm{\nabla^p \pot})$ for some real valued polynomial $P$ and $p \geq 1$. Assume that $\Sigma \subset \partial^*U$ with $U$ being perimeter regular with respect to the measure $\mu$. Then:
\[
\int_{\Sigma} \div_{\Sigma,\mu} X \; d\mu^{n-1} = \lim_{R \rightarrow \infty} \int_\Sigma \div_{\Sigma,\mu}( \eta_R X) \; d\mu^{n-1} . 
\]
\end{enumerate}
\end{lemma}
\begin{proof}
We will prove the first part, the proof of the second one is identical. Write: 
\[
\int_U \div_{\mu} X d\mu = \int_U \div_\mu (\eta_R X) d\mu - \int_U \scalar{\nabla \eta_R,X} d\mu + \int_U (1-\eta_R) \div_{\mu} X d\mu . 
\]
Next, note that:
\[
\abs{\div_\mu X} \leq \abs{\div X} + \abs{\nabla_X \pot} \leq Q(\abs{\nabla \pot},\ldots,\norm{\nabla^p \pot}) ~,~ \abs{\scalar{\nabla \eta_R,X}} \leq P(\abs{\nabla \pot},\ldots,\norm{\nabla^p \pot}) ,
\]
for an appropriate polynomial $Q$. Consequently:
\begin{align*}
\abs{\int_U (1-\eta_R) \div_{\mu} X d\mu} & \leq \int_{U \setminus B_R} Q(\abs{\nabla \pot},\ldots,\norm{\nabla^p \pot})  d\mu ~, \\
\abs{\int_U \scalar{\nabla \eta_R,X}  d\mu} & \leq \int_{U \setminus B_R} P(\abs{\nabla \pot},\ldots,\norm{\nabla^p \pot})  d\mu ~,
\end{align*}
and volume regularity implies that both of these terms go to zero as $R \rightarrow \infty$, yielding the claim. 
\end{proof}

\begin{proposition}\label{prop:first-variation}
Let $X$ be an admissible vector-field on $\R^n$, and let $U \subset \R^n$ denote a Borel subset with $P_\mu(U) < \infty$. \\
\begin{enumerate}
\item
If $U$ is volume regular with respect to the measure $\mu$ or if $X$ is $C_c^\infty$, then:
  \begin{equation}\label{eq:formula-first-variation-of-volume}
    \delta_X V(U) = \int_{\partial^* U} X^\n \, d\mu^{n-1} .
  \end{equation}
\item
If $U$ is perimeter regular with respect to the measure $\mu$ or if $X$ is $C_c^\infty$ then:
\[
\delta_X A(U) = \int_{\partial^* U} \div_{\n_U^{\perp},\mu} X d\mu^{n-1} .
\]
If in addition  $\partial^* U = \Sigma \cupdot \Xi$ where $\Sigma$ is a smooth hypersurface and $\H^{n-1}(\Xi) = 0$, then: 
  \begin{equation}\label{eq:formula-first-variation-of-area-before}
    \delta_X A(U) = \int_{\Sigma} (H_{\Sigma,\mu} X^\n + \div_{\Sigma,\mu} X^\tang) \, d\mu^{n-1}.
  \end{equation}
 \end{enumerate}
\end{proposition}

\begin{proof}Let $F_t$ be the flow along $X$. Under the assumptions of the first assertion, Lemma \ref{lem:regular} implies:
\[
\delta_X V(U) = \int_U \left . \frac{d}{dt} \right |_{t=0} (J F_t e^{-\pot \circ F_t}) dx . 
\]
It is well-known (e.g. \cite[(2.13)]{SternbergZumbrun}) that $\frac{d}{dt} |_{t=0} J F_t = \div X$, and therefore:
\[
  \delta_X V(U) = \int_U \brac{\div X - \nabla_X \pot} d\mu = \int_{U} \div_\mu X \, d\mu .
\]  
In order to apply integration-by-parts, we need to first make $X$ compactly supported. Applying Lemma \ref{lem:cutoff} and integrating by parts the compactly-supported vector-field $\eta_R X$ using ~\eqref{eq:integration-by-parts}, we deduce:
\[
\int_U \div_{\mu} X d\mu = \lim_{R \rightarrow \infty} \int_U \div_\mu (\eta_R X) d\mu = \lim_{R \rightarrow \infty} \int_{\partial^* U} \eta_R X^{\n} d\mu^{n-1} = \int_{\partial^* U} X^{\n} d\mu^{n-1} ,
\]
where the last equality follows by Dominant Convergence since $X$ is uniformly bounded and $P_\mu(U) < \infty$.

For the second assertion, set as usual $\Phi_t = F_t|_{\partial^* U}$ and recall that $J \Phi_t = \text{det}((d_{\n_{U}^{\perp}} F_t)^T d_{\n_{U}^{\perp}} F_t)^{1/2}$ is the Jacobian of $\Phi_t$. Under the assumptions of the second assertion, Lemma \ref{lem:regular} implies:
\[
\delta_X A(U) = \int_{\partial^* U} \left . \frac{d}{dt} \right |_{t=0} (J \Phi_t e^{-\pot \circ F_t}) d\H^{n-1}(x) . 
\]
It is well-known (e.g. \cite[(2.16)]{SternbergZumbrun}) that $\frac{d}{dt} |_{t=0} J \Phi_t = \div_{\n_U^\perp} X$, and hence: 
\[
\delta_X A(U) = \int_{\partial^* U} \brac{\div_{\n_U^\perp} X - \nabla_X \pot} d\mu^{n-1} = \int_{\partial^* U} \div_{\n_U^\perp,\mu} X \, d\mu^{n-1} .
\]
To establish the last claim, simply continue as follows:
\begin{align*}
& = \int_{\Sigma} \div_{\Sigma,\mu} X \, d\mu^{n-1} \notag \\
& = \int_{\Sigma} \brac{\div_{\Sigma,\mu} X^\tang + \div_{\Sigma,\mu} (X^\n \n) } d\mu^{n-1}  \notag \\
&= \int_{\Sigma} \brac{\div_{\Sigma,\mu} X^\tang + H_{\Sigma,\mu} X^\n } d\mu^{n-1}.
\end{align*}
\end{proof}

\section{Stationarity and Stability} \label{app:stst}

In this appendix, we verify that the usual properties of stationarity and stability of a minimizing cluster hold for all admissible (not necessarily compactly supported) vector-fields $X$, assuming its cells are volume and perimeter regular (with respect to $\mu$). When $X$ is compactly supported this is well-known and standard in the single-bubble case, and was proved for Euclidean double-bubbles in \cite{DoubleBubbleInR3} assuming higher-order boundary regularity, which we avoid in this appendix. 

\medskip

\noindent
Given a $q$-cluster $\Omega$, let $\Sigma_{ij}$ denote its interfaces. Recall that $E = E^{(q-1)} = \set{x \in \R^q : \sum_{i=1}^q x_i = 0}$. 

\medskip

\begin{lemma}\label{lem:span}
    If $\mu(\Omega) \in \interior \simplex^{(q-1)}$ then the set $\{e_i - e_j: \mu^{n-1}(\Sigma_{ij}) > 0\}$ spans $E^{(q-1)}$.
\end{lemma}

\begin{proof}
    Suppose in the contrapositive that $\{e_i - e_j: \mu^{n-1}(\Sigma_{ij}) > 0\}$ does not span $E^{(q-1)}$.
    Then there exists some non-zero $v$ in $E^{(q-1)}$ such that $v_i = v_j$ whenever $\mu^{n-1}(\Sigma_{ij}) > 0$.
    Let $S = \{i : v_i < 0\}$ and let $T = \{i : v_i \ge 0\}$. Since $v \in E^{(q-1)}$ implies $\sum_i v_i = 0$, it follows
    that both $S$ and $T$ are non-empty. Since $i \in S$ and $j \in T$ imply that $v_i \ne v_j$, it follows that
    $\mu^{n-1}(\Sigma_{ij}) = 0$ whenever $i \in S$ and $j \in T$. But this is in contradiction to the connectivity of the graph $G$ from Lemma \ref{lem:L-nondegenerate}. 
  \end{proof}

\begin{lemma}\label{lem:compensators}
    If $\mu(\Omega) \in \interior \simplex^{(q-1)}$ then
    there exists a collection of $C_c^\infty$ vector-fields $Y_1, \dots, Y_{q-1}$ with disjoint supports, so that for every $p=1,\ldots,q-1$,  $(Y_p)|_{\cup_{i<j}\Sigma_{ij}}$ is supported in $\Sigma_{ij}$ for some $i<j$, and such the set $\{\delta_{Y_p} V: p = 1, \dots, q-1\}$ spans $E^{(q-1)}$.
\end{lemma}

\begin{proof}
    For each pair $i, j$ with $\mu^{n-1}(\Sigma_{ij}) > 0$, choose some $x_{ij} \in \Sigma_{ij}$.
        By Theorem \ref{thm:Almgren}, there exists some $\eps > 0$ such that $B(x_{ij},\eps)$ is disjoint from all the cells besides $\Omega_i,\Omega_j$ (and hence disjoint from all
    the interfaces besides $\Sigma_{ij}$), and such that $\overline{B(x_{ij},\eps) \cap \Sigma_{ij}} \subset \Sigma_{ij}$.
        By replacing $\eps$ by $\eps/2$, we can ensure that all of the $B(x_{ij},\eps)$ are pairwise disjoint.  
    Let $f_{ij}$ be a non-negative $C_c^\infty$ function supported in $B(x_{ij},\eps)$
    such that $f_{ij}(x_{ij}) > 0$, and let $X_{ij}$ be a smooth extension of $f_{ij} \n_{ij}$ that is supported in $B(x_{ij},\eps)$.
    Then $\delta_{X_{ij}} V = \alpha_{ij} (e_i - e_j)$ for some $\alpha_{ij} > 0$.

    By Lemma~\ref{lem:span}, $\{\delta_{X_{ij}} V: \mu^{n-1}(\Sigma_{ij}) > 0\}$ spans $E^{(q-1)}$; hence,
    we may choose $Y_1, \dots, Y_{q-1}$ to be an appropriate subset of the $X_{ij}$.
\end{proof}

\subsection{Stationarity} 

\begin{lemma}[Stationarity] \label{lem:first-order} 
    Let $\Omega$ be a $\mu$-minimizing cluster.  Then for any $X$ in $C_c^\infty$, if $\delta_X V = 0$ then $\delta_X A = 0$. Moreover, the same holds for all admissible vector-fields $X$ if the cells of $\Omega$ are volume and perimeter regular with respect to $\mu$. 
\end{lemma}

\begin{proof}
   Let $F_t$ denote the flow along $X$, defined as usual by:
     \[
    \frac{d}{dt} F_t = X \circ F_t ~,~ F_0 = \text{Id} .
    \]
    Choose a family $Y_1, \dots, Y_{q-1}$ of vector-fields as in Lemma~\ref{lem:compensators}, having compact and pairwise disjoint supports. 
    Let $\{F_{t,s}\}_{t \in \R , s \in \R^{q-1}}$ be a family of $C^\infty$ diffeomorphisms defined by solving the following system of linear stationary PDEs:
    \begin{align*}
    \pdiff{}{s_i} F_{t,s} &= Y_i \circ F_{t,s} \;\;\; \forall i=1,\ldots,q-1 ~, \\
     F_{t,\vec 0} &= F_t .
    \end{align*}
                                                                  Observe that the above system is indeed integrable since the $Y_i$'s have disjoint supports, and hence the flows they individually generate necessarily commute (cf. the Frobenius Theorem \cite{Lang-ManifoldsBook}). Consequently:
     \begin{equation} \label{eq:concat}
     F_{t,s} = F^{q-1}_{s_{q-1}} \circ \ldots \circ F^{1}_{s_1} \circ F_t ,
     \end{equation}
     where:
     \[
     \frac{d}{ds} F^i_s = Y_i \circ F^i_s ~,~ F^i_0 = \text{Id} \;\;\; \forall i=1,\ldots,q-1 ~, 
     \]
     and all of the usual smoothness and uniform boundedness of all partial derivatives of any fixed order apply to $F_{t,s}$ for  $t \in [-T,T]$, $s \in [-T,T]^{q-1}$, for any fixed $T > 0$. 
     
    Let $V(t, s) = \mu(F_{t,s}(\Omega))$ and $A(t,s) = P_\mu(F_{t,s}(\Omega))$. By a tedious yet straightforward adaptation of the proof of Lemma \ref{lem:regular} to a concatenation of (partly non-commuting) flows as in (\ref{eq:concat}), it follows that $V$ and $A$ are both $C^\infty$ on $\{(t,s) : t \in (-\epsilon,\epsilon) , s \in (-\epsilon,\epsilon)^{q-1}\}$ for some $\epsilon > 0$.         Clearly:
    \begin{align*}
        \left.\frac{\partial^m V}{(\partial t)^m}\right|_{t=0,s=0} = \delta_X^m V ~,~ &
        \left.\frac{\partial^m A}{(\partial t)^m}\right|_{t=0,s=0} = \delta_X^m A \;\;\;\;\;\forall m =1,2,\ldots\\
        \left.\frac{\partial V}{\partial s_i}\right|_{t=0,s=0} = \delta_{Y_i} V  ~,~ &
        \left.\frac{\partial A}{\partial s_i}\right|_{t=0,s=0} = \delta_{Y_i} A.
    \end{align*}
    Since $\{\delta_{Y_i} V\}_{i=1,\ldots,k-1}$ span $E$, $\{\partial_{s_i} V_j(0, 0)\}_{ji} : \R^{q-1} \to E$ is full rank. By the implicit
    function theorem, there exists a $\delta \in (0,\epsilon)$ and a $C^\infty$ curve $s(t) : (-\delta,\delta) \rightarrow (-\epsilon,\epsilon)^{q-1}$, such that $s(0) = 0$ and $V(t,s(t)) = V(0,0) = \mu(\Omega)$ for all $|t| < \delta$. Moreover, $\frac{\partial V}{\partial t}(0,0) = \delta_X V = 0$ and the full-rank of $\{\partial_{s_i} V_j(0, 0)\}$ imply that $s'(0) = 0$. From this property and the chain rule,
    \[
        \diffat{A(t,s(t))}{t}{t=0} = \pdiffat{A(t,s)}{t}{t=0,s=0} = \delta_X A.
    \]
    Hence, we conclude that $\delta_X A = 0$; if not, there is some $t \ne 0$ such that the cluster
    $\tilde \Omega = F_{t,s(t)}(\Omega)$ has $\mu(\tilde \Omega) = \mu(\Omega)$ and
    $\per_\mu(\tilde \Omega) = A(t,s(t)) < A(0,0) = \per_\mu(\Omega)$, contradicting the minimality of $\Omega$.
\end{proof}

\subsection{First-Order Conditions}

We are now ready to provide a proof of Lemma \ref{lem:first-order-conditions} regarding first-order properties which a minimizing cluster must satisfy.

\begin{proof}[Proof of Lemma \ref{lem:first-order-conditions}]
We will sketch parts~\ref{it:first-order-constant} and~\ref{it:first-order-cyclic}, which are well-known, and provide a more detailed explanation of part~\ref{it:weak-angles} which is less standard. 
Since $\H^{n-1}(\partial^* \Omega_i \setminus \cup_{j \neq i} \Sigma_{ij}) = 0$ by (\ref{eq:nothing-lost}), and since $\Sigma_{ij}$ are all smooth for a minimizing cluster $\Omega$ by Theorem \ref{thm:Almgren}, the smoothness assumption in the second part of Proposition \ref{prop:first-variation} is satisfied and we may appeal to the formulas for first variation of volume and perimeter derived there:
\begin{align}
\label{eq:first-order-V} \delta_X V(\Omega)_i & = \sum_{j \neq i} \int_{\Sigma_{ij}} X^{\n_{ij}} \, d\mu^{n-1} \;\;\; \forall i=1,\ldots,q ~, \\
\label{eq:first-order-A} \delta_X A(\Omega) & = \sum_{i < j} \int_{\Sigma_{ij}} (H_{\Sigma_{ij},\mu} X^{\n_{ij}} + \div_{\Sigma_{ij},\mu} X^\tang) \, d\mu^{n-1} ~,
\end{align}
valid for any $X$ in $C_c^\infty$, as well as for any admissible $X$ assuming the cells $\Omega_i$ are volume and perimeter regular. 
Note that whenever the support of $X^{\tang}|_{\Sigma_{ij}}$ is contained in $\Sigma_{ij}$, we may integrate by parts to obtain:
\begin{equation} \label{eq:tang-zero}
\int_{\Sigma_{ij}} \div_{\Sigma_{ij},\mu} X^\tang \, d\mu^{n-1} = 0  . 
\end{equation}
 
To establish part~\ref{it:first-order-constant}, observe that for any two different points $x_1,x_2 \in \Sigma_{ij}$ we can find by Theorem \ref{thm:Almgren} an $\epsilon > 0$ so that $B(x_1,\epsilon)$, $B(x_2,\epsilon)$ and all the other interfaces are disjoint, $\mu^{n-1}(B(x_p,\eps) \cap \Sigma_{ij}) > 0$, 
and $\overline{B(x_p,\eps) \cap \Sigma_{ij}} \subset \Sigma_{ij}$, for $p=1,2$. Consequently, for any smooth vector-field $X$ supported in $B(x_1,\eps) \cup B(x_2,\eps)$, (\ref{eq:tang-zero}) applies. In view of (\ref{eq:first-order-V}) and (\ref{eq:first-order-A}), if $H_{\Sigma_{ij},\mu}$ were not the same at $x_1$ and $x_2$, we could then construct a smooth vector-field $X$ supported in $B(x_1,\eps) \cup B(x_2,\eps)$ with $\delta_X V(\Omega) = 0$ while $\delta_X A(\Omega) < 0$,  in violation of Lemma~\ref{lem:first-order}. 

To establish part~\ref{it:first-order-cyclic}, consider the undirected graph $G$ with vertices $\{1, \dots, q\}$ and an edge between vertex $i$ and $j$ if $\mu^{n-1}(\Sigma_{ij}) > 0$. Observe that for any directed cycle $(i_1,i_2,\ldots,i_r,i_{r+1}=i_1)$ in $G$ we must have $\sum_{p=1}^{r} H_{i_p i_{p+1} , \mu} = 0$; otherwise, we could construct a smooth vector-field compactly supported around $r$ points (one in each $\Sigma_{i_p i_{p+1}}$) that would preserve weighted volume to first order, while decreasing weighted perimeter to first order, exactly as above, again in violation of Lemma~\ref{lem:first-order}. Clearly, this implies the existence of $\lambda \in \R^{q}$ so that $H_{ij,\mu} = \lambda_i - \lambda_j$, and by adding an appropriate constant, we can always ensure that $\sum_{i=1}^q \lambda_i = 0$, i.e. that $\lambda \in E^*$ (it will soon be clear that $\lambda$ acts on $E$). Whenever $\mu(\Omega) \in \interior \simplex$, $\lambda \in E^*$ is uniquely defined by Lemma \ref{lem:span}. 

At this point, we have shown that for any vector-field $X$ as above:
\begin{align}
\notag & \sum_{i<j} \int_{\Sigma_{ij}}  H_{ij,\mu} X^{\n_{ij}}\, d\mu^{n-1} = \sum_{i < j} (\lambda_i - \lambda_j) \int_{\Sigma_{ij}} X^{\n_{ij}} \, d\mu^{n-1} \\
\notag  & = \sum_{i \neq j} \lambda_i \int_{\Sigma_{ij}} X^{\n_{ij}} \, d\mu^{n-1} = \sum_{i} \lambda_i \sum_{j \neq i} \int_{\Sigma_{ij}} X^{\n_{ij}}\, d\mu^{n-1} \\
\label{eq:inr-lambda}  &= \sum_{i} \lambda_i \; \delta_X V(\Omega_i) =  \inr{\lambda}{\delta_X V},
\end{align}
where we have used above that $\int_{\Sigma_{ij}} X^{\n_{ij}}\, d\mu^{n-1} = - \int_{\Sigma_{ji}} X^{\n_{ji}}\, d\mu^{n-1}$. 
For any such $X$, there exists by Lemma \ref{lem:compensators} a vector-field $Y = \sum_{i=1}^{q-1} c_i Y_i$ with $\delta_Y V(\Omega) = -\delta_X V(\Omega)$ so that $Y|_{\cup_{i<j} \Sigma_{ij}}$ is supported in $\cup_{i<j} \Sigma_{ij}$. In particular, we may integrate-by-parts and obtain:
 \begin{equation} \label{eq:lambda1}
 \int_{\Sigma_{ij}} \div_{\Sigma_{ij},\mu} Y^\tang \, d\mu^{n-1} = 0\;\;\; \forall i < j . 
 \end{equation}
 Since $\delta_{X+Y} V = 0$, it follows by (\ref{eq:inr-lambda}) that:
 \begin{equation} \label{eq:lambda2}
 \sum_{i<j}\int_{\Sigma_{ij}} H_{ij,\mu}  (X^{\n_{ij}} + Y^{\n_{ij}}) \, d\mu^{n-1} = \inr{\lambda}{\delta_{X+Y} V} = 0 . 
 \end{equation}
 In addition, Lemma \ref{lem:first-order} implies that:
 \begin{equation} \label{eq:lambda3}
 \delta_{X+Y} A = 0 . 
 \end{equation}
Combining (\ref{eq:lambda1}), (\ref{eq:lambda2}) and (\ref{eq:lambda3}), we obtain:
 \begin{align}
\nonumber 0 & = \delta_{X + Y} A  \\
 \nonumber &= \sum_{i<j} \int_{\Sigma_{ij}} \brac{H_{ij,\mu} (X^{\n_{ij}} + Y^{\n_{ij}}) + \div_{\Sigma_{ij},\mu} (X^\tang + Y^\tang)} \, d\mu^{n-1} \\
  \label{eq:app-weak-angles} & = \sum_{i<j} \div_{\Sigma_{ij},\mu} X^\tang  \, d\mu^{n-1} ,
 \end{align}
 concluding the proof of part~\ref{it:weak-angles}.

\end{proof}

\begin{proof}[Proof of Lemma \ref{lem:Lagrange}]
The formulas for $\delta_X V$ and $\delta_X A$ follow from (\ref{eq:first-order-V}), (\ref{eq:first-order-A}) and (\ref{eq:app-weak-angles}). 
In particular, it follows by (\ref{eq:inr-lambda}) that:
\begin{equation} \label{eq:app-formula-first-variation-of-area}
\delta_X A = \sum_{i<j} \int_{\Sigma_{ij}}   H_{ij,\mu} X^{\n_{ij}}  \, d\mu^{n-1} = \inr{\lambda}{\delta_X V} . 
\end{equation}
\end{proof}

\subsection{Stability}

Finally, we provide a proof of Lemma \ref{lem:unstable} on stability of a minimizing cluster. This is well-known and standard for compactly supported vector-fields in the single-bubble case, and was proved in \cite{DoubleBubbleInR3} assuming higher-order boundary regularity, which we avoid in this appendix.  

\begin{proof}[Proof of Lemma \ref{lem:unstable}]
    Let $Y_1, \dots, Y_{q-1}$, $F_{t,s}$, and $s(t)$ be as in the proof of Lemma~\ref{lem:first-order}. Recall that $\delta_X V = 0$ implies that $s'(0)=0$.
    By the chain rule, it follows (using $s'(0)=0$) that:
    \begin{align*}
                \left.\frac{d^2 A(t,s(t))}{(dt)^2} \right|_{t=0}
        & = \left.\frac{\partial^2 A}{(\partial t)^2}\right|_{t=0,s=0}  + \sum_{i=1}^{q-1} s_i''(0) \pdiffat{A}{s_i}{t=0,s=0} \\
        &= \delta_X^2 A + \sum_{i=1}^{q-1} s_i''(0) \delta_{Y_i} A \\
        &= \delta_X^2 A + \sum_{i=1}^{q-1} s_i''(0) \inr{\lambda}{\delta_{Y_i} V},
    \end{align*}
    where the last equality follows by (\ref{eq:app-formula-first-variation-of-area}). 
    Differentiating the relation $V(0,0) = V(t,s(t))$ twice in $t$ (and using again that $s'(0) = 0$),
    we obtain:
    \[
    0 = \left.\frac{\partial^2 V}{(\partial t)^2}\right|_{t=0,s=0} + \sum_i s_i''(0) \pdiffat{V}{s_i}{t=0,s=0} = \delta_X^2 V + \sum_i s_i''(0) \delta_{Y_i} V .
    \]
    Hence,
    \[
        \left.\frac{d^2 A(t,s(t))}{(dt)^2} \right|_{t=0}
        = \delta_X^2 A - \inr{\lambda}{\delta_X^2 V} = Q(X).
    \]
    It follows that necessarily $Q(X) \ge 0$, since otherwise, recalling that $\left.\frac{dA(t,s(t))}{dt} \right|_{t=0} = 0$ by Lemma~\ref{lem:first-order},
    we would find some $t \ne 0$ so that the cluster $\tilde \Omega = F_{t,s(t)}(\Omega)$ satisfies $\mu(\tilde \Omega) = \mu(\Omega)$
    and $\per_\mu(\tilde \Omega) = A(t,s(t)) < A(0,0) = \per_\mu(\Omega)$, contradicting the minimality of $\Omega$.
\end{proof}

\section{Second Variation for Translation Fields} \label{app:translations}

In this appendix, we calculate the second variations of volume and perimeter for constant (translation) fields. 
Recall that $\delta_w$ denotes the variation under the translation of the cluster generated by the constant vector-field $X \equiv w \in \R^n$.

\subsection{Second variation of volume}

\begin{lemma} \label{lem:delta2-V}
Let $\Omega$ be a stationary cluster with respect to $\mu = e^{-\pot} dx$ and with Lagrange multiplier $\lambda \in E^*$, and assume its cells are volume and perimeter regular. Then for any $i=1,\ldots,q$ and $w \in \R^n$:
\begin{equation} \label{eq:delta2-V-cell}
\delta_w^2 V(\Omega)_i = - \sum_{j \neq i}\int_{\Sigma_{ij}} \inr{w}{\n_{ij}} \nabla_w \pot \, d\mu^{n-1} .
\end{equation}
   In particular:
   \[
    \inr{\lambda}{\delta_w^2 V} = - \sum_{i < j} H_{ij,\mu} \int_{\Sigma_{ij}} \inr{w}{\n_{ij}} \nabla_w \pot \, d\mu^{n-1} .
    \]
\end{lemma}
\begin{proof}
Since $\Omega_i$ is volume regular, we have by Lemma \ref{lem:regular}:
\begin{align*}
  \delta_w^2 V(\Omega)_i
  &= \int_{\Omega_i} \diffIIat{e^{-\pot(x + tw)}}{t}{t}{t=0}\, dx \\
  &= \int_{\Omega_i} (\nabla_w \pot)^2 - \nabla^2_{w,w} \pot\, d\mu \\
  & = - \int_{\Omega_i} \div_{\mu} (w \nabla_w \pot) \, d\mu . 
\end{align*}
  In order to apply integration-by-parts, we need to first make $X = w \nabla_w \pot$ compactly supported. 
Noting that $\abs{X} \leq \abs{w}^2 \abs{\nabla \pot}$ and $\abs{\div X} = \abs{\nabla^2_{w,w} \pot} \leq \abs{w}^2 \norm{\nabla^2 \pot}$, we may apply Lemma \ref{lem:cutoff} to approximate $X$ by the compactly-supported $\eta_R X$. Integrating by parts using ~\eqref{eq:integration-by-parts}, we obtain:
\begin{align*}
  &= - \lim_{R \rightarrow \infty} \int_{\Omega_i} \div_\mu(\eta_R w \nabla_w \pot)\, d\mu \\
  &= -\lim_{R \rightarrow \infty} \int_{\partial^* \Omega_i} \eta_R \inr{w}{\n} \nabla_w \pot \, d\mu^{n-1} \\
  & = - \int_{\partial^* \Omega_i} \inr{w}{\n} \nabla_w \pot \, d\mu^{n-1}  ,
\end{align*}
where the last equality follows by Dominant Convergence since $\Omega_i$ is perimeter regular. Since $\H^{n-1}(\partial^* \Omega_i  \setminus \cup_{j \neq i} \Sigma_{ij}) = 0$, (\ref{eq:delta2-V-cell}) follows. 

Finally, since swapping $i$ and $j$ changes the sign of $\inr{w}{\n_{ij}}$, we have:
\begin{align*} 
    \inr{\lambda}{\delta_w^2 V}
        & = - \sum_{i=1}^q \lambda_i \sum_{j \ne i} \int_{\Sigma_{ij}} \inr{w}{\n_{ij}} \nabla_w \pot \, d\mu^{n-1} \\    
    &= - \sum_{i < j} (\lambda_i - \lambda_j)\int_{\Sigma_{ij}} \inr{w}{\n_{ij}} \nabla_w \pot \, d\mu^{n-1} \notag \\
    &= - \sum_{i < j} H_{ij,\mu} \int_{\Sigma_{ij}} \inr{w}{\n_{ij}} \nabla_w \pot \, d\mu^{n-1}.
\end{align*}
\end{proof}

\subsection{Second variation of perimeter}

\begin{lemma}\label{lem:delta2-A}
Let $\Omega$ be a stationary cluster with respect to $\mu = e^{-\pot} dx$, and assume that its cells are perimeter regular. Then for any $w \in \R^n$:
\[
\delta_w^2 A = -  \sum_{i < j} \int_{\Sigma_{ij}} \brac{ H_{ij,\mu} \scalar{w,\n_{ij}} \nabla_w \pot  + \scalar{w, \n_{ij}} \nabla^2_{\n_{ij},w} \pot } d\mu^{n-1} .
\]
\end{lemma}
\begin{proof}
Since $\Omega_i$ is perimeter regular, we have by Lemma \ref{lem:regular} that for every $i$:
\begin{align*}
  \delta_w^2 A(\Omega_i)
&  = \int_{\partial^* \Omega_i} \diffIIat{e^{-\pot(x + tw)}}{t}{t}{t=0}\, d\calH^{n-1}(x) \\
&  = \int_{\partial^* \Omega_i} \brac{(\nabla_w \pot)^2 - \nabla^2_{w,w} \pot} d\mu^{n-1} \\
&  = \sum_{j \neq i} \int_{\Sigma_{ij}} \brac{(\nabla_w \pot)^2 - \nabla^2_{w,w} \pot} d\mu^{n-1}  . 
\end{align*}
On the other hand, we now have:
\begin{equation}\label{eq:div-constant}
  \div_{\Sigma_{ij},\mu} (w \nabla_w \pot)
  = \div_{\Sigma_{ij}} (w \nabla_w \pot) - (\nabla_w \pot)^2
  = \nabla^2_{w^\tang,w} \pot - (\nabla_w \pot)^2,
\end{equation}
where $w^\tang$ is the tangential part of $w$. Consequently, we obtain:
\[
 \delta_w^2 A(\Omega_i) = - \sum_{j \neq i} \int_{\Sigma_{ij}} \brac{\div_{\Sigma_{ij},\mu} (w \nabla_w \pot) + \nabla^2_{w,w} \pot - \nabla^2_{w^\tang,w} \pot} d\mu^{n-1} . 
 \]
Summing over $i$ and dividing by $2$, we obtain:
\[
\delta_w^2 A = - \sum_{i < j} \int_{\Sigma_{ij}} \brac{\div_{\Sigma_{ij},\mu} (w \nabla_w \pot) + \scalar{w, \n_{ij}} \nabla^2_{\n_{ij},w} \pot  } d\mu^{n-1} . 
\]

We now claim that the contribution of the tangential part of divergence terms vanishes. This is essentially the content of Lemma~\ref{lem:first-order-conditions} part~\ref{it:weak-angles}, applied to the vector-field $X = w \nabla_w \pot$; however, $X$ is not bounded and so does not satisfy the required assumption (\ref{eq:field-bdd}). To remedy this, note as before that $\abs{X} \leq \abs{w}^2 \abs{\nabla \pot}$ and $\abs{\div_\Sigma X} = \abs{\nabla^2_{w^{\tang},w} \pot} \leq \abs{w}^2 \norm{\nabla^2 \pot}$, and so we may apply Lemma \ref{lem:cutoff} to approximate $X$ by the compactly-supported $\eta_R X$. Now applying Lemma~\ref{lem:first-order-conditions} part~\ref{it:weak-angles} to $\eta_R X$, we deduce:
\begin{align*}
\delta_w^2 A  & = - \lim_{R \rightarrow \infty} \sum_{i < j} \int_{\Sigma_{ij}} \brac{\div_{\Sigma_{ij},\mu} (\eta_R w \nabla_w \pot) + \scalar{w, \n_{ij}} \nabla^2_{\n_{ij},w} \pot } d\mu^{n-1} \\
& =  - \lim_{R \rightarrow \infty} \sum_{i < j} \int_{\Sigma_{ij}} \brac{\div_{\Sigma_{ij},\mu} (\eta_R \n_{ij} \scalar{w,\n_{ij}} \nabla_w \pot) + \scalar{w, \n_{ij}} \nabla^2_{\n_{ij},w} \pot } d\mu^{n-1} \\
& =  - \lim_{R \rightarrow \infty} \sum_{i < j} \int_{\Sigma_{ij}} \brac{ \eta_R H_{ij,\mu} \scalar{w,\n_{ij}} \nabla_w \pot  + \scalar{w, \n_{ij}} \nabla^2_{\n_{ij},w} \pot } d\mu^{n-1} \\
& = -  \sum_{i < j} \int_{\Sigma_{ij}} \brac{ H_{ij,\mu} \scalar{w,\n_{ij}} \nabla_w \pot  + \scalar{w, \n_{ij}} \nabla^2_{\n_{ij},w} \pot } d\mu^{n-1} ,
\end{align*}
where the last equality follows by Dominant Convergence since the cells $\Omega_i$ are perimeter regular. \end{proof}

\section{Calculation of First and Second Variations} \label{sec:calculation}

Appendix \ref{sec:calculation} is dedicated to establishing Lemmas~\ref{lem:formula} and~\ref{lem:polarization}. 
In the first two subsections below, let $U$ denote a Borel set of locally finite perimeter so that $\partial^* U = \Sigma \cup \Xi$, where $\Sigma$ is a smooth $(n-1)$-dimensional manifold co-oriented by the outer unit-normal $\n$ and $\H^{n-1}(\Xi) = 0$. 
Let $X$ be an admissible vector-field, and let $F_t$ denote the associated flow generated by $X$ via (\ref{eq:flow-gen-by-X}).

For a fixed point $p \in \Sigma$, let $\tang_1, \dots, \tang_{n-1}$ be normal coordinates for $\Sigma$. Then $\tang_1, \dots, \tang_{n-1}, \n$ is an orthonormal basis for $\R^n$ at $p$; we write $X^i$ for $\inr{\tang_i}{X}$ and $X^\n$ for $\inr{\n}{X}$. Moreover, we will write $X^\tang$ for the tangential component of $X$: $X^\tang = X^i \tang_i$. We freely employ Einstein summation convention, summing over repeated matching upper and lower indices. 

Recall that the second fundamental form is the symmetric bilinear form defined on $T \Sigma$ by $\II(Y, Z) = \inr{Y}{\nabla_Z \n}$. We write $\{\II_{ij}\}_{i,j=1}^{n-1}$ for its coordinates $\II_{ij} = \inr{\tang_i}{\nabla_j \n} = -\inr{\n}{\nabla_j \tang_i}$, where $\nabla_j$ means $\nabla_{\tang_j}$. The mean-curvature $H_\Sigma$ is defined as $\tr \II$. Given a measure $\mu = e^{-\pot} dx$ with $\pot \in C^\infty(\R^n)$, the weighted mean curvature $H_{\Sigma,\mu}$ is defined as $H_\Sigma - \nabla_\n \pot$. The surface unweighted and weighted divergences are defined, respectively, by 
\begin{align*}
  \div_\Sigma X &= \div X - \inr{\n}{\nabla_\n X} ~, \\
\div_{\Sigma,\mu} X &= \div_\Sigma X - \nabla_X \pot.
\end{align*}

\subsection{Second variation of perimeter}

Assuming $X$ is compactly supported or $U$ is perimeter regular, Lemma~\ref{lem:regular} implies that
\begin{multline}
  \label{eq:second-variation-of-area}
  \delta_X^2 A(U) = \int_{\partial^* U} \left . \frac{d^2}{(dt)^2} \right |_{t=0} (J \Phi_t e^{-\pot \circ F_t}) d\H^{n-1} \\
  = \int_{\Sigma} \Bigg[
  \left . \frac{d^2 J \Phi_t}{(dt)^2} \right |_{t=0}   - 2 \nabla_X \pot \left . \frac{d J \Phi_t }{dt} \right |_{t=0}
          + \left . \frac{d^2 e^{-\pot \circ F_t}}{(dt)^2} \right |_{t=0} e^{+\pot} 
  \Bigg]\, d\mu^{n-1},
\end{multline}
where, recall from Subsection \ref{subsec:variations}, $J \Phi_t$ denotes the (surface) Jacobian of $F_t|_{\Sigma}$. 
An immediate calculation yields:
\begin{equation}\label{eq:second-derivative-of-exponent}
  \diffIIat{e^{-\pot \circ F_t}}{t}{t}{t=0}  e^{+\pot} 
  = - \inr{\nabla_X X}{\nabla \pot}  + (\nabla_X \pot)^2  - \nabla^2 _{X,X}\pot .
\end{equation}
An explicit computation of the first and second derivatives of $J \Phi_t$ in $t$ (see e.g. ~\cite[formula (2.16)]{SternbergZumbrun}) reveals that: 
\begin{align*}
\left . \frac{d J \Phi_t }{dt} \right |_{t=0}& = \div_\Sigma X \\
    \left . \frac{d^2 J \Phi_t}{(dt)^2} \right |_{t=0}
  & = \div_\Sigma \nabla_X X + (\div_\Sigma X)^2 + \sum_{i=1}^{n-1} \inr{\n}{\nabla_i X}^2 - \sum_{i,j=1}^{n-1} \inr{\tang_i}{\nabla_j X} \inr{\tang_j}{\nabla_i X}.
\end{align*}
Plugging the above into~\eqref{eq:second-variation-of-area} yields
\begin{multline}
  \delta_X^2 A(U) = \\
  \int_{\Sigma} \Big[\div_{\Sigma} \nabla_X X + (\div_\Sigma X)^2 + \sum_{i=1}^{n-1} \inr{\n}{\nabla_i X}^2 - \sum_{i,j=1}^{n-1} \inr{\tang_i}{\nabla_j X} \inr{\tang_j}{\nabla_i X} \\
  \qquad  - 2 \nabla_X \pot \div_\Sigma X - \inr{\nabla_X X}{\nabla \pot} + (\nabla_X \pot)^2 - \nabla^2_{X,X} \pot \Big]\, d\mu^{n-1} .
  \label{eq:second-variation-of-area-1}
\end{multline}

\begin{lemma}\label{lem:div-nabla-nabla-div}
  \begin{multline*}
    \div_{\Sigma} (\nabla_{X^t} X) + (\div_\Sigma X)^2 + \sum_{i=1}^{n-1} \inr{\n}{\nabla_i X}^2 - \sum_{i,j=1}^{n-1} \inr{\tang_j}{\nabla_i X} \inr{\tang_i}{\nabla_j X} \\
    = \div_\Sigma (X \div_\Sigma X) + |(\nabla X^\n)^\tang|^2 - (X^\n)^2 \|\II\|_2^2  - \div_{\Sigma} (X^\n \nabla_{X^\tang} \n) + X^\n \nabla_{X^\tang} H_\Sigma.
  \end{multline*}
\end{lemma}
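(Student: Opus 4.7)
My plan is to work pointwise at an arbitrary $p \in \Sigma$ in a normal coordinate frame $\tang_1,\ldots,\tang_{n-1}$ on $\Sigma$, where the intrinsic Christoffel symbols vanish and only the shape-operator identities $\nabla_i \tang_j = -\II_{ij}\n$ and $\nabla_i \n = \II_{ij}\tang_j$ remain at $p$. Decomposing $X = X^\tang + X^\n \n$ with $X^j := \inr{\tang_j}{X}$, one immediately reads off the component formulas
\[
\inr{\tang_j}{\nabla_i X} = \nabla_i X^j + X^\n \II_{ij}, \qquad \inr{\n}{\nabla_i X} = \nabla_i X^\n - \II_{ik} X^k,
\]
together with $\div_\Sigma X = \sum_i \nabla_i X^i + X^\n H_\Sigma$. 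Both sides of the identity are pointwise tensorial in $X$, $\n$, and $\II$, so proving it at $p$ is enough.

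The first step is to expand the two composite divergence terms. From the Leibniz rule $\div_\Sigma(fY) = f \div_\Sigma Y + \nabla_{Y^\tang} f$ one gets $\div_\Sigma(X \div_\Sigma X) = (\div_\Sigma X)^2 + \nabla_{X^\tang}(\div_\Sigma X)$. For $\div_\Sigma(X^\n \nabla_{X^\tang}\n)$ I use that $\nabla_{X^\tang}\n = \II_{ij}X^i\tang_j$ is tangential, combined with the Codazzi-type identity $\sum_j \nabla_j \II_{ij} = \nabla_i H_\Sigma$ at $p$ (an immediate consequence of the flatness of the ambient $\R^n$, i.e.\ of $\nabla_i \nabla_j \n = \nabla_j \nabla_i \n$), to obtain
\[
\div_\Sigma(X^\n \nabla_{X^\tang}\n) = \inr{\nabla^\tang X^\n}{\II X^\tang} + X^\n \textstyle\sum_{ij}\II_{ij}\nabla_j X^i + X^\n \nabla_{X^\tang} H_\Sigma.
\]
The second step is to expand $\div_\Sigma(\nabla_{X^\tang} X) = \sum_j \inr{\tang_j}{\nabla_j(X^i\nabla_i X)}$ by the Leibniz rule: this produces $\sum_{ij}(\nabla_j X^i)\inr{\tang_j}{\nabla_i X}$, computable from the component formulas, plus $\sum_i X^i \sum_j \inr{\tang_j}{\nabla_j \nabla_i X}$. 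For the latter I invoke the flatness of $\R^n$ to commute the ambient derivatives, $\nabla_j\nabla_i X = \nabla_i\nabla_j X$, and then use $\nabla_i \tang_j = -\II_{ij}\n$ at $p$ to get
\[
\textstyle\sum_j \inr{\tang_j}{\nabla_i\nabla_j X} = \nabla_i(\div_\Sigma X) + \sum_j \II_{ij}\inr{\n}{\nabla_j X}.
\]
Contracting with $X^i$ yields a $\nabla_{X^\tang}(\div_\Sigma X)$ term which will cancel the one produced from $\div_\Sigma(X \div_\Sigma X)$, while the second sum contributes $\inr{\II X^\tang}{\nabla^\tang X^\n} - |\II X^\tang|^2$ after substituting the component formula for $\inr{\n}{\nabla_j X}$.

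The third step is to expand the purely pointwise quadratic terms using the component formulas:
\[
\textstyle\sum_i \inr{\n}{\nabla_i X}^2 = |(\nabla X^\n)^\tang|^2 - 2 \inr{\nabla^\tang X^\n}{\II X^\tang} + |\II X^\tang|^2,
\]
\[
\textstyle\sum_{ij}\inr{\tang_j}{\nabla_i X}\inr{\tang_i}{\nabla_j X} = \sum_{ij}(\nabla_j X^i)(\nabla_i X^j) + 2 X^\n \sum_{ij}\II_{ij}\nabla_j X^i + (X^\n)^2 \|\II\|_2^2.
\]
Substituting all of the above into the claimed identity, the $\sum_{ij}(\nabla_j X^i)(\nabla_i X^j)$ pieces cancel, the $|\II X^\tang|^2$ pieces cancel, the coefficients of $X^\n\sum_{ij}\II_{ij}\nabla_j X^i$ match, and the coefficients of $\inr{\nabla^\tang X^\n}{\II X^\tang}$ match, leaving precisely $|(\nabla X^\n)^\tang|^2 - (X^\n)^2\|\II\|_2^2 + X^\n \nabla_{X^\tang} H_\Sigma$ on each side. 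The main obstacle is the combinatorial bookkeeping in this final matching: one must carefully track the multiplicities and signs of the four distinct ``cross'' contributions of type $X^\n\inr{\II}{\nabla X^\tang}$, $\inr{\nabla^\tang X^\n}{\II X^\tang}$, $|\II X^\tang|^2$, and $\sum_{ij}(\nabla_j X^i)(\nabla_i X^j)$. Once done in the normal frame at $p$, tensoriality yields the lemma globally.
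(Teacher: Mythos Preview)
Your proposal is correct and follows essentially the same approach as the paper's proof: a direct pointwise computation in normal coordinates on $\Sigma$, using the component identities $\inr{\tang_j}{\nabla_i X} = \nabla_i X^j + X^\n \II_{ij}$ and $\inr{\n}{\nabla_i X} = \nabla_i X^\n - \II_{ik}X^k$, the flatness of $\R^n$ to commute $\nabla_i\nabla_j X = \nabla_j\nabla_i X$, and the Codazzi equation $\sum_j \nabla_j \II_{ij} = \nabla_i H_\Sigma$. The only difference is organizational---the paper first isolates the difference $\div_\Sigma(\nabla_{X^\tang}X) - \nabla_{X^\tang}(\div_\Sigma X)$ and then treats the remaining terms, whereas you expand every term on both sides and match---but the ingredients and the cancellations are identical.
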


\begin{proof}
  In coordinates 
    \begin{align*}
    \nabla_{X^\tang} (\div_\Sigma X)
    &= \sum_{i,j=1}^{n-1} X^j \nabla_j \inr{\tang_i}{\nabla_i X} \\
    &= \sum_{i,j=1}^{n-1} X^j \inr{\tang_i}{\nabla_j \nabla_i X} + X^j \inr{\nabla_j \tang_i}{\nabla_i X} \\
    &= \sum_{i,j=1}^{n-1} X^j \inr{\tang_i}{\nabla_j \nabla_i X} - X^j \II_{ij} \inr{\n}{\nabla_i X}.
  \end{align*}
  The last term can be manipulated by writing
  \[
    X^j \II_{ij} = \inr{\nabla_i \n}{X} = \nabla_i X^\n - \inr{\n}{\nabla_i X},
  \]
  and hence
  \begin{equation}\label{eq:nabla-div}
    \nabla_{X^\tang} (\div_\Sigma X)
    = \sum_{i,j=1}^{n-1} X^j \inr{\tang_i}{\nabla_j \nabla_i X} + \inr{\n}{\nabla_i X}^2 - (\nabla_i X^\n)^2 + X^j \II_{ij} \nabla_i X^n.
  \end{equation}
  On the other hand,
  \begin{align*}
    \div_\Sigma (\nabla_{X^\tang} X)
    &= \sum_{i,j=1}^{n-1} \inr{\tang_i}{\nabla_i (X^j \nabla_j X)} \\
    &= \sum_{i,j=1}^{n-1} X^j \inr{\tang_i}{\nabla_i \nabla_j X} + \nabla_i X^j \inr{\tang_i}{\nabla_j X} \\
    &= \sum_{i,j=1}^{n-1} X^j \inr{\tang_i}{\nabla_i \nabla_j X} + \inr{\tang_j}{\nabla_i X} \inr{\tang_i}{\nabla_j X} - \II_{ij} X^\n \inr{\tang_i}{\nabla_j X}.
  \end{align*}
  Comparing this with~\eqref{eq:nabla-div}, we obtain (using that $\nabla_i \nabla_j X = \nabla_j \nabla_i X$ since $\nabla$ is the flat
  connection on $\R^n$ and $[\tang_i, \tang_j] = 0$):
      \begin{multline}\label{eq:nabla-div-div-nabla}
    \div_{\Sigma} (\nabla_{X^t} X) + \sum_{i=1}^{n-1} \inr{\n}{\nabla_i X}^2 - \sum_{i,j=1}^{n-1}\inr{\tang_j}{\nabla_i X} \inr{\tang_i}{\nabla_j X} \\
    = \nabla_{X^\tang} (\div_\Sigma X) + \sum_{i=1}^{n-1} (\nabla_i X^\n)^2 - \sum_{i,j=1}^{n-1} (X^j \II_{ij} \nabla_i X^\n + \II_{ij} X^\n \inr{\tang_i}{\nabla_j X}).
  \end{multline}
  We now concentrate on the last two terms above. First, note that
  \[
   \inr{\tang_i}{\nabla_j X} = \nabla_j \inr{\tang_i}{X} - \inr{\nabla_j \tang_i}{X} = \nabla_j X^i + X^\n \II_{ij} .
   \]
  In conjunction with the Codazzi equation $\sum_{i=1}^{n-1} \nabla_i \II_{ij} = \sum_{i=1}^{n-1} \nabla_j \II_{ii}$, we obtain:
  \begin{align*}
    & \sum_{i,j=1}^{n-1} \brac{X^j \II_{ij} \nabla_i X^\n + \II_{ij} X^\n \inr{\tang_i}{\nabla_j X}} \\
    &\qquad = (X^\n)^2 \|\II\|_2^2  + \sum_{i,j=1}^{n-1} \II_{ij} (X^j \nabla_i X^\n + X^\n \nabla_j X^i) \\
    &\qquad = (X^\n)^2 \|\II\|_2^2  + \sum_{i,j=1}^{n-1} \II_{ij} \nabla_i (X^\n X^j) \\
    &\qquad = (X^\n)^2 \|\II\|_2^2  + \sum_{i,j=1}^{n-1} \nabla_i (\II_{ij} X^\n X^j) - X^\n X^j \nabla_i \II_{ij} \\
    &\qquad = (X^\n)^2 \|\II\|_2^2  + \sum_{i=1}^{n-1} \nabla_i \inr{X^\n \nabla_{X^\tang} \n}{\tang_i} - X^\n \nabla_{X^\tang} \II_{ii} \\
    &\qquad = (X^\n)^2 \|\II\|_2^2  + \div_\Sigma (X^\n \nabla_{X^\tang} \n) - X^\n \nabla_{X^\tang} H_\Sigma,
  \end{align*}
  where in the last equality we used that $\nabla_{X^\tang} \n$ is tangential, and
  $\nabla_i \inr{Y}{\tang_i} = \inr{\nabla_i Y}{\tang_i}$ for tangential fields $Y$.
  Plugging this final equation into~\eqref{eq:nabla-div-div-nabla} yields 
  \begin{multline*} 
    \div_{\Sigma} (\nabla_{X^t} X) + \sum_{i=1}^{n-1} \inr{\n}{\nabla_i X}^2 - \sum_{i,j=1}^{n-1} \inr{\tang_j}{\nabla_i X} \inr{\tang_i}{\nabla_j X} \\
    = \nabla_{X^\tang} (\div_\Sigma X) + |\nabla^\tang X^\n|^2 - (X^\n)^2 \|\II\|_2^2  - \div_{\Sigma} (X^\n \nabla_{X^\tang} \n) + X^\n \nabla_{X^\tang} H_\Sigma.
  \end{multline*}
  Finally, add $(\div_\Sigma X)^2$ to both sides and note that
  \[
    \div_\Sigma (X \div_\Sigma X) = \nabla_{X^\tang} (\div_\Sigma X) + (\div_\Sigma X)^2.
  \]
\end{proof}

Going back to~\eqref{eq:second-variation-of-area-1} and plugging in Lemma~\ref{lem:div-nabla-nabla-div}, we obtain
\begin{multline*}
  \delta_X^2 A(U)
  = \int_{\Sigma} \Big[\div_{\Sigma} (X^\n \nabla_\n X) + \div_{\Sigma} (X \div_\Sigma X) - \div_\Sigma (X^\n \nabla_{X^\tang} \n) \\
    + |\nabla^\tang X^\n|^2 - (X^\n)^2 \|\II\|_2^2  + X^\n \nabla_{X^\tang} H_\Sigma \\
- \inr{\nabla_X X}{\nabla \pot} - 2 \nabla_X \pot \div_\Sigma X + (\nabla_X \pot)^2 - \nabla^2_{X,X} \pot\Big]\, d\mu^{n-1}.
\end{multline*}
Now, expanding the definition of weighted divergence, we observe that
\begin{align}
  &\div_{\Sigma,\mu} (X \div_{\Sigma,\mu} X) \label{eq:expanding-tangential-divergence} \\
  &\quad= \div_\Sigma (X \div_\Sigma X) - 2 \nabla_X \pot \div_\Sigma X + (\nabla_X \pot)^2 - \nabla_{X^\tang} \nabla_X \pot \notag \\
  &\quad= \div_\Sigma (X \div_\Sigma X) - 2 \nabla_X \pot \div_\Sigma X + (\nabla_X \pot)^2 - \nabla^2_{X^\tang,X} \pot - \inr{\nabla_{X^\tang} X}{\nabla \pot}. \notag
\end{align}
Hence,
\begin{multline*}
  \delta_X^2 A(U)
  = \int_{\Sigma} \Big[\div_{\Sigma} (X^\n \nabla_\n X) + \div_{\Sigma,\mu} (X \div_{\Sigma,\mu} X) - \div_\Sigma (X^\n \nabla_{X^\tang} \n) \\
    + |\nabla^\tang X^\n|^2 - (X^\n)^2 \|\II\|_2^2  + X^\n \nabla_{X^\tang} H_\Sigma \\
- X^\n \inr{\nabla_\n X}{\nabla \pot} - X^\n \nabla^2_{X^\tang,\n} \pot - (X^\n)^2 \nabla^2_{\n,\n} \pot\Big]\, d\mu^{n-1}.
\end{multline*}
Now, the first term of the first line combines with the first term of the third
line to make a weighted divergence, and $X^\n \nabla^2_{X^\tang,\n} \pot =
X^\n \nabla_{X^\tang} \nabla_\n \pot - X^\n \inr{\nabla \pot}{\nabla_{X^\tang} \n}$. The second
of these terms combines to make $\div_{\Sigma} (X^\n \nabla_{X^\tang} \n)$ a weighted
divergence, while the first one combines with $X^\n \nabla_{X^\tang} H_\Sigma$
to make the mean-curvature weighted. To summarize:

\begin{lemma}\label{lem:second-variation-of-perimeter}
For $U$ and $X$ as above, 
\begin{multline*}
  \delta_X^2 A(U)
  = \int_{\Sigma} \Big[\div_{\Sigma,\mu} (X^\n \nabla_\n X) + \div_{\Sigma,\mu} (X \div_{\Sigma,\mu} X) - \div_{\Sigma,\mu} (X^\n \nabla_{X^\tang} \n) \\
    + |\nabla^\tang X^\n|^2 - (X^\n)^2 \|\II\|_2^2  + X^\n \nabla_{X^\tang} H_{\Sigma,\mu} - (X^\n)^2 \nabla^2_{\n,\n} \pot\Big]\, d\mu^{n-1}.
\end{multline*}
\end{lemma}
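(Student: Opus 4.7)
The plan is to start from Lemma~\ref{lem:regular}, which reduces the problem to computing the integrand of
\[
  \delta_X^2 A(U) = \int_{\Sigma} \diffIIat{J\Phi_t\, e^{-W\circ F_t}}{t}{t}{t=0} d\H^{n-1}.
\]
Expanding the product rule gives three kinds of contributions: the pure second derivative of $J\Phi_t$, the pure second derivative of $e^{-W\circ F_t}$, and the cross term $-2(\nabla_X W)(dJ\Phi_t/dt)|_{t=0}$. For the exponent I would apply the chain rule directly, using $\frac{d}{dt}F_t = X \circ F_t$, to obtain identity~\eqref{eq:second-derivative-of-exponent}. For the Jacobian I would quote the classical unweighted formulas $(dJ\Phi_t/dt)|_{t=0} = \div_\Sigma X$ and the second-derivative expression from~\cite{SternbergZumbrun}, which come from linearizing $\det((d_{\n^\perp}F_t)^T d_{\n^\perp}F_t)^{1/2}$. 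Substituting everything into the integral yields the ``raw'' expression~\eqref{eq:second-variation-of-area-1}.

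The heart of the proof is the algebraic identity in Lemma~\ref{lem:div-nabla-nabla-div}. I would prove it in normal coordinates at a point: compute $\nabla_{X^\tang}(\div_\Sigma X)$ and $\div_\Sigma(\nabla_{X^\tang} X)$ separately, and observe that the second derivatives $X^j\inr{\tang_i}{\nabla_i\nabla_j X}$ agree on both sides because $\nabla$ is flat and $[\tang_i,\tang_j] = 0$ in normal coordinates. Next I would expand $\inr{\tang_i}{\nabla_j X} = \nabla_j X^i + X^\n\II_{ij}$ and the identity $X^j\II_{ij} = \nabla_i X^\n - \inr{\n}{\nabla_i X}$, which exposes a term $\sum_{i,j}\II_{ij}\nabla_i(X^\n X^j)$ that I would rewrite as $\div_\Sigma(X^\n \nabla_{X^\tang}\n) - X^\n \nabla_{X^\tang}H_\Sigma$ using the Codazzi equation $\sum_i \nabla_i \II_{ij} = \nabla_j H_\Sigma$. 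Grouping the leftover pieces gives the claimed combination $|\nabla^\tang X^\n|^2 - (X^\n)^2\|\II\|^2$ together with the full divergences $\div_\Sigma(X \div_\Sigma X)$ and (after recombining with $\div_\Sigma(\nabla_{X^\tang} X)$ and the Jacobian's second-derivative term) $\div_\Sigma(X^\n \nabla_\n X)$.

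Finally, I would convert unweighted quantities to weighted ones by matching corrections. Identity~\eqref{eq:expanding-tangential-divergence} absorbs the terms $-2(\nabla_X W)\div_\Sigma X$, $(\nabla_X W)^2$, $-\inr{\nabla_{X^\tang}X}{\nabla W}$ and $-\nabla^2_{X^\tang,X}W$ into the single weighted divergence $\div_{\Sigma,\mu}(X\div_{\Sigma,\mu}X)$; the leftover piece $-X^\n\inr{\nabla_\n X}{\nabla W}$ is precisely what upgrades $\div_\Sigma(X^\n\nabla_\n X)$ to $\div_{\Sigma,\mu}(X^\n\nabla_\n X)$. Using $X^\n\nabla^2_{X^\tang,\n}W = X^\n\nabla_{X^\tang}(\nabla_\n W) - X^\n\inr{\nabla W}{\nabla_{X^\tang}\n}$, the first summand combines with $X^\n\nabla_{X^\tang}H_\Sigma$ to produce $X^\n\nabla_{X^\tang}H_{\Sigma,\mu}$, while the second turns $\div_\Sigma(X^\n\nabla_{X^\tang}\n)$ into its weighted counterpart. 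What survives is the pointwise term $-(X^\n)^2\nabla^2_{\n,\n}W$, yielding the stated formula. The main obstacle is the bookkeeping in Lemma~\ref{lem:div-nabla-nabla-div}: one must track signs and index contractions carefully so that the Codazzi identity lands exactly where it is needed, and so that each weighted divergence and each weighted mean-curvature appears with the precise $\nabla W$-correction required.
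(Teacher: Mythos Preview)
Your proposal is correct and follows essentially the same route as the paper: start from Lemma~\ref{lem:regular} and the raw expansion~\eqref{eq:second-variation-of-area-1}, establish the algebraic identity of Lemma~\ref{lem:div-nabla-nabla-div} in normal coordinates via the Codazzi equation, and then use~\eqref{eq:expanding-tangential-divergence} together with the splitting $X^\n\nabla^2_{X^\tang,\n}W = X^\n\nabla_{X^\tang}(\nabla_\n W) - X^\n\inr{\nabla W}{\nabla_{X^\tang}\n}$ to upgrade each unweighted quantity to its weighted counterpart.
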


\subsection{Second variation of volume}

Assuming $X$ is compactly supported or $U$ is volume and perimeter regular, Lemma~\ref{lem:regular} implies that:
\begin{multline*}
    \delta_X^2 V(U) = \int_{U} \left . \frac{d^2}{(dt)^2} \right |_{t=0} (J F_t e^{-\pot \circ F_t}) dx \\
  = \int_{U} \Bigg[
  \left . \frac{d^2 J F_t}{(dt)^2} \right |_{t=0}   - 2 \nabla_X \pot \left . \frac{d J F_t }{dt} \right |_{t=0}
          + \left . \frac{d^2 e^{-\pot \circ F_t}}{(dt)^2} \right |_{t=0} e^{+\pot} 
  \Bigg]\, d\mu,
\end{multline*}
where, recall from Subsection \ref{subsec:variations}, $J F_t$ denotes the Jacobian of $F_t$. Recalling~\eqref{eq:second-derivative-of-exponent}, and using (see e.g.~\cite[formula (2.13) and the derivation after (2.35)]{SternbergZumbrun}):
\[
\left . \frac{d J F_t }{dt} \right |_{t=0} = \div X ~,~ \left . \frac{d^2 J F_t}{(dt)^2} \right |_{t=0}  = \div(X \div X) ,
\]
we obtain:
\begin{align*}
    \delta_X^2 V(U)
    &= \begin{multlined}[t]
        \int_{U} \Big[ \div(X \div X) - 2 \nabla_X \pot \div X \\
            - \inr{\nabla_X X}{\nabla \pot} + (\nabla_X \pot)^2 - \nabla^2_{X,X} \pot \Big] \, d\mu
        \end{multlined}  \\
    &= \int_{U} \div_\mu(X \div_\mu X)\, d\mu ,
\end{align*}
where the last equality follows by expanding out the weighted divergences as in (\ref{eq:expanding-tangential-divergence}). 
We now claim that we may integrate-by-parts and proceed as follows:
\[
= \int_{\partial^* U} X^\n \div_\mu X\, d\mu^{n-1}  .
\]
Indeed, if $X$ is compactly supported, this holds by the Gauss--Green--De Giorgi theorem (\ref{eq:integration-by-parts}). For admissible $X$ and volume and perimeter regular $U$, this follows by employing in addition a truncation argument, exactly as the one employed in Appendix \ref{app:translations}.  
To summarize:
\begin{lemma} \label{lem:second-variation-of-measure}
For $U$ and $X$ as above:
\[
\delta_X^2 V(U) = \int_{\Sigma} X^\n \div_\mu X\, d\mu^{n-1}  .
\]
\end{lemma}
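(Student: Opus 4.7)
The plan is to follow essentially the derivation already sketched in the paragraph preceding the lemma, filling in the two small gaps: (a) the explicit computation of the first two time-derivatives of the Jacobian $JF_t$, and (b) the justification of integration by parts for non-compactly supported admissible vector-fields.

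First, Lemma~\ref{lem:regular} permits interchange of differentiation and integration in~\eqref{eq:Jac-vol}, reducing the task to pointwise differentiation of $JF_t \, e^{-W \circ F_t}$ at $t=0$. The standard formulae (see~\cite[formula (2.13)]{SternbergZumbrun}) give $\tfrac{d}{dt}|_{t=0} JF_t = \div X$ and $\tfrac{d^2}{(dt)^2}|_{t=0} JF_t = \div(X \div X)$, while the weight derivatives are recorded in~\eqref{eq:second-derivative-of-exponent}. Combining the four contributions and expanding the full weighted divergence $\div_\mu$ in analogy with~\eqref{eq:expanding-tangential-divergence}, one finds after cancellation that the bulk integrand equals $\div_\mu(X \div_\mu X)$. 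Thus
\[
\delta_X^2 V(U) = \int_U \div_\mu(X \div_\mu X)\, d\mu.
\]

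The second step is to integrate by parts. If $X \in C_c^\infty$, then $X \div_\mu X$ is $C_c^\infty$, so the Gauss--Green--De Giorgi identity~\eqref{eq:integration-by-parts} applied to $X \div_\mu X$ immediately yields
\[
\int_U \div_\mu(X \div_\mu X)\, d\mu = \int_{\partial^* U} X^\n \div_\mu X \, d\mu^{n-1} = \int_\Sigma X^\n \div_\mu X \, d\mu^{n-1},
\]
the last equality because $\H^{n-1}(\Xi)=0$ by hypothesis. For an admissible $X$ acting on a volume and perimeter regular $U$, one multiplies $X$ by a smooth compactly-supported cutoff $\chi_R$ that is $\equiv 1$ on $B_R$ and has $|\nabla \chi_R|$ uniformly bounded, applies the compactly supported case to $\chi_R X$, and passes to the limit as $R \to \infty$. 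The uniform volume and perimeter regularity of $U$ provides exactly the integrable majorants needed (for $\div_\mu(X \div_\mu X)$ on $U$ and for $X^\n \div_\mu X$ on $\partial^* U$) to apply dominated convergence to both sides.

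The only mildly technical step is the truncation argument in the admissible case, but this is essentially the same argument already carried out in the proof of~\cite[Lemma~5.2]{EMilmanNeeman-GaussianDoubleBubbleConj}, to which one may refer directly; I do not anticipate any genuine obstacle beyond bookkeeping of the polynomial-in-$\nabla W$ majorants guaranteed by uniform regularity.
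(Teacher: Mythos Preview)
Your proposal is correct and follows essentially the same approach as the paper: compute $\tfrac{d^2}{dt^2}|_{t=0}(JF_t\,e^{-W\circ F_t})$ via the Sternberg--Zumbrun Jacobian formulae and~\eqref{eq:second-derivative-of-exponent}, recognize the integrand as $\div_\mu(X\div_\mu X)$, and then integrate by parts using~\eqref{eq:integration-by-parts} directly in the $C_c^\infty$ case and via the truncation argument of~\cite[Lemma~5.2]{EMilmanNeeman-GaussianDoubleBubbleConj} in the admissible case.
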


\subsection{Index form of a stationary cluster}

Let us apply the above calculations to the cells $\{\Omega_i\}$ of a stationary cluster $\Omega$, assuming that either $X$ is compactly supported, or that $X$ is admissible and $\{\Omega_i\}$ are volume and perimeter regular. Stationarity implies there is some $\lambda \in E^*$ such that $H_{ij,\mu} = \lambda_i - \lambda_j$ for all $i \ne j$. Define $\Psi_{ij} = \int_{\Sigma_{ij}} X^\n \div_\mu X\, d\mu^{n-1}$ and note that $\Psi_{ji} = - \Psi_{ij}$ and that $\delta_X^2 V(\Omega)_i = \sum_{j \neq i} \Psi_{ij}$ by Lemma \ref{lem:second-variation-of-measure} and (\ref{eq:nothing-lost}). It follows  that:
\begin{align}
    \inr{\lambda}{\delta_X^2 V(\Omega)} \notag
    &= \sum_i \lambda_i \sum_{j \ne i} \Psi_{ij} \notag \\
    &= \frac 12 \sum_{i \ne j} (\lambda_i - \lambda_j) \Psi_{ij} \notag \\
    &= \sum_{i < j} H_{ij,\mu} \int_{\Sigma_{ij}} X^\n \div_\mu X \, d\mu^{n-1} . \label{eq:second-variation-of-volume-for-one-component}
\end{align}
As for $\delta^2_X A(\Omega)$, stationarity implies that the term $\nabla_{X^\tang} H_{\Sigma_{ij},\mu}$
in Lemma~\ref{lem:second-variation-of-perimeter} vanishes. We are left with:
\begin{multline*}
  \delta_X^2 A(\Omega) - \inr{\lambda}{\delta_X^2 V(\Omega)} \\
  = \sum_{i < j} \int_{\Sigma_{ij}} \Big[
    \begin{aligned}[t]
        &\div_{\Sigma,\mu}(X^\n \nabla_\n X) + \div_{\Sigma,\mu} (X \div_{\Sigma,\mu} X) - \div_{\Sigma,\mu} (X^\n \nabla_{X^\tang} \n) \\
               & + |\nabla^\tang X^\n|^2 - (X^\n)^2 \|\II\|_2^2  - (X^\n)^2 \nabla^2_{\n,\n} \pot - H_{ij,\mu} X^\n \div_\mu X \Big]\, d\mu^{n-1},
    \end{aligned}
\end{multline*}
which is the formula claimed in Lemma~\ref{lem:formula}.

\subsection{Polarization with respect to constant vector-fields}

Finally, assume that $\Omega$ is a stationary cluster whose cells are volume and perimeter regular. Let $X$ be an admissible vector-field, and set $Y = X + w$, where $w \in \R^n$ is a constant vector-field. For brevity, denote $\delta^2_Z A = \delta^2_Z A(\Omega)$ and $\delta^2_Z V = \delta^2_Z V(\Omega)$ for $Z = Y,X,w$. 
    Since $\nabla w = 0$ and $\div_\Sigma w = 0$, we may plug $Y$ into~\eqref{eq:second-variation-of-area-1}
    to obtain
    \begin{multline*}
        \delta_Y^2 A = \\
      \sum_{i < j} \int_{\Sigma_{ij}} \Big[\div_{\Sigma} \nabla_Y X + (\div_\Sigma X)^2 + \sum_{i=1}^{n-1} \inr{\n}{\nabla_i X}^2 - \sum_{i,j=1}^{n-1} \inr{\tang_i}{\nabla_j X} \inr{\tang_j}{\nabla_i X} \\
      \qquad - \inr{\nabla_Y X}{\nabla \pot} - 2 \nabla_Y \pot \div_\Sigma X + (\nabla_Y \pot)^2 - \nabla^2_{Y,Y} \pot \Big]\, d\mu^{n-1} .
    \end{multline*}
    Expanding $Y = X + w$ in all the remaining places, we obtain:
    \begin{align*}
        & \delta_Y^2 A = \delta_X^2 A + \delta_w^2 A  + \sum_{i<j} \int_{\Sigma_{ij}} \Big [ \div_\Sigma \nabla_w X - \inr{\nabla_w X}{\nabla \pot} \\
        &  \hspace{120pt} - 2\nabla_w \pot \div_\Sigma X + 2\nabla_X \pot \nabla_w \pot - 2\nabla^2_{w,X} \pot \Big ] d\mu^{n-1} \\
        & = \delta_X^2 A + \delta_w^2 A  + \sum_{i<j} \int_{\Sigma_{ij}} \Big [  \div_{\Sigma,\mu} (\nabla_w X) -     2\div_{\Sigma,\mu} (\nabla_w W \cdot X) - 2 X^\n \nabla^2_{w,\n} W    \Big ] d\mu^{n-1} .
    \end{align*}
   For the second variation of volume,~\eqref{eq:second-variation-of-volume-for-one-component}
    implies that
    \begin{align*}
        \inr{\lambda}{\delta_Y^2 V}
        &= \sum_{i < j} H_{ij,\mu} \int_{\Sigma_{ij}} Y^\n \div_\mu Y\, d\mu^{n-1} \\
        &= \sum_{i < j} H_{ij,\mu} \int_{\Sigma_{ij}} (X^\n + w^\n) (\div_\mu X - \nabla_w W)\, d\mu^{n-1} \\
        &= \inr{\lambda}{\delta_X^2 V + \delta_w^2 V} + \sum_{i < j} H_{ij,\mu} \int_{\Sigma_{ij}} \brac{w^\n \div_\mu X - X^\n \nabla_w \pot} d\mu^{n-1}.
    \end{align*}
    The polarization formula for $Q(X+w) = \delta_Y^2 A - \inr{\lambda}{\delta_Y^2 V}$ asserted in Lemma~\ref{lem:polarization} then follows by subtracting the above expressions.

\section{Curvature Blow-Up Near Quadruple Points} \label{sec:Schauder}

Appendix \ref{sec:Schauder} is dedicated to providing a proof of Proposition \ref{prop:Schauder}. The latter is an easy consequence of the following statement:
\begin{proposition} \label{prop:B1}
Let $\Omega$ be a stationary regular $\mathsf{q}$-cluster, and let $i,j,k,l \in \{1,\ldots,\mathsf{q}\}$ be distinct. For any $q \in \Sigma_{ijkl}$ and open neighborhood $N_q$ of $q$, there exists an open sub-neighborhood $\tilde N_q \subset N_q$ of $q$ so that:
\begin{enumerate}[(i)]
\item \label{it:B1-1}
\begin{equation} \label{eq:Schauder-goal}
\exists C_q > 0 \;\;\; \forall p \in \Sigma_{ijk} \cap \tilde{N}_q \;\;\;  \norm{\II^{ij}(p) }\leq \frac{C_q}{d(p,\Sigma_{ijkl} \cap N_q)^{1-\alpha}}    .
\end{equation}
\item \label{it:B1-2}
\begin{equation} \label{eq:Schauder-goal2}
\exists C_q > 0 \;\;\; \forall w \in \Sigma_{ij} \cap \tilde{N}_q \;\;\;  \norm{\II^{ij}(w)} \leq \frac{C_q}{d(w,\Sigma_{ijkl} \cap N_q)^{1-\alpha}}   .
\end{equation}
\end{enumerate}
\end{proposition}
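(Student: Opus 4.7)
The plan is to pull back to the model cone via the $C^{1,\alpha}$ diffeomorphism from Theorem \ref{thm:regularity} \ref{it:regularity-Sigma3}, parametrize each interface near $q$ as a $C^{1,\alpha}$ graph satisfying a prescribed-mean-curvature PDE with $120^\circ$-angle boundary conditions along the triple lines, and apply the Kinderlehrer--Nirenberg--Spruck Schauder-type improvement underlying Corollary \ref{cor:smooth-Sigma2} in a rescaling-adapted form. The key device is the scale-invariant rescaling
\[
\tilde u(\tilde x) := \frac{u(p + r\tilde x) - u(p) - \langle \nabla u(p), r\tilde x\rangle}{r^{1+\alpha}},
\]
which preserves the $C^{1,\alpha}$ seminorm of $u$ while sending $\nabla^2 u(p)$ to $r^{1-\alpha}\nabla^2 \tilde u(0)$. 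A uniform-in-$r$ $C^{2,\alpha}$ estimate on the rescaled graphs therefore yields precisely the desired bound $\|\II(p)\| \leq C r^{\alpha-1}$.

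First I would fix the quadruple point $q \in \Sigma_{ijkl}$ and invoke Corollary \ref{cor:isometry-Sigma3} to choose a $C^{1,\alpha}$ diffeomorphism $\phi$ from a neighborhood of $q$ to a neighborhood of the origin in $E^{(3)}\times\R^{n-3}$, sending $\Sigma$ into $\T\times\R^{n-3}$ with $d_q\phi$ an isometry. Pulling back via $\phi^{-1}$, each piece of interface $\Sigma_{i'j'}$ is realized as a $C^{1,\alpha}$ graph $u_{i'j'}$ over the appropriate $2$D sector of $\T$ crossed with $\R^{n-3}$. By stationarity (Lemma \ref{lem:first-order-conditions}) and the relation $H_{\Sigma_{i'j'}} = H_{i'j',\mu} + \langle\nabla W, \n_{i'j'}\rangle$, each $u_{i'j'}$ satisfies a uniformly elliptic quasilinear equation $a^{ab}(u,\nabla u)\partial_{ab} u = f(u,\nabla u)$ with $C^{0,\alpha}$ coefficients and $C^\infty$ source. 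Corollary \ref{cor:boundary-normal-sum} moreover shows that the three graphs sharing a given triple line satisfy a fully nonlinear oblique boundary condition encoding the $120^\circ$-angle condition, so altogether one has an elliptic system of KNS type.

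Next I would fix $p$ with $r := d(p, \Sigma_{ijkl}\cap N_q) > 0$ and choose $c > 0$ small enough (depending only on the bi-Lipschitz constant of $\phi$) that $\phi(B(p,cr))$ stays away from $\{0\}\times\R^{n-3}$; on $B(p,cr)$ the only singular stratum of $\Sigma$ present is a (possibly empty) $C^{1,\alpha}$ triple line avoiding the quadruple set. After rescaling by $r$ and normalizing each graph as above, the resulting system sits on a unit-sized domain, with triple line whose $C^{1,\alpha}$ seminorm is invariant under the rescaling, and with RHS $\tilde f$ scaling by a positive power of $r$ (the prescribed mean curvature itself picks up a factor of $r$). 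The KNS Schauder estimate following \cite[Theorem~5.2]{KNS} and the proof of Corollary \ref{cor:smooth-Sigma2} then gives
\[
\|\nabla^2 \tilde u\|_{L^\infty(\tilde B_{1/2})} \leq C\bigl([\nabla \tilde u]_{C^{0,\alpha}(\tilde B_1)} + \|\tilde f\|_{C^{0,\alpha}(\tilde B_1)}\bigr),
\]
where $C$ depends only on the ellipticity, the $C^{0,\alpha}$ norm of the coefficients, and the $C^{1,\alpha}$-seminorm bound on the straightened triple line, all of which are uniform in $r$. Unrescaling yields $\|\II(p)\|\leq C r^{\alpha-1}$, proving part \ref{it:B1-1}; part \ref{it:B1-2} follows in the same way when $p\in\Sigma_{ij}$ is close to a triple line, and if instead $p$ is at distance comparable to $r$ from $\Sigma^2$ one uses standard interior Schauder for a single graph (which is only stronger).

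The main obstacle will be carrying out the KNS Schauder step \emph{quantitatively}, with constants that depend only on the $C^{1,\alpha}$ seminorm (and not on the full $C^{1,\alpha}$ norm) of the straightening map of the triple line and of the graph functions, so as to be invariant under the above rescaling. This requires re-reading the KNS argument to track the dependence of their constants on the boundary regularity norms, verifying that the straightening can be performed by a change of variables with scale-invariantly controlled norm, and checking that the quasilinear coefficients of the mean-curvature operator remain uniformly elliptic and bounded after rescaling. Once this uniformity is secured, the proof is a routine application of the rescaling dichotomy above.
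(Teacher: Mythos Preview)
Your strategy is correct and matches the paper's in spirit: both exploit the uniform $C^{1,\alpha}$ control coming from the quadruple-point diffeomorphism and gain one derivative via the KNS mechanism, at the cost of a factor $r^{\alpha-1}$ on balls of radius $r \sim d(p,\Sigma_{ijkl})$. The organization, however, differs in two noteworthy respects.

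First, rather than rescale around each $p$ and invoke KNS at unit scale, the paper performs the KNS \emph{hodograph transform} once, globally near $q$: setting $w = u^2 - u^1$ and changing variables to $y = (w(x), x')$ sends the (a priori only $C^{1,\alpha}$) projected triple line $\Gamma$ to the fixed flat boundary $\{y_1 = 0\}$, and a further reflection map $S$ brings $u^3$ onto the same domain. This is the device that lets one obtain $C^{2,\alpha}$ estimates up to a free boundary that is itself only $C^{1,\alpha}$; without it, the ``obstacle'' you identify (Schauder up to a curved boundary with constants depending only on the $C^{1,\alpha}$ seminorm of the straightening) is not merely bookkeeping but the crux of the matter, since a generic $C^{1,\alpha}$ straightening of $\Gamma$ loses the derivative you need. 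The hodograph step is what lives inside your ``KNS Schauder estimate'' black box, and it should be made explicit.

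Second, in place of your rescaling, the paper applies the Agmon--Douglis--Nirenberg \emph{weighted} Schauder estimate (Theorem~\ref{thm:ADN} and Corollary~\ref{cor:ADN}) on hemispheres $\Omega_{R_p}$ with $R_p \asymp d(p,\Sigma_{ijkl})$, after first differentiating the transformed system in a fixed direction $\theta$ to linearize the nonlinear boundary conditions (which are quadratic in $\nabla u$). The resulting linear system for $\partial_\theta f^j$ is in divergence form with $C^{0,\alpha}$ coefficients bounded uniformly in $p$, and Corollary~\ref{cor:ADN} gives $R_p\|D^2 f^j(y_p)\| \lesssim R_p^\alpha$ directly, using only $\|\partial_\theta f^j - \partial_\theta f^j(y_p)\|_{C^0(\Omega_{R_p})} \lesssim R_p^\alpha$. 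Your rescaling device and the weighted-norm device are formally equivalent, but the paper's route avoids re-running a quantitative KNS argument at each scale and sidesteps the issue of how your per-function affine normalization interacts with the coupled boundary conditions.
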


Of course statement (\ref{eq:Schauder-goal2}) is more general than (\ref{eq:Schauder-goal}), as $\Sigma_{ij}$ is $C^\infty$ smooth all the way up to $\Sigma_{ijk}$, but it will be more convenient to first establish (\ref{eq:Schauder-goal}) and then obtain (\ref{eq:Schauder-goal2}). Let us start by showing that Proposition \ref{prop:B1} implies Proposition \ref{prop:Schauder}.

\begin{proof}[Proof that Proposition \ref{prop:B1} implies Proposition \ref{prop:Schauder}]
As explained in the comments following the formulation of Proposition \ref{prop:Schauder}, it is enough by compactness to verify the asserted integrability in a small-enough open neighborhood of a quadruple point in $\Sigma^3$. By Theorem \ref{thm:regularity}, given $q \in \Sigma_{ijkl}$, there exists a small enough neighborhood $N_q$ of $q$ where $\Sigma$ is an embedded $C^{1,\alpha}$-diffeomorphic image of $\T \times \R^{n-3}$; namely, there exists a neighborhood $U_q$ of the origin in $E^{(3)} \times \R^{n-3}$ and a $C^{1,\alpha}$-diffeomorphism $\varphi : U_q \rightarrow N_q$ so that $\varphi(0) = q$, $\varphi((\T \times \R^{n-3}) \cap U_q) = \Sigma \cap N_q$ and so that $\snorm{\varphi}_{C^{1,\alpha}(U_q)}$ and $\snorm{\varphi^{-1}}_{C^{1,\alpha}(N_q)}$ are bounded (see e.g. \cite{SardAndInverseFunctionTheoremInHolderAndSobolevSpaces} for an inverse function theorem for $C^{1,\alpha}$ functions). 

Denote $\Sigma^m_{12} := \{ x \in E^{(3)} : x_1 = x_2 > x_3, x_4 \} \times \R^{n-3}$, $\Sigma^m_{123} := \{ x \in E^{(3)} : x_1 = x_2 = x_3 > x_4 \} \times \R^{n-3}$ and similarly for $\Sigma^m_{23}$ and $\Sigma^m_{31}$. 
We may suppose without loss of generality that $\varphi$ maps $\Sigma^m_{12} \cap U_q$ onto $\Sigma_{ij} \cap N_q$, $\Sigma^m_{23} \cap U_q$ onto $\Sigma_{jk} \cap N_q$, and $\Sigma^m_{31} \cap U_q$ onto $\Sigma_{ki} \cap N_q$ (and hence $\Sigma^m_{123} \cap U_q$ onto $\Sigma_{ijk} \cap N_q$). 

We now apply Proposition \ref{prop:B1}. By part \ref{it:B1-1}, there exists a sub-neighborhood $\tilde N_q \subset N_q$ of $q$ satisfying (\ref{eq:Schauder-goal}), and we denote $\tilde U_q = \varphi^{-1}(\tilde N_q)$. To see that $\int_{\Sigma_{ijk} \cap \tilde N_q} \norm{\II^{ij}} d\mu^{n-2} < \infty$, observe that
\begin{align*}
\int_{\Sigma_{ijk} \cap \tilde{N}_q} \norm{\II^{ij}(p)} d\mu^{n-2}(p) & \leq M_q \int_{\Sigma^m_{123} \cap \tilde{U}_q}  \norm{\II^{ij}}(\varphi(z)) J(z) d\H^{n-2}(z) \\
& \leq M_q J_q \int_{\Sigma^m_{123} \cap \tilde{U}_q}  \frac{C_q}{d(\varphi(z) , \Sigma_{ijkl} \cap N_q)^{1-\alpha}}  d\H^{n-2}(z) ,
\end{align*} 
where $M_q$ is an upper bound on the density of $\mu$ on $\tilde{N}_q$, $J(z)$ is the Jacobian at $z$ of $\varphi$ restricted as a map from $\Sigma^m_{123} \cap \tilde{U}_q$ to $\Sigma_{ijk} \cap \tilde{N}_q$ and $J_q$ is an upper bound on the latter Jacobian on $\tilde{U}_q$ (as it only depends on first-order derivatives of $\varphi$). Since $\abs{\varphi(z_1) - \varphi(z_2)}$ and $\abs{z_1 - z_2}$ are equivalent on $U_q$ up to a factor $D_q$, we deduce that:
\[
\int_{\Sigma_{ijk} \cap \tilde{N}_q} \norm{\II^{ij}}(p) d\mu^{n-2}(p) \leq M_q J_q D_q^{1-\alpha} C_q \int_{\Sigma^m_{123} \cap \tilde{U}_q}  \frac{1}{d(z,(\set{0} \times \R^{n-3}) \cap U_q)^{1-\alpha}} d\H^{n-2}(z)  .
\]
But the latter integral is immediately seen to be finite after applying Fubini's theorem, since $\overline{\Sigma^m_{123}} = (1,1,1,-3) \Real_+ \times \R^{n-3}$ and the function $r \mapsto 1/r^{1-\alpha}$ is integrable at $0 \in \R_+$.

The proof that  $\int_{\Sigma_{ij} \cap \tilde N_q} \norm{\II^{ij}}^2 d\mu^{n-1} < \infty$ follows similarly from Proposition \ref{prop:B1} part \ref{it:B1-2}. Indeed, after pulling back via $\varphi$ to the model cluster, we obtain:
\begin{multline*}
\int_{\Sigma_{ij} \cap \tilde{N}_q} \norm{\II^{ij}(w)}^2 d\mu^{n-1}(w) \leq \\
M_q J_q D_q^{2(1-\alpha)} C_q^2 \int_{\Sigma^m_{12} \cap \tilde{U}_q}  \frac{1}{d(z,(\set{0} \times \R^{n-3}) \cap U_q)^{2(1-\alpha)}} d\H^{n-1}(z)  .
\end{multline*}
To see that the integral on the right-hand-side is finite, note that:
\[
 \overline{\Sigma^m_{12}} = \brac{(1,1,1,-3) \Real_+ + (1,1,-3,1) \Real_+} \times \R^{n-3},
\]
and so an application of Fubini's theorem and integration in polar coordinates on the two dimensional sector above, which incurs an additional $r$ factor from the Jacobian, boils things down to the integrability of the function $r \mapsto r / r^{2(1-\alpha)}$ at $0 \in \R_+$. 
\end{proof}

\subsection{Schauder estimates}

For the proof of Proposition \ref{prop:B1} we will require the following weighted Schauder estimate for systems of linear elliptic and coercive PDEs. As explained in \cite{KNS}, this is a generalization of \cite[Theorem 9.1]{ADN1} for systems, modeled on the generalization of \cite[Theorem 7.2]{ADN1} to systems in \cite[Theorem 9.2]{ADN2}. We will only formulate the version which we will require here. 

\smallskip

Let $\mathring{\Omega}_R$ denote an open hemisphere of radius $R \in (0,1]$ around the origin in $\R^n$, let $\Sigma_R$ denote the open $(n-1)$-dimensional disc which constitutes its flat boundary, and set $\Omega_R := \mathring{\Omega}_R \cup \Sigma_R$. We will use the following notation for a function $f$ defined on $\Omega_R$ or $\Sigma_R$, a non-negative integer $l$ and $\alpha \in (0,1)$:
\begin{align*}
[f]_l & := \max_{\abs{\mu} = l} \norm{D^\mu f}_\infty ~,~  [f]_{l+\alpha}  := \max_{\abs{\mu} = l} \norm{\frac{\abs{D^\mu f(x) - D^\mu f(y)}}{\abs{x-y}^{\alpha}}}_\infty , \\
|f|_l & := \sum_{i=0}^{l} [f]_i ~,~ |f|_{l+\alpha}  := |f|_{l} + [f]_{l + \alpha} ,
\end{align*}
where $D^{\mu} = \partial_{x_1}^{\mu_1} \ldots \partial_{x_n}^{\mu_n}$ denotes partial differentiation with respect to the multi-index $\mu$, and $\abs{\mu}$ denotes its total degree $\mu_1 + \ldots + \mu_n$. We will also require the following weighted versions of the above norms and semi-norms. Let $d_S(x)$ denote the distance of a point $x \in \Omega_R$ from the spherical part of $\partial \Omega_R$. Given a homogeneity excess parameter $p \geq 0$, denote:
\begin{align*}
& \widetilde{[f]}_{p,l} := \max_{\abs{\mu} = l} \norm{d_S^{p+l} D^\mu f}_\infty , \\
& \widetilde{[f]}_{p,l+\alpha} := \max_{\abs{\mu} = l} \; \sup \set{ d_S^{p+l+\alpha}(x) \frac{\abs{D^\mu f(x) - D^\mu f(y)}}{\abs{x-y}^\alpha} \; ; \;  4 \abs{x-y} < \min(d_S(x),d_S(y)) } , \\
& \widetilde{\abs{f}}_{p,l} := \sum_{i=0}^{l} \widetilde{[f]}_{p,i}  ~,~ \tabs{f}_{p,l+\alpha} := \tabs{f}_{p,l} + \tbra{f}_{p,l + \alpha} .
\end{align*}
Finally, set:
\[
\widetilde{[f]}_{a} := \widetilde{[f]}_{0,a} ~,~ \widetilde{\abs{f}}_{a} := \widetilde{\abs{f}}_{0,a}  .
\]

\begin{theorem}[Agmon--Douglis--Nirenberg] \label{thm:ADN}
Let $\alpha \in (0,1)$. Fix integers $N,M \geq 1$, let $i,j=1,\ldots,N$, $k=1,\ldots, M$, and denote the following linear differential operators on $x \in \Omega_R$ and $y \in \Sigma_R$, respectively:
\[
L_{ij} := \sum_{\abs{\beta} \in \set{0,1} , \abs{\mu} \in \set{0,1}} D^\beta a_{ij\beta\mu}(x) D^{\mu} ~,~  B_{kj}  := \sum_{\abs{\mu} \in \set{0,1}} b_{kj\mu}(y) D^{\mu} ,
\]
where $a_{ij\beta\mu} \in C^{0,\alpha}(\Omega_R)$ and $b_{kj\mu} \in C^{0,\alpha}(\Sigma_R)$ satisfy $|a_{ij\beta\mu}|_{\alpha} , |b_{kj\mu}|_{\alpha} \leq K$ for some parameter $K > 0$. Here and below differentiation is understood in the distributional sense. Given $F_{i\beta} \in C^{0,\alpha}(\Omega_R)$ and $\Phi_k \in C^{0,\alpha}(\Sigma_R)$, denote also:
\[
F_i := \sum_{\abs{\beta} \in \set{0,1}} D^\beta F_{i\beta} .
\]
Assume that $\set{f^j} \subset C^{1,\alpha}(\Omega_R)$ satisfy the following system of equations in the distributional sense:
\[
\forall i \;\;\; \sum_{j} L_{ij} f^j = F_i  \text{ on } \Omega_R  ~,~ \forall k \;\;\; \sum_{j} B_{kj} f^j = \Phi_k \text{ on } \Sigma_R .
\]
Finally, assume that the above system is uniformly elliptic and coercive on $\Omega_R$. Then:
\[
\forall j \;\;\;  \tabs{f^j}_{1+\alpha} \leq C \brac{ \sum_{i} \sum_{\abs{\beta} \in \set{0,1}} \tabs{F_{i\beta}}_{2-\abs{\beta},\alpha} + \sum_{k} \tabs{\Phi_k}_{1,\alpha} + \sum_{j} \tabs{f^j}_0 } .
\]
where the constant $C$ depends only on $n,N,M,K,\alpha$ and the ellipticity and coercitivity constants. 
\end{theorem}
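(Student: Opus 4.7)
The result is the classical weighted Schauder estimate of Agmon--Douglis--Nirenberg for elliptic boundary-value systems, and the plan is to follow their strategy adapted to the present weights. The weights $d_S^{p+l}$ are designed precisely so that they degenerate at the spherical part $\partial\Omega_R\setminus\Sigma_R$, where we have no boundary condition, while remaining harmless on the flat portion $\Sigma_R$, where the boundary conditions are imposed. Accordingly, the proof should split into an interior/semi-boundary piece away from the spherical part, where unweighted Schauder estimates apply on balls whose radii scale like $d_S$, and a boundary piece near $\Sigma_R$ where the classical ADN theory gives the required control.

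First I would establish the unweighted Schauder estimate on a half-ball for the model problem: constant coefficient operators $L_{ij}^0=\sum_{|\beta|=|\mu|=1}a^0_{ij\beta\mu}D^\beta D^\mu$ and $B^0_{kj}=\sum_{|\mu|=1}b^0_{kj\mu}D^\mu$, under uniform ellipticity and the complementing (coercivity) condition. For such systems, representation formulae using the Poisson kernels of the half-space (built from the fundamental matrix of $L^0$ via Fourier analysis, see \cite{ADN1,ADN2}) yield a direct $C^{1,\alpha}$ bound of $f^j$ in terms of the data $F_{i\beta}$, $\Phi_k$ and $\|f^j\|_\infty$. I would then pass from constant to H\"older coefficients by freezing coefficients at a base point, treating the discrepancy $(L-L^0)f$ as a perturbation absorbed by the $C^{0,\alpha}$ norms of $a_{ij\beta\mu}$ and $b_{kj\mu}$, controlled by $K$, and iterating via a standard small-ball contraction argument. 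This yields an unweighted estimate
\[
    |f^j|_{1+\alpha,\,\Omega_r}\le C\Big(\sum_{i,\beta}|F_{i\beta}|_{2-|\beta|,\alpha,\,\Omega_r}+\sum_k|\Phi_k|_{1,\alpha,\,\Sigma_r}+\sum_j|f^j|_{0,\,\Omega_r}\Big)
\]
on a half-ball of a fixed small radius, with a constant depending only on the stated parameters.

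Next I would upgrade this to the weighted estimate on $\Omega_R$. For each $x\in\Omega_R$, let $\rho(x)=d_S(x)/4$, and consider the (possibly truncated) half-ball $B_x=B(x,\rho(x))\cap\Omega_R$. If $B_x$ does not meet $\Sigma_R$, apply the standard interior Schauder estimate for elliptic systems on $B_x$; otherwise, after a translation and scaling $y=(x'+\rho(x)\xi)/\rho(x)$ that maps $B_x$ to a standard half-ball of unit size, apply the unweighted boundary estimate established above. Crucially, the rescaling replaces derivatives $D^\mu$ by $\rho(x)^{-|\mu|}D^\mu$, which is exactly the scaling encoded in the weights $d_S^{p+l}$ and $d_S^{p+l+\alpha}$, so the scaled inequalities reassemble into the desired bound on $\tabs{f^j}_{1+\alpha}$. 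A standard covering of $\Omega_R$ by such half-balls, combined with the finite overlap property, turns the pointwise rescaled bounds into the global weighted estimate; the H\"older seminorm restricted to pairs with $4|x-y|<\min(d_S(x),d_S(y))$ is controlled because any such pair lies in a single $B_x$ up to a comparable enlargement.

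The main obstacle is ensuring that the constants survive the scaling. The ellipticity and complementing-condition constants are scale-invariant for principal parts, but the lower-order coefficients $a_{ij\beta\mu}$ and $b_{kj\mu}$ rescale by powers of $\rho(x)$; since $\rho(x)\le R\le 1$, these rescalings only improve the bound, while the H\"older seminorms of the coefficients also behave favourably under the scaling. The other delicate point is verifying the complementing condition at each base point after freezing coefficients, but since this condition is open in the coefficients and is assumed uniformly on $\Omega_R$, a compactness argument on the unit sphere of frozen coefficients gives a uniform lower bound on the constant. Once these verifications are in place, assembling the pieces yields precisely the stated estimate with $C$ depending only on $n,N,M,K,\alpha$, the ellipticity constant, and the coercivity constant.
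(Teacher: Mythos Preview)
The paper does not prove this theorem at all: it is stated as a classical result, attributed to Agmon--Douglis--Nirenberg, with the remark that it is ``a generalization of \cite[Theorem 9.1]{ADN1} for systems, modeled on the generalization of \cite[Theorem 7.2]{ADN1} to systems in \cite[Theorem 9.2]{ADN2}'' as explained in \cite{KNS}. Your outline is a faithful sketch of the standard ADN strategy (constant-coefficient model problem via Poisson kernels, coefficient freezing and perturbation, then scaling to pass from the unweighted to the weighted estimate), so there is nothing in the paper to compare it against; if anything, your write-up supplies more detail than the paper chose to include.
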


We refer to \cite{ADN2} for the definitions of ellipticity and coercitivity (the latter is a certain compatibility requirement for the boundary conditions, a generalization of an oblique derivative condition). We will use Theorem \ref{thm:ADN} in the following form:

\begin{corollary} \label{cor:ADN}
With the same notation and assumptions of Theorem \ref{thm:ADN}, assume in addition that $F_{i\beta} \equiv 0$ and $\Phi_{k} \equiv 0$ for all $i,k,\beta$. Then:
\[
\forall j \;\;\;  \tbra{f^j}_{1} \leq C \brac{ \sum_{i,j} \tabs{a_{ij00}}_{2,\alpha} \abs{f^j(0)} + \sum_{k,j} \tabs{b_{kj0}}_{1,\alpha} \abs{f^j(0)} + \sum_{j} \tabs{f^j - f^j(0)}_0 } .
\]
\end{corollary}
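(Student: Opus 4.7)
The plan is to reduce to Theorem \ref{thm:ADN} by subtracting off the pointwise values $c^j := f^j(0)$. Set $g^j := f^j - c^j$, so that $g^j(0) = 0$ and $D^\mu g^j = D^\mu f^j$ for every multi-index with $|\mu| \ge 1$. In particular the quantity we seek to bound coincides for $f^j$ and $g^j$: $\tbra{f^j}_{1} = \tbra{g^j}_{1} \le \tabs{g^j}_{1+\alpha}$, and moreover $\tabs{g^j}_{0} = \tabs{f^j - f^j(0)}_{0}$.

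Since $f^j$ satisfies the homogeneous system while the $c^j$ are constants, the operators $L_{ij}$ and $B_{kj}$ produce only their $|\mu|=0$ contributions when applied to $c^j$, and so
\begin{align*}
\sum_{j} L_{ij} g^j &= -\sum_{j} L_{ij} c^j = -\sum_{j} \Bigl( a_{ij00}(x)\,c^j + \sum_{|\beta|=1} D^\beta\!\bigl(a_{ij\beta 0}(x)\,c^j\bigr)\Bigr), \\
\sum_{j} B_{kj} g^j &= -\sum_{j} B_{kj} c^j = -\sum_{j} b_{kj0}(y)\,c^j.
\end{align*}
This is precisely the inhomogeneous form required by Theorem \ref{thm:ADN}, with data $F_{i0}(x) = -\sum_j a_{ij00}(x)\,c^j$, $F_{i\beta}(x) = -\sum_j a_{ij\beta 0}(x)\,c^j$ for $|\beta| = 1$, and $\Phi_k(y) = -\sum_j b_{kj0}(y)\,c^j$.

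Applying Theorem \ref{thm:ADN} to $\{g^j\}$ and using the trivial product estimate $\tabs{a(x)\,c^j}_{p,\alpha} \le |c^j|\,\tabs{a}_{p,\alpha}$ for $F_{i0}, F_{i\beta}$ and $\Phi_k$, we obtain
\[
\tbra{f^j}_{1} \le C\Bigl( \sum_{i,j} \tabs{a_{ij00}}_{2,\alpha}\, |f^j(0)| + \sum_{i,j,\,|\beta|=1} \tabs{a_{ij\beta 0}}_{1,\alpha}\, |f^j(0)| + \sum_{k,j} \tabs{b_{kj0}}_{1,\alpha}\, |f^j(0)| + \sum_j \tabs{f^j - f^j(0)}_{0} \Bigr).
\]
The only extraneous sum is the middle one. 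Since $R \le 1$ and $|a_{ij\beta 0}|_\alpha \le K$, the weighted norm $\tabs{a_{ij\beta 0}}_{1,\alpha}$ is controlled by a constant depending only on $K$ and $\alpha$; since the constant $C$ in Theorem \ref{thm:ADN} already depends on $K$, these divergence-form contributions can be absorbed into a (possibly larger) overall constant, multiplying the remaining terms, yielding the stated inequality.

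I do not anticipate any serious obstacle: the corollary is a routine repackaging of the Schauder a priori estimate for homogeneous systems via the standard translation trick that converts a pointwise value $f^j(0)$ into source data. The only mild point requiring care is ensuring that the $|\beta|=1$ auxiliary sources can be cleanly absorbed into the Schauder constant; this is where the assumption $R \le 1$ together with the a priori $C^{0,\alpha}$ bound $K$ on the coefficients plays its role.
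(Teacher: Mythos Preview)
Your approach is exactly the paper's: subtract the constants $f^j(0)$, identify the resulting source terms, and invoke Theorem~\ref{thm:ADN}. You are in fact more careful than the paper in explicitly tracking the $|\beta|=1$ divergence contributions $-\sum_j a_{ij\beta 0}\,f^j(0)$ and arguing that their weighted norm is controlled by $K$ (the paper's one-line proof simply omits them); your absorption of these into the overall constant is adequate for the corollary's only use in Appendix~\ref{sec:Schauder}.
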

\begin{proof}
Immediate after applying Theorem \ref{thm:ADN} to $g^j = f^j - f^j(0)$ which satisfies:
\begin{align*}
\forall i \;\;\; \sum_{j} L_{ij} g^j = -\sum_j a_{ij00}(x) f^j(0)  \text{ on } \Omega_R  , \\
\forall k \;\;\; \sum_{j} B_{kj} g^j = - \sum_j b_{kj0}(y) f^j(0) \text{ on } \Sigma_R ,
\end{align*}
and noting that $\tabs{g^j}_{1+\alpha} \geq \tbra{g^j}_1 = \tbra{f^j}_1$. 
\end{proof}

We are now ready to prove Proposition \ref{prop:B1}. 
When referring to the topology of a set $U$ (and in particular, its topological boundary $\partial U$), we always employ the relative topology in $U$'s affine hull.

\subsection{Blow-up on $\Sigma_{ijk}$}

To establish (\ref{eq:Schauder-goal}), first note that by replacing the neighborhood $N_q$ by a smaller one if necessary, we may always assume that $\Sigma \cap N_q$ is an embedded local $C^{1,\alpha}$-diffeomorphic image of $\T \times \R^{n-3}$ as in the proof that Proposition \ref{prop:B1} implies Proposition \ref{prop:Schauder}, since this only makes the estimate (\ref{eq:Schauder-goal}) stronger. We continue with the notation introduced in the latter proof and proceed as in \cite{KNS}. Let $H$ denote the tangent plane to $\overline{\Sigma_{ki}}$ at $q$ (which is well-defined by continuity and the fact that $\varphi$ is a $C^1$-diffeomorphism).  By again shrinking $N_q$ if necessary, we may represent $\overline{\Sigma_{ij}}$, $\overline{\Sigma_{jk}}$ and $\overline{\Sigma_{ki}}$ in $N_q$ as graphs of $C^{1,\alpha}$ functions, denoted $u^1,u^2,u^3$ respectively, over their orthogonal projections onto $H$, denoted $\Omega^1$, $\Omega^2$ and $\Omega^3$ respectively. Indeed, this is immediate by the implicit function theorem 
for $\Sigma_{ki}$, but also follows for $\Sigma_{ij}$ and $\Sigma_{jk}$ as they form $120^{\circ}$ degree angles with $\Sigma_{ki}$ on $\Sigma_{ijk}$, and this extends by $C^1$ regularity of $\varphi$ to $q \in \Sigma_{ijkl}$; hence $\overline{\Sigma_{ij}}$ and $\overline{\Sigma_{jk}}$ form $60^{\circ}$ degree angles with $H$ at $q$, and the angle remains bounded away from $90^{\circ}$ by $C^1$ regularity at a small-enough neighborhood of $q$. 

Denote by $\Gamma$ the orthogonal projection $P_H$ of $\overline{\Sigma_{ijk}} \cap N_q$ onto $H$. 
Note that $\Gamma$ is the common boundary of $\Omega^1$, $\Omega^2$ and $\Omega^3$ in $P_H N_q$, and that $\Omega^1$ and $\Omega^2$ lie on one side of $\Gamma$, whereas $\Omega^3$ lies on the other side. 
Also denote $\Gamma_0 := P_H (\Sigma_{ijkl} \cap N_q) \subset \Gamma$. 
As we shall essentially reproduce below, it was shown in \cite{KNS} that in fact $u^i$ are $C^\infty(\Omega^i \setminus \Gamma_0)$.  
For convenience, we choose a coordinate system so that $H$ is identified with $\R^{n-1}$,  $q$ is identified with the origin, $T_q \Gamma$ is identified with $T_0 \{ x_{1} = 0 \}$ with  $\partial_{x_{1}}$ pointing into $\Omega_1$ and $\Omega_2$ at the origin, and $T_q \Sigma_{ijkl}$ is identified with $T_0 \{ x_1 = x_2 = 0 \}$. 

To elucidate the picture, let us describe how this works for a model 4-cluster in $\R^3$: all the $\Sigma_{ij}$'s are two-dimensional sectors of angle $\cos^{-1}(-1/3) \simeq 109^{\circ}$, and after projecting these onto the plane tangent to (and spanned by) $\Sigma_{ki}$, which we identify with $\R^2$, clearly $\Omega_3$ is the same sector of angle $\cos^{-1}(-1/3)$, but $\Omega_1 = \Omega_2$ are sectors of angle $\cos^{-1}(-1/\sqrt{3}) \simeq 125^{\circ}$, with common boundary $\Gamma = \R_+ e_2$ and with $\Gamma_0 = \{0\}$.

Now assume, by relabeling indices if necessary, that $u_2 \geq u_1$ on $\Omega^1 \cap \Omega^2$.  Each height function $u^i$, $i=1,2,3$, satisfies the following constant weighted mean-curvature equation:
\begin{equation} \label{eq:system0}
\sum_{a=1}^{n-1} \partial_{x_a}\brac{ \frac{\partial_{x_a} u^i }{\sqrt{ 1 + \abs{\nabla u^i}^2}} } - \scalar{\nabla W(x,u^i(x)) , \frac{(\nabla u^i(x) , -1)}{\sqrt{1 + \abs{\nabla u^i(x)}^2}}} \equiv C^i  \text{ in } \interior \Omega^i  .
\end{equation}
The boundary conditions are obtained from the fact that $\Sigma_{ij}$, $\Sigma_{jk}$ and $\Sigma_{ki}$ meet at $120^{\circ}$ degrees on $\Sigma_{ijk}$ according to Corollary \ref{cor:boundary-normal-sum} (and this extends by $C^1$ regularity to $\Sigma_{ijkl}$), and hence:
\begin{equation} \label{eq:bc0}
\begin{cases}
\scalar{\nabla u^2 , \nabla u^3} + 1 - \cos(60^{\circ}) \sqrt{1 + \abs{\nabla u^2}^2} \sqrt{1 + \abs{\nabla u^3}^2} = 0 ~, \\
\scalar{\nabla u^1 , \nabla u^2} + 1 - \cos(120^{\circ}) \sqrt{1 + \abs{\nabla u^1}^2} \sqrt{1 + \abs{\nabla u^2}^2} = 0 ~, \\
u^1 = u^2 = u^3  ~,
\end{cases}
\text{ on } \Gamma . 
\end{equation}
Of course, this still leaves a vertical degree of freedom (adding a common constant to all $u^i$'s), which we may remove by setting $u^3(0) = 0$, but we do not require this here. 

Denoting $x' = (x_2,\ldots,x_{n-1})$, we now apply the following hodograph transform $T$ mapping $x \in \Omega^1 \cap \Omega^2$ to $y$, where:
\[
\begin{cases}
w(x) := u^2(x) - u^1(x) ~,\\
y = T(x) := (w(x),x')  ~.
\end{cases}
\]
The inverse $T^{-1}$ of this transform is given by:
\[
x = T^{-1}(y) = (\psi(y),y') ,
\]
where $\psi$ is determined by the inverse function theorem as solving $x_{1} = \psi(y)$. This is always well-defined, perhaps after replacing $N_q$ by a smaller neighborhood, since $\partial_{x_{1}} w(0) = 2 \tan(60^{\circ}) > 0$, and $C^1$ regularity ensures that $\partial_{x_{1}} w$ will remain positive on the entire $\Omega^1 \cap \Omega^2$ if $N_q$ is chosen sufficiently small. In particular $w(x) > 0$ for $x \in (\Omega^1 \cap \Omega^2) \setminus \Gamma$, and $\partial_{y_1} \psi > 0$ on $T(\Omega^1 \cap \Omega^2)$. 

Denote $V_{12} := T(\Omega_1 \cap \Omega_2)$, $\Xi := T(\Gamma)$ and $\Xi_0 := T(\Gamma_0)$, and observe that $\Xi \subset \set{y \in \R^{n-1} : y_1 = 0}$, $V_{12} \setminus \Xi\subset \set{y \in \R^{n-1} : y_1 > 0}$ and $T(0) = 0$. Also note that $V_{12}$ is ``convex in the first coordinate", i.e. if $(t_0,y'),(t_1,y') \in V_{12}$ then $((1-\lambda) t_0 + \lambda t_1 , y') \in V_{12}$ for all $\lambda \in [0,1]$, by continuity of $w$. Finally note that $\psi \in C^{1,\alpha}(V_{12}) \cap C^\infty(V_{12} \setminus \Xi_0)$ by the inverse function theorem.

 We now apply the reflection method of \cite{KNS}, and for $y \in V_{12}$, denote:
\[
S(y) := (\psi(y) - C y_1 , y') ~,~ C := \sup_{y \in V_0} \abs{\nabla \psi(y)} + 1 . 
\]
Clearly $S$ is a diffeomorphism of class $C^{1,\alpha}$ from $V_{12}$ to $S(V_{12})$ and of class $C^\infty$ from $V_{12} \setminus \Xi_0$ to $S(V_{12} \setminus \Xi_0)$, and hence by the inverse function theorem, so is its inverse on the corresponding domains. 
Obviously $S$ maps $\Xi$ onto $\Gamma$ and $\Xi_0$ onto $\Gamma_0$, but also note that it maps $V_{12} \setminus \Xi$ to the complement of $\Omega^1 \cap \Omega^2$, since otherwise we would have $z \in V_{12} \setminus \Xi$ and $y \in V_{12}$ with $(\psi(z) - C z_1 , z') = (\psi(y),y')$, which is impossible since $t \mapsto \psi(t,y')$ is increasing whereas $t \mapsto \psi(t,y') - C t$ is decreasing on $\{ t \geq 0 : (t,y') \in V_{12} \}$. Finally, we set $V := S^{-1}(\Omega_3) \subset V_{12}$, which by the preceding comments must contain an open neighborhood of $K$ in $\set{y \in \R^{n-1} : y_1 \geq 0}$ for any compact subset $K$ of $\relint(\Xi)$; we will return to the geometry of $V$ later. We can now define for $y \in V$:
\begin{align*}
\Phi^+(y) & := u^1(x) ~,~ x = T^{-1}(y) ~,~\\
\Phi^{-}(y) & := u^3(x) ~,~ x = S(y) ~,
\end{align*}
and note that:
\[
u^2(x) = w(x) + u^1(x) = y_1 + \Phi^+(y) ~,~ y = T(x) . 
\]
Also note that since $u^i \in C^{1,\alpha}(\Omega^i) \cap C^{\infty}(\Omega^i \setminus \Gamma_0)$ then $\Phi^+,\Phi^- \in C^{1,\alpha}(V) \cap C^\infty(V \setminus \Xi_0)$. 

As verified in \cite{KNS}, the hodograph and reflection maps transform our quasilinear system of PDEs in divergence form (\ref{eq:system0}), for functions $u^i$ defined on different domains $\Omega^i$, into a system of quasilinear PDEs, still in divergence form, for $\Phi^+,\Phi^-,\Psi$, defined on the same domain $V$, which is of the following form:
\begin{equation} \label{eq:system}
\begin{cases}
M_1(D^2 \Psi,D^2\Phi^+,D \Psi, D \Phi^+, \Phi^+) = 0 \\
M_2(D^2 \Psi,D^2\Phi^+,D \Psi, D \Phi^+, \Phi^+) = 0 \\
M_3(D^2 \Psi,D^2\Phi^-,D \Psi, D \Phi^-, \Phi^-) = 0 
\end{cases}
\text{ in $\interior V$} .
\end{equation}
Our boundary conditions (\ref{eq:bc0}) on $\Gamma$ are transformed into boundary conditions on $\Xi$ of the following form:
\begin{equation} \label{eq:bc}
\begin{cases}
N_1(D\Psi,D\Phi^+,D \Phi^-) = 0 \\
N_2(D\Psi,D\Phi^+) = 0 \\
N_3(\Phi^+,\Phi^-) = \Phi^+ - \Phi^- = 0 
\end{cases} 
\text{ on $\Xi$}. 
\end{equation}
Unfortunately, these boundary conditions are quadratic in the first-order leading terms, which prevents us from directly employing $C^{2,\alpha}$ Schauder estimates for elliptic linear systems \cite{ADN2} after treating the first and zeroth order terms as coefficients, which we do know are in $C^{0,\alpha}(V)$.  

Consequently, as in \cite{KNS}, we linearize (\ref{eq:system}) and (\ref{eq:bc}) by considering their first order perturbation in a given direction $\theta \in \R^{n-1}$, treating the lower order terms as $C^{0,\alpha}$ coefficients. As verified in \cite{KNS}, this results in a linear system of PDEs with boundary conditions which is elliptic and coercive at $y=0$ (the coercitivity ultimately follows from the fact that the surfaces meet in non-zero angles). Since the latter two reqruirements are open (cf. \cite{ADN1,ADN2}) and only depend on the $C^{0,\alpha}(V)$ coefficients, by replacing $N_q$ by a smaller neighborhood if necessary, we may always assume that the system is uniformly elliptic and coercive on the entire $V$. 

It is worth pointing out that the linearized system remains in divergence form with $C^{0,\alpha}$ coefficients -- to see why, recall that the system before linearization consisted of quasi-linear terms in divergence form:
\[
\sum_{\abs{\beta} \in \set{0,1} , \abs{\mu} \in \set{0,1}} D^\beta c_{i \beta \mu}(D f^1,D f^2, Df^3,f^1,f^2,f^3) D^{\mu} f^i ,
\]
with $c_{i \beta \mu}$ being smooth functions of their arguments and $\set{f^1,f^2,f^3} = \set{\Phi^+,\Phi^-,\Psi} \subset C^{1,\alpha}(V) \cap C^\infty(V \setminus \Xi_0)$. Taking partial derivative $\partial_\theta$ in the direction of $\theta \in S^{n-2}$, we obtain:
\begin{align*}
& \sum_{\abs{\beta} \in \set{0,1} , \abs{\mu} \in \set{0,1}} D^\beta \rbrac{c_{i \beta \mu}(D f^1,D f^2, Df^3,f^1,f^2,f^3)} D^{\mu} (\partial_\theta f^i) + \\
& \sum_{j=1}^3 \sum_{\abs{\beta} \in \set{0,1} , \abs{\delta} = 1} D^\beta \rbrac{\sum_{\abs{\mu} \in \set{0,1}} D^{\mu} f^i \cdot D_j^\delta c_{i \beta \mu}(D f^1,D f^2, Df^3,f^1,f^2,f^3)} D^{\delta} (\partial_\theta f^j) + \\
& \sum_{j=1}^3 \sum_{\abs{\beta} \in \set{0,1}} D^\beta \rbrac{\sum_{\abs{\mu} \in \set{0,1}} D^{\mu} f^i \cdot \partial_{j+3} c_{i \beta \mu}(D f^1,D f^2, Df^3,f^1,f^2,f^3)} \partial_\theta f^j ,
\end{align*}
where $D_j^\delta$ denotes partial differentiation of $c_{i \beta \mu}$ according to the multi-index $\delta$ of its $j$'th argument. Since $D f^i \in C^{0,\alpha}(V_0)$, we see that the system for $\set{\partial_\theta f^i}_{i=1,2,3}$ is still in divergence form with $C^{0,\alpha}(V)$ coefficients (delineated by rectangular brackets above). After linearization, the quadratic boundary conditions become linear in $D^\mu \partial_\theta f^i$ with $C^{0,\alpha}(\Xi)$ coefficients. Note that these coefficients (of both the system and the boundary conditions) are bounded in $C^{0,\alpha}$ norm on the entire $V$ and $\Sigma$, respectively, since all of the estimates depend only on our initial $C^{1,\alpha}$ estimates for $\varphi$ and $\varphi^{-1}$. We are thus in a position to apply the Schauder estimates of Corollary \ref{cor:ADN}. 

Let $p \in \Sigma_{ijk} \cap N_q$. Then $x_p := P_H p \in \Gamma \setminus \Gamma_0$ and $y_p := T(x_p) \in \Xi \setminus \Xi_0$.  Let $\Omega_R \subset \R^{n-1}$ denote a maximal hemisphere centered at $y_p$ of radius $R = R_p$ so that its flat part $\Sigma_R \subset \set{y_1 = 0}$ and $\Omega_R \subset V \setminus \Xi_0$. By the preceding discussion and since $\Xi_0$ is a relatively closed set in $P_H N_q$, necessarily $R > 0$. Applying Corollary \ref{cor:ADN} to our linearized system for $\{\partial_\theta f^1,\partial_\theta f^2,\partial_\theta f^3\} = \{\partial_\theta \Phi^+,\partial_\theta \Phi^-,\partial_\theta \psi \} \subset C^\infty(\Omega_R)$, and recalling the weighing used in the norms appearing in the resulting estimate, we deduce for all $j=1,2,3$:
\begin{align*}
& R \abs{D \partial_\theta f^j(y_p)}  \leq C_q  \left ( \sum_{i,j} R^{2+\alpha} \norm{a_{ij00}}_{C^{0,\alpha}(\Omega_R)} \abs{\partial_\theta f^j(y_p)} \right . \\
&\left .  + \sum_{k,j} R^{1+\alpha} \norm{b_{kj0}}_{C^{0,\alpha}(\Sigma_R)} \abs{\partial_\theta f^j(y_p)} + \sum_j \norm{\partial_\theta f^j - \partial_\theta f^j(y_p)}_{C^0(\Omega_R)} \right ) ,
\end{align*}
where $a_{ij00}$ and $b_{ij0}$ are the zeroth order coefficients of the linearized system and boundary conditions, and $C_q$ depends on quantities which are uniform for all $p \in \Sigma_{ijk} \cap N_q \setminus \{q\}$. Since $\norm{a_{ij00}}_{C^{0,\alpha}(V)} , \norm{b_{kj0}}_{C^{0,\alpha}(\Xi)} < \infty$, $\norm{\partial_\theta f^j}_{C^0(V)} < \infty$ and since we may assume $R \leq 1$ by choosing $N_q$ small enough, it follows that for another constant $C_q'$ which is independent of $p$ we have:
\begin{equation} \label{eq:Schauder-estimate}
R \abs{D \partial_\theta f^j(y_p)} \leq C_q' \brac{R^{1+\alpha} + \sum_j \norm{\partial_\theta f^j - \partial_\theta f^j(y_p)}_{C^0(\Omega_R)} }.
\end{equation}
Using the fact that $f^j \in C^{1,\alpha}(V)$, it follows that $\norm{\partial_\theta f^j - \partial_\theta f^j(y_p)}_{C^0(\Omega_R)} \leq c_q R^{\alpha}$ where $c_q$ is independent of $p$, and we conclude that:
\[
\forall j=1,2,3 \;\;\; \norm{D^2 f^j(y_p)} \leq \frac{C_q''}{R_p^{1-\alpha}} ,
\]
for some constant $C_q''$ independent of $p$. As a side note, we remark that Kinderlehrer--Nirenberg--Spruck deduce in \cite{KNS} that $f^j \in C^{2,\alpha}$ all the way up to the triple points, by considering the difference-quotients of $f^j$ (instead of working with $D^\theta f^j$ as we did above, since we already know the functions are smooth by \cite{KNS}), and using the resulting $C^{1,\alpha}$ Schauder estimate on the difference-quotient and the Arzel\`a--Ascoli theorem to pass to the limit, yielding $C^{2,\alpha}$ regularity. 

We now claim that:
\begin{equation} \label{eq:D2u-decay}
\forall j=1,2,3 \;\;\; \norm{D^2 u^j(x_p)} \leq \frac{C_q^{(3)}}{R_p^{1-\alpha}} ,
\end{equation}
where $C_q^{(3)}$ is independent of $p$. Indeed, by the chain-rule, if we denote $\bar T = T^{-1}$:
\[
D^2_{ab} T_c(x_p) = - D_b T_e(x_p) D^2_{a e} \bar T_d(y_p) D_d T_c(x_p) . 
\]
Recalling that $\bar T(y) = (\psi(y),y')$, we know that $\norm{D^2 \bar T(y_p)} \leq A_q/R_p^{1-\alpha}$; and recalling that $T(x) = (u_2(x)-u_1(x),x')$, since $u^i \in C^{1,\alpha}(\Omega^1 \cap \Omega^2)$, we see that $\norm{D T(x_p)} \leq B_q$, where $A_q,B_q$ are constants independent of $p$. Consequently, $\norm{D^2 T(x_p)} \leq C_q /R_p^{1-\alpha}$. Taking two derivatives of $u^1(x_p) = \Phi^+(T(x_p))$ and applying the chain-rule, the latter estimates on $\norm{D T(x_p)}$ and $\norm{D^2 T(x_p)}$ in conjunction with $\norm{D^2 \Phi^+(y_p)} \leq A_q / R_p^{1-\alpha}$ and $\norm{D \Phi^+(y_p)} \leq B_q$, readily establishes (\ref{eq:D2u-decay}) for $u^1$. As $u^2(x_p) = w(x_p) + u^1(x_p)$ and $w(x) = T_1(x)$, the previous estimates imply (\ref{eq:D2u-decay}) for $u^2$ as well; the case of $u^3$ follows similarly.

As $\II^{ij}(p)$ is computed quasilinearly from $D^2 u^1(x_p)$ (leading order) and $D u^1(x_p)$, it immediately follows that:
\[
\forall p \in \Sigma_{ijk} \cap N_q \;\;\; \norm{\II^{ij}(p)} \leq \frac{C_q^{(4)}}{R_p^{1-\alpha}} .
\]
It remains to establish that:
\begin{equation} \label{eq:Rp-goal}
\forall p \in \Sigma_{ijk} \cap \tilde{N_q} \;\;\;  R_p \geq c_q d(p,\Sigma_{ijkl} \cap N_q) ,
\end{equation}
for some constant $c_q > 0$ independent of $p$ and some sub-neighborhood $\tilde{N_q} \subset N_q$ of $q$. 
We will use $\tilde{N_q} = \varphi(\tilde{U_q})$, where:
\[
\tilde{U_q} := \set{z \in U_q : d(z,0) < d(z,\partial U_q)} .
\]

Indeed, observe that $R_p = \min(d(y_p,\Xi_0),d(y_p, \partial V \setminus \Xi))$, 
and so if we show that:
\begin{equation} \label{eq:Rp-goal2}
\forall y_p \in \Xi \setminus \Xi_0  \cap T(P_H(\tilde{N_q}))  \;\;\;  d(y_p, \partial V \setminus \Xi)) \geq c_q' d(y_p,\Xi_0) ,
\end{equation}
then (\ref{eq:Rp-goal}) will follow, since $d(y_p,\Xi_0)$ is equivalent to $d(p,\Sigma_{ijkl} \cap N_q)$ up to a factor of $D_q$ (as $T \circ P_H$ is a $C^{1,\alpha}$ diffeomorphism on $\Sigma_{ijk} \cap N_q$). To show (\ref{eq:Rp-goal2}), observe that this is indeed the case on our model cluster:
\[
\forall z \in \Sigma_{123}^m \;\;\; d(z , \partial \Sigma^m_{12} \setminus \Sigma^m_{123}) \geq d(z, \Sigma^m_{1234} ) ,
\]
(in fact, with equality above, since $\Sigma^m_{123}$ and $\Sigma^m_{123}$, the two boundary components of $\Sigma^m_{12}$, form an obtuse angle of $\cos^{-1}(1/3) \simeq 109^{\circ}$). Consequently, it is easy to see that:
\[
\forall z \in \Sigma_{123}^m \cap \tilde{U}_q \;\;\; d(z , \partial (\Sigma^m_{12} \cap U_q) \setminus \Sigma^m_{123}) \geq d(z, \Sigma^m_{1234} \cap U_q) .
\]
The same holds with $\Sigma^m_{12}$ replaced by $\Sigma^m_{23}$ and $\Sigma^m_{31}$. But since $P_H \circ \varphi$ is a $C^{1,\alpha}$ diffeomorphism from $\overline{\Sigma^m_{12}} \cap U_q$ to $\Omega^1$, from $\overline{\Sigma^m_{23}} \cap U_q$ to $\Omega^2$, and from $\overline{\Sigma^m_{31}} \cap U_q$ to $\Omega^3$, we have:
\[
\forall x \in \Gamma \setminus \Gamma_0 \cap P_H(\tilde{N}_q)\;\;\; \forall i=1,2,3 \;\;\;  d(x , \partial \Omega^i \setminus \Gamma) \geq c_q''' d(x, \Gamma_0) .
\]
Finally, since $V = T(\Omega^1 \cap \Omega^2) \cap S^{-1}(\Omega^3)$, and $T$ and $S^{-1}$ are $C^{1,\alpha}$ diffeomorphisms on their corresponding domains, (\ref{eq:Rp-goal2}) follows with an appropriate constant $c_q'$. This concludes the proof of (\ref{eq:Schauder-goal}).

\subsection{Blow-up on $\Sigma_{ij}$}

Let us now sketch the argument for establishing (\ref{eq:Schauder-goal2}). Let $N_q$ be a (possibly readjusted) neighborhood of $q$ as in the proof of (\ref{eq:Schauder-goal}) described above. 
We will use $\tilde N_q = \tilde N^1_q(c_q) \cap \tilde N^2_q$, where:
\[
\tilde N^1_q(c) := \varphi( \set{ z \in U_q : d(z,0) < c d(z,\partial U_q)})  ,
\]
and:
\[
\tilde N^2_q := \set{ w \in N_q : d(w,q) < d(w,\partial N_q) } . 
\]
The constant $c_q \leq 1$ above is chosen to ensure that if $w \in \tilde N^1_q(c_q) $ and $p \in \partial \Sigma_{12}$ realizes the distance $d(w,\partial \Sigma_{12})$, then necessarily $p \in \tilde N^1_q(1)$, which was exactly the requirement on $p$ which we used in the proof of (\ref{eq:Schauder-goal}). Indeed, it is always possible to choose such a constant $c_q$ since $\varphi$ is a $C^{1,\alpha}$ diffeomorphism from $U_q$ to $N_q$, and thus distances are preserved up to constants. 

We first establish (\ref{eq:Schauder-goal2}) for points $w \in \Sigma_{ij} \cap \tilde{N}_q$ so that:
\begin{equation} \label{eq:Schauder-case1}
d(w,\Sigma_{ijkl} \cap N_q) > A_q d(w,\partial (\Sigma_{ij} \cap N_q)) , 
\end{equation}
for some constant $A_q>1$ to be prescribed. Let $p \in \partial (\Sigma_{ij} \cap N_q)$ realize the distance on the right-hand-side above. Then necessarily $p \in \partial \Sigma_{ij}\cap N_q$, since $d(w,\partial N_q) > d(w,q) \geq d(w,\partial \Sigma_{ij})$ by our assumption that $w \in \tilde N^2_q$. Hence $p \in \overline{\Sigma_{ijk}} \cap N_q$ or $p \in \overline{\Sigma_{ijl}} \cap N_q$, and we assume without loss of generality it is the former case (otherwise exchange the index $k$ with the index $l$ in all of our previous arguments). 
Clearly $p \notin \Sigma_{ijkl}$ since otherwise we would have $d(w,\Sigma_{ijkl} \cap N_q) = d(w,p)$, in violation of (\ref{eq:Schauder-case1}) and the fact that $A_q > 1$. 
In addition, since $w \in \tilde N^1_q(c_q)$, we are guaranteed that $p \in \tilde N^1_q(1)$.  

Now let $y_p = T(P_H p) \in \Xi \setminus \Xi_0 \cap T(P_H(\tilde N^1_q(1)))$. By (\ref{eq:Rp-goal2}), we know that there exists a constant $c_q' \in (0,1]$ so that $\Omega_{R_{y_p}}(y_p) \subset V$ with $R_{y_p} \geq c_q' d(y_p,\Xi_0)$, where recall $\Omega_{R}(y)$ denotes the hemisphere of radius $R$ centered at $y$ with flat part in $\set{ y_1 = 0}$. Since $T$ and $S^{-1}$ are $C^{1,\alpha}$ diffeomorphisms from $\Omega^1 \cap \Omega^2$ and $\Omega^3 \cap S(V_{12})$ into $V$, respectively, 
there exists a constant $c_q'' > 0$ so that, denoting $R_{x_p} := c_q'' d(x_p, \Gamma_0)$, we have $B_{R_{x_p}}(x_p) \subset (\Omega^1 \cap \Omega^2) \cup \Omega^3$, where $B_R(x)$ is a ball of radius $R$ around $x$ in $H$. 

By (\ref{eq:Schauder-case1}), we know that:
\begin{equation} \label{eq:Schauder-dwp}
d(w,p) < \frac{1}{A_q} d(w,\Sigma_{ijkl} \cap N_q).
\end{equation}
Since $P_H$ is a $C^{1,\alpha}$ diffeomorphism from $\overline{\Sigma_{ij}} \cap N_q$ to $\Omega^1$, there exists a constant $C_q > 1$ so that $d(x_w,x_p) < \frac{1}{C_q A_q} d(x_w,\Gamma_0)$. By the triangle inequality, it follows that:
\begin{equation} \label{eq:Schauder-dist1}
d(x_w,x_p) \leq \frac{1}{C_q A_q - 1} d(x_p,\Gamma_0) . 
\end{equation}
Therefore, choosing $A_q$ large enough, we can ensure that $\frac{1}{C_q A_q - 1} \leq c_q''$, and conclude that $x_w \in B_{R_{x_p}}(x_p)$; in particular, $x_w \in \Omega^1 \cap \Omega^2 \cap S(V)$ and $y_w := T(x_w) \in V$. 

In addition, since $T$ is a $C^{1,\alpha}$ diffeomorphism from $\Omega^1 \cap \Omega^2$ into $V$, (\ref{eq:Schauder-dist1}) implies that by choosing $A_q$ large enough, we can also ensure that $d(y_w,y_p) \leq (c_q'/2) d(y_p,\Xi_0)$, which recall is at most $R_{y_p}/2$. Hence the distance of $y_w$ to the spherical part of the boundary of $\Omega_{R_{y_p}}(y_p)$ is at least $R_{y_p}/2$. Applying the Schauder estimate for systems as in (\ref{eq:Schauder-estimate}) and testing it at $y_w$, we conclude that: 
\[
\norm{D^2 \Phi^+(y_w)} \leq \frac{C_q^{(2)}}{R_{y_p}^{1-\alpha}} .
\]
Arguing as in (\ref{eq:D2u-decay}), we deduce that:
\[
\norm{D^2 u^1(x_w)} \leq \frac{C_q^{(3)}}{R_{y_p}^{1-\alpha}} ,
\]
which implies:
\begin{equation} \label{eq:Schauder-final1}
\norm{\II^{ij}(w)} \leq \frac{C_q^{(4)}}{R_{y_p}^{1-\alpha}} .
\end{equation}
Recall that $R_{y_p} \geq c_q d(p, \Sigma_{ijkl} \cap N_q)$ by (\ref{eq:Rp-goal}). On the other hand, (\ref{eq:Schauder-dwp}) implies that:
\[
 d(p, \Sigma_{ijkl} \cap N_q) \geq d(w,\Sigma_{ijkl} \cap N_q) - d(w,p) \geq \frac{A_q - 1}{A_q} d(w,\Sigma_{ijkl} \cap N_q) . 
 \]
Hence, we conclude that $R_{y_p} \geq c_q \frac{A_q - 1}{A_q} d(w,\Sigma_{ijkl} \cap N_q)$, which together with (\ref{eq:Schauder-final1}) concludes the proof of (\ref{eq:Schauder-goal2}) under the assumption that (\ref{eq:Schauder-case1}). 

The case when (\ref{eq:Schauder-case1}) does not hold is much simpler, and there is no need for systems of PDEs nor boundary conditions. 
Set $x_w = P_H w$ and let $B_{R_w}$ be a ball centered at $x_w$ of maximum radius $R_w > 0$ in $\Omega^1 \subset H$. Since $u^1$ is $C^{1,\alpha}$ uniformly on the entire $\Omega^1$, a standard text-book $C^{2,\alpha}$ Schauder estimate for the quasilinear elliptic equation (\ref{eq:system0}) satisfied by $u^1$ in $B_{R_w}$ immediately verifies that:
\[
\norm{D^2 u^1(x_w)} \leq \frac{C_q'}{R_w^{1-\alpha}} ,
\]
which implies as before that:
\[
\norm{\II^{ij}(w)} \leq \frac{C_q''}{R_w^{1-\alpha}} .
\]
Since $w$ violates (\ref{eq:Schauder-case1}), it remains to establish that $R_w \geq c_q d(w,\partial (\Sigma_{ij} \cap N_q))$. But this is immediate, since $P_H$ is a $C^{1,\alpha}$ diffeomorphism from $\overline{\Sigma_{ij}} \cap N_q$ to $\Omega^1$. This concludes the proof of (\ref{eq:Schauder-goal2}) and thus of Proposition \ref{prop:B1}.

\bibliographystyle{plain}

\def\cprime{$'$} \def\textasciitilde{$\sim$}

\end{document}